\tikzset{commutative diagrams/.cd,every label/.append style = {font = \normalsize}}
\numberwithin{equation}{section}
\newtheorem{thm}{Theorem}
\numberwithin{thm}{section}
\newtheorem{cor}[thm]{Corollary}
\newtheorem{lem}[thm]{Lemma}
\newtheorem{prop}[thm]{Proposition}
\theoremstyle{definition}
\newtheorem{defn}[thm]{Definition}
\newtheorem{eg_no_qed}[thm]{Example}
\newenvironment{eg}[1][]{\begin{eg_no_qed}[#1]\pushQED{\qed}}{\popQED\end{eg_no_qed}}
\newtheorem{rmk}[thm]{Remark}
\theoremstyle{remark}
\newtheorem*{claimpf_no_qed}{Proof of Claim}
\DeclareMathOperator{\ad}{ad}
\newcommand{\adjoint}[1]{#1 ^*}
\renewcommand{\eqref}[1]{\hyperref[#1]{\textup{(\ref*{#1})}}}
\DeclareMathOperator{\Fl}{Fl}
\DeclareMathOperator{\GL}{GL}
\newcommand{\gl}{\mathfrak{gl}}
\DeclareMathOperator{\Gr}{Gr}
\DeclareMathOperator{\grad}{grad}
\DeclareMathOperator{\id}{id}
\DeclareMathOperator{\im}{im}
\newcommand{\interior}[1]{#1^{\circ}}
\renewcommand{\O}{\operatorname{O}}
\DeclareMathOperator{\rank}{rank}
\renewcommand{\sl}{\mathfrak{sl}}
\DeclareMathOperator{\SO}{SO}
\DeclareMathOperator{\spn}{span}
\DeclareMathOperator{\SU}{SU}
\newcommand{\su}{\mathfrak{su}}
\DeclareMathOperator{\tr}{tr}
\newcommand{\transpose}[1]{{#1}{\hspace*{-0.2pt}\raisebox{4pt}{$\scriptstyle\mathsf{T}$}\hspace*{0.5pt}}}
\newcommand{\adinverse}[1]{\ad_{#1}^{-1}}
\newcommand{\B}{\operatorname{B}}
\newcommand{\bflambda}{\lambda}
\newcommand{\bfmu}{\mu}
\newcommand{\bfomega}[1]{\omega_{#1}}
\newcommand{\Bminus}{\operatorname{B}^{-}}
\newcommand{\ccon}[1]{\hspace*{1pt}\overline{#1}\hspace*{1pt}}
\renewcommand{\dashrightarrow}{\mathrel{\ThisStyle{\ooalign{$\SavedStyle\rightarrow$\cr\hfil\textcolor{white}{\rule{2\LMpt}{1\LMex}}\kern2\LMpt\hfil}}}}
\newcommand{\Diag}[1]{\hspace*{1pt}\mathsf{Diag}(#1)\hspace*{1pt}}
\newcommand{\eval}[1]{\hspace*{-1pt}\big\rvert_{#1}\hspace*{2pt}}
\newcommand{\field}{\mathbb{F}}
\newcommand{\flip}{\rho}
\renewcommand{\H}{\operatorname{T}}
\newcommand{\h}{\mathfrak{t}}
\newcommand{\I}{I}
\newcommand{\ii}{\mathrm{i}\hspace*{0.5pt}}
\newcommand{\inv}[2]{\mathsf{inv}(#1,#2)}
\newcommand{\Jac}{\mathcal{J}}
\DeclareMathOperator{\Kterm}{\pi_{U}}
\DeclareMathOperator{\kterm}{\pi_{\mathfrak{u}}}
\newcommand{\Kry}[2]{\mathsf{Vand}(#1,#2)}
\newcommand{\limitorbit}[2]{\mathsf{L}^{\infty}(#1;#2)}
\newcommand{\limitproj}[2]{\mathsf{P}^{\infty}_{#2}(#1)}
\newcommand{\modulo}[1]{\hspace*{-6pt}\pmod{#1}}
\newcommand{\N}{\operatorname{N}}
\newcommand{\n}{\mathfrak{n}}
\newcommand{\norm}{\nu}
\newcommand{\oo}{\mathfrak{o}}
\newcommand{\Orbit}{\mathcal{O}}
\renewcommand{\P}[2]{\operatorname{P}_{#1;\hspace*{0.5pt}#2}}
\newcommand{\PFl}[2]{\Fl_{#1;\hspace*{0.5pt}#2}}
\newcommand{\Proj}[1]{\mathsf{P}_{#1}}
\newcommand{\rev}{\mathsf{rev}}
\newcommand{\stableorbit}[2]{\Orbit_{#2}^{s}(#1)}
\newcommand{\sumof}[1]{\sum\hspace*{-1pt}{#1}}
\newcommand{\T}{\operatorname{T}^{\textnormal{U}}}
\newcommand{\twist}{\vartheta}
\newcommand{\twistorbit}{\vartheta_{\bflambda}}
\newcommand{\U}{\operatorname{U}}
\newcommand{\uu}{\mathfrak{u}}
\newcommand{\Vand}{\mathcal{V}}
\title[Gradient flows and adjoint orbits]{Gradient flows, adjoint orbits, and the topology of totally nonnegative flag varieties}
\author{Anthony M. Bloch}
\address{Department of Mathematics, University of Michigan}
\email{\href{mailto:abloch@umich.edu}{abloch@umich.edu}}
\author{Steven N. Karp}
\address{LaCIM, Universit\'{e} du Qu\'{e}bec \`{a} Montr\'{e}al}
\email{\href{mailto:karp.steven@courrier.uqam.ca}{karp.steven@courrier.uqam.ca}}
\subjclass[2020]{15B48, 14M15, 20G20, 17B45, 81T60, 37J35}
\thanks{A.M.B.\ was partially supported by NSF grants DMS-1613819 and DMS-2103026, and AFOSR grant FA 0550-18-0028. S.N.K.\ was partially supported by an NSERC postdoctoral fellowship.}
\begin{document}

\begin{abstract}
One can view a partial flag variety in $\mathbb{C}^n$ as an adjoint orbit $\mathcal{O}_{\lambda}$ inside the Lie algebra of $n\times n$ skew-Hermitian matrices. We use the orbit context to study the totally nonnegative part of a partial flag variety from an algebraic, geometric, and dynamical perspective. The paper has three main parts:

(1) We introduce the totally nonnegative part of $\mathcal{O}_{\lambda}$, and describe it explicitly in several cases. We define a twist map on it, which generalizes (in type $A$) a map of Bloch, Flaschka, and Ratiu (1990) on an isospectral manifold of Jacobi matrices.

(2) We study gradient flows on $\mathcal{O}_{\lambda}$ which preserve positivity, working in three natural Riemannian metrics. In the K\"{a}hler metric, positivity is preserved in many cases of interest, extending results of Galashin, Karp, and Lam (2017, 2019). In the normal metric, positivity is essentially never preserved on a generic orbit. In the induced metric, whether positivity is preserved appears to depends on the spacing of the eigenvalues defining the orbit.

(3) We present two applications. First, we discuss the topology of totally nonnegative flag varieties and amplituhedra. Galashin, Karp, and Lam (2017, 2019) showed that the former are homeomorphic to closed balls, and we interpret their argument in the orbit framework. We also show that a new family of amplituhedra, which we call {\itshape twisted Vandermonde amplituhedra}, are homeomorphic to closed balls. Second, we discuss the symmetric Toda flow on $\mathcal{O}_{\lambda}$. We show that it preserves positivity, and that on the totally nonnegative part, it is a gradient flow in the K\"{a}hler metric up to applying the twist map. This extends a result of Bloch, Flaschka, and Ratiu (1990).
\end{abstract}

\maketitle
\setcounter{tocdepth}{1}
\tableofcontents

\section{Introduction}\label{sec_introduction}

\noindent Let $\Fl_n(\mathbb{C})$ denote the {\itshape complete flag variety}, consisting of all sequences $V_1 \subset \cdots \subset V_{n-1}$ of nested subspaces of $\mathbb{C}^n$ such that each $V_k$ has dimension $k$. We may view $\Fl_n(\mathbb{C})$ as the quotient of $\GL_n(\mathbb{C})$ by the subgroup of upper-triangular matrices $\B_n(\mathbb{C})$, where $V_k$ is the subspace spanned by the first $k$ columns of a matrix representative in $\GL_n(\mathbb{C})/\B_n(\mathbb{C})$. Lusztig \cite{lusztig94, lusztig98} introduced two remarkable subsets of the real points of $\Fl_n(\mathbb{C})$, called the {\itshape totally positive} and {\itshape totally nonnegative flag varieties}, denoted $\Fl_n^{>0}$ and $\Fl_n^{\ge 0}$, and defined as follows. Let $\GL_n^{>0}$ be the subset of $\GL_n(\mathbb{C})$ of all {\itshape totally positive} matrices, i.e., matrices whose minors are all positive. Then $\Fl_n^{>0}$ is the image of $\GL_n^{>0}$ inside $\GL_n(\mathbb{C})/\B_n(\mathbb{C})$, and $\Fl_n^{\ge 0}$ is its closure. Equivalently, $\Fl_n^{>0}$ (respectively, $\Fl_n^{\ge 0}$) is the set of flags which can be represented by an element of $\GL_n(\mathbb{C})$ whose left-justified minors (i.e.\ those which use an initial subset of columns) are all positive (respectively, nonnegative).

More generally, for any subset $K$ of $\{1, \dots, n-1\}$, we have the partial flag variety $\PFl{K}{n}(\mathbb{C})$, consisting of nested sequences of subspaces of dimensions $k\in K$. Its totally nonnegative part $\PFl{K}{n}^{\ge 0}$ is defined to be the image of $\Fl_n^{\ge 0}$ under the natural projection which forgets the subspaces of dimensions $k\notin K$. Of particular interest is the case $K = \{k\}$, whence we obtain the {\itshape Grassmannian} $\Gr_{k,n}(\mathbb{C})$ and its totally nonnegative part $\Gr_{k,n}^{\ge 0}$. The totally nonnegative parts of Grassmannians and of more general partial flag varieties have been widely studied, with connections to representation theory \cite{lusztig94}, combinatorics \cite{postnikov07}, cluster algebras \cite{fomin_williams_zelevinsky}, high-energy physics \cite{arkani-hamed_bourjaily_cachazo_goncharov_postnikov_trnka16, arkani-hamed_bai_lam17}, mirror symmetry \cite{rietsch_williams19}, topology \cite{galashin_karp_lam22}, and many other topics.

It is well-known that one can view a partial flag variety as an adjoint orbit inside a corresponding Lie algebra. The purpose of this paper is to use the orbit context to study total positivity. We approach this analysis from an algebraic, geometric, and dynamical perspective.

There are two main inspirations for our work. The first is work of Galashin, Karp, and Lam \cite{galashin_karp_lam, galashin_karp_lam19}, who constructed a contractive flow on any totally nonnegative partial flag variety, in order to show that it is homeomorphic to a closed ball. One of our goals was to situate these flows in a more general and geometric context. We will see that these contractive flows are gradient flows in the K\"{a}hler metric on an adjoint orbit. The second inspiration is work of Bloch, Flaschka, and Ratiu \cite{bloch_flaschka_ratiu90}, who studied the tridiagonal Toda flow on an adjoint orbit. They showed that after applying a certain involution, the flow becomes a gradient flow in the K\"{a}hler metric, and then projecting by the moment map gives a homeomorphism from the underlying isospectral manifold onto the moment polytope. Another of our goals was to clarify and extend this construction using total positivity, and to relate it to the work of Galashin, Karp, and Lam above. Here we introduce a generalization of this involution called the {\itshape twist map}, which plays an important role throughout the paper. In order to accomplish these goals, we have developed the fundamentals of total positivity for adjoint orbits.

The paper consists of three main parts. In the first part (\cref{sec_background,sec_tnn_U,sec_tnn_orbit}), we introduce the totally nonnegative part of an adjoint orbit and the twist map. In the second part (\cref{sec_gradient}), we study gradient flows on adjoint orbits in three different Riemannian metrics: the K\"{a}hler, normal, and induced metrics. We focus on characterizing which gradient flows are compatible with positivity. In the third part (\cref{sec_ball,sec_amplituhedron,sec_toda}), we consider two applications of the theory thus developed: to the topology of totally nonnegative flag varieties and amplituhedra, and to the symmetric Toda flow. Below we give further details and highlight our main results.

\subsection*{Adjoint orbits}
Let $\U_n$ denote the group of $n\times n$ unitary matrices, and let $\uu_n$ denote the Lie algebra of $n\times n$ skew-Hermitian matrices. For a weakly decreasing sequence $\bflambda = (\lambda_1, \dots, \lambda_n) \in \mathbb{R}^n$, we let $\Orbit_{\bflambda}$ denote the adjoint orbit inside $\uu_n$ consisting of all matrices with eigenvalues $\ii \lambda_1, \dots, \ii \lambda_n$ (where $\ii = \sqrt{-1}$). We may identify $\Orbit_{\bflambda}$ with a partial flag variety $\PFl{K}{n}(\mathbb{C})$, where $K$ depends on the multiplicities of the entries of $\lambda$. Namely, $K$ is the set of $k\in\{1, \dots, n-1\}$ such that $\lambda_k > \lambda_{k+1}$, and the matrix $L\in\Orbit_{\bflambda}$ corresponds to the flag $V\in\PFl{K}{n}(\mathbb{C})$, where $V_k$ is the span of the eigenvectors of $L$ corresponding to the eigenvalues $\ii\lambda_1, \dots, \ii\lambda_k$. In the generic case (i.e. when $\lambda$ is strictly decreasing), we have $\Orbit_{\bflambda} \cong \Fl_n(\mathbb{C})$. At another extreme we have the case $\bflambda = \bfomega{k} := (1, \dots, 1, 0, \dots, 0)$, with $k$ ones followed by $n-k$ zeros; then $\Orbit_{\bflambda}$ consists of matrices $\ii P$ such that $P$ is a projection matrix of rank $k$, and $\Orbit_{\bflambda} \cong \Gr_{k,n}(\mathbb{C})$.

The totally nonnegative part of $\PFl{K}{n}(\mathbb{C})$ defines a corresponding subset $\Orbit_{\bflambda}^{\ge 0}$, the totally nonnegative part of an adjoint orbit. It is a distinguished subset of the purely imaginary matrices in $\Orbit_{\bflambda}$. Similarly, we obtain the totally positive part $\Orbit_{\bflambda}^{>0}$. We show that in several cases of interest, $\Orbit_{\bflambda}^{\ge 0}$ can be described using notions familiar in the literature (see \cref{complete_to_orbit}, \cref{projection_tnn_description}, and \cref{tridiagonal_orbit}):
\begin{thm}\label{intro_orbit_description}
Let $\ii L\in\Orbit_{\bflambda}$.
\begin{enumerate}[label=(\roman*), leftmargin=*, itemsep=2pt]
\item\label{intro_orbit_description_complete} If $\lambda_1 > \cdots > \lambda_n > 0$, then $\ii L\in\Orbit_{\bflambda}^{>0}$ if and only if $L$ is eventually totally positive, i.e., $L^m\in\GL_n^{>0}$ for some $m > 0$.
\item\label{intro_orbit_description_projection} If $\bflambda = \bfomega{k}$, then $\ii L\in\Orbit_{\bflambda}^{>0}$ (respectively, $\ii L\in\Orbit_{\bflambda}^{\ge 0}$) if and only if all $k\times k$ minors of $L$ are real and positive (respectively, nonnegative).
\item\label{intro_orbit_description_tridiagonal} If $L$ is tridiagonal, then $\ii L\in\Orbit_{\bflambda}^{>0}$ (respectively, $\ii L\in\Orbit_{\bflambda}^{\ge 0}$) if and only if $L$ is real and its entries immediately above and below the diagonal are positive (respectively, nonnegative).
\end{enumerate}

\end{thm}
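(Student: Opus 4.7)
The plan is to treat the three parts separately, in each case using the identification of $\Orbit_\bflambda$ with the corresponding partial flag variety via the spectral flag $V_k = \spn(v_1, \dots, v_k)$, where $v_i$ is an eigenvector of $L$ for the eigenvalue $\ii\lambda_i$, and then translating the condition $\ii L \in \Orbit_\bflambda^{>0}$ (or $\ge 0$) into a condition on minors of representative matrices.

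For part \ref{intro_orbit_description_complete}, the approach is via classical Gantmacher--Krein theory of oscillatory matrices. Write $L = UDU^*$ with $U \in \U_n$ and $D = \diag(\lambda_1, \dots, \lambda_n)$, so the spectral flag is represented by the columns of $U$. If that flag is totally positive, one can adjust the phases of the columns (right-multiplying $U$ by a unitary diagonal matrix, which commutes with $D$) so that every left-justified minor $\det U[I, \{1,\dots,k\}]$ is positive. Cauchy--Binet then gives
\[
\det L^m[I,J] \,=\, \sum_{|K|=|I|} \lambda_K^m \, \det U[I,K] \, \overline{\det U[J,K]},
\]
where $\lambda_K = \prod_{k\in K}\lambda_k$. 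Since $\lambda_1 > \cdots > \lambda_n > 0$, the term with $K = \{1,\dots,|I|\}$ has the strictly largest $\lambda_K^m$ and is positive, so it dominates for all $m$ sufficiently large, yielding $L^m \in \GL_n^{>0}$. Conversely, the Gantmacher--Krein theorem says that an oscillatory matrix has distinct positive real eigenvalues and, after suitable phase adjustment of its eigenvectors, an eigenvector matrix whose left-justified minors are all positive; applied to $L^m$, this shows the spectral flag of $L$ is totally positive.

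For part \ref{intro_orbit_description_projection}, write $L = A A^*$, where $A$ is an $n \times k$ matrix with $A^*A = I_k$; the column span of $A$ is the point of $\Gr_{k,n}(\mathbb{C})$ corresponding to $\ii L$. By Cauchy--Binet, $\det L[I,J] = \Delta_I(A) \, \overline{\Delta_J(A)}$ for $|I| = |J| = k$, where $\Delta_I(A)$ denotes the Pl\"ucker coordinate. These products are all real and positive (respectively, nonnegative) if and only if the nonzero Pl\"ucker coordinates share a common complex argument (and none vanish, in the positive case), which after a global unit rescaling of $A$ is equivalent to $[A] \in \Gr_{k,n}^{>0}$ (respectively, $\Gr_{k,n}^{\ge 0}$).

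For part \ref{intro_orbit_description_tridiagonal}, a Hermitian tridiagonal $L$ that is real with positive off-diagonal entries is a classical symmetric Jacobi matrix; the oscillation theorem for such matrices gives that $L$ has distinct real eigenvalues and an orthonormal eigenvector matrix which, after appropriate sign choices, has all left-justified minors positive, making the spectral flag totally positive and hence $\ii L \in \Orbit_\bflambda^{>0}$. For the converse---that every tridiagonal element of $\Orbit_\bflambda^{\ge 0}$ has this form---one would combine the total positivity cell decomposition of $\Orbit_\bflambda^{\ge 0}$ with the Hessenberg (tridiagonal) constraint, which confines the allowed matrices to the above shape. The main obstacle will be this last step: systematically ruling out complex or sign-alternating tridiagonal matrices from $\Orbit_\bflambda^{\ge 0}$ requires either a direct sign analysis of minors (for example of $\exp(tL)$ for small $t > 0$) or careful use of the TNN cell structure developed earlier in the paper.
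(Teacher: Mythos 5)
Parts \ref{intro_orbit_description_complete} and \ref{intro_orbit_description_projection} of your proposal follow the paper's argument essentially verbatim: for \ref{intro_orbit_description_complete}, Cauchy--Binet gives the dominant term $(\lambda_1\cdots\lambda_k)^m\Delta_I(g)\Delta_J(g)$ for large $m$ (this is the proof of \cref{complete_to_orbit}), and for \ref{intro_orbit_description_projection}, the identity $\Delta_{I,J}(\Proj{V}) = \Delta_I(V)\ccon{\Delta_J(V)}/\sum_K|\Delta_K(V)|^2$ together with Pl\"{u}cker-positivity of the Grassmannian is exactly \eqref{projection_maximal_minors} plus \cref{tnn_Fl_converse}. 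No issues there.

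Part \ref{intro_orbit_description_tridiagonal} is where the proposal has a genuine gap, and you acknowledge it yourself. The forward implication (real Jacobi matrix $\Rightarrow$ totally positive spectral flag) is fine: it is the paper's argument via $\exp(L)\in\GL_n^{>0}$ and Gantmakher--Krein (\cref{gk}, \cref{gk_tnn}), and the oscillation theorem you invoke is the same classical result. But for the converse you only gesture at ``combining the cell decomposition with the Hessenberg constraint'' or ``a sign analysis of $\exp(tL)$,'' and neither route is carried out. The second suggestion in particular is a dead end: you cannot conclude $\exp(tL)\in\GL_n^{\ge 0}$ from $\ii L\in\Orbit_{\bflambda}^{\ge 0}$ alone, since by \cref{complete_to_orbit_tnn} a generic element of $\Orbit_{\bflambda}^{\ge 0}$ can fail to have any nonnegative power; establishing it for tridiagonal $L$ is tantamount to the conclusion you are trying to reach. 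The paper's actual argument (\cref{positive_superdiagonal}, used in \cref{tridiagonal_orbit}) sidesteps this: decompose $L$ as the weighted sum of orthogonal projections $-\ii L = \big(\sum_{k\in K}(\lambda_k - \lambda_{k+1})P_k\big) + \lambda_n\I_n$ onto the spectral flag's subspaces (\cref{projection_sum}), then show that each $(P_k)_{i,i+1}$ is real and strictly positive using the explicit minor formula \eqref{projection_minors_equation} and the sign analysis of \cref{evenness} (the crucial point being that for adjacent indices $i,i+1$ the sign $(-1)^{\inv{\{i\}}{K}+\inv{\{i+1\}}{K}}$ is always $+1$, so the sum is manifestly positive). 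This yields the reality and positivity of $L_{i,i+1}$ for any $\ii L\in\Orbit_{\bflambda}^{>0}$, tridiagonal or not, without reference to the cell decomposition or to exponentials. You should add this step; as written, the converse of \ref{intro_orbit_description_tridiagonal} is not proved.
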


The tridiagonal subset of $\Orbit_{\bflambda}^{\ge 0}$ (known as a space of {\itshape Jacobi matrices}) will reappear several times in key places throughout the paper.

\subsection*{The twist map}
We introduce an involution $\twist$ on $\Fl_n^{\ge 0}$ called the {\itshape twist map}, defined as follows. Given $V\in\Fl_n^{\ge 0}$, we represent $V$ by a (unique) orthogonal matrix $g$ whose left-justified minors are all nonnegative. Then $\twist(V)$ is defined to be the element represented by the matrix $((-1)^{i+j}g_{j,i})_{1 \le i,j \le n}$, which is obtained by inverting (or transposing) $g$ and changing the sign of every other entry. Amazingly, this operation is compatible with positivity (see \cref{twist_action}):
\begin{thm}\label{intro_twist}
The twist map $\twist$ defines an involution on $\Fl_n^{\ge 0}$ and on $\Fl_n^{>0}$.
\end{thm}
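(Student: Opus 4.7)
The plan is to reduce the statement to a concrete positivity claim about the minors of the canonical orthogonal representative $g$ of $V$, and then to establish that claim either by direct determinantal manipulation or by a continuity argument from the top cell. With $D := \diag(1, -1, 1, \dots, (-1)^{n-1})$, the matrix $h := ((-1)^{i+j} g_{j,i})$ is simply $h = D g^{T} D$. Two preliminary points are routine: every $V \in \Fl_{n}^{\ge 0}$ admits a unique orthogonal representative $g$ with left-justified minors $\ge 0$ (existence via Gram--Schmidt applied to a totally nonnegative representative, since Gram--Schmidt rescales left-justified minors by positive factors; uniqueness because two real orthogonal representatives of the same flag differ by right multiplication by $\diag(\pm 1, \dots, \pm 1)$, which the sign condition pins down), and the matrix operation $g \mapsto h$ is an involution on $\SO_{n}(\mathbb{R})$ (both $D^{2} = I$ and $(D g^{T} D)^{T} (D g^{T} D) = I$ are immediate, and $\det h = \det g$).

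The core computation then proceeds as follows: expanding the determinant and pulling out signs row-by-row and column-by-column yields
\[
\Delta_{I}(h) \;=\; (-1)^{\sum I + \binom{k+1}{2}} \det\!\bigl( g[\{1,\dots,k\};\, I] \bigr) \qquad (|I| = k),
\]
and invoking the Jacobi (complementary minor) identity for $g \in \SO_{n}(\mathbb{R})$ the two sign factors cancel to leave
\[
\Delta_{I}(h) \;=\; \det\!\bigl( g[\{k+1,\dots,n\};\, I^{c}] \bigr).
\]
Thus the theorem reduces to showing that the \emph{bottom-justified} minors of $g$ are all $\ge 0$ for $V \in \Fl_{n}^{\ge 0}$, with strict positivity in the $\Fl_{n}^{>0}$ case. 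Equivalently, tracing through the column spans of $h$, the flag $\twist(V)$ equals $D \cdot V^{\mathsf{row}}$, where $V^{\mathsf{row}}$ has $k$th subspace spanned by the first $k$ rows of $g$; the claim becomes that the sign-twisted ``row dual'' $D \cdot V^{\mathsf{row}}$ lies in $\Fl_{n}^{\ge 0}$.

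The positivity claim is the main obstacle, since the canonical orthogonal gauge $g$ is not itself totally nonnegative as a matrix---one cannot directly appeal to nonnegativity of all of its minors. The cleanest route I see is to handle the top-cell case $V \in \Fl_{n}^{>0}$ first using connectedness: since $\Fl_{n}^{>0}$ is connected (via Lusztig's parameterization by the totally positive unipotent subgroup), each continuous bottom-justified minor of $g$ has constant sign on $\Fl_{n}^{>0}$ provided it is nonvanishing there---and nonvanishing on $\Fl_{n}^{>0}$ can be verified by exhibiting a single strictly positive representative, e.g.\ a suitably normalized Vandermonde orbit point, at which all of the relevant minors can be computed directly and seen to be positive. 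The $\Fl_{n}^{\ge 0}$ inclusion then follows from the $\Fl_{n}^{>0}$ inclusion by continuity of $\twist$ and closedness of $\Fl_{n}^{\ge 0}$ inside $\Fl_{n}(\mathbb{C})$. A more structural alternative would be a Cauchy--Binet expansion of bottom-justified minors of $g = B R^{-1}$ against a TNN matrix $B$, but the contributions from $R^{-1}$ are not manifestly of the right sign and this approach would require additional input from the combinatorics of TNN factorizations.

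Once both inclusions $\twist(\Fl_{n}^{>0}) \subseteq \Fl_{n}^{>0}$ and $\twist(\Fl_{n}^{\ge 0}) \subseteq \Fl_{n}^{\ge 0}$ are established, the involution property is formal: $g$ is the canonical orthogonal representative of $V$ precisely when $h$ is that of $\twist(V)$, so iterating shows $\twist^{2}$ coincides with the matrix-level involution $g \mapsto D(D g^{T} D)^{T} D = g$, and is therefore the identity on both $\Fl_{n}^{\ge 0}$ and $\Fl_{n}^{>0}$.
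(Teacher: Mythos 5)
Your algebraic setup is correct: the identity $\Delta_I(\twist(V)) = \Delta_{[k+1,n],\,[n]\setminus I}(g)$ for $|I|=k$ (where $g\in\SO_n$ is the canonical representative with nonnegative left-justified minors) follows exactly as you describe, by pulling the $\delta_n$-signs out of the minor and then invoking Jacobi's complementary-minor identity. This is also the first step the paper takes. The preliminaries (existence and uniqueness of the canonical representative, $\iota$ being an involution on $\SO_n$) are likewise fine.

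The gap is in the positivity argument, and it is circular. You propose to show the ``bottom-justified'' minors $\Delta_{[k+1,n],J}(g)$ are positive on $\Fl_n^{>0}$ by combining connectedness with a computation at a single point. Connectedness plus continuity gives constant sign \emph{only if} the minor is already known to be nowhere-vanishing on $\Fl_n^{>0}$; but evaluating at a single Vandermonde point does not establish nonvanishing elsewhere, and nonvanishing of these minors is essentially equivalent to the statement you are trying to prove (namely that every Pl\"{u}cker coordinate of $\twist(V)$ is nonzero, i.e.\ $\twist(V)\in\Fl_n^{>0}$). You rightly flag the naive Cauchy--Binet route as not manifestly sign-definite, but the connectedness route fails for the same underlying reason.

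What the paper does instead is two extra ideas. First, it invokes Fekete's criterion (\cref{fekete}, \cref{consecutive_rows}): to show $\iota(g)\in\U_n^{>0}$ one only needs to check the left-justified minors on \emph{row intervals} $I=[i,j]$. Second, for such an interval the bottom-justified minor $\Delta_{[j-i+2,n],\,[i-1]\cup[j+1,n]}(g)$ can be Laplace-expanded (using \eqref{laplace}) into a sum and then converted, via a second application of Jacobi together with $g^{-1}=\adjoint{g}$, into
$$
\Delta_{[i,j]}(\iota(g)) \;=\; \sum_{I\in\binom{[j-i+2,n]}{i-1}}\Delta_I(g)\,\ccon{\Delta_{[j-i+1]\cup I}(g)},
$$
which is manifestly a sum of products of left-justified minors of $g$, hence nonnegative (positive) for $g\in\U_n^{\ge 0}$ ($\U_n^{>0}$). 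The restriction to row-interval $I$ is essential: a general $I$ does not admit such a sign-definite expansion, which is exactly why your single-term Jacobi identity leaves the positivity of $\Delta_{[k+1,n],I^c}(g)$ opaque. Your closure step from $\Fl_n^{>0}$ to $\Fl_n^{\ge 0}$ is sound once the open-cell positivity is in place, but the latter needs the expansion above (or some equivalent manifestly-positive formula), not a connectedness argument.
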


For example, the twist map $\twist$ sends
$$
\begin{bmatrix}
\frac{\sqrt{3}}{2} & -\frac{1}{2\sqrt{2}} & \frac{1}{2\sqrt{2}} \\[6pt]
\frac{\sqrt{3}}{4} & \frac{1}{4\sqrt{2}} & -\frac{5}{4\sqrt{2}} \\[6pt]
\frac{1}{4} & \frac{3\sqrt{3}}{4\sqrt{2}} & \frac{\sqrt{3}}{4\sqrt{2}}
\end{bmatrix}\hspace*{1pt} \text{ to } \hspace*{1pt}\begin{bmatrix}
\frac{\sqrt{3}}{2} & -\frac{\sqrt{3}}{4} & \frac{1}{4} \\[6pt]
\frac{1}{2\sqrt{2}} & \frac{1}{4\sqrt{2}} & -\frac{3\sqrt{3}}{4\sqrt{2}} \\[6pt]
\frac{1}{2\sqrt{2}} & \frac{5}{4\sqrt{2}} & \frac{\sqrt{3}}{4\sqrt{2}}
\end{bmatrix}\hspace*{1pt} \text{ in } \Fl_3^{>0}.
$$
We call $\twist$ the `twist map' since it is analogous to the twist maps introduced by Berenstein, Fomin, and Zelevinsky, but with the key difference that our map is based on the Iwasawa (or $QR$-) decomposition of $\GL_n(\mathbb{C})$, rather than the Bruhat decomposition.

We obtain a corresponding involution for any generic adjoint orbit, given by
$$
\twistorbit : \Orbit_{\bflambda}^{\ge 0} \to \Orbit_{\bflambda}^{\ge 0}, \quad g\Lambda g^{-1} \mapsto \delta_ng^{-1}\Lambda g\delta_n,
$$
related to the dressing transformations of Poisson geometry. Above, $\Lambda$ is the diagonal matrix with diagonal entries $\ii \lambda_1, \dots, \ii \lambda_n$, $\delta_n$ is the diagonal matrix with diagonal entries $1, -1, 1, \dots, (-1)^{n-1}$, and $g\in\U_n$ is chosen so that all its left-justified minors are nonnegative. The key point is that in general, $g^{-1}\Lambda g$ depends on $g$ (and not just on the element $g\Lambda g^{-1}$ of the orbit), and total nonnegativity provides a canonical way of selecting the representative $g$.

\subsection*{Gradient flows}
Inspired by \cite{galashin_karp_lam}, we study flows on $\Orbit_{\bflambda}$ which {\itshape strictly preserve positivity}, which means that the flow sends $\Orbit_{\bflambda}^{\ge 0}$ inside $\Orbit_{\bflambda}^{>0}$ after any positive time. We focus on gradient flows for height functions of the form $L\mapsto \tr(LN)$ (coming from the Killing form) for fixed $N\in\uu_n$, and work in three different Riemannian metrics: the K\"{a}hler, normal, and induced metrics. In several cases we are able to classify which flows strictly preserve positivity.

One such case is when $\Orbit_{\bflambda}\cong\Gr_{k,n}(\mathbb{C})$, in which case the three metrics coincide up to dilation. In this case, we have the following classification (see \cref{positivity_preserving_kahler_grassmannian}, which also contains the corresponding result for $k=1,n-1$):
\begin{thm}\label{intro_positivity_preserving_grassmannian}
Let $2 \le k \le n-2$. Then the gradient flow of $L\mapsto \tr(LN)$ on $\Orbit_{\bfomega{k}}$ strictly preserves positivity if and only if $\ii N$ is real, $N_{i,j} = 0$ for $i-j \not\equiv -1, 0, 1 \hspace*{4pt}\modulo{n}$,
$$
\ii N_{1,2},\, \ii N_{2,3},\, \dots,\, \ii N_{n-1,n},\, (-1)^{k-1}\ii N_{n,1} \ge 0,
$$
and at least $n-1$ of the $n$ inequalities above are strict.
\end{thm}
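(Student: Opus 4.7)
The plan is to reduce the gradient flow on $\Orbit_{\bfomega{k}}$ to an explicit linear action on the Grassmannian $\Gr_{k,n}(\mathbb{C})$, and then match the stated conditions on $N$ to the hypotheses of the positivity-improving flows of Galashin, Karp, and Lam. Since $\Orbit_{\bfomega{k}} \cong \Gr_{k,n}(\mathbb{C})$, the K\"{a}hler, normal, and induced metrics agree up to scale, so it suffices to work in the K\"{a}hler metric. The first step is to compute the gradient of $L \mapsto \tr(LN)$ and show that the flow takes the Lax form $\dot L = [L, M_N]$, where $M_N \in \uu_n$ depends linearly on $N$ alone (and not on $L$). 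This integrates to $L(t) = \exp(tM_N)\, L(0)\, \exp(-tM_N)$, and under the identification $\Orbit_{\bfomega{k}} \cong \Gr_{k,n}(\mathbb{C})$ becomes the linear flow $V \mapsto \exp(tM_N)V$ on $k$-dimensional subspaces.

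For the sufficiency direction, I would verify that the listed conditions on $N$ translate into $M_N$ lying in the class of matrices whose exponential sends $\Gr_{k,n}^{\ge 0}$ into $\Gr_{k,n}^{>0}$ for every $t > 0$. This is precisely the family of cyclically positivity-improving generators identified in \cite{galashin_karp_lam, galashin_karp_lam19}: tridiagonal with a wraparound corner, with off-diagonal signs dictated by the cyclic order and at most one degeneracy. The factor $(-1)^{k-1}$ on the wraparound entry $\ii N_{n,1}$ reflects the character by which the long cyclic shift acts on degree-$k$ Pl\"{u}cker coordinates, and the requirement that at least $n-1$ of the $n$ inequalities be strict is what prevents $\exp(tM_N)$ from stabilizing a proper cyclic subinterval, and hence from fixing a boundary stratum.

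For the necessity direction, I would test the flow against a well-chosen supply of boundary points. Using the tridiagonal description of $\Orbit_{\bfomega{k}}^{\ge 0}$ from \cref{intro_orbit_description}, Jacobi elements form an accessible family; more generally, the cell decomposition of $\Gr_{k,n}^{\ge 0}$ provides low-dimensional strata on whose interiors the effect of a single entry of $N$ can be isolated. For each index pair $(i,j)$ with $i - j \not\equiv -1, 0, 1 \modulo{n}$, I would choose $\ii L_0$ lying on a cell small enough that some Pl\"{u}cker coordinate of $\exp(tM_N)L_0$ has leading-order derivative proportional to $N_{i,j}$; requiring this to remain nonnegative for small positive $t$ forces $N_{i,j} = 0$. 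Similar tests on one-dimensional cells pin down the signs of each $\ii N_{i,i+1}$ and of $(-1)^{k-1}\ii N_{n,1}$, as well as the count of strict inequalities.

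I expect the necessity direction to be the main obstacle, in particular the $(-1)^{k-1}$ bookkeeping and the "$n-1$ strict" count. Both require a careful selection of test point: the stratum must have exactly the right dimension so that a single matrix entry of $N$ governs the leading-order variation of a single Pl\"{u}cker coordinate, and the sign must be tracked through the Iwasawa (orthogonal) normalization that defines $\Orbit_{\bfomega{k}}^{\ge 0}$. If an orbit point lies on a stratum where two sign conditions degenerate simultaneously, the flow can be trapped there, contradicting the "strictly preserves" hypothesis; isolating the cleanest such obstruction, and confirming that no subtler relation among the $N_{i,j}$ can compensate, is the delicate step.
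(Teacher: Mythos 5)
Your high-level route matches the paper's: equate the three metrics on a Grassmannian orbit, pass to the K\"{a}hler metric, identify the resulting flow with a one-parameter linear action on $\Gr_{k,n}(\mathbb{C})$, and classify the generators whose exponential carries $\Gr_{k,n}^{\ge 0}$ into $\Gr_{k,n}^{>0}$. But the Lax form you assert---$\dot L = [L, M_N]$ with a fixed $M_N\in\uu_n$, integrating to $L(t) = \exp(tM_N)L_0\exp(-tM_N)$---cannot be the K\"{a}hler gradient flow. Conjugation by a one-parameter unitary subgroup is an isometry of the orbit (a Killing field), and a nontrivial gradient flow is never an isometry, since the height function $\tr(LN)$ strictly increases along nonequilibrium trajectories. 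What the K\"{a}hler flow actually integrates to (\cref{gradient_flow_kahler}) is linear at the level of subspaces, $V(t) = \exp(t\ii N)V_0$, where $\exp(t\ii N)$ is Hermitian positive definite, \emph{not} unitary; the lift back to the orbit requires the Iwasawa projection $g(t) = \Kterm(\exp(t\ii N)g_0)$, and the true Lax generator $M(t)$ depends on $L(t)$. The error is contained, since you only invoke the Grassmannian-level flow afterward, but it signals a misunderstanding of why the flow contracts rather than rotates.

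The bigger gap is that you defer the decisive step to ``the family of cyclically positivity-improving generators identified in \cite{galashin_karp_lam, galashin_karp_lam19}.'' Those works verify the property for a single matrix (the cyclic shift); they do not classify it. The classification you need---which $M\in\gl_n(\mathbb{R})$ satisfy $\exp(tM)\Gr_{k,n}^{\ge 0}\subseteq\Gr_{k,n}^{>0}$ for all $t>0$---is \cref{infinitesimal_k}, and it is where most of the proof lives. The sufficiency direction reduces via Trotter's formula to single-entry matrices (adapting Br\"{a}nd\'{e}n's argument) and then needs an induction showing the lifted digraph $\widehat{D}$ on $\binom{[n]}{k}$ is strongly connected whenever the digraph $D$ on $[n]$ is. Your necessity sketch (compute $\frac{d}{dt}\Delta_{I,J}(\exp(tM))$ at $t=0$ and read off sign constraints) is exactly the right move, but the test points are the coordinate elements $V_J$ with $\Delta_I(V_J)=\delta_{I,J}$, not Jacobi matrices---for $\bflambda = \bfomega{k}$ with $2 \le k \le n-2$ the set $\Jac_{\bflambda}^{>0}$ is empty by \cref{tridiagonal_orbit}, so that family gives you nothing to test against.
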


When $\Orbit_{\bflambda}$ is not isomorphic to a Grassmannian, then the three metrics are different, and their gradient flows exhibit markedly different behavior with respect to positivity. In the case of the K\"{a}hler metric, the flows admit a beautiful explicit solution (see \cref{gradient_flow_kahler}). We use it to obtain the following complete classification (see \cref{positivity_preserving_kahler_full}):
\begin{thm}\label{intro_positivity_preserving_full}
Let $\bflambda\in\mathbb{R}^n$ be weakly decreasing with at least three distinct entries. Then the gradient flow of $L\mapsto \tr(LN)$ on $\Orbit_{\bflambda}$ in the K\"{a}hler metric strictly preserves positivity if and only if $\ii N$ is a real tridiagonal matrix whose entries immediately above and below the diagonal are positive.
\end{thm}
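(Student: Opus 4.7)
I would prove the two directions of the equivalence separately.

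\textbf{Sufficiency.} Assume $\ii N$ is real tridiagonal with positive super- and subdiagonal entries. By the explicit formula for the K\"{a}hler gradient flow (\cref{gradient_flow_kahler}), the trajectory on $\Orbit_{\bflambda}$ corresponds, at the level of the underlying flag in $\PFl{K}{n}(\mathbb{C})$, to the left action of $\exp(t \ii N) \in \GL_n(\mathbb{C})$. The matrix $\exp(t \ii N)$ is real and has strictly positive entries for every $t > 0$, because $\ii N$ is a Metzler matrix whose digraph of positive off-diagonal entries is strongly connected. A direct Pl\"{u}cker-coordinate computation then shows that left multiplication by such a matrix strictly improves total nonnegativity of flag representatives, so the gradient flow strictly preserves positivity on $\Orbit_{\bflambda}$. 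Alternatively, one can first verify the assertion on the Jacobi stratum of $\Orbit_{\bflambda}^{\ge 0}$ (\cref{intro_orbit_description}(iii)), where the flow reduces to the classical symmetric Toda flow, and then extend to the whole orbit by continuity.

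\textbf{Necessity.} Conversely, assume the flow strictly preserves positivity. The hypothesis gives $|K| \ge 2$ for $K := \{k : \lambda_k > \lambda_{k+1}\}$. For each $k \in K$, the natural projection $\pi_k : \Orbit_{\bflambda} \to \Orbit_{\bfomega{k}} \cong \Gr_{k,n}(\mathbb{C})$ respects total nonnegativity and intertwines the gradient flow with its Grassmannian counterpart (both being given by the left action of $\exp(t \ii N)$ on flags and Grassmannians respectively). Consequently the Grassmannian gradient flow strictly preserves positivity, and \cref{intro_positivity_preserving_grassmannian} applies. Collecting the constraints across all $k \in K$, together with the treatment of the edge cases $k = 1$ and $k = n-1$, yields that $\ii N$ is real and cyclically tridiagonal, with essentially the correct signs on its nonzero entries. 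The final step is to upgrade this to the genuine tridiagonal condition with no wrap-around entry $N_{n,1}$ and with all super- and subdiagonal entries strictly positive; I would do this by direct boundary analysis at carefully chosen flags in $\Orbit_{\bflambda}^{\ge 0}$.

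\textbf{Main obstacle.} The hardest step is the closing part of the necessity direction --- ruling out the wrap-around entry $N_{n,1}$ and promoting each super- and subdiagonal sign from nonnegative to strictly positive. The Grassmannian projections alone are insufficient when the elements of $K$ all share a common parity or lie in $\{1, n-1\}$ (for instance, when $\bflambda = (a, b, \dots, b, c)$ so $K = \{1, n-1\}$). To close the gap one works at a boundary flag in $\Orbit_{\bflambda}^{\ge 0}$ where exactly one left-justified minor $m$ vanishes: the requirement that the flow carry this flag into the interior amounts to $\tfrac{d}{dt} m(V(t))|_{t = 0} > 0$, which isolates a particular linear combination of entries of $\ii N$. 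Enumerating enough such boundary flags --- for instance, Jacobi matrices with a single off-diagonal entry equal to zero --- pins down all of the claimed inequalities.
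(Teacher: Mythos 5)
Your high-level outline matches the paper's proof of \cref{positivity_preserving_kahler_full} (via \cref{kahler_flow_positivity_preserving} and \cref{infinitesimal_full}), but both directions have gaps in the justifications you sketch.

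\textbf{Sufficiency.} Entrywise positivity of $\exp(t\ii N)$ — which is what the Metzler/strong-connectivity argument gives you — is weaker than total positivity and is not enough to preserve $\PFl{K}{n}^{\ge 0}$: left multiplication by a matrix with positive entries need not preserve nonnegativity of higher Pl\"ucker coordinates. What you actually need is $\exp(t\ii N) \in \GL_n^{>0}$ for $t > 0$, i.e.\ all minors positive; this is precisely Karlin's characterization of $\gl_n^{>0}$ from \cref{defn_tnn_intro}, and then \cref{matrices_acting_on_flags} (Cauchy--Binet) finishes. Your fallback route via the Jacobi stratum is not viable: $\Jac_{\bflambda}^{\ge 0}$ is a low-dimensional subset of $\Orbit_{\bflambda}^{\ge 0}$ (dimension $n-1$ inside dimension $\tbinom{n}{2}$ in the generic case), so ``extend by continuity'' does not cover the rest of the orbit; moreover the gradient flow with respect to a tridiagonal $N$ in the K\"ahler metric is not the symmetric Toda flow on that stratum.

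\textbf{Necessity.} The first stage (project to each $\Gr_{k,n}$, apply \cref{infinitesimal_k}, get the cyclically tridiagonal shape) matches the paper. You correctly identify that this leaves the corner entry $N_{1,n} = N_{n,1}$ undetermined when the elements of $K$ share a parity, and that a separate argument is required for $K = \{1,n-1\}$. But the closing device you propose — picking a boundary flag where exactly one left-justified minor vanishes and computing $\tfrac{d}{dt}\big\rvert_{t=0}$ of that minor — is functionally the same as the Grassmannian projection analysis you have already declared insufficient: a single left-justified minor of a flag representative is just a Pl\"ucker coordinate of one constituent subspace $V_k$, its first-order behavior under $\exp(t\ii N)$ depends only on $V_k$ and the flow on $\Gr_{k,n}$, and the projection $\PFl{K}{n}^{\ge 0}\twoheadrightarrow\Gr_{k,n}^{\ge 0}$ is surjective, so you get back only the Grassmannian inequalities (and only with $\ge$, since ``flows into the interior'' forces the derivative to be $\ge 0$, not $> 0$). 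The genuinely new ingredient in the paper's \cref{corner_entry} and \cref{kahler_special_case} is that it exploits the \emph{flag} structure — the containment $V_k \subset V_{k'}$ between subspaces of different dimensions in the same flag — which no collection of single-Grassmannian constraints can see. Concretely, the paper forms a renormalized limit $W = \lim_{t\to 0^+} h(t)\exp(tM)\mathring{w}$ (with $h(t)\in\H_n^{>0}$ a diverging diagonal rescaling) landing in $\PFl{K}{n}^{\ge 0}$, then applies \cref{tnn_counterexample} (a rigidity fact for nested totally nonnegative subspaces containing a vector of the form $e_1 + ce_n$) to force the offending entry of $M$ to vanish. That lemma and the renormalized-limit trick are the pieces your proposal is missing.
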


By contrast, we show that in the normal metric, in the generic case there are no flows which strictly preserve positivity (see \cref{positivity_preserving_normal}):
\begin{thm}
Let $\bflambda\in\mathbb{R}^n$ be strictly decreasing. Then for all $N\in\uu_n$, the gradient flow of $L\mapsto \tr(LN)$ on $\Orbit_{\bflambda}$ in the normal metric does not strictly preserve positivity.
\end{thm}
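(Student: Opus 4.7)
The plan is to identify the normal-metric gradient flow as the double-bracket equation
\[
\dot L = c\,[L,[L,N]]
\]
for a fixed nonzero constant $c$ (a standard fact about the gradient of linear functions on adjoint orbits in the normal metric; its sign is immaterial for the argument), and to derive a contradiction by analyzing this flow at two $0$-dimensional cells of $\Orbit_{\bflambda}^{\ge 0}$. The two test points are $\Lambda := \diag(\ii\lambda_1,\ldots,\ii\lambda_n)$ and $\Lambda' := \diag(\ii\lambda_2,\ii\lambda_1,\ii\lambda_3,\ldots,\ii\lambda_n)$. Their corresponding flags admit TNN representatives $I$ and $[e_2\,|\,{-e_1}\,|\,e_3\,|\cdots|\,e_n]$, each with many vanishing left-justified minors; hence $\Lambda,\Lambda'\in\Orbit_{\bflambda}^{\ge 0}\setminus\Orbit_{\bflambda}^{>0}$. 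I will also use that $\Orbit_{\bflambda}^{\ge 0}\subset\ii\cdot\Sym_n(\mathbb{R})$, since every TNN flag admits an $\SO_n$ representative.

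First I dispose of the case when $N$ has a nonzero component in $\so_n$. Decomposing $N = A + \ii T$ with $A\in\so_n$ and $T\in\Sym_n(\mathbb{R})$, the identity $[\Lambda,[\Lambda,N]]_{ij} = -(\lambda_i-\lambda_j)^2 N_{ij}$ shows that if $A\ne 0$, then $\dot L|_\Lambda$ has a component outside $\ii\cdot\Sym_n(\mathbb{R})$, forcing the trajectory to leave $\Orbit_{\bflambda}^{\ge 0}$ at every $t>0$. Assume henceforth that $N = \ii T$. A first-order perturbation analysis at $\Lambda$ then constrains $T$: writing the perturbed flag's $\SO_n$ representative as $g(t) = I + tC + O(t^2)$ with $C$ real antisymmetric and $C_{ij} = \dot S_{ij}/(\lambda_j-\lambda_i)$ (where $\dot S_{ij}$ is a nonzero scalar multiple of $(\lambda_i-\lambda_j)^2 T_{ij}$), the left-justified $1\times 1$ minor at row $i\ge 3$ equals $tC_{i,1}$ to leading order, while the $2\times 2$ minor at rows $\{2,i\}$ with columns $\{1,2\}$ equals $-tC_{i,1}$ to leading order. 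Nonnegativity of both forces $T_{i,1} = 0$; analogous comparisons at other pairs of rows force $T$ to be tridiagonal, with $T_{i+1,i}$ nonzero of a fixed sign for each $i$.

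The main obstacle, and the final step, is showing that this tridiagonal $T$ cannot carry $\Lambda'$ into $\Orbit_{\bflambda}^{>0}$. The key computation is the left-justified $2\times 2$ minor on rows $\{2,3\}$ and columns $\{1,2\}$ of the orthogonal flag representative at $\phi_t(\Lambda')$, evaluated via eigenvector perturbation theory with diagonal entries $\mu := (\lambda_2,\lambda_1,\lambda_3,\ldots,\lambda_n)$. This minor expands as
\[
-\frac{t\,\dot S_{3,1}}{\lambda_2-\lambda_3}\;+\;\frac{t^2\,\dot S_{2,1}\,\dot S_{3,2}}{(\lambda_2-\lambda_1)(\lambda_1-\lambda_3)}\;+\;O(t^3),
\]
where $\dot S_{ij}|_{\Lambda'}$ is the same fixed scalar multiple of $(\mu_i-\mu_j)^2 T_{ij}$. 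Tridiagonality of $T$ forces $T_{3,1}=0$, killing the first-order term; in the quadratic term, $\dot S_{2,1}\,\dot S_{3,2}$ is a positive multiple of $T_{2,1}\,T_{3,2}$, which is positive because $T_{2,1}$ and $T_{3,2}$ have the same sign, while $(\lambda_2-\lambda_1)(\lambda_1-\lambda_3)<0$ because $\lambda_1$ is sandwiched between $\lambda_2$ and $\lambda_3$. Hence the minor is strictly negative for small $t>0$, while the $\{1,2\}$-rows $2\times 2$ minor of $g(t)$ stays close to $+1$; no diagonal $\pm 1$-rescaling of the first two columns can render both of these opposite-signed minors nonneg, contradicting $\phi_t(\Lambda')\in\Orbit_{\bflambda}^{\ge 0}$. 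The technical care goes into the second-order eigenvector expansion; the sign of the quadratic coefficient is pinned down cleanly because only the product $\dot S_{2,1}\,\dot S_{3,2}$ enters once the first-order term vanishes.
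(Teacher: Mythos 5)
Your first two moves mirror the paper's own setup: identifying the normal-metric gradient flow with Brockett's double-bracket equation (this is \cref{gradient_flow_normal}), and then a first-order analysis at the identity $0$-cell $\Lambda$ forcing $\ii N$ to be a real tridiagonal matrix whose off-diagonal entries have a fixed sign --- this is exactly the content and the method of \cref{normal_tridiagonal_necessary}. Where you diverge, and where the gap is, is in the last step. The paper's proof of \cref{positivity_preserving_normal} works at \emph{$1$-cell} representatives $g_0 \in \U_3^{\ge 0}$ (matrices with entries such as $\tfrac{1}{\sqrt2}$, $\tfrac12$), for which the relevant vanishing minors already produce the desired sign constraints at \emph{first} order. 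You instead attempt a \emph{second}-order expansion at the $0$-cell $\Lambda'$, and the second-order formula you write down for the minor is incomplete.

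Concretely, your expansion of $\Delta_{\{2,3\},\{1,2\}}(g(t))$ keeps only the cross term $-g_{2,2}(t)g_{3,1}(t)$. Once $\dot S_{3,1}=0$, the product $g_{2,1}(t)g_{3,2}(t)$ also contributes at order $t^2$, since $g_{3,2}(t)$ has a second-order eigenvector correction
$$
g_{3,2}(t) = -\frac{t^2}{\mu_1-\mu_3}\left(\frac{\dot S_{3,2}\,\dot S_{2,1}}{\mu_1-\mu_2} + \frac{\ddot S_{3,1}}{2}\right) + O(t^3),
$$
and $\ddot S_{3,1}$ is not zero --- it is quadratic in $T$ and involves the same product $T_{2,1}T_{3,2}$. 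Setting $a := \mu_1 - \mu_2 = \lambda_2-\lambda_1$ and $b := \mu_2 - \mu_3 = \lambda_1-\lambda_3$, and restoring the missing pieces, the $t^2$ coefficient of the minor works out to
$$
\tfrac12\, T_{2,1}T_{3,2}\,\bigl(b^2 + ab - a^2\bigr) \;=\; \tfrac12\, T_{2,1}T_{3,2}\,\Bigl((\lambda_2-\lambda_3)(\lambda_1-\lambda_3) - (\lambda_1-\lambda_2)^2\Bigr),
$$
whose sign genuinely depends on the spacing of the $\lambda$'s: it is negative only when $\lambda_2-\lambda_3 < \tfrac{\sqrt5-1}{2}(\lambda_1-\lambda_2)$, and positive otherwise. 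As a concrete check, take $\bflambda=(2,1,0)$ and $\ii N$ with $\ii N_{2,1}=\ii N_{3,2}=1$ and zero diagonal. Your formula predicts $\Delta_{\{2,3\},\{1,2\}}(g(t)) = -2t^2+O(t^3)$, but the correct expansion gives $+\tfrac12 t^2+O(t^3)$; the minor is nonnegative for small $t>0$ and no contradiction arises. So the argument, as written, fails for a large range of strictly decreasing $\bflambda$. (A secondary issue: the first-order analysis at $\Lambda$ gives only a fixed sign for $T_{i+1,i}$, not $T_{i+1,i}\neq 0$, so the case where the superdiagonal has zeros would also need to be handled. But the incorrect second-order expansion is the central problem.) The $0$-cell-with-second-order strategy is an interesting alternative to the paper's $1$-cell-with-first-order approach, but the quadratic term here is considerably more delicate than it looks, and in fact does not pin down the sign universally.
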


We leave the consideration of positivity-preserving flows in the normal metric for other choices of $\bflambda$ to future work. For the induced metric, our results are much less complete. However, our preliminary investigations indicate that in this case, the existence of gradient flows on $\Orbit_{\bflambda}$ which strictly preserve positivity may depend on the spacing between the entries of $\lambda$; see \cref{induced_extended_example} and \cref{induced_no_preserving}.

We establish analogues of the results stated above for gradient flows on $\Orbit_{\bflambda}$ in the K\"{a}hler, normal, and induced metrics which {\itshape weakly} preserve positivity, i.e., which send $\Orbit_{\bflambda}^{\ge 0}$ inside itself after any positive time (see \cref{positivity_preserving_kahler_grassmannian}, \cref{positivity_preserving_kahler_full}, \cref{induced_extended_example} and \cref{induced_no_preserving}).

\subsection*{Topology}
Galashin, Karp, and Lam \cite{galashin_karp_lam, galashin_karp_lam19} used certain flows which strictly preserve positivity to show that the totally nonnegative part of a partial flag variety (in arbitrary Lie type) is homeomorphic to a closed ball. We rephrase their argument in the orbit language for any gradient flow on $\Orbit_{\bflambda}$ in the K\"{a}hler metric, and show that the height function provides a strict Lyapunov function for such a flow. This leads to the following result (see \cref{compact_invariant_ball}):
\begin{thm}\label{intro_ball}
Suppose that $\bflambda,\bfmu\in\mathbb{R}^k$ such that $\mu_k > \mu_{k+1}$ for all $1 \le k \le n-1$ such that $\lambda_k > \lambda_{k+1}$. Consider the gradient flow of $L\mapsto\tr(LN)$ on $\Orbit_{\bflambda}$ in the K\"{a}hler metric, where $-N\in\Orbit_{\bfmu}$. Let $S$ be a nonempty compact subset of the stable manifold of the global attractor, such that any flow beginning in $S$ remains in the interior of $S$ for all positive time. Then $S$ is homeomorphic to a closed ball, its interior is homeomorphic to an open ball, and its boundary is homeomorphic to a sphere.
\end{thm}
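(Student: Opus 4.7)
The plan is to adapt the strategy of Galashin, Karp, and Lam to the K\"{a}hler gradient flow setting on $\Orbit_{\bflambda}$, proceeding in three steps: identify the global attractor, extract a cone structure on $S$ from the flow, and show the base of the cone is a sphere.

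First I would verify that $F(L) := \tr(LN)$ is a strict Lyapunov function for the flow and that the global attractor is a single point. For a gradient flow in a Riemannian metric, $\frac{d}{dt}F(L(t)) = -\|\grad F(L(t))\|^2 \le 0$ with equality only at critical points of $F|_{\Orbit_{\bflambda}}$. A Schur--Horn/rearrangement-type analysis shows that the extremum of $F$ on $\Orbit_{\bflambda}$ selected by the flow is attained exactly when the flag of $L$ is aligned with the appropriate partial flag of $-N$'s eigenspaces; the compatibility hypothesis that $\bfmu$ has a jump at every jump of $\bflambda$ is precisely what forces this extremizing flag to be unique (otherwise a tie in $\bfmu$ across a jump of $\bflambda$ would yield a positive-dimensional family of critical flags). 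Thus every orbit in $S \subseteq W^s(p)$ converges to the unique attractor $p$, and $p \in \interior{S}$ since $\phi_t(p) = p$ for all $t > 0$.

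Second, I would extract a cone structure on $S$. The invariance hypothesis $\phi_t(S) \subseteq \interior{S}$ for $t > 0$ forces the backward orbit of every $x \in \partial S$ to leave $S$ for all small $t > 0$: otherwise $x = \phi_t(\phi_{-t}(x)) \in \interior{S}$, contradicting $x \in \partial S$. For $y \in \interior{S} \setminus \{p\}$, a LaSalle-type argument shows the backward orbit must exit $S$ at a unique first time $\tau(y) > 0$: if it stayed in compact $S$ for all $t \ge 0$, the strict Lyapunov property would force the $\alpha$-limit set to consist of critical points within $W^s(p)$, all of which (being fixed) must equal $p$, contradicting the strict monotonicity of $F$ along the orbit from $y$. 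Continuity of $\tau$ and of $\pi(y) := \phi_{-\tau(y)}(y) \in \partial S$ (via the inward transversality on $\partial S$) gives a homeomorphism
$$
\partial S \times [0, \infty]\,/\,\partial S \times \{\infty\} \;\longrightarrow\; S, \qquad (x,t)\mapsto\phi_t(x),\quad \infty\mapsto p,
$$
realizing $S$ as the topological cone on $\partial S$.

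Third, I would show $\partial S$ is a sphere. Nondegeneracy of the Hessian of $F$ at $p$ in the K\"{a}hler metric (guaranteed by the compatibility condition) lets us choose a small closed geodesic ball $B \subseteq \interior{S}$ around $p$ that is forward-invariant under the flow, with $\partial B$ homeomorphic to $S^{d-1}$ where $d = \dim_{\mathbb{R}} \Orbit_{\bflambda}$. The first-hitting map $\sigma : \partial S \to \partial B$, sending $x$ to $\phi_{\tau_B(x)}(x)$ for $\tau_B(x) := \inf\{t > 0 : \phi_t(x) \in B\}$, is a continuous bijection between compact Hausdorff spaces: injectivity follows from uniqueness of orbits (since by the cone structure each orbit meets $\partial S$ exactly once), and surjectivity by running the backward flow from points of $\partial B$ until they exit $\interior{S}$. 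Thus $\sigma$ is a homeomorphism, $\partial S \cong S^{d-1}$, and the cone structure forces $S$ to be homeomorphic to a closed $d$-ball, with $\interior{S}$ an open $d$-ball.

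The main obstacle I anticipate is verifying the nondegeneracy of the Hessian of $F$ at $p$ in the K\"{a}hler metric, which is what both produces the forward-invariant ball $B$ and ensures $W^s(p)$ is open. One must linearize the explicit K\"{a}hler gradient flow formula at the critical point $p$ and check that all eigenvalues of the linearization have strictly negative real part, with the compatibility hypothesis being precisely the condition that guarantees this. A cleaner alternative, avoiding explicit linearization, would be to construct $B$ directly from sublevel sets of $F$ near its extremum, using the strict Lyapunov property together with local compactness of $\Orbit_{\bflambda}$.
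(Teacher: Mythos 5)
Your approach is in the same spirit as the paper's: exploit the Lyapunov function $L\mapsto -\kappa(L,N)$, pick a small forward-invariant ball $B$ near the attractor whose ball/sphere structure is known, and use flow-time maps to carry $S$ onto $B$. The paper (in \cref{compact_invariant_ball}) is slightly more economical: instead of first realizing $S$ as a cone on $\partial S$ and then separately showing $\partial S$ is a sphere, it defines two continuous time functions $t_r$ (first time the orbit's Lyapunov value drops to $r$) and $t_\partial$ (first entry time into $S$) and directly builds mutually inverse continuous maps $\alpha : S \to B_r$, $\beta : B_r \to S$, where $B_r$ is a Lyapunov sublevel set made into a ball via the Morse lemma. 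The cone structure is then an immediate corollary rather than a lemma. Your closing remark that one could ``construct $B$ directly from sublevel sets of $F$'' is in fact exactly what the paper does.

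The one step of your argument that would not survive scrutiny is the appeal to \emph{inward transversality on $\partial S$} to get continuity of $\tau$ and $\pi$. The hypotheses give you only that $S$ is compact, contained in the stable manifold, and that forward flow of $S$ lands in $\interior{S}$; $\partial S$ need not be a smooth hypersurface and the flow need not be transverse to it in any sense, so there is no transversality to invoke. What actually makes the first-exit time continuous is the combination of two pieces of openness: $\stableorbit{N}{\bflambda}\setminus S$ is open and $\interior{S}$ is open, so if $\phi_{-t_1}(y)\notin S$ and $\phi_{-t_2}(y)\in\interior{S}$ with $t_1>\tau(y)>t_2$, the same is true on a neighborhood of $y$ by joint continuity of the flow. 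You would also need a uniform-convergence argument (or a second Lyapunov-level time function $t_r$ as in the paper) to get continuity of your cone map at the apex $\infty\mapsto p$. These are fixable gaps rather than a wrong approach, but as written the continuity of the cone parametrization and of the hitting map $\sigma:\partial S\to\partial B$ is asserted rather than proved, which is the part of this theorem that actually requires work.
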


In particular, by applying \cref{intro_ball} in the setting of the gradient flows in \cref{intro_positivity_preserving_grassmannian} and \cref{intro_positivity_preserving_full}, we obtain that $\Orbit_{\bflambda}^{\ge 0}$ is homeomorphic to a closed ball, as shown in \cite{galashin_karp_lam, galashin_karp_lam19}.

We also apply \cref{intro_ball} to study the topology of {\itshape amplituhedra} $\mathcal{A}_{n,k,m}(Z)$. These are generalizations of the totally nonnegative Grassmannian $\Gr_{k,n}^{\ge 0}$, introduced by Arkani-Hamed and Trnka \cite{arkani-hamed_trnka14} in order to give a geometric basis for calculating scattering amplitudes in planar $\mathcal{N} = 4$ supersymmetric Yang--Mills theory. The amplituhedron $\mathcal{A}_{n,k,m}(Z)$ depends on a certain auxiliary $(k+m)\times n$ matrix $Z$, where $m$ is an additional parameter satisfying $k+m \le n$. Much recent work has focused on the combinatorics and topology of amplituhedra. It is believed that every amplituhedron $\mathcal{A}_{n,k,m}(Z)$ is homeomorphic to a closed ball. This is known when $k+m = n$ \cite[Theorem 1.1]{galashin_karp_lam} (in which case $\mathcal{A}_{n,k,m}(Z)$ is $\Gr_{k,n}^{\ge 0}$), when $k=1$ (in which case $\mathcal{A}_{n,k,m}(Z)$ is a cyclic polytope \cite{sturmfels88}), when $m=1$ \cite[Corollary 6.18]{karp_williams19}, for the family of {\itshape cyclically symmetric amplituhedra} \cite[Theorem 1.2]{galashin_karp_lam}, and when $n-k-m = 1$ with $m$ even \cite[Theorem 1.8]{blagojevic_galashin_palic_ziegler19}.

We extend the methods of \cite{galashin_karp_lam} to show that a new family of amplituhedra, which we call {\itshape twisted Vandermonde amplituhedra}, are homeomorphic to closed balls. These are amplituhedra for which the matrix $Z$ arises by applying the twist map $\twist$ to a {\itshape Vandermonde flag} (see \cref{defn_twisted_vandermonde_amplituhedra}). This family of amplituhedra includes all amplituhedra $\mathcal{A}_{n,k,m}(Z)$ satisfying $n-k-m \le 2$. We obtain the following result (see \cref{twisted_vandermonde_amplituhedra_ball} and
\cref{amplituhedra_ball_case_ball}):
\begin{thm}\label{intro_amplituhedra_ball}
Every twisted Vandermonde amplituhedron (in particular, every amplituhedron $\mathcal{A}_{n,k,m}(Z)$ with $n-k-m \le 2$) is homeomorphic to a closed ball, its interior is homeomorphic to an open ball, and its boundary is homeomorphic to a sphere.
\end{thm}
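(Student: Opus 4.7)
My plan is to apply \cref{intro_ball} with the compact set $S$ taken to be the amplituhedron $\mathcal{A}_{n,k,m}(Z)$, viewed as a subset of $\Orbit_{\bfomega{k}} \cong \Gr_{k,k+m}(\mathbb{C})$. Following the strategy of \cite{galashin_karp_lam}, the goal is to realize $\mathcal{A}_{n,k,m}(Z)$ as a forward-invariant compact subset of the stable manifold of the global attractor for a K\"{a}hler gradient flow of the form $L\mapsto\tr(LN')$ on $\Orbit_{\bfomega{k}}\subset\uu_{k+m}$; \cref{intro_ball} will then yield the three homeomorphism statements for $S$, its interior, and its boundary.

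The construction of this flow is where the twisted Vandermonde hypothesis enters essentially. First I would choose $N\in\uu_n$ of the cyclic tridiagonal form required by \cref{intro_positivity_preserving_grassmannian}, so that the K\"{a}hler gradient flow of $L\mapsto\tr(LN)$ on $\Orbit_{\bfomega{k}}\subset\uu_n$ strictly preserves positivity on $\Gr_{k,n}^{\ge 0}$; up to the action of the positive torus, its unique attractor in $\Gr_{k,n}^{>0}$ is a Vandermonde flag. Because $Z$ is obtained from a Vandermonde flag by the twist map $\twist$, the amplituhedron map $V\mapsto Z\cdot V$ equivariantly intertwines this upstairs flow with a corresponding flow on $\Orbit_{\bfomega{k}}\subset\uu_{k+m}$. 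The point of using $\twist$ (which is based on the Iwasawa decomposition) rather than a Bruhat-type twist is precisely that it is compatible with the K\"{a}hler metric, so that the downstairs flow is again the K\"{a}hler gradient flow of a height function $L\mapsto\tr(LN')$ for an induced $N'\in\uu_{k+m}$.

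With such a flow in hand, the hypotheses of \cref{intro_ball} are checked as follows. The image under $V\mapsto Z\cdot V$ of the upstairs attractor is a fixed point of the downstairs flow, sits in the relative interior of $\mathcal{A}_{n,k,m}(Z)$, and attracts every trajectory inside $S$; hence the global attractor lies in $S$. For strict forward invariance, note that the upstairs flow sends $\Gr_{k,n}^{\ge 0}$ into $\Gr_{k,n}^{>0}$ after any positive time, and $V\mapsto Z\cdot V$ carries $\Gr_{k,n}^{>0}$ into the relative interior of $\mathcal{A}_{n,k,m}(Z)$, so $S$ is sent strictly into its interior. For the parenthetical case $n-k-m\le 2$, I would verify that every such amplituhedron can be realized, up to a change of $Z$ that does not affect the homeomorphism type, as a twisted Vandermonde amplituhedron: the cases $n-k-m\in\{0,1\}$ fit the framework via previously known arguments (\cite[Theorem~1.1]{galashin_karp_lam} and \cite[Corollary~6.18]{karp_williams19}), while for $n-k-m=2$ the new content is an explicit description of the family of valid $Z$'s as twists of Vandermonde flags.

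The main obstacle I expect is the metric-compatibility claim in the second paragraph---that the equivariant image of the K\"{a}hler gradient flow remains a K\"{a}hler gradient flow downstairs. A general equivariance argument would only produce a compatible one-parameter subgroup on $\Orbit_{\bfomega{k}}\subset\uu_{k+m}$ without any guarantee that it is a gradient flow for the K\"{a}hler metric, and it is exactly here that $\twist$ being based on the Iwasawa rather than Bruhat decomposition, and $Z$ being a twist of a Vandermonde flag rather than an arbitrary $(k+m)$-plane, are used in an essential way.
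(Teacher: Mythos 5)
Your high-level plan — apply \cref{compact_invariant_ball} with $S = \mathcal{A}_{n,k,m}(Z)$, using a positivity-preserving K\"{a}hler gradient flow upstairs on $\Gr_{k,n}(\mathbb{C})$ pushed down through $\tilde{Z}$ — matches the paper's, but your explanation of \emph{why} the pushed-down flow is again a K\"{a}hler gradient flow is wrong, and this is exactly the step you flag as the main obstacle. The mechanism has nothing to do with the twist map being ``compatible with the K\"{a}hler metric.'' It is elementary linear algebra: choose $N\in\uu_n$ whose eigenvector flag (for $\ii N$, ordered by decreasing eigenvalue) gives the row span of $Z$. Then $\ker(Z)$ is spanned by the eigenvectors of $\ii N$ for the $n-k-m$ smallest eigenvalues, hence $N(\ker Z)\subseteq\ker Z$, hence $ZN = MZ$ for some $M\in\gl_{k+m}(\mathbb{C})$. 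After normalizing $Z\transpose{Z} = \I_{k+m}$ (an innocuous left change of basis that does not change the amplituhedron), one gets $M = ZN\adjoint{Z}\in\uu_{k+m}$, so $\exp(t\ii N)$ upstairs projects to $\exp(t\ii M)$ downstairs, which is again a K\"{a}hler gradient flow on $\Orbit_{\bfomega{k}}\subset\uu_{k+m}$ (this is \cref{amplituhedron_projected_flows} and \cref{projected_flows_gradient}). This works for \emph{any} $N$; no special structure of $\twist$ is needed at this step.

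You also have the roles of the twist map and the Vandermonde flag reversed. The attractor of the flow driven by a Jacobi matrix $\ii N\in\gl_n^{>0}$ is a \emph{twisted} Vandermonde flag, not a Vandermonde flag: \cref{tridiagonal_flag} says the eigenvector flag of a Jacobi matrix in $\Jac_{\bflambda}^{>0}$ is exactly an element of $\twist(\Vand_n^{>0})$. The twist map's real job is this algebraic identification — it is how one knows that ``$Z$ comes from a twisted Vandermonde flag'' is equivalent to ``the corresponding $N$ has $\ii N\in\gl_n^{>0}$,'' which is what feeds into \cref{positivity_preserving_kahler_grassmannian} to give strict positivity preservation on every Grassmannian simultaneously (in particular on both $\Gr_{k,n}(\mathbb{C})$ and $\Gr_{k+m,n}(\mathbb{C})$, both of which are needed: the former to preserve the amplituhedron, the latter to guarantee the $(k+m)\times(k+m)$ minors of $Z$ are positive so the amplituhedron is defined). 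Also, you default to the ``cyclic tridiagonal'' form of $N$, but the twisted Vandermonde case corresponds precisely to $\ii N$ being a genuine Jacobi matrix with vanishing corner entry; the cyclic corner term gives the distinct family of cyclically symmetric amplituhedra. Finally, for the $n-k-m\le 2$ parenthetical you correctly see that one must show the projection $\Fl_n^{>0}\to\Gr_{k+m,n}^{>0}$ is surjective on $\twist(\Vand_n^{>0})$; the paper proves this as \cref{twist_projection_bijection} by a direct argument (reducing to solving for the Jacobi matrix entries from the first two eigenvector conditions), which your sketch does not supply.
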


\subsection*{The symmetric Toda flow}
The {\itshape Toda lattice} \cite{toda67b} is an integrable Hamiltonian system which has been widely studied since it was introduced in 1967. It may be viewed as the flow $\dot{L} = [L, \kterm(-\ii L)]$ evolving on an adjoint orbit $\Orbit_{\bflambda}$, where $\kterm(-\ii L)$ is the skew-Hermitian part of $-\ii L$. Classically, $L$ is assumed to be a purely imaginary tridiagonal matrix, but more generally, we can take $L$ to be any element of $\Orbit_{\bflambda}$.

We observe that the Toda flow provides an example of a gradient flow which weakly preserves positivity (in both time directions), in two different ways. First, in the tridiagonal case, the Toda flow is a gradient flow in the normal metric; this follows from work of Bloch \cite{bloch90}. Second, in the general case, the Toda flow starting at a point in $\Orbit_{\bflambda}$ is a twisted gradient flow (see \cref{full_symmetric_toda_gradient}):
\begin{thm}\label{intro_toda}
Let $\bflambda\in\mathbb{R}^n$ be strictly decreasing, and let $L(t)$ denote the Toda flow on $\Orbit_{\bflambda}$ beginning at a point in $\Orbit_{\bflambda}^{\ge 0}$. Then $\twistorbit(L(t))$ is a gradient flow of the function $M\mapsto \tr(MN)$ in the K\"{a}hler metric, where $\ii N$ is the diagonal matrix with diagonal entries $\lambda_1, \dots, \lambda_n$.
\end{thm}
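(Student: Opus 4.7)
The plan is to solve both flows in closed form and directly verify the equality: the Toda flow via the Symes--Deift $QR$-factorization, and the K\"{a}hler gradient flow via the Iwasawa decomposition formula given in \cref{gradient_flow_kahler}. Write $L(0) = g_0 \Lambda g_0^{-1}$ with $g_0 \in \O_n$ the canonical totally nonnegative representative, and set $D := \diag(\lambda_1, \dots, \lambda_n)$, so that $H_0 := -\ii L(0) = g_0 D g_0^{-1}$ is real symmetric.

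By the Symes--Deift factorization, if $e^{tH_0} = Q(t) R(t)$ is the unique $QR$-decomposition (with $R(t)$ upper triangular with strictly positive real diagonal), then $L(t) = Q(t)^{-1} L(0) Q(t) = g(t) \Lambda g(t)^{-1}$ with $g(t) := Q(t)^{-1} g_0 \in \O_n$. Granted that each $L(t) \in \Orbit_{\bflambda}^{\ge 0}$ has a (unique, continuously varying) totally nonnegative representative $\tilde g(t) \in \O_n$, the continuous curve $t \mapsto g(t)^{-1} \tilde g(t)$ takes values in the stabilizer of $\Lambda$ under orthogonal conjugation. Since $\bflambda$ is strictly decreasing, this stabilizer is the discrete group $\{\pm 1\}^n$, so the curve is the identity at $t = 0$ and hence for all $t$. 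Therefore $g(t) = \tilde g(t)$, giving
\[
\twistorbit(L(t)) \;=\; \delta_n g(t)^{-1} \Lambda\, g(t) \delta_n \;=\; \bigl(\delta_n g_0^{-1} Q(t)\bigr)\, \Lambda\, \bigl(\delta_n g_0^{-1} Q(t)\bigr)^{-1}.
\]

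On the K\"{a}hler side, \cref{gradient_flow_kahler} expresses the gradient flow of $M \mapsto \tr(MN)$ starting at $u_0 \Lambda u_0^{-1}$ as $u(t) \Lambda u(t)^{-1}$, where $e^{\ii t N} u_0 = u(t) b(t)$ is the Iwasawa decomposition in $\GL_n(\mathbb{C})$, with $b(t)$ upper triangular with positive real diagonal. Taking $u_0 := \delta_n g_0^{-1}$, which represents $\twistorbit(L(0))$, and using $\ii N = D$, I compute
\[
e^{\ii t N} u_0 \;=\; e^{tD}\, \delta_n g_0^{-1} \;=\; \delta_n g_0^{-1}\, e^{tH_0} \;=\; \bigl(\delta_n g_0^{-1} Q(t)\bigr)\, R(t),
\]
using that $\delta_n$ commutes with the diagonal matrix $e^{tD}$, that $g_0 e^{tD} g_0^{-1} = e^{tH_0}$, and the Toda $QR$-factorization. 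Since $\delta_n g_0^{-1} Q(t)$ is unitary and $R(t)$ is upper triangular with positive diagonal, uniqueness of Iwasawa forces $u(t) = \delta_n g_0^{-1} Q(t)$, which matches the displayed expression for $\twistorbit(L(t))$.

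I expect the main obstacle to be the representative identification $g(t) = \tilde g(t)$, for which one must know that $L(t)$ remains in $\Orbit_{\bflambda}^{\ge 0}$. I would handle this by first carrying out the argument on the totally positive part (where the strictly positive left-justified minors of $g_0$ vary continuously with $t$ and remain positive for small $t$), and then extending to all $t \ge 0$ using that the left action of the positive real diagonal $e^{tD}$ on $\Fl_n^{\ge 0}$ preserves $\Fl_n^{\ge 0}$ (an immediate minor computation), combined with the involutive nature of $\twistorbit$ on $\Orbit_{\bflambda}^{\ge 0}$.
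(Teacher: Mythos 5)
Your approach is the same as the paper's: solve both sides in closed form via Symes's $QR$-factorization for the Toda flow and the Iwasawa description of the K\"{a}hler gradient flow, then match them using $\iota(g) = \delta_n g^{-1}\delta_n$. The algebra is correct. But there is a logical circularity that your sketch of a fix does not clearly escape.

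The problem is the direction of the argument. To write $\twistorbit(L(t)) = \iota(g(t))\Lambda\iota(g(t))^{-1}$ you need $g(t) = Q(t)^{-1}g_0$ to be the TNN representative of $L(t)$, which in turn presupposes that $L(t)$ has a TNN representative at all, i.e.\ $L(t)\in\Orbit_{\bflambda}^{\ge 0}$. Your stabilizer argument (that $g(t)^{-1}\tilde g(t)$ lives in $\{\pm 1\}^n$ and is thus constant) is fine, but it takes as input the existence and continuity of $\tilde g(t)$, which is exactly the fact you have not yet established. Your proposed remedy — ``carry out the argument on the totally positive part for small $t$, then extend using the torus action and involutivity'' — names the right ingredients but does not break the circle as written: continuity of $\U_n^{>0}$ only gives the claim on an initial time interval, and ``extending using involutivity'' is the same circularity in disguise unless you specify in which direction you apply $\twistorbit$.

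The paper escapes this by running the argument in the opposite direction. Set $M_0 := \twistorbit(L_0)$ with TNN representative $\iota(g_0) = \delta_n g_0^{-1}\delta_n$ (note: this differs from your $u_0 = \delta_n g_0^{-1}$ by a right factor of $\delta_n$, and the distinction matters for tracking the TNN representative). Since $\exp(t\ii N) = e^{tD}\in\H_n^{>0}$ and left-multiplication by $\H_n^{>0}$ visibly preserves the sign of all left-justified minors, one gets with no circularity that $v(t) := \Kterm\!\big(e^{tD}\iota(g_0)\big)\in\U_n^{\ge 0}$ and hence $M(t)\in\Orbit_{\bflambda}^{\ge 0}$. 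Now $\twistorbit(M(t)) = \iota(v(t))\Lambda\iota(v(t))^{-1}$ is well-defined, and your Iwasawa-uniqueness computation (run from $M(t)$ towards $L(t)$ rather than the other way) shows $\iota(v(t)) = Q(t)^{-1}g_0 = g(t)$, so $\twistorbit(M(t)) = L(t)$ exactly by Symes. This simultaneously proves $L(t)\in\Orbit_{\bflambda}^{\ge 0}$ (because $\twistorbit$ maps $\Orbit_{\bflambda}^{\ge 0}$ to itself) and, by involutivity, $\twistorbit(L(t)) = M(t)$. You already have all the pieces; you just need to reorder the logic so that the torus-action positivity is proved on the $M$-side first, and the twist is applied from $M(t)$ to $L(t)$ rather than from $L(t)$ to $M(t)$.
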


\cref{intro_toda} generalizes a result of Bloch, Flaschka, and Ratiu \cite{bloch_flaschka_ratiu90} on the subset of tridiagonal matrices in $\Orbit_{\bflambda}^{\ge 0}$ (i.e.\ Jacobi matrices). Their construction of the twist map $\twistorbit$ in this case involves an intricate analysis involving the Bruhat decomposition. The perspective of positivity gives a natural way to define $\twistorbit$ on Jacobi matrices, and to generalize it to all of $\Orbit_{\bflambda}^{\ge 0}$.

\subsection*{Outline}
In \cref{sec_background} we recall some background material. In \cref{sec_tnn_U} we introduce the totally nonnegative part of the unitary group $\U_n$ and define the twist map $\twist$. In \cref{sec_tnn_orbit} we introduce the adjoint orbit $\Orbit_{\bflambda}$ and its totally nonnegative part. In \cref{sec_gradient} we study gradient flows on $\Orbit_{\bflambda}$ in the K\"{a}hler, normal, and induced metrics. In \cref{sec_ball} we show that certain subsets of $\Orbit_{\bflambda}$, including $\Orbit_{\bflambda}^{\ge 0}$, are homeomorphic to closed balls. In \cref{sec_amplituhedron} we study gradient flows on amplituhedra and show that certain amplituhedra are homeomorphic to closed balls. In \cref{sec_toda} we study the symmetric Toda flow and its relation to total positivity.

We expect that many of the results and techniques in this paper extend to the case of an arbitrary complex semisimple Lie group $\mathfrak{g}$ and its compact real form $\mathfrak{k}$; the case we consider corresponds to $\mathfrak{g} = \sl_n(\mathbb{C})$ and $\mathfrak{k} = \su_n$ (i.e.\ type $A$). We have decided to focus on this case, and to work instead with $\gl_n(\mathbb{C})$ and $\uu_n$, both for the sake of simplicity and concreteness, and to emphasize the connections with the classical theory of total positivity.

\subsection*{Acknowledgments}
We thank Roger Brockett, Sergey Fomin, Pavel Galashin, Thomas Lam, and Tudor Ra\c{t}iu for valuable discussions, and Jonathan Boretsky for helpful feedback on the paper.

\addtocontents{toc}{\protect\setcounter{tocdepth}{2}}
\section{Background}\label{sec_background}

\noindent In this section, we collect notation and background on matrix Lie groups and Lie algebras, and on total positivity, which we will use throughout the paper. For further details on Lie groups and Lie algebras, we refer to \cite{knapp02}. For further details on total positivity, we refer to \cite{gantmaher_krein50, karlin68, lusztig94, fomin_zelevinsky00a, pinkus10, fallat_johnson11}, as well as the original references.
\subsection{Notation}\label{sec_notation}
Let $\mathbb{N} := \{0, 1, 2, \dots, \}$. For $n\in\mathbb{N}$, we let $[n]$ denote $\{1, 2, \dots, n\}$, and for $i,j\in\mathbb{Z}$, we let $[i,j]$ denote the interval of integers $\{i, i+1, \dots, j\}$. Given a set $S$ and $k\in\mathbb{N}$, we let $\binom{S}{k}$ denote the set of $k$-element subsets of $S$.

Given an $m\times n$ matrix $L$, we let $\transpose{L}$ denote its transpose, and let $\adjoint{L} := \ccon{\transpose{L}}$ denote its conjugate transpose. For subsets $I\subseteq [m]$ and $J\subseteq [n]$, we let $L_{I,J}$ denote the submatrix of $L$ using rows $I$ and columns $J$. If $|I| = |J|$, we let $\Delta_{I,J}(L)$ denote $\det(L_{I,J})$, called a {\itshape minor} of $L$. If $J = [k]$, where $k = |I|$, we call $\Delta_{I,J}(L)$ a {\itshape left-justified minor} of $L$, which we denote by $\Delta_I(L)$. We also let $\sumof{I}$ denote the sum of the elements in $I$, and let $\inv{I}{J}$ denote the number of pairs $(i,j)\in I\times J$ such that $i > j$. We let $\Diag{\lambda_1, \dots, \lambda_n}$ denote the $n\times n$ diagonal matrix with diagonal entries $\lambda_1, \dots, \lambda_n$, and let $\delta_n := \Diag{1, -1, 1, \dots, (-1)^{n-1}}$.

Given a field $\field$ and $n\in\mathbb{N}$, we let $e_1, \dots, e_n$ denote the unit vectors of $\field^n$. We define the following spaces:\vspace*{1pt}
\begin{itemize}[itemsep=3pt]
\item $\mathbb{P}^n(\field) := (\field^{n+1}\setminus\{0\})/\field^{\times} = \text{projective $n$-space over $\field$}$;
\item $\GL_n(\field) := \{\text{invertible } n\times n \text{ matrices with entries in }\field\}$;
\item $\B_n(\field) := \{g\in\GL_n(\field) : g \text{ is upper-triangular}\}$;
\item $\N_n(\field) := \{g\in\B_n(\field) : g_{i,i} = 1 \text{ for } 1 \le i \le n\}$;
\item $\Bminus_n(\field) := \{g\in\GL_n(\field) : g \text{ is lower-triangular}\} = \transpose{\B_n(\field)}$;
\item $\H_n(\field) := \{g\in\GL_n(\field) : g \text{ is diagonal}\}$;
\item $\U_n := \{g\in\GL_n(\mathbb{C}) : \adjoint{g}g = \I_n\}$;
\item $\T_n := \H_n(\mathbb{C})\cap\U_n$;
\item $\O_n := \U_n\cap\GL_n(\mathbb{R})$;
\item $\gl_n(\field) := \{n\times n \text{ matrices with entries in }\field\}$;
\item $\n_n(\field) := \{L\in\gl_n(\field) : L \text{ is strictly upper-triangular}\}$;
\item $\h_n(\field) := \{L\in\gl_n(\field) : L \text{ is diagonal}\}$;
\item $\uu_n := \{L\in\gl_n(\mathbb{C}) : \adjoint{L} + L = 0\}$;
\item $\oo_n := \uu_n\cap\gl_n(\mathbb{R})$.
\end{itemize}

The {\itshape Lie bracket} $[\cdot,\cdot]$ on $\gl_n(\field)$ is given by
$$
[L,M] := LM - ML \quad \text{ for all } L,M\in\gl_n(\field).
$$
We define the {\itshape adjoint operator} $\ad_L$ for $L\in\gl_n(\field)$ by
$$
\ad_L(M) := [L,M] \quad \text{ for all } M\in\gl_n(\field).
$$
When $\field = \mathbb{C}$, we define the {\itshape exponential map} $\exp : \gl_n(\mathbb{C}) \to \GL_n(\mathbb{C})$ by
$$
\exp(L) := \sum_{m=0}^\infty \frac{1}{m!}L^m = \lim_{m\to\infty}\Big(\I_n + \frac{1}{m}L\Big)^m.
$$

We recall some properties of the determinant:
\begin{prop}[{\cite[Chapter I]{gantmacher59}}]\label{determinant_properties}~
\begin{enumerate}[label=(\roman*), leftmargin=*, itemsep=2pt]
\item\label{determinant_properties_laplace} (Laplace expansion) Let $M$ be an $n\times n$ matrix, let $0 \le k \le n$, and let $I\in\binom{[n]}{k}$. Then
\begin{align}\label{laplace}
\det(M) = \sum_{J\in\binom{[n]}{k}}(-1)^{\sumof{I} + \sumof{J}}\Delta_{I,J}(M)\Delta_{[n]\setminus I,[n]\setminus J}(M).
\end{align}
\item\label{determinant_properties_cauchy-binet} (Cauchy--Binet identity) Let $L$ be an $m\times n$ matrix, and let $M$ be an $n\times p$ matrix. Then for $1 \le k \le m,p$, we have
\begin{align}\label{cauchy-binet}
\Delta_{I,J}(LM) = \sum_{K\in\binom{[n]}{k}}\Delta_{I,K}(L)\Delta_{K,J}(M) \quad \text{ for all } I\in\textstyle\binom{[m]}{k} \text{ and } J\in\binom{[p]}{k}.
\end{align}
\item\label{determinant_properties_jacobi} (Jacobi's formula) Let $g\in\GL_n(\field)$, and let $I,J\subseteq [n]$ have the same size. Then
\begin{align}\label{jacobi}
\Delta_{I,J}(g^{-1}) = \frac{(-1)^{\sumof{I} + \sumof{J}}}{\det(g)}\Delta_{[n]\setminus J,[n]\setminus I}(g).
\end{align}
\item\label{determinant_properties_vandermonde} (Vandermonde's determinantal identity) We have
\begin{align}\label{vandermonde}
\det((\lambda_i^{j-1})_{1 \le i,j \le n}) = \prod_{1 \le i < j \le n}(\lambda_j - \lambda_i).
\end{align}

\end{enumerate}

\end{prop}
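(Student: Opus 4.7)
All four identities are classical, so the plan is essentially to organize standard multilinear algebra arguments in a uniform way, likely referring the reader to Gantmacher for the technical details rather than reproducing every calculation.

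For \ref{determinant_properties_laplace}, I would prove the Laplace expansion by reorganizing the Leibniz formula $\det(M) = \sum_{\sigma\in S_n} \sgn(\sigma)\prod_i M_{i,\sigma(i)}$. Every permutation $\sigma$ of $[n]$ decomposes uniquely into a bijection $I \to J$ (for some $J \in \binom{[n]}{k}$) and a bijection $[n]\setminus I \to [n]\setminus J$; gathering terms according to $J$ produces the products $\Delta_{I,J}(M)\Delta_{[n]\setminus I,[n]\setminus J}(M)$. The only delicate point is tracking the sign: the reshuffling needed to bring $(I,[n]\setminus I)$ and $(J,[n]\setminus J)$ into standard order contributes a sign of $(-1)^{\sumof{I}+\sumof{J}}$, which one can verify by counting inversions.

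For \ref{determinant_properties_cauchy-binet}, the cleanest route is via exterior algebra: the map $L\mapsto \wedge^k L$ is a functor, so $\wedge^k(LM) = (\wedge^k L)(\wedge^k M)$, and reading off the $(I,J)$-entry in the basis indexed by $\binom{[n]}{k}$ gives the formula. Alternatively, one can apply \ref{determinant_properties_laplace} to the $(m+p)\times(m+p)$ block matrix $\begin{bmatrix} L & 0 \\ -\I_n & M \end{bmatrix}$ after suitable row operations. For \ref{determinant_properties_jacobi}, the standard route is to use the cofactor formula $g^{-1} = \frac{1}{\det(g)}\operatorname{adj}(g)$ entrywise, then apply \ref{determinant_properties_cauchy-binet} to $g \cdot g^{-1} = \I_n$ restricted to rows $[n]\setminus J$ and columns $I$ in order to extract the submatrix identity; the sign $(-1)^{\sumof{I}+\sumof{J}}$ is the one inherited from the Laplace expansion already done in \ref{determinant_properties_laplace}.

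For \ref{determinant_properties_vandermonde}, the slickest argument is to view the determinant as a polynomial in $\lambda_1,\dots,\lambda_n$: it vanishes whenever $\lambda_i = \lambda_j$, so $\prod_{i<j}(\lambda_j - \lambda_i)$ divides it; comparing total degrees (both equal $\binom{n}{2}$) shows the quotient is a constant, and comparing the coefficient of $\lambda_2 \lambda_3^2 \cdots \lambda_n^{n-1}$ shows the constant is $1$.

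The main (very modest) obstacle is just bookkeeping of signs in \ref{determinant_properties_laplace} and \ref{determinant_properties_jacobi}. Since these are exactly the versions proved in \cite[Chapter I]{gantmacher59}, the cleanest presentation is to cite that reference and omit the details, which is the route the paper already takes.
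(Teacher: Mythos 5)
Your proof sketches for all four identities are correct and standard (Leibniz reorganization for Laplace, exterior-power functoriality for Cauchy--Binet, the adjugate/block argument for Jacobi, and the divisibility-plus-degree-count argument for Vandermonde), and you correctly observe at the end that the paper itself offers no proof and simply cites \cite[Chapter I]{gantmacher59}. Since that citation is precisely the route the paper takes, your proposal agrees with the paper's treatment.
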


We have the {\itshape Trotter product formula} for the exponential map:
\begin{prop}[{\cite[p.\ 256]{abraham_marsden_ratiu88}}]\label{trotter}
Let $L,M\in\gl_n(\mathbb{C})$. Then
\begin{gather*}
\exp(L+M) = \lim_{m\to\infty}\big(\hspace*{-2pt}\exp(\textstyle\frac{1}{m}L)\exp(\frac{1}{m}M)\big)^m.
\end{gather*}

\end{prop}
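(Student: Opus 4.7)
The plan is to compare the product $A_m := \exp(\tfrac{1}{m}L)\exp(\tfrac{1}{m}M)$ with the ``diagonal'' approximation $B_m := \exp(\tfrac{1}{m}(L+M))$, whose $m$-th power equals $\exp(L+M)$ by the multiplicative property of $\exp$ (which holds for commuting arguments, and in particular for two copies of the same matrix). Since $B_m^m = \exp(L+M)$ exactly, it suffices to show $\|A_m^m - B_m^m\| \to 0$ as $m \to \infty$ in any submultiplicative matrix norm $\|\cdot\|$ on $\gl_n(\mathbb{C})$.

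First I would expand both $A_m$ and $B_m$ as power series in $1/m$. Using $\exp(X) = \I_n + X + \tfrac{1}{2}X^2 + \cdots$ termwise, one sees that
\[
A_m = \I_n + \tfrac{1}{m}(L+M) + O\!\left(\tfrac{1}{m^2}\right), \qquad B_m = \I_n + \tfrac{1}{m}(L+M) + O\!\left(\tfrac{1}{m^2}\right),
\]
where the implicit constants depend only on $\|L\|$ and $\|M\|$. Subtracting gives an estimate of the form $\|A_m - B_m\| \le C/m^2$ for all $m$ sufficiently large, where $C = C(L,M)$. The easiest way to make this rigorous is to bound the tails of both series by comparison with $\exp((\|L\|+\|M\|)/m) - 1 - (\|L\|+\|M\|)/m$, which is manifestly $O(1/m^2)$.

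Next I would apply the standard telescoping identity
\[
A_m^m - B_m^m = \sum_{k=0}^{m-1} A_m^{\,k}\,(A_m - B_m)\,B_m^{\,m-1-k},
\]
valid in any associative algebra. Using the submultiplicativity of $\|\cdot\|$ and the bound $\|\exp(X)\| \le \exp(\|X\|)$, we have $\|A_m\|, \|B_m\| \le \exp((\|L\|+\|M\|)/m)$, so every product $\|A_m^{\,k}\,B_m^{\,m-1-k}\|$ is bounded uniformly by $\exp(\|L\|+\|M\|)$, independent of $k$ and $m$. Combining this with the $O(1/m^2)$ estimate on $A_m - B_m$ yields
\[
\|A_m^m - B_m^m\| \le m \cdot \exp(\|L\| + \|M\|)\cdot \frac{C}{m^2} = O\!\left(\tfrac{1}{m}\right),
\]
which tends to $0$. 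Since $B_m^m = \exp(L+M)$ exactly, we conclude that $A_m^m \to \exp(L+M)$, as desired.

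The only real obstacle is the bookkeeping in the second step: one needs the $1/m^2$ bound on $A_m - B_m$ to be uniform in $m$ (with the $1/m$ and $(L+M)/m$ terms cancelling exactly). Once this cancellation is pinned down, the telescoping identity and the uniform exponential bound on powers do the rest automatically, and the argument is insensitive to the particular $L,M \in \gl_n(\mathbb{C})$ chosen.
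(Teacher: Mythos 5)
The paper does not prove this proposition; it simply cites it to \cite[p.~256]{abraham_marsden_ratiu88}, so there is no in-text argument to compare against. Your proof is the standard (and correct) textbook argument for the Lie--Trotter product formula: the $O(1/m^2)$ comparison between $A_m = \exp(\tfrac{1}{m}L)\exp(\tfrac{1}{m}M)$ and $B_m = \exp(\tfrac{1}{m}(L+M))$, the telescoping identity $A_m^m - B_m^m = \sum_{k=0}^{m-1}A_m^k(A_m-B_m)B_m^{m-1-k}$, and the uniform bound $\|A_m^k\|\,\|B_m^{m-1-k}\| \le \exp(\|L\|+\|M\|)$ for $0 \le k \le m-1$ all check out, and together they deliver the $O(1/m)$ rate of convergence. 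The only cosmetic point is that you could sharpen the exposition by writing out the exact cancellation of the first two Taylor terms of $A_m - B_m$, but this is straightforward and you already flag it as the one piece of bookkeeping to verify. In short, the proposal is a complete and correct proof of the statement, supplying an argument that the paper leaves to the cited reference.
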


We also recall a classical result of Perron \cite{perron07}:
\begin{thm}[Perron--Frobenius {\cite[Theorem XIII.2.1]{gantmacher59}}]\label{perron}
Let $A$ be an $n\times n$ matrix with positive real entries, and let $r$ be the spectral radius of $A$.
\begin{enumerate}[label=(\roman*), leftmargin=*, itemsep=2pt]
\item\label{perron_eigenvalue} The value $r$ is the unique eigenvalue of $A$ with modulus $r$, and it has algebraic multiplicity $1$.
\item\label{perron_eigenvector} There exists $x\in\mathbb{R}_{>0}^n$ such that $Ax = rx$.
\end{enumerate}

\end{thm}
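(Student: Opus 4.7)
The plan is to carry out the standard three-step approach: produce a positive eigenvector by a fixed-point argument, identify its eigenvalue as the spectral radius $r$ via an entrywise modulus comparison, and then use a positive \emph{left} Perron vector to obtain both uniqueness of $r$ among eigenvalues of modulus $r$ and algebraic simplicity.

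I would begin with existence. Let $\Delta := \{x \in \mathbb{R}_{\ge 0}^n : \sum_i x_i = 1\}$ and define $f : \Delta \to \Delta$ by $f(x) := Ax/\|Ax\|_1$. Positivity of $A$ makes $Ax$ strictly positive for every $x \in \Delta$, so $\|Ax\|_1 > 0$ and $f$ is well-defined and continuous. Brouwer's fixed point theorem yields $x^* \in \Delta$ with $Ax^* = r' x^*$ for some $r' > 0$, and then $x^* = (1/r')Ax^* \in \mathbb{R}_{>0}^n$, giving part \ref{perron_eigenvector} once we verify $r' = r$. Applying the same construction to $\transpose{A}$ produces a positive left eigenvector $w \in \mathbb{R}_{>0}^n$ satisfying $\transpose{A}w = r'' w$ for some $r'' > 0$.

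Next I would identify $r' = r'' = r$. For any eigenpair $(\mu, y)$ of $A$ with $y \ne 0$, the triangle inequality applied entrywise gives $(A|y|)_i \ge |(Ay)_i| = |\mu|\,|y|_i$, where $|y|$ denotes the entrywise modulus. Pairing with $w$ yields
\[
r'' \langle w, |y|\rangle = \langle \transpose{A}w, |y|\rangle = \langle w, A|y|\rangle \ge |\mu|\,\langle w, |y|\rangle,
\]
and since $\langle w, |y|\rangle > 0$ we conclude $|\mu| \le r''$. Swapping the roles of $A$ and $\transpose{A}$ and taking the pair $(\mu, y) = (r', x^*)$ on each side forces $r' = r'' = r$, so every eigenvalue of $A$ has modulus at most $r$.

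Finally I would prove part \ref{perron_eigenvalue}. Suppose $Ay = \mu y$ with $|\mu| = r$; the chain above must then be an equality, so $A|y| = r|y|$, making $|y|$ a positive eigenvector. The equality case of the triangle inequality forces the complex numbers $A_{ij} y_j$ to share a common argument for each $i$, and strict positivity of $A$ and $|y|$ then forces $y = e^{\ii\theta}|y|$, so $\mu = r$. Geometric simplicity follows from a standard subtraction: if $y'$ is another real eigenvector for $r$, a suitable $\alpha \in \mathbb{R}$ makes $x^* - \alpha y'$ nonnegative with a zero entry, and $A(x^* - \alpha y') = r(x^* - \alpha y')$ is then either zero or strictly positive, so $y' \in \mathbb{R} x^*$. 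The main obstacle, and the step I would handle most carefully, is upgrading geometric simplicity to algebraic simplicity: here I would use $w$ to obstruct any generalized eigenvector, noting that $(A - rI)v = x^*$ would imply $\langle w, x^*\rangle = \langle(\transpose{A} - rI)w, v\rangle = 0$, contradicting $w, x^* \in \mathbb{R}_{>0}^n$.
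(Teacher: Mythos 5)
The paper does not prove this result; it is cited verbatim from Gantmacher's book, so there is no in-paper proof to compare against. Your argument is a correct, self-contained proof of the classical Perron theorem for positive matrices, and all the steps hold up: Brouwer on the simplex gives a positive right eigenvector (the map $x \mapsto Ax/\|Ax\|_1$ is well-defined since $A$ strictly positive sends the simplex into its interior); the left Perron vector $w$ correctly bounds every eigenvalue modulus via the entrywise triangle inequality; the equality case forces $A|y| = r|y|$ (because $w>0$ kills a nonnegative vector only when it vanishes) and then a common-phase argument identifies $\mu = r$; the subtraction argument gives geometric simplicity; and the pairing $\transpose{w}(A - rI)v = 0$ blocks any Jordan chain, upgrading to algebraic simplicity. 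One small point worth making explicit when you write this up: in the algebraic-simplicity step the generalized eigenvector $v$ may be complex, so you should use the bilinear pairing $\transpose{w}v$ (not a Hermitian one) so that $\transpose{w}A = r\transpose{w}$ applies directly; with that convention the contradiction $\transpose{w}x^* = 0$ against $w, x^* > 0$ is immediate. This is a standard route (fixed point for existence, left eigenvector for the rest), whereas Gantmacher's own treatment is more constructive and proceeds via the variational characterization $r = \max_{x \ge 0} \min_i (Ax)_i/x_i$; both are legitimate, and yours is arguably cleaner for the algebraic-simplicity step because the left eigenvector does that work in one line.
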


\subsection{Partial flag varieties}\label{sec_background_flags}
We now introduce partial flag varieties inside $\field^n$.
\begin{defn}\label{defn_Fl}
Let $\field$ be a field and $n\in\mathbb{N}$. Given a subset $K = \{k_1 < \cdots < k_l\} \subseteq [n-1]$, let $\P{K}{n}(\field)$ denote the subgroup of $\GL_n(\field)$ of block upper-triangular matrices with diagonal blocks of sizes $k_1, k_2 - k_1, \dots, k_l - k_{l-1}, n - k_l$. We define the {\itshape partial flag variety}
$$
\PFl{K}{n}(\field) := \GL_n(\field)/\P{K}{n}(\field).
$$
We have the embedding
\begin{align}\label{plucker_embedding}
\begin{gathered}
\PFl{K}{n}(\field) \hookrightarrow \mathbb{P}^{\left(\hspace*{-1pt}\binom{n}{k_1}-1\right)}(\field) \times \cdots \times \mathbb{P}^{\left(\hspace*{-1pt}\binom{n}{k_l}-1\right)}(\field), \\
g \mapsto \Big((\Delta_I(g))_{I\in\binom{[n]}{k_1}}, \dots, (\Delta_I(g))_{I\in\binom{[n]}{k_l}}\Big).
\end{gathered}
\end{align}
(We can check that the right-hand side of the second line only depends on $g$ modulo the right action of $\P{K}{n}(\field)$.) We call the left-justified minors $\Delta_I(g)$ appearing above the {\itshape Pl\"{u}cker coordinates} of $g\in\PFl{K}{n}(\field)$ (also known as {\itshape flag minors}).

We may identify $\PFl{K}{n}(\field)$ with the variety of partial flags of subspaces in $\field^n$
$$
\{V = (V_{k_1}, \dots, V_{k_l}) : 0 \subset V_{k_1} \subset \cdots \subset V_{k_l} \subset \field^n \text{ and } \dim(V_{k_i}) = k_i \text{ for } 1 \le i \le l\}.
$$
The identification sends $g\in\GL_n(\field)/\P{K}{n}(\field)$ to the tuple $(V_k)_{k\in K}$, where $V_k$ is the span of the first $k$ columns of $g$.

Note that for any $K'\subseteq K$, we have $\P{K}{n}(\field) \subseteq \P{K'}{n}(\field)$. This gives a projection map
\begin{align}\label{defn_Fl_surjection}
\PFl{K}{n}(\field) \twoheadrightarrow \PFl{K'}{n}(\field).
\end{align}
In terms of partial flags of subspaces, the map \eqref{defn_Fl_surjection} retains only the subspaces whose dimensions lie in $K'$.

There are two instances of $\PFl{K}{n}(\field)$ which will be of particular interest to us. If $K = [n-1]$, then $\PFl{K}{n}(\field)$ is the {\itshape complete flag variety} of $\field^n$, which we denote by $\Fl_n(\field)$. If $K$ is the singleton $\{k\}$, then $\PFl{K}{n}(\field)$ is the {\itshape Grassmannian} of $k$-dimensional subspaces of $\field^n$, which we denote by $\Gr_{k,n}(\field)$. We represent an element of $\Gr_{k,n}(\field)$ by an $n\times k$ matrix of rank $k$ modulo column operations. We also extend the definition of $\Gr_{k,n}(\field)$ to $k=0$ and $k=n$.
\end{defn}

\begin{eg}\label{eg_Fl}
Let $n := 4$ and $K := \{1, 3\}$. Then
$$
\P{\{1,3\}}{4}(\field) = \left\{\begin{bmatrix}
\ast & \ast & \ast & \ast \\
0 & \ast & \ast & \ast \\
0 & \ast & \ast & \ast \\
0 & 0 & 0 & \ast
\end{bmatrix}\right\}\subseteq\GL_4(\field) \quad \text{ and } \quad \PFl{\{1,3\}}{4}(\field) = \GL_4(\field)/\P{\{1,3\}}{4}(\field). 
$$
We can write a generic element of $\PFl{\{1,3\}}{4}(\field)$ as
$$
g = \begin{bmatrix}
1 & 0 & 0 & 0 \\
a & 1 & 0 & 0 \\
b & 0 & 1 & 0 \\
c & d & e & 1
\end{bmatrix}, \quad \text{ where } a,b,c,d,e\in\field. 
$$
(Note that not all elements $g$ of $\PFl{\{1,3\}}{4}(\field)$ are of this form, such as those with $g_{1,1} = 0$.) Then the embedding \eqref{plucker_embedding} takes $g$ to
\begin{multline*}
\big((\Delta_1(g) : \Delta_2(g) : \Delta_3(g) : \Delta_4(g)), (\Delta_{123}(g) : \Delta_{124}(g) : \Delta_{134}(g) : \Delta_{234}(g))\big) \\
= \big((1 : a : b : c), (1 : e : -d : c-ad-be)\big) \in \mathbb{P}^3(\field)\times\mathbb{P}^3(\field).
\end{multline*}
Furthermore, we can identify $g\in\PFl{\{1,3\}}{4}(\field)$ with the partial flag $(V_1, V_3)$, where $V_1\subseteq\field^4$ is the span of the first column of $g$, and $V_3\subseteq\field^4$ is the span of the first three columns of $g$.
\end{eg}

\subsection{Total positivity and total nonnegativity}\label{sec_total_positivity}
We now introduce the totally positive and totally nonnegative parts of several of the spaces defined above.
\begin{defn}\label{defn_tnn_intro}
Let $n\in\mathbb{N}$. We define the {\itshape totally positive} parts of the following spaces:
\begin{itemize}[itemsep=3pt]
\item $\mathbb{P}^n_{>0} := \{(x_0 : \cdots : x_n)\in\mathbb{P}^n(\mathbb{R}) : x_0, \dots, x_n > 0\}$;
\item $\GL_n^{>0} := \{g \in \GL_n(\mathbb{R}) : \Delta_{I,J}(g) > 0 \text{ for all } I,J \subseteq [n] \text{ with } |I| = |J|\}$;
\item $\H_n^{>0} := \{g\in\H_n(\mathbb{R}) : g_{i,i} > 0 \text{ for } 1 \le i \le n\}$;
\item $\gl_n^{>0} := \{L\in\gl_n(\mathbb{R}) : \exp(tL)\in\GL_n^{>0} \text{ for all } t > 0\}$ \\[2pt]
$\phantom{\gl_n^{>0}}\mathbin{\phantom{:}}= \{L\in\gl_n(\mathbb{R}) : L \text{ is tridiagonal and } L_{i,i+1}, L_{i+1,i} > 0 \text{ for } 1 \le i \le n-1\}$.
\end{itemize}
We also define the {\itshape totally nonnegative} parts by taking closures in the Euclidean topology:
\begin{itemize}[itemsep=3pt]
\item $\mathbb{P}^n_{\ge 0} := \overline{\mathbb{P}^n_{>0}} = \{(x_0 : \cdots : x_n)\in\mathbb{P}^n(\mathbb{R}) : x_0, \dots, x_n \ge 0\}$;
\item $\GL_n^{\ge 0} := \overline{\GL_n^{>0}} = \{g \in \GL_n(\mathbb{R}) : \Delta_{I,J}(g) \ge 0 \text{ for all } I,J \subseteq [n] \text{ with } |I| = |J|\}$;
\item $\gl_n^{\ge 0} := \overline{\gl_n^{>0}} = \{L\in\gl_n(\mathbb{R}) : \exp(tL)\in\GL_n^{\ge 0} \text{ for all } t \ge 0\}$ \\[2pt]
$\phantom{\gl_n^{\ge 0} := \overline{\gl_n^{>0}}} = \{L\in\gl_n(\mathbb{R}) : L \text{ is tridiagonal and } L_{i,i+1}, L_{i+1,i} \ge 0 \text{ for } 1 \le i \le n-1\}$.
\end{itemize}
(We do not consider $\H_n^{\ge 0}$, since $\H_n^{>0}$ is already closed.)

The alternative descriptions of $\gl_n^{>0}$, $\GL_n^{\ge 0}$, and $\gl_n^{\ge 0}$ above are due, respectively, to Karlin \cite[Theorem 3.3.4]{karlin68}, Gantmakher and Krein \cite[Lemma p.\ 18]{gantmakher_krein37}, and Loewner \cite{loewner55} (cf.\ \cite{rietsch97}).

We note that $\GL_n^{>0}$ and $\GL_n^{\ge 0}$ are semigroups by \eqref{cauchy-binet}. Also, $\gl_n^{>0}$ and $\gl_n^{\ge 0}$ are convex cones.
\end{defn}

\begin{eg}\label{eg_tnn_GL_n=2}
We have $\GL_2^{>0} = \left\{\begin{bmatrix}a & b \\[3pt] c & d + \frac{bc}{a} \\[1pt]\end{bmatrix} : a,b,c,d > 0\right\}$.
\end{eg}

\begin{defn}[\cite{lusztig94,lusztig98}]\label{defn_tnn_Fl}
Let $n\in\mathbb{N}$ and $K\subseteq [n-1]$. We define the {\itshape totally positive part} of $\PFl{K}{n}(\mathbb{C})$, denoted by $\PFl{K}{n}^{>0}$, as the image of $\GL_n^{>0}$ inside $\PFl{K}{n}(\mathbb{C}) = \GL_n(\mathbb{C})/\P{K}{n}(\mathbb{C})$. We define the {\itshape totally nonnegative part} of $\PFl{K}{n}(\mathbb{C})$ by taking the closure in the Euclidean topology:
$$
\PFl{K}{n}^{\ge 0} := \overline{\PFl{K}{n}^{>0}}.
$$
Note that for any $K'\subseteq K$, the projection map $\PFl{K}{n}(\mathbb{C}) \twoheadrightarrow \PFl{K'}{n}(\mathbb{C})$ from \eqref{defn_Fl_surjection} restricts to surjections
\begin{align}\label{defn_tnn_Fl_surjections}
\PFl{K}{n}^{>0} \twoheadrightarrow \PFl{K'}{n}^{>0} \quad \text{ and } \quad \PFl{K}{n}^{\ge 0} \twoheadrightarrow \PFl{K'}{n}^{\ge 0}.
\end{align}

\end{defn}

We remark that we could instead have defined $\PFl{K}{n}^{>0}$ and $\PFl{K}{n}^{\ge 0}$ by replacing $\mathbb{C}$ with $\mathbb{R}$. It will turn out to be more convenient for us to work over $\mathbb{C}$.
\begin{eg}\label{eg_tnn_Fl}
We have
\begin{gather*}
\Fl_3^{>0} = \left\{\begin{bmatrix}1 & 0 & 0 \\ a+c & 1 & 0 \\ bc & b & 1\end{bmatrix} : a,b,c > 0\right\} \quad \text{ and } \quad \Gr_{2,4}^{>0} = \left\{\begin{bmatrix}1 & 0 \\ a & b \\ 0 & 1 \\ -c & d\end{bmatrix} : a,b,c,d > 0\right\}.\qedhere
\end{gather*}

\end{eg}

\begin{rmk}\label{proj_tnn_GL_to_Fl}
It follows from \cref{defn_tnn_Fl} that the image of $\GL_n^{\ge 0}$ inside $\PFl{K}{n}(\mathbb{C})$ is contained in $\PFl{K}{n}^{\ge 0}$. However, this containment is strict unless $K = \emptyset$. For example, the element $\scalebox{0.8}{$\begin{bmatrix}0 & -1 \\ 1 & 0\end{bmatrix}$}\in\Fl_2^{\ge 0}$ cannot be represented by an element of $\GL_2^{\ge 0}$.
\end{rmk}

One can show that the Pl\"{u}cker embedding \eqref{plucker_embedding} is compatible with total positivity (see \cref{tnn_Fl}), in that it takes $\PFl{K}{n}^{>0}$ inside $\mathbb{P}^{\left(\hspace*{-1pt}\binom{n}{k_1}-1\right)}_{>0} \times \cdots \times \mathbb{P}^{\left(\hspace*{-1pt}\binom{n}{k_l}-1\right)}_{>0}$, and similarly with ``${>}\hspace*{2pt}0$'' replaced with ``${\ge}\hspace*{2pt}0$''. It is natural to ask whether the preimage of $\mathbb{P}^{\left(\hspace*{-1pt}\binom{n}{k_1}-1\right)}_{>0} \times \cdots \times \mathbb{P}^{\left(\hspace*{-1pt}\binom{n}{k_l}-1\right)}_{>0}$ equals $\PFl{K}{n}^{>0}$, and similarly with ``${>}\hspace*{2pt}0$'' replaced with ``${\ge}\hspace*{2pt}0$''. This motivates the following definition.
\begin{defn}\label{defn_plucker_positive}
Let $n\in\mathbb{N}$ and $K\subseteq [n-1]$. We define the {\itshape Pl\"{u}cker-positive part} of $\PFl{K}{n}(\mathbb{C})$, denoted by $\PFl{K}{n}^{\Delta > 0}$, as the preimage of $\mathbb{P}^{\left(\hspace*{-1pt}\binom{n}{k_1}-1\right)}_{>0} \times \cdots \times \mathbb{P}^{\left(\hspace*{-1pt}\binom{n}{k_l}-1\right)}_{>0}$ under the Pl\"{u}cker embedding \eqref{plucker_embedding}. That is, $\PFl{K}{n}^{\Delta > 0}$ consists of all $V\in \PFl{K}{n}(\mathbb{C})$ such that for every $k\in K$, we have
$$
\Delta_I(V) > 0 \quad \text{ for all } I\in\textstyle\binom{[n]}{k}.
$$
We similarly define the {\itshape Pl\"{u}cker-nonnegative part} $\PFl{K}{n}^{\Delta \ge 0}$ by replacing ``${>}\hspace*{2pt}0$'' with ``${\ge}\hspace*{2pt}0$'' everywhere above.
\end{defn}

\begin{eg}\label{eg_plucker_positive}
We consider an example when $K := \{1,3\}$ and $n := 4$. Let $V = (V_1, V_3) \in \PFl{\{1,3\}}{4}(\mathbb{C})$ be represented by the matrix
$$
g := \begin{bmatrix}
1 & 0 & 0 & 0 \\
1 & 1 & 0 & 0 \\
1 & 1 & 2 & 0 \\
1 & 0 & 1 & 1
\end{bmatrix}.
$$
Then $V\in\PFl{\{1,3\}}{4}^{\Delta > 0}$, since all its Pl\"{u}cker coordinates are positive:
\begin{gather*}
\Delta_1(V) = \Delta_2(V) = \Delta_3(V) = \Delta_4(V) = 1, \\
\Delta_{123}(V) = \Delta_{234}(V) = 2, \quad \Delta_{124}(V) = \Delta_{134}(V) = 1.
\end{gather*}
However, we can verify that $V\notin\PFl{\{1,3\}}{4}^{>0}$, for example, by showing that $gh\notin\GL_4^{>0}$ for all $h\in\B_4(\mathbb{C})$ (cf.\ \cite[Example 10.1]{chevalier11}).
\end{eg}

As we observed above, Lusztig's notion of total positivity is stronger than Pl\"{u}cker positivity:
\begin{lem}\label{tnn_Fl}
Let $n\in\mathbb{N}$ and $K\subseteq [n-1]$.
\begin{enumerate}[label=(\roman*), leftmargin=*, itemsep=2pt]
\item\label{tnn_Fl_tp} We have $\PFl{K}{n}^{>0} \subseteq \PFl{K}{n}^{\Delta > 0}$. That is, if $V\in\PFl{K}{n}^{>0}$, then for every $k\in K$ we have
$$
\Delta_I(V) > 0 \quad \text{ for all } I\in\textstyle\binom{[n]}{k}.
$$
\item\label{tnn_Fl_tnn} We have $\PFl{K}{n}^{\ge 0} \subseteq \PFl{K}{n}^{\Delta \ge 0}$. That is, if $V\in\PFl{K}{n}^{\ge 0}$, then for every $k\in K$ we have
$$
\Delta_I(V) \ge 0 \quad \text{ for all } I\in\textstyle\binom{[n]}{k}.
$$
\end{enumerate}
In other words, the Pl\"{u}cker embedding \eqref{plucker_embedding} preserves total positivity and total nonnegativity.
\end{lem}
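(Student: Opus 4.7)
The plan is to reduce part (i) directly to the defining property of $\GL_n^{>0}$ via the Plücker embedding, and then deduce part (ii) from part (i) by taking closures and using continuity.

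For part (i), I would start with $V\in\PFl{K}{n}^{>0}$, which by \cref{defn_tnn_Fl} has a representative $g\in\GL_n^{>0}$. For each $k\in K$ and $I\in\binom{[n]}{k}$, the left-justified minor $\Delta_I(g) = \Delta_{I,[k]}(g)$ is a square minor of $g$, hence strictly positive by definition of $\GL_n^{>0}$. Therefore the $k$-th component of the image of $V$ under \eqref{plucker_embedding} lies in $\mathbb{P}^{\binom{n}{k}-1}_{>0}$. Before concluding, one verification is needed: the projective point $(\Delta_I(g))_{I\in\binom{[n]}{k}}$ must not depend on the choice of representative. If $g' = gh$ with $h\in\P{K}{n}(\mathbb{C})$, the block upper-triangular structure of $h$ ensures that the first $k$ columns of $g'$ are $g_{[n],[k]}\cdot h_{[k],[k]}$, and Cauchy--Binet \eqref{cauchy-binet} gives $\Delta_I(g') = \det(h_{[k],[k]})\cdot \Delta_I(g)$ — a single rescaling factor across all $I\in\binom{[n]}{k}$. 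Hence the sign pattern of the Plücker coordinates at level $k$ is a genuine invariant of $V$, and part (i) follows.

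For part (ii), I would invoke continuity. The Plücker embedding \eqref{plucker_embedding} is continuous, and $\mathbb{P}^{N}_{\ge 0} = \overline{\mathbb{P}^{N}_{>0}}$ by \cref{defn_tnn_intro}. Since $\PFl{K}{n}^{\ge 0} = \overline{\PFl{K}{n}^{>0}}$, applying \eqref{plucker_embedding} to a convergent sequence in $\PFl{K}{n}^{>0}$ whose limit is a given $V\in\PFl{K}{n}^{\ge 0}$, and using part (i) for each term of the sequence, places the image of $V$ inside
$$
\overline{\mathbb{P}^{\binom{n}{k_1}-1}_{>0}\times\cdots\times\mathbb{P}^{\binom{n}{k_l}-1}_{>0}} \;=\; \mathbb{P}^{\binom{n}{k_1}-1}_{\ge 0}\times\cdots\times\mathbb{P}^{\binom{n}{k_l}-1}_{\ge 0},
$$
which by \cref{defn_plucker_positive} means $V\in\PFl{K}{n}^{\Delta\ge 0}$.

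There is no serious obstacle: the argument is essentially a bookkeeping of definitions, with the one subtlety being the scaling ambiguity of Plücker coordinates under the right action of $\P{K}{n}(\mathbb{C})$. That subtlety is resolved by the Cauchy--Binet computation above, which confirms that the rescaling factor $\det(h_{[k],[k]})$ is uniform across the $k$-th block of coordinates and hence harmless in projective space.
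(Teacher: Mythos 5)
Your proof is correct, and since the paper states this lemma without proof (noting only that one can show it), there is no house argument to compare against. One minor wording issue: you say ``the sign pattern of the Pl\"{u}cker coordinates at level $k$ is a genuine invariant of $V$.'' Literally that is not quite right --- the scalar $\det(h_{[k],[k]})$ (for $h\in\P{K}{n}(\mathbb{C})$) may be negative or non-real, in which case it flips every sign or rotates every argument simultaneously. The genuine invariant is the projective class $(\Delta_I(V))_{I\in\binom{[n]}{k}}\in\mathbb{P}^{\binom{n}{k}-1}(\mathbb{C})$, and what your argument actually establishes is that this class has a representative (the one coming from $g\in\GL_n^{>0}$) with all coordinates real and positive, which is precisely membership in $\mathbb{P}^{\binom{n}{k}-1}_{>0}$. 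This is clearly what you intend, so the argument stands. Two further cosmetic remarks: since $(gh)_{[n],[k]} = g_{[n],[k]}\hspace*{1pt}h_{[k],[k]}$, the identity $\Delta_I(gh) = \det(h_{[k],[k]})\Delta_I(g)$ is just multiplicativity of $\det$ for the $k\times k$ product $g_{I,[k]}\hspace*{1pt}h_{[k],[k]}$, so the full Cauchy--Binet sum is unnecessary; and the well-definedness of the Pl\"{u}cker coordinates under the right $\P{K}{n}(\mathbb{C})$-action is already asserted in the paper's definition of \eqref{plucker_embedding}, so you could simply cite it. The closure step for part (ii) is fine: continuity of the embedding together with $\overline{A_1\times\cdots\times A_l}=\overline{A_1}\times\cdots\times\overline{A_l}$ gives the containment.
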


The following result of Bloch and Karp \cite{bloch_karp2} characterizes when Lusztig's notion of total positivity coincides with Pl\"{u}cker positivity. We refer to \cite{bloch_karp2} for further background and previous related work.
\begin{thm}[{Bloch and Karp \cite{bloch_karp2}}]\label{tnn_Fl_converse}
Let $K\subseteq [n-1]$. Then the following are equivalent:
\begin{enumerate}[label=(\roman*), leftmargin=*, itemsep=2pt]
\item\label{tnn_Fl_converse_tp} $\PFl{K}{n}^{>0} = \PFl{K}{n}^{\Delta > 0}$;
\item\label{tnn_Fl_converse_tnn} $\PFl{K}{n}^{\ge 0} = \PFl{K}{n}^{\Delta \ge 0}$; and
\item\label{tnn_Fl_converse_consecutive} the set $K$ consists of consecutive integers.
\end{enumerate}

\end{thm}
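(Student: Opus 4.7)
The plan is to prove (iii) $\Rightarrow$ (i), (iii) $\Rightarrow$ (ii), and the contrapositives $\neg$(iii) $\Rightarrow$ $\neg$(i) and $\neg$(iii) $\Rightarrow$ $\neg$(ii), using \cref{tnn_Fl} as a starting point since it already supplies the containments $\PFl{K}{n}^{>0} \subseteq \PFl{K}{n}^{\Delta > 0}$ and $\PFl{K}{n}^{\ge 0} \subseteq \PFl{K}{n}^{\Delta \ge 0}$ for free. So the content is in establishing the reverse containments precisely when $K$ is consecutive.

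For the forward direction (iii) $\Rightarrow$ (i), (ii), I would first dispose of the Grassmannian base case $K = \{k\}$, where $\Gr_{k,n}^{>0} = \Gr_{k,n}^{\Delta > 0}$ (and its closure analogue) is classical---provable, for example, from Whitney's theorem that every totally nonnegative matrix is a limit of products of Chevalley generators with nonnegative parameters, or from Postnikov's parametrization of $\Gr_{k,n}^{\ge 0}$ by Le-diagrams. For general consecutive $K = \{a, a+1, \dots, b\}$, given $V \in \PFl{K}{n}^{\Delta \ge 0}$, each individual subspace $V_k$ already lies in $\Gr_{k,n}^{\ge 0}$ by the base case. The main step is an induction on $b-a$ in which one lifts a nonnegative representative of the sub-flag $(V_a, \dots, V_k)$ to one of $(V_a, \dots, V_{k+1})$. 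This lifting ought to be controllable through three-term Plücker relations between consecutive dimensions together with Lusztig's parametrization of $\PFl{K}{n}^{>0}$---and it is exactly the consecutiveness of the dimensions that keeps these Plücker relations ``short enough'' to preserve positivity.

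For the contrapositive direction, suppose $K$ has a gap: pick $k, k' \in K$ with $k' \ge k + 2$ and no element of $K$ strictly between them. I would generalize \cref{eg_plucker_positive} to construct a flag $V \in \PFl{K}{n}^{\Delta > 0}$ with $V \notin \PFl{K}{n}^{\ge 0}$, settling both $\neg$(i) and $\neg$(ii) at once. The construction: arrange the Plücker coordinates at dimensions in $K$ to be strictly positive, yet arrange the remaining degrees of freedom so that \emph{any} lift to $\PFl{K \cup \{k+1\}}{n}$ necessarily has some negative Plücker coordinate at dimension $k+1$. Applying \cref{tnn_Fl} to $K \cup \{k+1\}$ then forbids $V$ from having a totally nonnegative representative, so $V \notin \PFl{K}{n}^{\ge 0}$. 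The explicit matrix in \cref{eg_plucker_positive} is a template: the nonzero $g_{4,3} = 1$ and $g_{4,2} = 0$ together with $g_{3,3} = 2$, $g_{2,2} = 1$ force the hidden $2$-minor $\Delta_{3,4;1,2}$ of any representative to be negative relative to other required signs.

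The main obstacle I anticipate is the inductive lifting step in the forward direction: explicitly producing a $\GL_n^{\ge 0}$ representative from nested nonnegative Plücker data. This is subtle because $\GL_n^{\ge 0}$ carries a rich cell decomposition (due to Lusztig and Postnikov) that is not visible at the level of individual Plücker coordinates, so the argument will likely require either an explicit inductive parametrization of the positive part of the incidence variety or a careful deformation argument within cells. The hypothesis that $K$ is a block of consecutive integers is exactly what ensures that the three-term Plücker relations linking adjacent dimensions are enough to recover a globally nonnegative representative, and pinpointing where this breaks down for non-consecutive $K$ is what yields the counterexamples above.
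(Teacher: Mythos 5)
The paper does not prove this theorem; it is stated as an imported result from \cite{bloch_karp2}, so there is no in-paper proof to compare against. Assessing your proposal on its own merits: the high-level skeleton is sound. Starting from the unconditional containments in \cref{tnn_Fl}, reducing the non-consecutive direction to a concrete counterexample in the spirit of \cref{eg_plucker_positive}, and noting that one flag $V\in\PFl{K}{n}^{\Delta>0}\setminus\PFl{K}{n}^{\ge 0}$ simultaneously refutes (i) and (ii), is all correct logic. But the load-bearing step of the forward direction --- lifting a nonnegative representative of $(V_a,\dots,V_k)$ to one of $(V_a,\dots,V_{k+1})$ when $K$ is a consecutive block --- is precisely where you stop and say ``this ought to be controllable through three-term Plücker relations together with Lusztig's parametrization.'' That is the entire mathematical content of the implication (iii) $\Rightarrow$ (i), (ii), and no argument is given; a hand-wave toward Plücker relations does not establish that a nested family of $\Gr_{k,n}^{\ge 0}$ points can be simultaneously represented by a single totally nonnegative matrix, which is what $\PFl{K}{n}^{\ge 0}$ membership requires. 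Until that lifting is carried out, this is a genuine gap, not merely an omitted detail.

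Two further remarks. First, you overlook a shortcut for $K=[n-1]$: \cref{fekete} shows directly that positivity of the interval minors $\Delta_{[i,j]}$ --- all of which are Plücker coordinates --- already forces $V\in\Fl_n^{>0}$, so no induction is needed in the complete-flag case; it could be used to anchor the induction from the top rather than from a Grassmannian. Second, for the contrapositive you must verify the \emph{strong} exclusion $V\notin\PFl{K}{n}^{\ge 0}$, not merely $V\notin\PFl{K}{n}^{>0}$ (which is all \cref{eg_plucker_positive} explicitly claims). For that particular matrix this can be confirmed by showing that every intermediate $2$-plane $V_1\subset W\subset V_3$ has $\Delta_{12}(W)$, $\Delta_{14}(W)$, $\Delta_{34}(W)$ of incompatible signs unless $W$ degenerates, so the example does settle both $\neg$(i) and $\neg$(ii); but your sketch should make explicit that the generalization needs to produce this stronger sign obstruction for an arbitrary gap in $K$, not merely reproduce the shape of the matrix.
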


We now make several comments about the notion of Pl\"{u}cker positivity.
\begin{rmk}\label{naive_part}
The space $\PFl{K}{n}^{\Delta \ge 0}$ was explicitly introduced by Arkani-Hamed, Bai, and Lam \cite[Section 6.3]{arkani-hamed_bai_lam17}, who called it the {\itshape naive nonnegative part}. Indeed, this space arises naturally in the physics of scattering amplitudes, in particular, for {\itshape loop amplituhedra} \cite{arkani-hamed_trnka14}. For example, the space $\PFl{\{k,k+2\}}{n}^{\Delta \ge 0}$ is a special case of a {\itshape $1$-loop amplituhedron}; the case $k=1$ was studied in detail by Bai, He, and Lam \cite{bai_he_lam16}.
\end{rmk}

\begin{rmk}\label{cyclic_action}
An important aspect of the applications to physics mentioned in \cref{naive_part} is the {\itshape cyclic symmetry} of $\PFl{K}{n}^{\Delta \ge 0}$, in the case that all elements of $K$ have the same parity. An important special case is when $K = \{k\}$ is a singleton, so that $\PFl{K}{n}^{\ge 0} = \PFl{K}{n}^{\Delta \ge 0} = \Gr_{k,n}^{\ge 0}$; see \cite[Section 4]{karp19} for a survey of various applications of the cyclic symmetry for $\Gr_{k,n}^{\ge 0}$. The cyclic action is defined as follows. Let $\sigma\in\GL_n(\mathbb{C})$ be the signed permutation matrix
$$
\sigma := \begin{bmatrix}
0 & 1 & 0 & \cdots & 0 \\
0 & 0 & 1 & \cdots & 0 \\
\vdots & \vdots & \vdots & \ddots & \vdots \\
0 & 0 & 0 & \cdots & 1 \\
\pm 1 & 0 & 0 & \cdots & 0
\end{bmatrix},
$$
where the bottom-left entry is $1$ is all elements of $K$ are odd, and $-1$ if all elements of $K$ are even. Then $\sigma$ acts on $\PFl{K}{n}(\mathbb{C})$; it has order $n$, since $\sigma^n = \pm\I_n$. In terms of Pl\"{u}cker coordinates, $\sigma$ acts by rotating the set $[n]$. In particular, $\sigma$ preserves $\PFl{K}{n}^{\Delta \ge 0}$. However, unless $K = \{k\}$ is a singleton, then $\sigma$ does {\itshape not} preserve $\PFl{K}{n}^{>0}$; see \cite{bloch_karp2}.
\end{rmk}

\begin{rmk}
While we will use Lusztig's notion of total positivity throughout the paper, most of our proofs only use the weaker notion of Pl\"{u}cker positivity (via \cref{tnn_Fl}), and therefore the corresponding results hold for both notions of positivity. An important exception is our classification of gradient flows on an adjoint orbit which preserve positivity in the K\"{a}hler metric (\cref{positivity_preserving_kahler_full}), where for certain orbits the classification differs depending on which notion of positivity one uses; see \cref{plucker_gradient_flows}.
\end{rmk}

\begin{rmk}\label{plucker_positive_projection}
Note that for $V\in\PFl{K}{n}(\mathbb{C})$, we have $V\in\PFl{K}{n}^{\Delta > 0}$ (respectively, $V\in\PFl{K}{n}^{\Delta \ge 0}$) if and only if $V_k\in\Gr_{k,n}^{>0}$ (respectively, $V_k\in\Gr_{k,n}^{\ge 0}$) for all $k\in K$. This follows from \cref{defn_plucker_positive} along with \cref{tnn_Fl_converse} applied to $\Gr_{k,n}(\mathbb{C})$.
\end{rmk}

We will also need the following result from \cite{bloch_karp2}:
\begin{lem}[{Bloch and Karp \cite{bloch_karp2}}]\label{tnn_counterexample}
Let $V\in\Gr_{k,n}^{\ge 0}$ and $W\in\Gr_{k+1,n}^{\ge 0}$ such that $V\subseteq W$. If $e_1 + ce_n\in V$ for some $c\in\mathbb{R}$, then $e_1 \in W$.
\end{lem}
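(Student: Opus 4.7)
The plan is to expand Pl\"ucker coordinates via Laplace expansion along the vector $v_1 := e_1 + ce_n$ and combine the resulting identities with the total nonnegativity of $V$ and $W$. If $c = 0$, then $e_1 = v_1 \in V \subseteq W$, so I henceforth assume $c \ne 0$. Extend $v_1$ to a basis $v_1, \ldots, v_k$ of $V$ and further to a basis $v_1, \ldots, v_{k+1}$ of $W$, and form the matrices $A := [v_1 \mid \cdots \mid v_k]$, $B := [v_1 \mid \cdots \mid v_{k+1}]$, $A' := [v_2 \mid \cdots \mid v_k]$, and $B' := [v_2 \mid \cdots \mid v_{k+1}]$. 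Since $v_1$ is supported on $\{1, n\}$, expanding $\Delta_I(A)$ along the first column gives $0$ when $1, n \notin I$; it gives $\Delta_{I \setminus \{1\}}(A')$ when $1 \in I$ and $n \notin I$; and it gives $(-1)^{k+1} c \, \Delta_{I \setminus \{n\}}(A')$ when $n \in I$ and $1 \notin I$. The analogous formulas for $B$ hold with $(-1)^{k+1}$ replaced by $(-1)^{k}$.

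Next I fix signs $\epsilon_V, \epsilon_W \in \{\pm 1\}$ so that $\epsilon_V \Delta_I(A) \ge 0$ and $\epsilon_W \Delta_I(B) \ge 0$ for all $I$; such signs exist because $V$ and $W$ are totally nonnegative. Evaluating the formulas above on index sets containing exactly one of $\{1, n\}$ produces the twin inequalities
\[
\epsilon_V \Delta_J(A') \ge 0 \quad \text{and} \quad (-1)^{k+1} c \, \epsilon_V \Delta_J(A') \ge 0 \quad \text{for all } J \in \binom{[2, n-1]}{k-1},
\]
together with
\[
\epsilon_W \Delta_J(B') \ge 0 \quad \text{and} \quad (-1)^{k} c \, \epsilon_W \Delta_J(B') \ge 0 \quad \text{for all } J \in \binom{[2, n-1]}{k}.
\]
Since $c \ne 0$, exactly one of the signs $(-1)^{k+1}c$ and $(-1)^k c$ is negative, and on the side where the sign is negative the two inequalities force all of the relevant minors of $A'$ (respectively $B'$) to vanish identically.

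This leaves two parallel cases. If $(-1)^k c < 0$, then $\Delta_J(B') = 0$ for every $J \in \binom{[2, n-1]}{k}$; combined with the expansion, this gives $\Delta_J(B) = 0$ for every $J \in \binom{[2, n]}{k+1}$, which by one more Laplace expansion on the augmented matrix $[B \mid e_1]$ is equivalent to $e_1$ lying in the column span of $B$, i.e., $e_1 \in W$. If instead $(-1)^k c > 0$, then the analogous argument for $V$ forces $\Delta_J(A') = 0$ for every $J \in \binom{[2, n-1]}{k-1}$, so the columns $v_2, \ldots, v_k$ of $A'$ are linearly dependent after restriction to rows $[2, n-1]$. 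That dependence yields a nonzero $u \in V$ supported on $\{1, n\}$ and linearly independent from $v_1$; hence $\spn(v_1, u) = \spn(e_1, e_n) \subseteq V \subseteq W$, so again $e_1 \in W$. The main subtle point is orchestrating this case split: the parity shift between $k$ and $k+1$ is precisely what forces one of the two systems to collapse, and both collapses route to the same conclusion.
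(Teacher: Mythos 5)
The paper cites this lemma from \cite{bloch_karp2} without proof, so there is no internal argument to compare against; the relevant question is simply whether your proof is correct, and it is. The Laplace expansion of the Pl\"ucker coordinates of $A$ and $B$ along $v_1 = e_1 + ce_n$ is computed correctly (the cofactor signs $(-1)^{k+1}$ for $A$ and $(-1)^{k}$ for $B$ are right, coming from the fact that $n$ is always the last row of the restricted submatrix), and the parity mismatch between $k$ and $k+1$ does exactly what you say: one of the two families of minors $\{\Delta_J(A')\}_{J\in\binom{[2,n-1]}{k-1}}$ or $\{\Delta_J(B')\}_{J\in\binom{[2,n-1]}{k}}$ is squeezed to zero by the two signed inequalities. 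In Case A the vanishing of the $B'$ minors propagates to $\Delta_J(B)=0$ for all $J\in\binom{[2,n]}{k+1}$ (splitting $J$ according to whether $n\in J$), hence $\rank(B_{[2,n],\hspace*{0.5pt}\cdot\hspace*{0.5pt}}) < k+1$, giving a nonzero kernel vector and thus a nonzero multiple of $e_1$ in $W$. In Case B the vanishing of the $A'$ minors yields a nonzero $u \in V$ supported on $\{1,n\}$ and independent of $v_1$ (independence follows because $u$ is a nontrivial combination of $v_2,\dots,v_k$ alone), so $\spn(e_1,e_n)\subseteq V \subseteq W$. The one place that needs a word of care is $k=1$ in Case B, where $A'$ has no columns; but for $k=1$ total nonnegativity of $V$ forces $c\ge 0$, so $(-1)^k c = -c \le 0$ and Case B cannot occur. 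Similarly, when $k+1=n$ the conclusion is trivial. So the argument is complete.
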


We have the following refinement of \cref{tnn_Fl_converse} in the case of $\Fl_n^{>0}$, which follows from a classical result of Fekete \cite{fekete_polya12}.
\begin{lem}[Fekete {\cite[Theorem V.8]{gantmaher_krein50}}]\label{fekete}
Let $V\in\Fl_n(\mathbb{C})$. Then $V\in\Fl_n^{>0}$ if and only if
\begin{gather*}
\Delta_{[i,j]}(V) > 0 \quad \text{ for all } 1 \le i \le j \le n.
\end{gather*}

\end{lem}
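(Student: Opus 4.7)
I would split the biconditional. The forward direction is immediate from Lemma 2.15 applied with $K = [n-1]$: for $V \in \Fl_n^{>0}$, every flag minor $\Delta_I(V)$ is positive, and intervals $[i,j]$ are a special case.

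For the reverse direction, I would first reduce to Pl\"{u}cker positivity. Since $K = [n-1]$ consists of consecutive integers, Theorem~\ref{tnn_Fl_converse} gives $\Fl_n^{>0} = \Fl_n^{\Delta > 0}$, so it suffices to show $\Delta_I(V) > 0$ for every nonempty proper subset $I \subsetneq [n]$, not only for intervals.

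This reduced claim is the substance of the classical Fekete theorem. I would prove it by induction on a ``scatter'' measure of $I = \{i_1 < \cdots < i_k\}$, for instance $\sum_{r=1}^{k-1}(i_{r+1} - i_r - 1)$, with intervals (scatter zero) forming the base case guaranteed by hypothesis. For a non-interval $I$, the inductive step would apply a Desnanot--Jacobi (Sylvester) determinantal identity to a suitable submatrix of a representative of $V$, to express $\Delta_I(V)$ as a rational expression in flag minors of strictly smaller scatter, divided by a known-positive minor; induction then yields $\Delta_I(V) > 0$.

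The hard part will be the combinatorial bookkeeping: for each non-interval $I$, selecting a Desnanot--Jacobi identity all of whose other terms involve index sets of strictly smaller scatter, so that the induction closes without sign cancellation issues. This selection is standard and is carried out in Gantmaher--Krein \cite{gantmaher_krein50}, which the paper already cites; I would defer to that reference rather than reproducing the combinatorics in detail.
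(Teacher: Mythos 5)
The paper provides no proof of this lemma, stating it with a citation to Gantmaher and Krein, Theorem V.8. Your outline is correct and takes essentially the same stance: the forward direction is \cref{tnn_Fl}\ref{tnn_Fl_tp}, the reduction of the reverse direction to Pl\"{u}cker positivity via \cref{tnn_Fl_converse} is sound, and deferring the core Fekete induction to the cited reference matches the paper's own treatment.
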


The group $\GL_n(\mathbb{C})$ acts on $\PFl{K}{n}(\mathbb{C})$ by left multiplication. This action is compatible with total positivity:
\begin{lem}\label{matrices_acting_on_flags}
Let $n\in\mathbb{N}$ and $K\subseteq [n-1]$.
\begin{enumerate}[label=(\roman*), leftmargin=*, itemsep=4pt]
\item\label{matrices_acting_on_flags_tp} We have $g\cdot\PFl{K}{n}^{\ge 0} \subseteq \PFl{K}{n}^{>0}$ for all $g\in\GL_n^{>0}$.
\item\label{matrices_acting_on_flags_tnn} We have $g\cdot\PFl{K}{n}^{>0} \subseteq \PFl{K}{n}^{>0}$ for all $g\in\GL_n^{\ge 0}$.
\end{enumerate}

\end{lem}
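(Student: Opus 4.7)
The plan is to prove (ii) directly via the Cauchy--Binet identity, and to reduce (i) to the complete flag case $K=[n-1]$, where Fekete's criterion (\cref{fekete}) applies.

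For (ii), let $V\in\PFl{K}{n}^{>0}$ be represented by some $h\in\GL_n^{>0}$, so that $gV$ is represented by $gh$; it suffices to show $gh\in\GL_n^{>0}$. For any $I,J\subseteq[n]$ with $|I|=|J|=k$, \eqref{cauchy-binet} gives
\[
\Delta_{I,J}(gh)=\sum_{K\in\binom{[n]}{k}}\Delta_{I,K}(g)\,\Delta_{K,J}(h).
\]
Each $\Delta_{K,J}(h)>0$ since $h$ is totally positive, while each $\Delta_{I,K}(g)\ge0$. Since $g$ is invertible, the rows of $g$ indexed by $I$ are linearly independent, so at least one $\Delta_{I,K}(g)$ is strictly positive, giving $\Delta_{I,J}(gh)>0$.

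For (i), I pass through the complete flag variety. Using the surjection $\Fl_n^{\ge0}\twoheadrightarrow\PFl{K}{n}^{\ge0}$ from \eqref{defn_tnn_Fl_surjections}, lift $V$ to some $\tilde V\in\Fl_n^{\ge0}$. Because left multiplication by $g$ is equivariant with the projection $\Fl_n(\mathbb{C})\twoheadrightarrow\PFl{K}{n}(\mathbb{C})$, it suffices to show $g\tilde V\in\Fl_n^{>0}$; applying \eqref{defn_tnn_Fl_surjections} once more then yields $gV\in\PFl{K}{n}^{>0}$. To prove $g\tilde V\in\Fl_n^{>0}$, by \cref{fekete} it is enough to exhibit a representative $B$ of $\tilde V$ with $\Delta_{[i,j]}(gB)>0$ for all $1\le i\le j\le n$. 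Using \cref{tnn_Fl}, I can choose $B$ so that $\Delta_I(B)\ge0$ for every $I\subseteq[n]$: starting from any representative, the signs at each level can be corrected independently by right multiplying by a suitable diagonal element of $\B_n(\mathbb{R})$. Since $\tilde V$ is a complete flag, at each level $k$ some $\Delta_K(B)$ with $|K|=k$ is strictly positive. Cauchy--Binet then gives
\[
\Delta_{[i,j]}(gB)=\sum_{K\in\binom{[n]}{j-i+1}}\Delta_{[i,j],K}(g)\,\Delta_K(B),
\]
in which every $\Delta_{[i,j],K}(g)>0$ because $g\in\GL_n^{>0}$ and every $\Delta_K(B)\ge0$ with at least one strictly positive, so the sum is strictly positive.

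The main subtlety is that a direct Cauchy--Binet expansion on the Pl\"{u}cker coordinates of $gV\in\PFl{K}{n}(\mathbb{C})$ only yields Pl\"{u}cker positivity of $gV$, which by \cref{tnn_Fl_converse} is strictly weaker than Lusztig positivity whenever $K$ is not an interval of consecutive integers. The reduction to $K=[n-1]$ circumvents this, since for complete flags the two notions coincide, and Fekete's criterion reduces the check to the consecutive-row left-justified minors, which are precisely the outputs of Cauchy--Binet applied to $gB$ with $g\in\GL_n^{>0}$.
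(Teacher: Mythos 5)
Your proof is correct and takes essentially the same approach as the paper's terse two-sentence sketch: reduce part (i) to the complete flag variety, where Lusztig positivity coincides with Pl\"{u}cker positivity (the paper cites \cref{tnn_Fl_converse}, you use the equivalent refinement \cref{fekete}), and then apply Cauchy--Binet. Your direct argument for part (ii), representing $V\in\PFl{K}{n}^{>0}$ by some $h\in\GL_n^{>0}$ and showing $gh\in\GL_n^{>0}$ rather than also routing through the complete flag variety, is a minor but clean simplification; you also correctly identify and address the subtlety that a naive Cauchy--Binet argument would only yield Pl\"{u}cker positivity for general $K$.
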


\begin{proof}
By \eqref{defn_tnn_Fl_surjections}, it suffices to prove the result for the complete flag variety (i.e.\ when $K = [n-1]$). This case follows from \cref{tnn_Fl_converse} and the Cauchy--Binet identity \eqref{cauchy-binet}.
\end{proof}

\begin{rmk}\label{torus_action}
The torus $\H_n(\mathbb{C})$ acts on $\PFl{K}{n}(\mathbb{C})$ by left multiplication. Then \cref{matrices_acting_on_flags} implies that the totally positive part of the torus $\H_n^{>0}$ acts on $\PFl{K}{n}^{>0}$ and $\PFl{K}{n}^{\ge 0}$. This torus action will arise repeatedly throughout the paper.
\end{rmk}

A classical result of Gantmakher and Krein \cite{gantmakher_krein37} (cf.\ \cite[Chapter V]{gantmaher_krein50}) gives an explicit connection between $\GL_n^{>0}$ and $\Fl_n^{>0}$. We will need the following refinement for matrices whose minors of a fixed order are positive. Our proof follows \cite{gantmakher_krein37}, and is based on the Perron--Frobenius theorem.
\begin{thm}\label{gk_k}
Let $1 \le k \le n$, and let $g$ be a complex $n\times n$ matrix whose $k\times k$ minors are all positive.
\begin{enumerate}[label=(\roman*), leftmargin=*, itemsep=2pt]
\item\label{gk_k_eigenvalues} The eigenvalues of $g$ over $\mathbb{C}$ may be enumerated as $\lambda_1, \dots, \lambda_n$, such that
$$
|\lambda_1| \ge \cdots \ge |\lambda_k| > |\lambda_{k+1}| \ge \cdots \ge |\lambda_n| \quad \text{ and } \quad \lambda_1\cdots\lambda_k > 0.
$$
\item\label{gk_k_eigenvectors} Let $V$ be the linear span of all generalized eigenvectors of $g$ corresponding to the eigenvalues $\lambda_1, \dots, \lambda_k$. That is, $V$ is the unique $g$-invariant subspace such that $g$ restricted to $V$ has eigenvalues $\lambda_1, \dots, \lambda_k$. Then $V\in\Gr_{k,n}^{>0}$.
\end{enumerate}

\end{thm}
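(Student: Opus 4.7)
The strategy is to consider the $k$-th compound matrix $C$ of $g$, i.e., the $\binom{n}{k}\times\binom{n}{k}$ matrix whose rows and columns are indexed by $\binom{[n]}{k}$ and whose $(I,J)$-entry is $\Delta_{I,J}(g)$, and then to apply the Perron--Frobenius theorem (\cref{perron}). The Cauchy--Binet identity \eqref{cauchy-binet} shows that passing to the compound matrix is multiplicative. Two consequences of this drive the proof: first, the eigenvalues of $C$, listed with algebraic multiplicity, are precisely the products $\lambda_{i_1}\cdots \lambda_{i_k}$ for $\{i_1<\cdots<i_k\}\in\binom{[n]}{k}$, where $\lambda_1,\dots,\lambda_n$ are the eigenvalues of $g$ with multiplicity (seen by computing $C$ in a Jordan basis for $g$); and second, for any $k$-dimensional $g$-invariant subspace $W$ of $\mathbb{C}^n$, the Pl\"ucker image $(\Delta_I(W))_{I\in\binom{[n]}{k}}$ from \eqref{plucker_embedding} is an eigenvector of $C$ with eigenvalue $\det(g|_W)$.

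For part~\ref{gk_k_eigenvalues}, since all entries of $C$ are positive by hypothesis, \cref{perron} yields a unique eigenvalue $r$ of $C$ of maximum modulus, and $r$ is positive real with algebraic multiplicity~$1$. Order $\lambda_1,\dots,\lambda_n$ so that $|\lambda_1|\ge\cdots\ge|\lambda_n|$; then $\lambda_1\cdots\lambda_k$ has maximum modulus among all products of $k$ eigenvalues, so it must equal $r$. I expect the main obstacle to be extracting the strict inequality $|\lambda_k|>|\lambda_{k+1}|$: if $|\lambda_k|=|\lambda_{k+1}|$, then the product $\lambda_1\cdots\lambda_{k-1}\lambda_{k+1}$ also has modulus $r$, producing either a second distinct maximum-modulus eigenvalue of $C$ (when $\lambda_k\ne\lambda_{k+1}$) or a max-modulus eigenvalue of algebraic multiplicity at least $2$ (when $\lambda_k=\lambda_{k+1}$, via the additional subset $\{1,\dots,k-1,k+1\}$), each contradicting \cref{perron}\ref{perron_eigenvalue}. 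Hence $|\lambda_k|>|\lambda_{k+1}|$ and $r=\lambda_1\cdots\lambda_k>0$, as claimed.

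For part~\ref{gk_k_eigenvectors}, the gap just established guarantees that $V$ is a well-defined $k$-dimensional $g$-invariant subspace of $\mathbb{C}^n$, and $\det(g|_V)=\lambda_1\cdots\lambda_k=r$. By the second observation in the first paragraph, the Pl\"ucker image of $V$ lies in the one-dimensional $r$-eigenspace of $C$. By \cref{perron}\ref{perron_eigenvector}, this eigenspace is spanned by a vector in $\mathbb{R}_{>0}^{\binom{n}{k}}$, so the Pl\"ucker image of $V$ is a nonzero complex multiple of such a vector; in particular every Pl\"ucker coordinate $\Delta_I(V)$ is nonzero and they share a common phase, which gives $V\in\Gr_{k,n}^{\Delta>0}$ projectively. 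Finally, since the singleton $K=\{k\}$ is trivially a set of consecutive integers, \cref{tnn_Fl_converse} applies to give $\Gr_{k,n}^{\Delta>0}=\Gr_{k,n}^{>0}$, and hence $V\in\Gr_{k,n}^{>0}$.
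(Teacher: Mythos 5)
Your proof is correct and follows essentially the same approach as the paper: pass to the $k$-th compound matrix (exterior power), observe it has positive entries, and apply Perron--Frobenius. You have filled in several details the paper leaves implicit—in particular the case analysis showing $|\lambda_k|>|\lambda_{k+1}|$, the observation that the Pl\"ucker image of a $g$-invariant $k$-plane is an eigenvector of the compound matrix with eigenvalue $\det(g|_W)$, and the final appeal to \cref{tnn_Fl_converse} to pass from $\Gr_{k,n}^{\Delta>0}$ to $\Gr_{k,n}^{>0}$—but the underlying argument is identical.
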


\begin{proof}
Consider $g$ acting on the exterior power $\bigwedge^{\hspace*{-1pt}k}(\mathbb{C}^n)$, which we regard as an $(\binom{n}{k}\times\binom{n}{k})$-matrix with entries $\Delta_{I,J}(g)$ for $I,J\in\binom{[n]}{k}$, and eigenvalues $\prod_{i\in I}\lambda_i$ for $I\in\binom{[n]}{k}$. By assumption, this matrix has positive entries, and so the result follows from \cref{perron}.
\end{proof}

\begin{cor}[Gantmakher and Krein {\cite[Theorems 10 and 13]{gantmakher_krein37}}]\label{gk}
Let $g\in\GL_n^{>0}$.
\begin{enumerate}[label=(\roman*), leftmargin=*, itemsep=2pt]
\item\label{gk_eigenvalues} The matrix $g$ has $n$ distinct positive real eigenvalues $\lambda_1 > \cdots > \lambda_n$.
\item\label{gk_eigenvectors} If we diagonalize $g$ as
$$
h^{-1}gh = \Diag{\lambda_1, \dots, \lambda_n}, \quad \text{ where } h\in\GL_n(\mathbb{C}),
$$
then the projection of $h$ to $\Fl_n(\mathbb{C})$ lies in $\Fl_n^{>0}$. That is, the complete flag generated by the eigenvectors of $g$, ordered by decreasing eigenvalue, is totally positive.
\end{enumerate}

\end{cor}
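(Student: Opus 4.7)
The plan is to deduce both parts of the corollary by iteratively applying \cref{gk_k} for $k = 1, 2, \ldots, n$, exploiting the fact that $g \in \GL_n^{>0}$ guarantees positivity of all $k \times k$ minors at every level simultaneously.

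For part (i), I would argue by induction on $k$. Taking $k=1$ in \cref{gk_k} produces a unique eigenvalue of largest modulus, namely $\lambda_1 > 0$. Assuming $\lambda_1 > \lambda_2 > \cdots > \lambda_{k-1} > 0$ have been identified as the eigenvalues of the $k-1$ largest moduli, applying \cref{gk_k} at level $k$ yields an eigenvalue $\lambda_k$ with $|\lambda_{k-1}| > |\lambda_k| > |\lambda_{k+1}|$ (the right-hand inequality vacuous when $k=n$) and $\lambda_1 \cdots \lambda_k > 0$; dividing by $\lambda_1 \cdots \lambda_{k-1} > 0$ forces $\lambda_k > 0$. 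Since $g$ is real, any non-real eigenvalue would have to occur in a complex conjugate pair of equal modulus, which the resulting strict chain $|\lambda_1| > \cdots > |\lambda_n|$ forbids; hence every $\lambda_k$ is real, and we obtain $n$ distinct positive real eigenvalues as claimed.

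For part (ii), I would set $V_k$ to be the span of the first $k$ columns of the diagonalizing matrix $h$. Since these columns are eigenvectors of $g$ corresponding to $\lambda_1, \ldots, \lambda_n$ in that order, $V_k$ is the unique $g$-invariant subspace of dimension $k$ on which $g$ has eigenvalues $\lambda_1, \ldots, \lambda_k$. The second conclusion of \cref{gk_k} then supplies $V_k \in \Gr_{k,n}^{>0}$ for each $k \in [n-1]$, so by \cref{plucker_positive_projection} the complete flag $V = (V_1, \ldots, V_{n-1})$ lies in $\Fl_n^{\Delta > 0}$. Since $[n-1]$ consists of consecutive integers, \cref{tnn_Fl_converse} applies and gives $\Fl_n^{>0} = \Fl_n^{\Delta > 0}$, so $V \in \Fl_n^{>0}$.

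The only subtlety worth noting is the bookkeeping required to see that the enumerations produced by successive applications of \cref{gk_k} can be chosen compatibly, so that the strict separations $|\lambda_k| > |\lambda_{k+1}|$ appearing at each level fit together into a single chain; once this is observed, reality of the eigenvalues is automatic from $g$ being real, and the remainder is a direct appeal to the cited results. I do not anticipate any genuine obstacle.
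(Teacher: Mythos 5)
Your proof is correct, and it is exactly the argument the paper has in mind: the paper states \cref{gk} as a \emph{Corollary} of \cref{gk_k} without writing out the deduction, and the intended derivation is precisely yours — apply \cref{gk_k} for each $k\in[n]$ to get that the moduli of the eigenvalues are strictly decreasing (hence, since $g$ is real and conjugate pairs have equal modulus, all eigenvalues are real), extract positivity of each $\lambda_k$ by dividing consecutive products, and then pass from $V_k\in\Gr_{k,n}^{>0}$ for all $k$ to $V\in\Fl_n^{>0}$ via \cref{plucker_positive_projection} and \cref{tnn_Fl_converse}. One small phrasing quibble: \cref{gk_k} at level $k$ gives only $|\lambda_k|>|\lambda_{k+1}|$ (and $\lambda_1\cdots\lambda_k>0$), not $|\lambda_{k-1}|>|\lambda_k|$; the latter comes from the $k-1$ application (your induction hypothesis), and your closing remark about compatibility of enumerations acknowledges this, so the argument stands.
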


We will later state a converse to part \ref{gk_eigenvectors}; see \cref{complete_to_orbit}. It implies that for every $V\in\Fl_n^{>0}$, there exists $g\in\GL_n^{>0}$ such that $g\cdot V = V$.

\begin{eg}\label{eg_gk}
We illustrate \cref{gk} for the matrix
$$
g := \begin{bmatrix}
1 & 2 & 1 \\
1 & 3 & 2 \\
1 & 4 & 4
\end{bmatrix}\in\GL_3^{>0}.
$$
We diagonalize $g$ as follows:
$$
h^{-1}gh = \begin{bmatrix}
\frac{7+3\sqrt{5}}{2} & 0 & 0 \\[3pt]
0 & 1 & 0 \\[1pt]
0 & 0 & \frac{7-3\sqrt{5}}{2}
\end{bmatrix}, \quad \text{ where } h := \begin{bmatrix}
3 + \sqrt{5} & -2 & 3 - \sqrt{5} \\[2pt]
4 + 2\sqrt{5} & -1 & 4 - 2\sqrt{5} \\[2pt]
7 + 3\sqrt{5} & 2 & 7 - 3\sqrt{5}
\end{bmatrix}\in\GL_3(\mathbb{C}).
$$
We can verify (e.g.\ from \cref{fekete}) that the projection of $h$ to $\Fl_3(\mathbb{C})$ lies in $\Fl_3^{>0}$.
\end{eg}

We have the following analogue of \cref{gk} for $\GL_n^{\ge 0}$. Its statement is more subtle, because not all elements of $\GL_n^{\ge 0}$ are diagonalizable (such as \scalebox{0.8}{$\begin{bmatrix}1 & 1 \\ 0 & 1\end{bmatrix}$}).
\begin{cor}\label{gk_tnn}
Let $g\in\GL_n^{\ge 0}$.
\begin{enumerate}[label=(\roman*), leftmargin=*, itemsep=2pt]
\item\label{gk_tnn_eigenvalues} The matrix $g$ has $n$ nonnegative real eigenvalues $\lambda_1 \ge \cdots \ge \lambda_n$ (including multiplicities).
\item\label{gk_tnn_eigenvectors} Let $K := \{i \in [n-1] : \lambda_i > \lambda_{i+1}\}$, and take $h\in\GL_n(\mathbb{C})$ such that $h^{-1}gh$ is the Jordan form of $g$, with Jordan blocks ordered by decreasing eigenvalue. Then the projection of $h$ to $\PFl{K}{n}(\mathbb{C})$ lies in $\PFl{K}{n}^{\ge 0}$. That is, the flag in $\PFl{K}{n}(\mathbb{C})$ generated by the generalized eigenvectors of $g$, ordered by decreasing eigenvalue, is totally nonnegative.
\end{enumerate}

\end{cor}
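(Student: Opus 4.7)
My plan is to prove both parts together by a closure/limiting argument, reducing to the invertible case \cref{gk}. Since $\GL_n^{\ge 0} = \overline{\GL_n^{>0}}$, I choose a sequence $g^{(m)}\in\GL_n^{>0}$ with $g^{(m)}\to g$. By \cref{gk}\ref{gk_eigenvalues}, each $g^{(m)}$ has $n$ distinct positive real eigenvalues $\lambda_1^{(m)} > \cdots > \lambda_n^{(m)} > 0$. Since the eigenvalues of a matrix depend continuously on its entries (as an unordered multiset in $\mathbb{C}$), passing to the limit yields $n$ real eigenvalues $\lambda_1 \ge \cdots \ge \lambda_n \ge 0$ of $g$, counted with algebraic multiplicity. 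This proves \ref{gk_tnn_eigenvalues}.

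\textbf{The flag in the limit.} For part \ref{gk_tnn_eigenvectors}, let $h^{(m)}\in\GL_n(\mathbb{C})$ diagonalize $g^{(m)}$ with eigenvalues in decreasing order; by \cref{gk}\ref{gk_eigenvectors}, its projection $V^{(m)}\in\Fl_n(\mathbb{C})$ lies in $\Fl_n^{>0}$. Since $\Fl_n^{\ge 0}$ is compact in the Euclidean topology, after passing to a subsequence I may assume $V^{(m)} \to V$ for some $V = (V_1,\dots,V_{n-1})\in\Fl_n^{\ge 0}$. The key step is to identify $V_k$ for each $k\in K$. Each $V_k^{(m)}$ is $g^{(m)}$-invariant and $g^{(m)}\lvert_{V_k^{(m)}}$ has eigenvalues $\lambda_1^{(m)},\dots,\lambda_k^{(m)}$; by continuity (in the Grassmannian $\Gr_{k,n}(\mathbb{C})$), the limit $V_k$ is $g$-invariant and $g\lvert_{V_k}$ has eigenvalues $\lambda_1,\dots,\lambda_k$. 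Because $k\in K$ means $\lambda_k > \lambda_{k+1}$, there is a spectral gap, and so the only $k$-dimensional $g$-invariant subspace whose induced eigenvalues are $\lambda_1,\dots,\lambda_k$ is the sum of generalized eigenspaces for these eigenvalues. Hence $V_k$ agrees with the span of the first $k$ columns of the $h$ described in the statement.

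\textbf{Conclusion and main obstacle.} Finally, applying the projection $\Fl_n^{\ge 0} \twoheadrightarrow \PFl{K}{n}^{\ge 0}$ of \eqref{defn_tnn_Fl_surjections} to $V$, I obtain $(V_k)_{k\in K}\in\PFl{K}{n}^{\ge 0}$; by the preceding paragraph, this is precisely the projection of $h$ to $\PFl{K}{n}(\mathbb{C})$, proving \ref{gk_tnn_eigenvectors}. The main subtlety is the identification step for $k\in K$: $V$ itself need not be canonical (distinct choices of $h$ or subsequence could give different complete flags when eigenvalues coincide), but projecting to $\PFl{K}{n}(\mathbb{C})$ throws away exactly the ambiguous subspaces, leaving only the well-defined generalized-eigenspace sums. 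This cancellation of ambiguities against the spectral gaps encoded by $K$ is what allows the argument to go through and is the reason the statement is naturally about $\PFl{K}{n}^{\ge 0}$ rather than $\Fl_n^{\ge 0}$.
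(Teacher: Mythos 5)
Your proof is correct and takes the same approach as the paper, which simply says ``This follows from \cref{gk}, using the fact that $\GL_n^{\ge 0} = \overline{\GL_n^{>0}}$''; you have filled in the details that the paper leaves implicit. In particular, the spectral-gap argument identifying $V_k$ (for $k\in K$) as the unique $k$-dimensional $g$-invariant subspace with the right induced spectrum, and the observation that passing to $\PFl{K}{n}(\mathbb{C})$ discards exactly the limit ambiguity at indices $i\notin K$, are precisely the points a careful reader would need to check, and you get them right.
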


\begin{proof}
This follows from \cref{gk}, using the fact that $\GL_n^{\ge 0} = \overline{\GL_n^{>0}}$.
\end{proof}

\subsection{The cell decomposition of \texorpdfstring{$\Fl_n^{\ge 0}$}{the totally nonnegative part of Fl(n)}}\label{sec_cell_decomposition}
We recall a decomposition of $\Fl_n^{\ge 0}$ introduced by Lusztig \cite{lusztig94}.
\begin{defn}[{\cite[Chapter 2]{bjorner_brenti05}}]\label{defn_symmetric_group}
For $0 \le k \le n$, we define the partial order $\le$ on $\binom{[n]}{k}$, called the {\itshape Gale order}, as follows:
$$
\{i_1 < \cdots < i_k\} \le \{j_1 < \cdots < j_k\} \quad \iff \quad i_1 \le j_1, \dots, i_k \le j_k.
$$
Given $n\in\mathbb{N}$, let $\mathfrak{S}_n$ denote the symmetric group of all permutations of $[n]$. We define the partial order $\le$ on $\mathfrak{S}_n$, called the {\itshape (strong) Bruhat order}, as follows:
$$
v \le w \quad \iff \quad v([k]) \le w([k])\; \text{ for } 1 \le k \le n-1.
$$
The Bruhat order on $\mathfrak{S}_n$ has the minimum $\id := (i \mapsto i)$ and the maximum $w_0 := (i \mapsto n+1-i)$, and is graded by the function $\ell : \mathfrak{S}\to\mathbb{N}$. For example, the Hasse diagram of $\mathfrak{S}_3$ is shown in \cref{figure_S3}.

For $w\in\mathfrak{S}_n$, we define the {\itshape (signed) permutation matrix} $\mathring{w}\in\GL_n(\mathbb{C})$ by $$
\mathring{w}_{i,j} := \begin{cases}
\pm 1, & \text{ if $i = w(j)$}; \\
0, & \text{ otherwise},
\end{cases}\quad \text{ for } 1 \le i,j \le n,
$$
where the signs are chosen so that all left-justified minors of $\mathring{w}$ are nonnegative. Note that
$$
\mathring{(w^{-1})} = \delta_n(\mathring{w})^{-1}\delta_n.
$$
We will also regard $\mathring{w}$ as an element of $\Fl_n(\mathbb{C})$.
\end{defn}

\begin{figure}[ht]
\begin{center}
$$
\begin{tikzpicture}[baseline=(current bounding box.center),scale=1.0]
\pgfmathsetmacro{\s}{1.0};
\pgfmathsetmacro{\hd}{1.20};
\pgfmathsetmacro{\vd}{0.96};
\pgfmathsetmacro{\is}{1.68};
\node[inner sep=\is](123)at(0,0){\scalebox{\s}{$123$}};
\node[inner sep=\is](213)at($(123)+(-\hd,\vd)$){\scalebox{\s}{$213$}};
\node[inner sep=\is](132)at($(123)+(\hd,\vd)$){\scalebox{\s}{$132$}};
\node[inner sep=\is](312)at($(213)+(0,\vd)$){\scalebox{\s}{$312$}};
\node[inner sep=\is](231)at($(132)+(0,\vd)$){\scalebox{\s}{$231$}};
\node[inner sep=\is](321)at($(312)+(\hd,\vd)$){\scalebox{\s}{$321$}};
\path[semithick](123)edge(213) edge(132) (213)edge(312) edge(231) (132)edge(312) edge(231) (312)edge(321) (231)edge(321);
\end{tikzpicture}
$$
\caption{The Hasse diagram of Bruhat order on $\mathfrak{S}_3$.}
\label{figure_S3}
\end{center}
\end{figure}
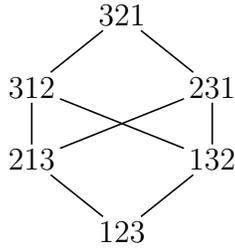

\begin{eg}\label{eg_permutation_matrix}
Let $w := 312 \in\mathfrak{S}_3$. Then $\ell(w) = 2$ and $\mathring{w} = \scalebox{0.8}{$\begin{bmatrix}0 & -1 & 0 \\ 0 & 0 & -1 \\ 1 & 0 & 0\end{bmatrix}$}$.
\end{eg}

\begin{defn}[{\cite[Section 8]{lusztig94}}]\label{defn_cell_decomposition}
Given $v,w\in\mathfrak{S}_n$ with $v \le w$, we define
$$
C_{v,w} := (\Bminus_n(\mathbb{C})\cdot\mathring{v}) \cap (\B_n(\mathbb{C})\cdot\mathring{w}) \cap \Fl_n^{\ge 0},
$$
which is the intersection inside $\Fl_n^{\ge 0}$ of the opposite Schubert cell indexed by $v$ and the Schubert cell indexed by $w$. Then we have the disjoint union
\begin{align}\label{cell_decomposition_equation}
\Fl_n^{\ge 0} = \bigsqcup_{v,w\in\mathfrak{S}_n,\, v \le w}C_{v,w},
\end{align}
and $C_{\id,w_0} = \Fl_n^{>0}$. We observe that each cell $C_{v,w}$ is preserved by the action of the totally positive part of the torus $\H_n^{>0}$ from \cref{torus_action}.
\end{defn}

Rietsch \cite{rietsch99} proved that \eqref{cell_decomposition_equation} is a cell decomposition:
\begin{thm}[Rietsch {\cite[Theorem 2.8]{rietsch99}}]\label{cell_decomposition_ball}
Let $v,w\in\mathfrak{S}_n$ with $v \le w$. Then $C_{v,w}$ is homeomorphic to an open ball of dimension $\ell(w) - \ell(v)$.
\end{thm}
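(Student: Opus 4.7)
The strategy is to build an explicit homeomorphism $\mathbb{R}_{>0}^{\ell(w) - \ell(v)} \to C_{v,w}$, which suffices because $\mathbb{R}_{>0}^d$ is homeomorphic to an open ball of dimension $d$. The homeomorphism will be realized by factoring representatives in $\GL_n(\mathbb{C})$ as ordered products of one-parameter unipotent factors $y_i(t) := \I_n + tE_{i+1,i}$ (which lie in $\GL_n^{\ge 0}$ whenever $t \ge 0$) interleaved with the signed simple-permutation matrices $\mathring{s_i}$ from \cref{defn_symmetric_group}.

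The plan is as follows. Fix a reduced expression $\mathbf{w} = (s_{i_1}, \dots, s_{i_m})$ for $w$, with $m = \ell(w)$. Inside $\mathbf{w}$, select a distinguished subexpression for $v$: a sequence $\id = v_0, v_1, \dots, v_m = v$ in $\mathfrak{S}_n$ in which each step is either \emph{used} (meaning $v_k = v_{k-1}s_{i_k}$ with $\ell(v_k) = \ell(v_{k-1})+1$) or \emph{unused} (meaning $v_k = v_{k-1}$, with the additional constraint $v_{k-1}s_{i_k} < v_{k-1}$ in Bruhat order). Letting $J \subseteq [m]$ index the unused positions, one has $|J| = \ell(w) - \ell(v)$. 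Then define
$$
\phi : \mathbb{R}_{>0}^{J} \longrightarrow \Fl_n(\mathbb{C}), \qquad (t_k)_{k \in J} \longmapsto g_1 g_2 \cdots g_m,
$$
where $g_k := y_{i_k}(t_k)$ if $k \in J$, and $g_k := \mathring{s_{i_k}}$ otherwise. The goal is then to show that $\phi$ is a homeomorphism onto $C_{v,w}$.

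Three things need verification. First, the image of $\phi$ lies in $\Fl_n^{\ge 0}$: this follows because each $y_i(t)$ with $t > 0$ lies in $\GL_n^{\ge 0}$, each $\mathring{s_i}$ has all left-justified minors nonnegative and hence represents an element of $\Fl_n^{\ge 0}$, and \cref{matrices_acting_on_flags} then propagates total nonnegativity through the whole product. Second, the image lies in $\B_n(\mathbb{C}) \cdot \mathring{w} \cap \Bminus_n(\mathbb{C}) \cdot \mathring{v}$: this is a standard tracking computation for how $y_i(t)$ and $\mathring{s_i}$ move between Bruhat cells, where the distinguished-subexpression condition is precisely what guarantees the correct Bruhat length at each intermediate stage. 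Third, $\phi$ is a bijection onto $C_{v,w}$ with continuous inverse.

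The main obstacle is the third statement, especially surjectivity and continuity of the inverse. I would handle it by induction on $m$ via a peeling algorithm: given $V \in C_{v,w}$, decide whether position $1$ is used or unused by testing whether $V$ lies on the appropriate Schubert divisor (equivalently, whether a specific Pl\"ucker coordinate of $V$ vanishes); in the unused case, recover $t_1 > 0$ continuously from a ratio of Pl\"ucker coordinates; then left-multiply $V$ by $g_1^{-1}$ to land in a cell $C_{v',w'}$ with $\ell(w') = m-1$ and invoke the inductive hypothesis. The delicate combinatorial content is that the sequence of used/unused decisions forced by $V$ always coincides with a (unique) distinguished subexpression for $v$ in $\mathbf{w}$, and that the recovered parameters are strictly positive at every unused position, so that the process never fails and is continuous in the Pl\"ucker coordinates of $V$. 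Once bijectivity and bicontinuity of $\phi$ are established, the theorem follows immediately.
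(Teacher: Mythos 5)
The paper does not prove this theorem; it is cited to Rietsch \cite{rietsch99}, whose original argument goes via Lusztig's canonical bases and is quite different from what you propose. Your approach is essentially the Marsh--Rietsch parametrization of totally nonnegative Richardson cells, which is a legitimate and indeed more constructive route to the same result, so the overall strategy is sound and worth comparing favorably. However, your stated definition of a distinguished subexpression has the inequality reversed: at an unused step $k$ the correct requirement is $v_{k-1}s_{i_k} > v_{k-1}$, not $v_{k-1}s_{i_k} < v_{k-1}$. With your condition the construction fails already for the simplest nontrivial cell $C_{\id,s_1} = \Fl_2^{>0}$: the only subexpression of $(s_1)$ ending at $\id$ is $(\id,\id)$ with position $1$ unused, yet $v_0 s_1 = s_1 > \id = v_0$ violates your constraint, so no valid subexpression exists. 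More generally, since $v_0 = \id$ and $\id \cdot s_{i_1} > \id$ always, your condition forces every step to be used, which only yields subexpressions for $v = w$. Flipping the inequality gives the standard (and correct) definition; then the unused set $J$ of the \emph{positive} distinguished subexpression has the right cardinality $\ell(w)-\ell(v)$ and the factorization map $\phi$ is well-defined. Beyond this fix, the proposal defers the real work --- that the product lands in $(\Bminus_n(\mathbb{C})\cdot\mathring{v}) \cap (\B_n(\mathbb{C})\cdot\mathring{w})$, and surjectivity with continuous inverse via the peeling algorithm --- as ``standard'' and ``delicate combinatorial content''; this is precisely Deodhar's lemma together with the Marsh--Rietsch chamber-ansatz analysis, and would need to be spelled out for a complete proof.
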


\begin{eg}\label{eg_cell_decomposition}
We have $C_{132,312} = \left\{\scalebox{0.8}{$\begin{bmatrix}a & -1 & 0 \\ 0 & 0 & -1 \\ 1 & 0 & 0\end{bmatrix}$} : a > 0 \right\}\subseteq\Fl_3^{\ge 0}$, which is homeomorphic to a $1$-dimensional open ball.
\end{eg}

\section{The totally nonnegative part of \texorpdfstring{$\U_n$}{U(n)} and the twist map}\label{sec_tnn_U}

\noindent We  define the totally positive part and totally nonnegative part of $\U_n$, which we will be able to identify with $\Fl_n^{>0}$ and $\Fl_n^{\ge 0}$, respectively. We use this identification to introduce an involution $\twist$ on $\Fl_n^{\ge 0}$ which we call the {\itshape twist map}, motivated by similar maps of Berenstein, Fomin, and Zelevinsky (see \cref{twist_motivation}).

We will use the following consequence of the Gram--Schmidt orthonormalization process, or equivalently, the $QR$-decomposition. It is an instance of the {\itshape Iwasawa decomposition} for semisimple Lie groups. We remark that this decomposition has been studied in the context of totally positive matrices by Gasca and Pe\~{n}a \cite[Section 4]{gasca_pena93}.
\begin{prop}[{\cite[Section VI.4]{knapp02}}]\label{iwasawa_decomposition}
Let $n\in\mathbb{N}$.
\begin{enumerate}[label=(\roman*), leftmargin=*, itemsep=2pt]
\item\label{iwasawa_decomposition_map} The multiplication map
\begin{align}\label{eqn_iwasawa_decomposition}
\U_n \times \H_n^{>0} \times \N_n(\mathbb{C}) \to \GL_n(\mathbb{C}), \quad (g_1,g_2,g_3) \mapsto g_1g_2g_3
\end{align}
is a diffeomorphism.
\item\label{iwasawa_decomposition_isomorphism} We have $\U_n/\T_n = \GL_n(\mathbb{C})/\B_n(\mathbb{C}) = \Fl_n(\mathbb{C})$.
\end{enumerate}

\end{prop}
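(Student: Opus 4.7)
The plan is to prove part \ref{iwasawa_decomposition_map} directly by the complex Gram--Schmidt ($QR$) procedure, then deduce part \ref{iwasawa_decomposition_isomorphism} as a formal corollary. Throughout, the main task is to check that the set-theoretic Gram--Schmidt factorization is actually a diffeomorphism, so smoothness of the inverse must be made explicit rather than just invoked.

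First I would show the map $\mu\colon (g_1,g_2,g_3)\mapsto g_1g_2g_3$ is surjective. Given $g\in\GL_n(\mathbb{C})$, apply Gram--Schmidt to the columns $v_1,\dots,v_n$ of $g$: set $w_k := v_k - \sum_{j<k}\frac{\langle v_k,q_j\rangle}{\langle q_j,q_j\rangle}q_j$ iteratively and $q_k := w_k/\lVert w_k\rVert$. Packaging the $q_k$ as the columns of a matrix $Q\in\U_n$ and reading off the coefficients of $v_k$ in the basis $(q_j)$ gives $g=Q\cdot D\cdot U$ with $D=\Diag{\lVert w_1\rVert,\dots,\lVert w_n\rVert}\in\H_n^{>0}$ and $U\in\N_n(\mathbb{C})$. (Note: these are real norms, so $D$ has positive real entries even over $\mathbb{C}$.)

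For injectivity, suppose $g_1g_2g_3=g_1'g_2'g_3'$ with the specified factor types. Then $(g_1')^{-1}g_1 = g_2'g_3'g_3^{-1}g_2^{-1}$, whose left-hand side lies in $\U_n$ and whose right-hand side lies in $\H_n^{>0}\cdot\N_n(\mathbb{C})$, which is a subgroup of $\B_n(\mathbb{C})$. But a unitary upper-triangular matrix is forced to be diagonal (equating it to its inverse, which is lower-triangular as the conjugate transpose), and its diagonal entries have modulus~$1$. Combined with the fact that the right-hand side has strictly positive diagonal entries (product of two copies of $\H_n^{>0}$ with two unipotents), both sides equal $\I_n$. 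Unique factorization in $\H_n^{>0}\cdot\N_n(\mathbb{C})$ then gives $g_2=g_2'$ and $g_3=g_3'$, and hence $g_1=g_1'$. The map $\mu$ is clearly smooth, so by the above it is a continuous bijection of smooth manifolds of the same (real) dimension $n^2+n+2\binom{n}{2}=2n^2$. To upgrade this to a diffeomorphism, note that the Gram--Schmidt construction above expresses the entries of $Q$, $D$, and $U$ as explicit rational and square-root functions of the entries of $g$, with all denominators and radicands nonvanishing on $\GL_n(\mathbb{C})$ (they are products of $\lVert w_k\rVert^2$, which is positive because Gram--Schmidt applied to an invertible matrix never produces a zero vector). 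Hence $\mu^{-1}$ is smooth, which is the only technical point deserving real attention.

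For part \ref{iwasawa_decomposition_isomorphism}, observe that $\B_n(\mathbb{C})=\H_n(\mathbb{C})\cdot\N_n(\mathbb{C})$, so part \ref{iwasawa_decomposition_map} implies $\GL_n(\mathbb{C})=\U_n\cdot\B_n(\mathbb{C})$, i.e., the map $\U_n\to\GL_n(\mathbb{C})/\B_n(\mathbb{C})$ is surjective. Its fibers are cosets of $\U_n\cap\B_n(\mathbb{C})$; the uniqueness argument above already showed that a unitary upper-triangular matrix is diagonal, so $\U_n\cap\B_n(\mathbb{C})=\T_n$. This gives a bijection $\U_n/\T_n\to\GL_n(\mathbb{C})/\B_n(\mathbb{C})=\Fl_n(\mathbb{C})$ which is smooth with smooth inverse (both inherited from part \ref{iwasawa_decomposition_map}), finishing the proof.
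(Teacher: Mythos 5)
The paper does not prove this proposition; it cites \cite[Section VI.4]{knapp02} for the Iwasawa decomposition of a reductive Lie group and takes the statement as known background. Your proposal supplies a self-contained proof in the $\GL_n(\mathbb{C})$ case via the classical Gram--Schmidt (i.e.\ $QR$) construction, and it is correct: surjectivity comes from orthonormalizing the columns, injectivity from the observation that a unitary matrix which is also upper-triangular with positive real diagonal must be the identity (equivalently $\U_n\cap(\H_n^{>0}\cdot\N_n(\mathbb{C})) = \{\I_n\}$), and smoothness of the inverse from the fact that the Gram--Schmidt formulas are rational-and-radical in the entries of $g$ with denominators $\lVert w_k\rVert^2$ that stay bounded away from $0$ on $\GL_n(\mathbb{C})$. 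Part \ref{iwasawa_decomposition_isomorphism} then falls out of $\B_n(\mathbb{C}) = \H_n(\mathbb{C})\cdot\N_n(\mathbb{C})$ and the identity $\U_n\cap\B_n(\mathbb{C}) = \T_n$, exactly as you argue. The trade-off between your route and the cited one is the usual one: Knapp's proof works uniformly for any connected semisimple (or reductive) Lie group $G = KAN$, which is the level of generality the authors ultimately have in mind (they remark that the paper's results should extend beyond type $A$), whereas your argument is elementary and explicit but tied to the matrix model of $\GL_n(\mathbb{C})$. One small stylistic point: the dimension count $2n^2$ is a nice sanity check but is not actually needed once you have exhibited the smooth two-sided inverse.
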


\begin{defn}\label{defn_iwasawa_projection}
Given $n\in\mathbb{N}$, we let $\Kterm : \GL_n(\mathbb{C}) \to \U_n$ denote the projection onto the first component of the inverse of \eqref{eqn_iwasawa_decomposition}. That is, for $(g_1, g_2, g_3) \in \U_n \times \H_n^{>0} \times \N_n(\mathbb{C})$, we have $\Kterm(g_1g_2g_3) = g_1$.

Similarly, we let $\kterm : \gl_n(\mathbb{C}) \to \uu_n$ denote the projection onto the first summand in the direct sum $\gl_n(\mathbb{C}) = \uu_n \oplus \h_n(\mathbb{R}) \oplus \n_n(\mathbb{C})$, which is linear over $\mathbb{R}$. That is, for $L\in\gl_n(\mathbb{C})$, the element $\kterm(L)$ is skew-Hermitian and $L - \kterm(L)$ is upper-triangular with real diagonal entries.
\end{defn}

\subsection{The totally nonnegative part of \texorpdfstring{$\U_n$}{U(n)}}\label{sec_tnn_U_defn}
We use the Iwasawa decomposition to define the totally positive and totally nonnegative parts of $\U_n$.
\begin{defn}\label{defn_tp_U}
Given $n\in\mathbb{N}$, we define the {\itshape totally positive part} of $\U_n$, denoted $\U_n^{>0}$, as the image of $\GL_n^{>0}$ under $\Kterm$. That is, an element $g_1\in\U_n$ is {\itshape totally positive} if and only if there exist $g_2\in\H_n^{>0}$ and $g_3\in\N_n(\mathbb{C})$ such that $g_1g_2g_3\in\GL_n^{>0}$. We define the {\itshape totally nonnegative part} of $\U_n$, denoted $\U_n^{\ge 0}$, as the closure of $\U_n^{>0}$ inside $\U_n$.
\end{defn}

We make several remarks about \cref{defn_tp_U}.

\begin{rmk}
We emphasize that although $\U_n$ is a subset of $\GL_n(\mathbb{C})$, the same does not hold for the respective totally positive or totally nonnegative parts unless $n=1$. For example, if $n\ge 2$ we have $\GL_n^{>0} \cap \U_n = \emptyset$, since every element in $\U_n$ has a matrix entry which is not a positive real number.
\end{rmk}

\begin{rmk}
It would make sense to denote $\U_n^{\ge 0}$ instead by any of $\O_n^{\ge 0}$, $\SU_n^{\ge 0}$, or $\SO_n^{\ge 0}$, since every element of $\U_n^{\ge 0}$ has real matrix entries and determinant $1$. We use the notation $\U_n^{\ge 0}$ since we wish to view this space as a subset of the real Lie group $\U_n$.
\end{rmk}

\begin{rmk}
We note that the projection of $\GL_n^{\ge 0}$ to $\U_n$ under $\Kterm$ is contained in $\U_n^{\ge 0}$, but is not equal to $\U_n^{\ge 0}$ unless $n=1$ (cf.\ \cref{proj_tnn_GL_to_Fl}).
\end{rmk}

\begin{eg}\label{eg_tnn_U_n=2}
Let us determine $\U_2^{>0}$ and $\U_2^{\ge 0}$, using the description of $\GL_2^{>0}$ from \cref{eg_tnn_GL_n=2}. For $a,b,c,d > 0$, we have the decomposition
$$
\begin{bmatrix}a & b \\[4pt] c & d + \frac{bc}{a} \\[2pt]\end{bmatrix} = \begin{bmatrix}
\frac{a}{\sqrt{a^2 + c^2}} & \frac{-c}{\sqrt{a^2 + c^2}} \\[4pt]
\frac{c}{\sqrt{a^2 + c^2}} & \frac{a}{\sqrt{a^2 + c^2}} \\[2pt]
\end{bmatrix}
\begin{bmatrix}
\sqrt{a^2 + c^2} & 0 \\[4pt]
0 & \frac{ad}{\sqrt{a^2 + c^2}} \\[2pt]
\end{bmatrix}
\begin{bmatrix}1 & \frac{a^2b + acd + bc^2}{a} \\[4pt]
0 & 1 \\[2pt]
\end{bmatrix}
$$
as in \eqref{eqn_iwasawa_decomposition}. Setting $\alpha := \arccos\big(\frac{a}{\sqrt{a^2 + c^2}}\big)$, we obtain
$$
\U_2^{>0} = \left\{\begin{bmatrix}\cos(\alpha) & -\hspace*{-1pt}\sin(\alpha) \\[1pt] \sin(\alpha) & \cos(\alpha)\end{bmatrix} : \alpha \in (0, \textstyle\frac{\pi}{2})\right\}.
$$
It follows that
\begin{gather*}
\U_2^{\ge 0} = \left\{\begin{bmatrix}\cos(\alpha) & -\hspace*{-1pt}\sin(\alpha) \\[1pt] \sin(\alpha) & \cos(\alpha)\end{bmatrix} : \alpha \in [0, \textstyle\frac{\pi}{2}]\right\}.\qedhere
\end{gather*}

\end{eg}

\begin{prop}\label{U_to_Fl}
Let $n\in\mathbb{N}$.
\begin{enumerate}[label=(\roman*), leftmargin=*, itemsep=2pt]
\item\label{U_to_Fl_isomorphism} Let $S$ be the open subset of $\O_n$ defined by the equations
\begin{align}\label{open_neighborhood}
\sum_{I\in\binom{[n]}{k}}\Delta_I > 0 \quad \text{ for } 1 \le k \le n.
\end{align}
Then $S$ contains $\U_n^{\ge 0}$. The projection map $S \to \Fl_n(\mathbb{R})$ is a diffeomorphism from $S$ onto its image, and further restricts to bijections
$$
\U_n^{>0}\xrightarrow{\cong}\Fl_n^{>0} \quad \text{ and } \quad \U_n^{\ge 0}\xrightarrow{\cong}\Fl_n^{\ge 0}.
$$
\item\label{U_to_Fl_tp} An element of $\U_n$ lies in $\U_n^{>0}$ if and only if all its left-justified minors are positive real numbers.
\item\label{U_to_Fl_tnn} An element of $\U_n$ lies in $\U_n^{\ge 0}$ if and only if all its left-justified minors are nonnegative real numbers.
\end{enumerate}

\end{prop}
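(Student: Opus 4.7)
The plan is to prove (ii) first, use it (together with a compactness argument) to deduce (i), and then obtain (iii) by a parallel argument to (ii). The computational engine throughout is the following scaling identity: if $h = QDU$ is an Iwasawa decomposition as in \cref{iwasawa_decomposition}, and $g\in\GL_n(\mathbb{C})$, $t\in\H_n(\mathbb{C})$, then since $DU$ and $t$ are upper-triangular, the Cauchy--Binet identity \eqref{cauchy-binet} yields
$$
\Delta_I(h) \;=\; \Delta_I(Q)\prod_{j=1}^{|I|} d_{j} \qquad \text{and} \qquad \Delta_I(gt) \;=\; \Delta_I(g)\prod_{j=1}^{|I|} t_{j,j}
$$
for every left-justified $I$. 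In particular, the left-justified minors of $Q$ and $h$ share signs (since $d_j > 0$), and right-multiplication by $t$ rescales each $k\times k$ left-justified minor by $\prod_{j=1}^{k} t_{j,j}$.

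For (ii), the forward direction is immediate from the first identity applied to $h \in \GL_n^{>0}$ in real Iwasawa form, which places $\Kterm(h)$ in $\O_n$ with positive left-justified minors. Conversely, given $g\in\U_n$ with all left-justified minors positive real, the flag $V$ of $g$ lies in $\Fl_n^{\Delta > 0}$, which equals $\Fl_n^{>0}$ by \cref{tnn_Fl_converse} applied to the consecutive set $K = [n-1]$. Pick $h\in\GL_n^{>0}$ representing $V$ and set $Q := \Kterm(h) \in \U_n^{>0}$. Since $g$ and $Q$ both represent $V$, we have $g = Qt$ for some $t \in \T_n$. The identity $\Delta_{[k]}(g) = \Delta_{[k]}(Q)\prod_{j=1}^{k} t_{j,j}$ with both sides positive real forces $\prod_{j=1}^{k} t_{j,j} > 0$, and combined with $|t_{j,j}|=1$ this gives $t = \I_n$ by induction on $k$, so $g = Q \in \U_n^{>0}$.

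For (i), since $\O_n$ is closed in $\U_n$, $\U_n^{\ge 0} \subseteq \O_n$, and continuity in (ii) gives all left-justified minors of $g \in \U_n^{\ge 0}$ nonnegative. For each $k$, at least one such minor is nonzero (the flag of $g$ is a well-defined point in the relevant projective space), so $\sum_I \Delta_I(g) > 0$ and hence $g \in S$. For the diffeomorphism assertion, if $g, g' \in S$ represent the same flag then $g^{-1}g' \in \T_n \cap \O_n = \{\pm 1\}^n$; positivity of both $\sum_I \Delta_I(g)$ and $\sum_I \Delta_I(g')$ together with the scaling identity forces $\prod_{j=1}^{k}(g^{-1}g')_{j,j} > 0$, hence $g^{-1}g' = \I_n$ inductively. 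Since $\T_n \cap \O_n$ is discrete, the projection $\O_n \to \Fl_n(\mathbb{R})$ is a local diffeomorphism, and injectivity on the open set $S$ upgrades this to a diffeomorphism onto its (open) image. The bijection $\U_n^{>0} \cong \Fl_n^{>0}$ is then surjective by definition of $\U_n^{>0}$ and injective by the above. Surjectivity of $\U_n^{\ge 0} \twoheadrightarrow \Fl_n^{\ge 0}$ follows by approximating $V \in \Fl_n^{\ge 0}$ by $V^{(m)} \in \Fl_n^{>0}$, lifting to $g^{(m)} \in \U_n^{>0}$, and extracting a convergent subsequence using compactness of $\U_n$.

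Part (iii) then parallels (ii): the forward direction is by continuity, and for the reverse, \cref{tnn_Fl_converse} gives $V \in \Fl_n^{\ge 0}$, part (i) produces a unique $g' \in \U_n^{\ge 0} \subseteq S$ representing $V$, and the sign-matching argument from (ii), applied this time to an index $I_k$ with $\Delta_{I_k}(g) > 0$ (which exists for each $k$ because the flag is well-defined in projective space), shows $g \in \O_n \cap S$, whence $g = g' \in \U_n^{\ge 0}$ by injectivity of $S \to \Fl_n(\mathbb{R})$. The main technical obstacle throughout is the careful control of the $\T_n$-ambiguity in lifting a flag to a unitary representative; this is what the scaling identity $\Delta_I(gt) = \Delta_I(g)\prod_{j=1}^{|I|} t_{j,j}$, together with the discreteness of $\T_n \cap \O_n$, is designed to resolve.
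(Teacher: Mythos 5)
Your proof is correct and follows essentially the same route as the paper's: the forward direction of (ii) via the Cauchy--Binet scaling of left-justified minors through the Iwasawa factor, the reduction to Pl\"{u}cker positivity via \cref{tnn_Fl_converse}, and the resolution of the $\T_n$-ambiguity using the discreteness of $\T_n\cap\O_n$ together with the sign-matching of scaled minors. You reorganize the logical order slightly (proving the reverse of (ii) directly before (i), whereas the paper derives the reverse directions of (ii) and (iii) from (i)) and make explicit a detail the paper leaves implicit, namely that for each $k$ some $k\times k$ left-justified minor of an invertible matrix is nonzero, so that $\sum_I\Delta_I>0$ rather than merely $\ge 0$.
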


\begin{proof}
First we prove the forward direction of part \ref{U_to_Fl_tp}, which implies the forward direction of part \ref{U_to_Fl_tnn}. Let $g_1\in\U_n^{>0}$, so that there exist $g_2\in\H_n^{>0}$ and $g_3\in\N_n(\mathbb{C})$ such that the element $g := g_1g_2g_3$ lies in $\GL_n^{>0}$. Write $g_2 = \Diag{\lambda_1, \dots, \lambda_n}$. Then for every $1 \le k \le n$ and $I\in\binom{[n]}{k}$, we have
$$
\Delta_I(g_1) = (\lambda_1\cdots\lambda_k)^{-1}\Delta_I(g) > 0.
$$

Now we prove part \ref{U_to_Fl_isomorphism}, whence the reverse directions of parts \ref{U_to_Fl_tp} and \ref{U_to_Fl_tnn} follow from \cref{tnn_Fl_converse}. Note that $S$ contains $\U_n^{\ge 0}$ by the forward direction of part \ref{U_to_Fl_tnn}. Let $\T_n(\mathbb{R}) := \T_n\cap\O_n$, which is a discrete group of size $2^n$. Since the $2^n$ orbits $S\cdot g$ for $g\in \T_n(\mathbb{R})$ are disjoint, the projection $S \to \O_n/\T_n(\mathbb{R}) = \Fl_n(\mathbb{R})$ is a diffeomorphism onto its image. This completes the proof.
\end{proof}

\begin{rmk}\label{consecutive_rows}
We observe that by \cref{U_to_Fl}\ref{U_to_Fl_isomorphism} and \cref{fekete}, an element of $\U_n$ lies in $\U_n^{>0}$ if and only if all its left-justified minors indexed by consecutive rows are positive real numbers.
\end{rmk}

\begin{rmk}
Recall from \cref{defn_symmetric_group} that to each permutation $w\in\mathfrak{S}_n$, we associate a signed permutation matrix $\mathring{w}\in\GL_n(\mathbb{R})$. The signs are determined by the fact that $\mathring{w}\in\U_n^{\ge 0}$.
\end{rmk}

\subsection{The reversal and duality maps}\label{sec_flip}
In this subsection, we introduce two involutions on $\U_n$: the reversal map $\rev$ and the duality map $\flip$. They correspond to reversing the order of either the rows or the columns of a matrix in $\U_n$, as well as changing the signs of certain entries so as to be compatible with total positivity. We recall from \cref{defn_symmetric_group} that we have the matrices
$$
\mathring{w}_0 = \scalebox{0.8}{$\begin{bmatrix}
\hspace*{2pt}0 & 0 & \cdots & (-1)^{n-1}\hspace*{2pt} \\
\hspace*{2pt}\vdots & \vdots & \reflectbox{$\ddots$} & \vdots\hspace*{2pt} \\
\hspace*{2pt}0 & -1 & \cdots & 0\hspace*{2pt} \\
\hspace*{2pt}1 & 0 & \cdots & 0\hspace*{2pt}
\end{bmatrix}$}\in\U_n^{\ge 0} \quad \text{ and } \quad \delta_n = \Diag{1, -1, 1, \dots, (-1)^{n-1}}\in\U_n,
$$
satisfying
$$
(\mathring{w}_0)^{-1} = \delta_n\mathring{w}_0\delta_n \quad \text{ and } \quad \delta_n^{-1} = \delta_n.
$$
\begin{defn}\label{defn_rev}
Given $n\in\mathbb{N}$, define the involution $\rev : \U_n \to \U_n$ by$$
\rev(g) := \mathring{w}_0\delta_ng\delta_n.
$$
For $K\subseteq [n-1]$, we obtain an involution $\rev : \PFl{K}{n}(\mathbb{C}) \to \PFl{K}{n}(\mathbb{C})$, given by
$$
\rev(V) = \mathring{w}_0\delta_n\cdot V \quad \text{ for all } V\in\PFl{K}{n}(\mathbb{C}).
$$
That is, $\rev(V)$ is obtained from $V$ by reversing the order of the ground set $[n]$. Note that by \cref{U_to_Fl} and \eqref{defn_tnn_Fl_surjections}, $\rev$ preserves $\U_n^{>0}$, $\U_n^{\ge 0}$, $\PFl{K}{n}^{>0}$, and $\PFl{K}{n}^{\ge 0}$.
\end{defn}

\begin{eg}
The reversal map $\rev$ sends
\begin{gather*}
\begin{bmatrix}
\frac{\sqrt{3}}{2} & -\frac{1}{2\sqrt{2}} & \frac{1}{2\sqrt{2}} \\[6pt]
\frac{\sqrt{3}}{4} & \frac{1}{4\sqrt{2}} & -\frac{5}{4\sqrt{2}} \\[6pt]
\frac{1}{4} & \frac{3\sqrt{3}}{4\sqrt{2}} & \frac{\sqrt{3}}{4\sqrt{2}}
\end{bmatrix}\hspace*{1pt} \text{ to } \hspace*{1pt}\begin{bmatrix}
\frac{1}{4} & -\frac{3\sqrt{3}}{4\sqrt{2}} & \frac{\sqrt{3}}{4\sqrt{2}} \\[6pt]
\frac{\sqrt{3}}{4} & -\frac{1}{4\sqrt{2}} & -\frac{5}{4\sqrt{2}} \\[6pt]
\frac{\sqrt{3}}{2} & \frac{1}{2\sqrt{2}} & \frac{1}{2\sqrt{2}}
\end{bmatrix}\hspace*{1pt} \text{ in } \U_3^{>0}.\qedhere
\end{gather*}

\end{eg}

\begin{defn}\label{defn_orthogonal_complement}
Given $n\in\mathbb{N}$, let $\langle\cdot,\cdot\rangle$ denote the inner product $\langle v,w\rangle := v_1\ccon{w_1} + \cdots + v_n\ccon{w_n}$ on $\mathbb{C}^n$. For a subspace $V$ of $\mathbb{C}^n$, we let $V^\perp$ denote the orthogonal complement of $V$.

Given $K = \{k_1 < \cdots < k_l\}\subseteq [n-1]$, define $K^\perp := \{n-k_l < \cdots < n-k_1\}\subseteq [n-1]$. For a flag $V = (V_{k_1}, \dots, V_{k_l}) \in \PFl{K}{n}(\mathbb{C})$, we define the orthogonally complementary flag $V^\perp \in\PFl{K\hspace*{-1pt}^\perp}{n}(\mathbb{C})$ by
\begin{gather*}
V^\perp := (W_{n-k_l}, \dots, W_{n-k_1}), \quad \text{ where } \quad W_{n-k_i} := (V_{k_i})^\perp \text{ for } 1 \le i \le l.
\end{gather*}

\end{defn}

\begin{defn}\label{defn_flip}
Given $n\in\mathbb{N}$, define the involution $\flip : \U_n \to \U_n$ by$$
\flip(g) := \delta_ng\delta_n\mathring{w}_0.
$$
In other words, $\flip(g)_{i,j} = (-1)^{n-i}g_{i,n+1-j}$ for $1 \le i,j \le n$.

Now let $K\subseteq [n-1]$. Note that $\flip(g)^{-1}\flip(gh) \in \P{K\hspace*{-1pt}^\perp}{n}(\mathbb{C})\cap\U_n$ for all $h\in\P{K}{n}(\mathbb{C})\cap\U_n$, so by \cref{iwasawa_decomposition}\ref{iwasawa_decomposition_isomorphism} we obtain an involution $\flip : \PFl{K}{n}(\mathbb{C}) \to \PFl{K\hspace*{-1pt}^\perp}{n}(\mathbb{C})$. In fact, we have
$$
\flip(V) = \delta_n\cdot V^\perp \quad \text{ for all } V\in\PFl{K}{n}(\mathbb{C}).
$$
We call $\flip(V)$ the {\itshape dual flag} of $V$. Note that the maps $\rev$ and $\flip$ on both $\U_n$ and $\PFl{K}{n}(\mathbb{C})$ commute.
\end{defn}

\begin{lem}\label{flip_action}
Let $n\in\mathbb{N}$, and let $K\subseteq [n-1]$.
\begin{enumerate}[label=(\roman*), leftmargin=*, itemsep=2pt]
\item\label{flip_action_formula_U} Let $g\in\U_n$. For $0 \le k \le n$, we have
$$
\Delta_{I,J}(\flip(g)) = (-1)^{\sumof{J} - \binom{k+1}{2}}\det(g)\ccon{\Delta_{[n]\setminus I, [n]\setminus J^\perp}(g)} \quad \text{ for all } I,J\in\textstyle\binom{[n]}{k}.
$$
\item\label{flip_action_formula_Fl} Let $V\in\PFl{K}{n}(\mathbb{C})$. We have
$$
\Delta_I(\flip(V)) = \ccon{\Delta_{[n]\setminus I}(V)} \quad \text{ for all } k\in K^\perp  \text{ and } I\in\textstyle\binom{[n]}{k}.
$$
\item\label{flip_action_tp} The involution $\flip$ preserves $\U_n^{>0}$ and $\U_n^{\ge 0}$. It takes $\PFl{K}{n}^{>0}$ onto $\PFl{K^\perp}{n}^{>0}$ and $\PFl{K}{n}^{\ge 0}$ onto $\PFl{K^\perp}{n}^{\ge 0}$.
\end{enumerate}

\end{lem}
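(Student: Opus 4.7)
The plan is to prove (i) by direct computation from the entrywise formula for $\flip(g)$, then derive (ii) as the specialization $J=[k]$ and (iii) from the characterization of positivity by left-justified minors in \cref{U_to_Fl}.

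For part (i), I would begin by using $\flip(g)_{i,j} = (-1)^{n-i}g_{i,n+1-j}$ and factoring the row signs out of $\Delta_{I,J}(\flip(g))$, producing a factor $(-1)^{kn-\sumof{I}}$. The remaining minor has columns $\{n+1-j : j\in J\}$ of $g$ arranged in decreasing order; reordering them into increasing order introduces another sign $(-1)^{\binom{k}{2}}$. Writing $J^\perp := \{n+1-j : j\in J\}$, this yields the intermediate identity $\Delta_{I,J}(\flip(g)) = (-1)^{kn-\sumof{I}+\binom{k}{2}}\Delta_{I,J^\perp}(g)$. To convert the right-hand side into a complementary minor and a complex conjugate, I would use that $g\in\U_n$ implies $g^{-1}=\adjoint{g}$, so $\Delta_{A,B}(g^{-1})=\ccon{\Delta_{B,A}(g)}$; substituting into Jacobi's formula \eqref{jacobi} and using $\ccon{\det(g)}=\det(g)^{-1}$ (since $|\det(g)|=1$) gives $\Delta_{I,J^\perp}(g) = (-1)^{\sumof{I}+\sumof{J^\perp}}\det(g)\,\ccon{\Delta_{[n]\setminus I,[n]\setminus J^\perp}(g)}$. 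Combining the two signs and simplifying via $\sumof{J^\perp}=k(n+1)-\sumof{J}$ and $\binom{k+1}{2}=\binom{k}{2}+k$ should reduce the total exponent to $\sumof{J}-\binom{k+1}{2}$ modulo $2$.

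Part (ii) is the specialization $J=[k]$: one checks $\sumof{[k]}=\binom{k+1}{2}$ so the sign is trivial, $[k]^\perp=\{n-k+1,\dots,n\}$ so $[n]\setminus[k]^\perp=[n-k]$ and the complementary minor $\Delta_{[n]\setminus I,[n-k]}(g)=\Delta_{[n]\setminus I}(g)$ is left-justified, and the factor $\det(g)\in\mathbb{C}^\times$ (independent of $I$ within the Pl\"ucker block indexed by $k$) is absorbed into the projective equivalence class. For part (iii), if $g\in\U_n^{>0}$ then \cref{U_to_Fl}\ref{U_to_Fl_tp} yields $\det(g)=\Delta_{[n]}(g)>0$ and every $\Delta_{[n]\setminus I}(g)>0$, so the identity in (ii) forces $\Delta_I(\flip(g))>0$ for all left-justified $I$, hence $\flip(g)\in\U_n^{>0}$; continuity of $\flip$ then extends this to the closure $\U_n^{\ge 0}$, and involutivity upgrades both containments to equalities. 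The flag-variety statements follow by checking the commutative diagram that $\flip(V)=\delta_n\cdot V^\perp$ intertwines the projections $\Fl_n(\mathbb{C})\twoheadrightarrow\PFl{K}{n}(\mathbb{C})$ and $\Fl_n(\mathbb{C})\twoheadrightarrow\PFl{K^\perp}{n}(\mathbb{C})$, and then applying \eqref{defn_tnn_Fl_surjections}.

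The main obstacle is the sign bookkeeping in part (i): four distinct sources of signs---the row factors, the column reordering, Jacobi's formula, and the complex conjugate of $\det(g)$---must be combined cleanly to produce the single exponent $\sumof{J}-\binom{k+1}{2}$. Nothing here is conceptually subtle once the entrywise formula for $\flip(g)$ and the Jacobi-plus-unitarity trick for complementary minors are in hand; the difficulty is purely in organizing the parity calculation so as to avoid off-by-one errors.
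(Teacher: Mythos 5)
Your proposal is correct and takes essentially the same route as the paper: part (i) via the entrywise formula for $\flip(g)$, unitarity $g^{-1}=\adjoint{g}$, and Jacobi's formula (your intermediate sign $(-1)^{kn-\sumof{I}+\binom{k}{2}}$ agrees with the paper's $(-1)^{\sumof{I}+\sumof{J}+\sumof{J^\perp}-\binom{k+1}{2}}$ modulo $2$); part (ii) by specializing $J=[k]$; and part (iii) via \cref{U_to_Fl}, the projections \eqref{defn_tnn_Fl_surjections}, and closures. The only cosmetic difference is that you split the first-step sign into a row-sign factor and a column-reordering factor, whereas the paper records it as a single combined exponent.
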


\begin{proof}
\ref{flip_action_formula_U} For $0 \le k \le n$ and $I,J\in\binom{[n]}{k}$, we have
\begin{align*}
\Delta_{I,J}(\flip(g))
&= (-1)^{\sumof{I} + \sumof{J} + \sumof{J^\perp} - \binom{k+1}{2}}\Delta_{I,J^\perp}(g) \\
&= (-1)^{\sumof{I} + \sumof{J} + \sumof{J^\perp} - \binom{k+1}{2}}\ccon{\Delta_{J^\perp\hspace*{-0.5pt},I}(g^{-1})} \qquad \text{(since $g^{-1} = \adjoint{g}$)} \\
&= (-1)^{\sumof{J} - \binom{k+1}{2}}\ccon{\textstyle\frac{1}{\det(g)}\Delta_{[n]\setminus I, [n]\setminus J^\perp}(g)} \qquad \text{(by \eqref{jacobi})}.
\end{align*}

\ref{flip_action_formula_Fl} This follows from part \ref{flip_action_formula_U}, by representing any element of $\PFl{K}{n}(\mathbb{C})$ by an element of $\U_n$ and taking $J = [k]$.

\ref{flip_action_tp} By part \ref{flip_action_formula_U} and \cref{U_to_Fl}\ref{U_to_Fl_tp}, we see that $\flip$ preserves $\U_n^{>0}$. The result then follows by \cref{U_to_Fl}\ref{U_to_Fl_isomorphism}, \eqref{defn_tnn_Fl_surjections}, and taking closures.
\end{proof}

\begin{eg}\label{eg_flip}
We illustrate \cref{flip_action}\ref{flip_action_tp} in the case of $\U_2^{>0}$. By \cref{eg_tnn_U_n=2}, we can write any element $g\in\U_2^{>0}$ as $g = \scalebox{0.8}{$\begin{bmatrix}\cos(\alpha) & -\hspace*{-1pt}\sin(\alpha) \\[1pt] \sin(\alpha) & \cos(\alpha)\end{bmatrix}$}$, where $\alpha\in(0,\frac{\pi}{2})$. Then
\begin{gather*}
\flip(g) = \delta_2g\delta_2\mathring{w}_0 = \begin{bmatrix}\sin(\alpha) & -\hspace*{-1pt}\cos(\alpha) \\[1pt] \cos(\alpha) & \sin(\alpha)\end{bmatrix}\in\U_2^{>0}.\qedhere
\end{gather*}

\end{eg}

\subsection{The twist map}\label{sec_twist}
We now introduce the twist map $\twist$.
\begin{defn}\label{defn_positive_inverse}
Given $n\in\mathbb{N}$, define the involution $\iota : \GL_n(\mathbb{C}) \to \GL_n(\mathbb{C})$ by
$$
\iota(g) := \delta_n g^{-1}\delta_n.
$$
In other words, $\iota(g)_{i,j} = (-1)^{i+j}(g^{-1})_{i,j}$ for $1 \le i,j \le n$.
\end{defn}

\begin{eg}\label{eg_positive_inverse}
Let $g := \scalebox{0.8}{$\begin{bmatrix}a & b \\ c & d\end{bmatrix}$} \in \GL_2(\mathbb{C})$. Then $\iota(g) = \frac{1}{ad-bc}\scalebox{0.8}{$\begin{bmatrix}d & b \\ c & a\end{bmatrix}$}$.
\end{eg}

Fomin and Zelevinsky \cite[Section 2.1]{fomin_zelevinsky99} call $\iota$ the ``totally nonnegative version'' of the matrix inverse, since it preserves $\GL_n^{>0}$ and $\GL_n^{\ge 0}$; this follows from \eqref{jacobi}. We will focus on the more subtle analogue for $\U_n$:
\begin{prop}\label{positive_inverse}
Let $n\in\mathbb{N}$.
\begin{enumerate}[label=(\roman*), leftmargin=*, itemsep=2pt]
\item\label{positive_inverse_twist} Let $g\in\U_n$. We have
$$
\Delta_{[i,j]}(\iota(g)) = \sum_{I\in\binom{[j-i+2,n]}{i-1}}\!\Delta_I(g)\ccon{\Delta_{[j-i+1]\hspace*{0.5pt}\cup\hspace*{0.5pt} I}(g)} \quad \text{ for all } 1 \le i \le j \le n.
$$
\item\label{positive_inverse_action} The involution $\iota$ preserves $\U_n^{>0}$ and $\U_n^{\ge 0}$.
\end{enumerate}

\end{prop}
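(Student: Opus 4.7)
The plan is to deduce part \ref{positive_inverse_action} from part \ref{positive_inverse_twist} via \cref{U_to_Fl}\ref{U_to_Fl_tp} and \cref{consecutive_rows}, and to prove part \ref{positive_inverse_twist} by computing $\Delta_{[i,j]}(\iota(g))$ in two compatible ways and matching them. Throughout I set $m := j-i+1$.

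First I would compute the left-hand side directly. The definition $\iota(g)_{a,b} = (-1)^{a+b}(g^{-1})_{a,b}$ contributes a factor $(-1)^{\sumof{I}+\sumof{J}}$ when forming any $k\times k$ minor $\Delta_{I,J}(\iota(g))$, which cancels precisely with the sign in Jacobi's formula \eqref{jacobi}; taking $I=[i,j]$ and $J=[1,m]$ yields
\[
\Delta_{[i,j]}(\iota(g)) \;=\; \frac{1}{\det(g)}\,\Delta_{[m+1,n],\,[1,i-1]\cup[j+1,n]}(g).
\]

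Next I would attack the right-hand side. Unitarity $g^{-1}=g^*$, combined with Jacobi's formula, yields the identity
\[
\overline{\Delta_{I,J}(g)} \;=\; \frac{(-1)^{\sumof{I}+\sumof{J}}}{\det(g)}\,\Delta_{[n]\setminus I,\,[n]\setminus J}(g),
\]
valid for all $I,J\subseteq[n]$ of the same size. Applying this to each factor $\overline{\Delta_{[m]\cup I,\,[1,j]}(g)}$ (for $I\in\binom{[m+1,n]}{i-1}$) rewrites the claimed right-hand side as
\[
\frac{(-1)^{\binom{m+1}{2}+\binom{j+1}{2}}}{\det(g)}\sum_{I\in\binom{[m+1,n]}{i-1}}(-1)^{\sumof{I}}\,\Delta_{I,[1,i-1]}(g)\,\Delta_{[m+1,n]\setminus I,\,[j+1,n]}(g).
\]
The inner sum is, up to an explicit sign, precisely the Laplace expansion \eqref{laplace} of the single minor $\Delta_{[m+1,n],\,[1,i-1]\cup[j+1,n]}(g)$ along the column block $[1,i-1]$. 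Assembling the three sign contributions (from Jacobi, from the unitarity identity, and from Laplace expansion), the total exponent simplifies to $(m+i)(m+i-1)$, a product of consecutive integers and hence always even; thus the two expressions for $\Delta_{[i,j]}(\iota(g))$ coincide.

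For part \ref{positive_inverse_action}, note first that $\iota$ sends $\U_n$ to itself since $\delta_n\in\U_n$. Suppose $g\in\U_n^{>0}$. Then \cref{U_to_Fl}\ref{U_to_Fl_tp} says every left-justified minor $\Delta_I(g)$ is a positive real number, so each summand $\Delta_I(g)\,\overline{\Delta_{[j-i+1]\cup I}(g)}$ in the formula from part \ref{positive_inverse_twist} is positive; hence $\Delta_{[i,j]}(\iota(g))>0$ for all $1\le i\le j\le n$. By \cref{consecutive_rows} (it suffices that the left-justified minors on consecutive rows are positive), $\iota(g)\in\U_n^{>0}$. The statement for $\U_n^{\ge 0}=\overline{\U_n^{>0}}$ follows at once by continuity of $\iota$.

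The main obstacle is the sign bookkeeping in part \ref{positive_inverse_twist}: three separately signed contributions must conspire to cancel. There is no conceptual leap beyond this, but the fact that the three-way cancellation collapses to the even exponent $(m+i)(m+i-1)$ is the substance of the formula, and must be verified with care.
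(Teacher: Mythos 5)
Your proof is correct and uses the same three ingredients as the paper's own argument — Jacobi's formula for the inverse, the Laplace expansion, and Jacobi combined with unitarity $g^{-1}=\adjoint{g}$ — differing only in that you organize it as a meet-in-the-middle (reducing both sides to $\frac{1}{\det(g)}\Delta_{[m+1,n],[1,i-1]\cup[j+1,n]}(g)$) rather than as the paper's single chain of equalities. Your explicit sign exponent $(m+i)(m+i-1)$ checks out, and part (ii) matches the paper's deduction from \cref{U_to_Fl}\ref{U_to_Fl_tp} and \cref{consecutive_rows} followed by taking closures.
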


\begin{proof}
\ref{positive_inverse_twist} For $1 \le i \le j \le n$, we have
\begin{align*}
\Delta_{[i,j]}(\iota(g)) &= \frac{1}{\det(g)}\Delta_{[j-i+2,n],[i-1]\hspace*{0.5pt}\cup\hspace*{0.5pt} [j+1,n]}(g) \qquad \text{(by \eqref{jacobi})} \\[2pt]
&= \frac{1}{\det(g)}\sum_{I\in\binom{[j-i+2,n]}{i-1}}\!(-1)^{\sumof{[j-i+2,j]} + \sumof{I}}\Delta_{I,[i-1]}(g)\Delta_{[j-i+2,n]\setminus I,[j+1,n]}(g) \qquad \text{(by \eqref{laplace})} \\[2pt]
&= \sum_{I\in\binom{[j-i+2,n]}{i-1}}\!\Delta_I(g)\ccon{\Delta_{[j-i+1]\hspace*{0.5pt}\cup\hspace*{0.5pt} I}(g)} \qquad \text{(by \eqref{jacobi}, since $g^{-1} = \adjoint{g}$)}.
\end{align*}

\ref{positive_inverse_action} Recall the descriptions of $\U_n^{>0}$ from \cref{U_to_Fl}\ref{U_to_Fl_tp} and \cref{consecutive_rows}. By part \ref{positive_inverse_twist}, if $g\in\U_n^{>0}$, then $\Delta_{[i,j]}(\iota(g)) > 0$ for all $1 \le i \le j \le n$, whence $\iota(g)\in\U_n^{>0}$. Therefore $\iota$ preserves $\U_n^{>0}$, and also preserves the closure $\U_n^{\ge 0}$.
\end{proof}

\begin{rmk}\label{first_row_positive}
\cref{positive_inverse}\ref{positive_inverse_action} implies that if $g\in\U_n^{>0}$, then the entries of the first row of $g$ are nonzero and alternate in sign, i.e.,
\begin{align}\label{first_row_positive_equation}
(-1)^{j-1}g_{1,j} > 0 \quad \text{ for } 1 \le j \le n.
\end{align}
Conversely, if $V\in\Fl_n^{>0}$ and $g\in\U_n$ represents $V$, then $g\in\U_n^{>0}$ if and only if $g$ satisfies \eqref{first_row_positive_equation} (if and only if $g$ satisfies \eqref{open_neighborhood}). However, note that \eqref{first_row_positive_equation} (even after replacing `$>$' with `$\ge$') does not similarly characterize the closure $\U_n^{\ge 0}$, in contrast to \eqref{open_neighborhood}. For example, \eqref{first_row_positive_equation} does not determine the signs in the signed permutation matrix $\mathring{w}\in\U_n^{\ge 0}$. We will return to this distinction in \cref{BFR_map}.
\end{rmk}

\begin{defn}\label{defn_twist}
Let $n\in\mathbb{N}$. By \cref{positive_inverse}\ref{positive_inverse_action}, $\iota$ preserves $\U_n^{\ge 0}$. Hence the identification $\U_n^{\ge 0} \xrightarrow{\cong} \Fl_n^{\ge 0}$ of \cref{U_to_Fl}\ref{U_to_Fl_isomorphism} induces an involution
$$
\twist : \Fl_n^{\ge 0} \to \Fl_n^{\ge 0}, \quad g\in\U_n^{\ge 0} \mapsto \iota(g),
$$
which is a diffeomorphism on some open neighborhood of $\Fl_n^{\ge 0}$ inside $\Fl_n(\mathbb{R})$. (Explicitly, we can take this open neighborhood to be the image in $\Fl_n(\mathbb{R})$ of $S\cap\iota(S)\subseteq\O_n$, where $S$ is defined by \eqref{open_neighborhood}.) We call this involution the {\itshape (Iwasawa) twist map}.
\end{defn}

\begin{rmk}\label{twist_extension}
It is not clear how to extend $\twist$ to all of $\Fl_n(\mathbb{R})$ or $\Fl_n(\mathbb{C})$, since there is no canonical way to represent a complete flag by an element of $\U_n$. Similarly, it is not clear how to define a twist map on the totally nonnegative part of an arbitrary partial flag variety $\PFl{K}{n}^{\ge 0}$, since there is no canonical way to represent a totally nonnegative partial flag by an element of $\U_n^{\ge 0}$.
\end{rmk}

\begin{rmk}\label{twist_motivation}
The name {\itshape twist map} is motivated by the twist maps defined by Berenstein, Fomin, and Zelevinsky on $\N_n(\mathbb{C})$ \cite[Lemma 1.3]{berenstein_fomin_zelevinsky96} and by Fomin and Zelevinsky on $\GL_n(\mathbb{C})$ \cite[(4.10)]{fomin_zelevinsky99}. The key difference between these maps and our map $\twist$ is that the former are based on the Bruhat decomposition of $\GL_n(\mathbb{C})$, whereas $\twist$ is based on the Iwasawa decomposition.

Indeed, the map $\twist$ on $\Fl_n^{\ge 0}$ takes a complete flag represented as a matrix $g\in\U_n^{\ge 0}$, and acts as the map
$$
g \mapsto \delta_n\transpose{g}\delta_n.
$$
The map of \cite[Lemma 1.3]{berenstein_fomin_zelevinsky96} induces a rational map on $\Fl_n(\mathbb{C})$ defined in a similar way (up to an application of the map $\rev$ from \cref{defn_rev}), but where we instead represent a complete flag by a matrix of the form
$$
g = \scalebox{0.8}{$\begin{bmatrix}
\hspace*{2pt}\ast & \ast & \ast & \cdots & (-1)^{n-1}\hspace*{2pt} \\
\hspace*{2pt}\vdots & \vdots & \vdots & \reflectbox{$\ddots$} & \vdots\hspace*{2pt} \\
\hspace*{2pt}\ast & \ast & 1 & \cdots & 0\hspace*{2pt} \\
\hspace*{2pt}\ast & -1 & 0 & \cdots & 0\hspace*{2pt} \\
\hspace*{2pt}1 & 0 & 0 & \cdots & 0\hspace*{2pt}
\end{bmatrix}$}.
$$
For example, when $n=3$ we obtain the map on $\Fl_3^{>0}$
$$
\begin{bmatrix}
bc & -b & 1 \\
a+c & -1 & 0 \\
1 & 0 & 0
\end{bmatrix} \hspace*{2pt}\mapsto\hspace*{2pt} \begin{bmatrix}
bc & -(a+c) & 1 \\
b & -1 & 0 \\
1 & 0 & 0
\end{bmatrix} \quad (a,b,c > 0).
$$

Note that the latter map above is not defined on all of $\Fl_n^{\ge 0}$. Following \cite[(4.10)]{fomin_zelevinsky99}, one could attempt to extend the definition to all of $\Fl_n^{\ge 0}$, but we would expect the definition to be different for each cell $C_{v,w}$ in \eqref{cell_decomposition_equation}, and that the resulting map would not necessarily be continuous when passing between cells. One encounters a similar issue when attempting to extend $\twist$ to all of $\Fl_n(\mathbb{R})$ or $\Fl_n(\mathbb{C})$ (cf.\ \cref{twist_extension}), but the issue occurs away from the totally nonnegative part. The fact that $\twist$ is a diffeomorphism defined on a neighborhood of $\Fl_n^{\ge 0}$ (and not merely on $\Fl_n^{>0}$) will be essential for us, for example in \cref{sec_toda}.
\end{rmk}

\begin{rmk}\label{BFR_remark}
The twist map $\twist$ generalizes (in type $A$) a map of Bloch, Flaschka, and Ratiu \cite[Section 3]{bloch_flaschka_ratiu90} defined on the subset of tridiagonal matrices of $\Orbit_{\bflambda}^{\ge 0}$, known as an isospectral manifold of Jacobi matrices. We discuss this in more detail in \cref{BFR_map}, after introducing Jacobi matrices in \cref{sec_tridiagonal_orbit}. It is also closely related to a map on $\Fl_n(\mathbb{R})$ introduced by Mart\'{i}nez Torres and Tomei \cite[Proposition 1]{martinez_torres_tomei}. The main difference between the two maps is in the domain of definition. Indeed, our twist map is a diffeomorphism on $\Fl_n^{\ge 0}$, and is designed to be compatible with total positivity. On the other hand, the map of \cite{martinez_torres_tomei} is defined piecewise on each Bruhat cell of $\Fl_n(\mathbb{R})$, and is designed to be compatible with the asymptotic behavior of the symmetric Toda flow (see \cref{sorting_remark}); however, it is not compatible with total positivity, for the same reasons as discussed in \cref{twist_motivation}. We also mention that the twist map is also related to the dressing transformations of Poisson geometry \cite{semenov-tian-shansky85,lu_weinstein90}.
\end{rmk}

\begin{eg}\label{eg_twist}
We explicitly describe the twist map $\twist$ on $\Fl_n^{\ge 0}$ for $n = 1,2,3$. When $n=1$, $\Fl_1^{\ge 0}$ is a point, so $\twist$ is necessarily the identity. When $n=2$, we can verify from \cref{eg_tnn_U_n=2} that $\twist$ is again the identity.

We now consider the case $n=3$. Let $g\in\Fl_3^{\ge 0}$, and let $\Delta_I$ and $\Delta^\twist_I$ denote the Pl\"{u}cker coordinates of $g$ and $\twist(g)$, respectively, where the former are normalized so that
\begin{align}\label{twist_normalization}
\sum_{I\in\binom{[3]}{k}}\Delta_I^2 = 1 \quad \text{ for } k = 1,2,3.
\end{align}
Note that the Pl\"{u}cker coordinates satisfy the {\itshape Pl\"{u}cker relation} (cf.\ \cite[Section 9.1]{fulton97})
$$
\Delta_2\Delta_{13} = \Delta_1\Delta_{23} + \Delta_3\Delta_{12}.
$$
By \cref{positive_inverse}\ref{positive_inverse_twist}, we find
$$
\Delta^\twist_1 = \Delta_1,\quad \Delta^\twist_2 = \Delta_2\Delta_{12} + \Delta_3\Delta_{13},\quad \Delta^\twist_3 = \Delta_{23},\quad \Delta^\twist_{12} = \Delta_{12},\quad \Delta^\twist_{23} = \Delta_3.
$$
The remaining Pl\"{u}cker coordinate $\Delta^\twist_{13}$ can be obtained from the Pl\"{u}cker relation:
$$
\Delta^\twist_{13} = \frac{\Delta^\twist_1\Delta^\twist_{23} + \Delta^\twist_3\Delta^\twist_{12}}{\Delta^\twist_2} = \frac{\Delta_1\Delta_3 + \Delta_{23}\Delta_{12}}{\Delta_2\Delta_{12} + \Delta_3\Delta_{13}}.
$$
One can verify that the $\Delta^\twist$'s satisfy the same normalization condition \eqref{twist_normalization} and that $\Delta \mapsto \Delta^\twist$ defines an involution, though this is not obvious.
\end{eg}

The twist map acts on the cell decomposition \eqref{cell_decomposition_equation} of $\Fl_n^{\ge 0}$:
\begin{thm}\label{twist_action}
The twist map $\twist$ preserves $\Fl_n^{>0}$ and $\Fl_n^{\ge 0}$. For all $v,w\in\mathfrak{S}_n$ with $v\le w$, it restricts to a diffeomorphism $\twist : C_{v,w} \xrightarrow{\cong} C_{v^{-1},w^{-1}}$.
\end{thm}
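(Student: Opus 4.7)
The positivity-preservation assertions are essentially automatic: \cref{positive_inverse}\ref{positive_inverse_action} combined with the identification $\U_n^{\ge 0} \cong \Fl_n^{\ge 0}$ in \cref{U_to_Fl}\ref{U_to_Fl_isomorphism} immediately shows that $\twist$ restricts to involutions on both $\Fl_n^{>0}$ and $\Fl_n^{\ge 0}$. The substantive content is the cell identification $\twist(C_{v,w}) = C_{v^{-1},w^{-1}}$, and this is what the rest of the proof will focus on.

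The plan is to pass to the canonical representative $g \in \U_n^{\ge 0}$ of $V \in C_{v,w}$ and analyze the Bruhat-type data of the matrix $\iota(g) = \delta_n g^{-1} \delta_n$ directly in $\GL_n(\mathbb{C})$. The condition $V \in C_{v,w}$ translates to $g \in \B_n \mathring{w} \B_n \cap \Bminus_n \mathring{v} \B_n$. For the Schubert side, I would write $g = b_1 \mathring{w} b_2$ with $b_1, b_2 \in \B_n$ and apply the identity $(\mathring{w})^{-1} = \delta_n \mathring{w^{-1}} \delta_n$ from \cref{defn_symmetric_group} together with the fact that conjugation by $\delta_n$ preserves $\B_n$. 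This is a direct manipulation producing $\iota(g) = (\delta_n b_2^{-1} \delta_n) \cdot \mathring{w^{-1}} \cdot (\delta_n b_1^{-1} \delta_n) \in \B_n \mathring{w^{-1}} \B_n$, hence $\twist(V) \in \B_n \cdot \mathring{w^{-1}}$.

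The main obstacle is the opposite-Schubert side. The naive analogue of the argument above---factoring $g = b_- \mathring{v} b$ with $b_- \in \Bminus_n$ and $b \in \B_n$, and then inverting---swaps the two factors and places $\iota(g)$ in $\B_n \mathring{v^{-1}} \Bminus_n$; this has $\Bminus_n$ on the \emph{wrong} side and tells us nothing directly about the opposite Schubert cell of $\twist(V)$. To resolve this I would invoke the crucial extra fact that the canonical representative $g$ is real orthogonal, since $\U_n^{\ge 0} \subseteq \O_n$ by \cref{U_to_Fl}\ref{U_to_Fl_isomorphism}, so that $g^{-1} = \transpose{g}$. Transposing the factorization $g = b_- \mathring{v} b$ yields $\transpose{g} = \transpose{b} \cdot \mathring{v}^{-1} \cdot \transpose{b_-}$ with $\transpose{b} \in \Bminus_n$ and $\transpose{b_-} \in \B_n$---the sides are now correct. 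Folding in the $\delta_n$-conjugations together with $(\mathring{v})^{-1} = \delta_n \mathring{v^{-1}} \delta_n$ then puts $\iota(g)$ into $\Bminus_n \mathring{v^{-1}} \B_n$, giving $\twist(V) \in \Bminus_n \cdot \mathring{v^{-1}}$.

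Combining the two containments shows $\twist(C_{v,w}) \subseteq C_{v^{-1},w^{-1}}$, and applying the same inclusion to the pair $(v^{-1}, w^{-1})$---legitimate because $\twist$ is an involution---yields the reverse inclusion and hence the bijection. Finally, the diffeomorphism assertion follows immediately because $\twist$ is already a diffeomorphism on an open neighborhood of $\Fl_n^{\ge 0}$ in $\Fl_n(\mathbb{R})$ by \cref{defn_twist}, so its restriction to each cell is smooth with smooth inverse.
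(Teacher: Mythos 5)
Your proof is correct and takes essentially the same route as the paper: the paper also argues the two Bruhat-type containments separately, using $\iota(g)=\delta_n g^{-1}\delta_n$ on the Schubert side and $\iota(g)=\delta_n\transpose{g}\delta_n$ on the opposite-Schubert side (these being equal because the canonical representative $g$ is orthogonal), then invokes involutivity for the reverse inclusion. You have in fact made explicit the one subtlety that the paper's terse proof leaves implicit, namely why the opposite-Schubert factor order comes out right only after passing to the transpose.
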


\begin{proof}
The map $\twist$ preserves $\Fl_n^{>0}$ and $\Fl_n^{\ge 0}$ by \cref{positive_inverse}\ref{positive_inverse_action}. Since $\twist$ is an involution, it remains to prove the containment $\twist(C_{v,w})\subseteq C_{v^{-1},w^{-1}}$. We show that given
$$
g \in (\Bminus_n(\mathbb{C})\hspace*{1pt}\mathring{v}\B_n(\mathbb{C})) \cap (\B_n(\mathbb{C})\hspace*{1pt}\mathring{w}\B_n(\mathbb{C})) \cap \U_n,
$$
we have
$$
\iota(g) \in (\Bminus_n(\mathbb{C})\hspace*{1pt}\mathring{(v^{-1})}\B_n(\mathbb{C})) \cap (\B_n(\mathbb{C})\hspace*{1pt}\mathring{(w^{-1})}\B_n(\mathbb{C})).
$$
Indeed, we have
$$
\iota(g) = \delta_n\transpose{g}\delta_n \in (\delta_n\transpose{\B_n(\mathbb{C})}\delta_n)(\delta_n\transpose{\mathring{v}}\delta_n)(\delta_n\transpose{\Bminus_n(\mathbb{C})}\delta_n) = \Bminus_n(\mathbb{C})\hspace*{1pt}\mathring{(v^{-1})}\B_n(\mathbb{C})
$$
and
\begin{gather*}
\iota(g) = \delta_ng^{-1}\delta_n \in (\delta_n\B_n(\mathbb{C})^{-1}\delta_n)(\delta_n(\mathring{w})^{-1}\delta_n)(\delta_n\B_n(\mathbb{C})^{-1}\delta_n) = \B_n(\mathbb{C})\hspace*{1pt}\mathring{(w^{-1})}\B_n(\mathbb{C}).\qedhere
\end{gather*}

\end{proof}

\begin{eg}\label{eg_twist_action}
We illustrate \cref{twist_action} in the case $n := 3$, $v := 132$, and $w := 312$:
\begin{gather*}
C_{132,312} \ni \begin{bmatrix}
\cos(\alpha) & -\hspace*{-1pt}\sin(\alpha) & 0 \\[1pt]
0 & 0 & -1 \\[1pt]
\sin(\alpha) & \cos(\alpha) & 0
\end{bmatrix} \hspace*{2pt}\overset{\twist}{\mapsto}\hspace*{2pt}
\begin{bmatrix}
\cos(\alpha) & 0 & \sin(\alpha) \\[1pt]
\sin(\alpha) & 0 & -\hspace*{-1pt}\cos(\alpha) \\[1pt]
0 & 1 & 0
\end{bmatrix} \in C_{132,231} \quad \big(\alpha \in (0, \textstyle\frac{\pi}{2})\big).\qedhere
\end{gather*}
\end{eg}

We conclude this section by relating the three maps $\rev$, $\flip$, and $\twist$.
\begin{lem}\label{twist_flip_rev}
Let $n\in\mathbb{N}$.
\begin{enumerate}[label=(\roman*), leftmargin=*, itemsep=2pt]
\item\label{twist_flip_rev_U} We have $\iota\circ\rev\circ\iota = \flip$ on $\U_n$.
\item\label{twist_flip_rev_Fl} We have $\twist\circ\rev\circ\twist = \flip$ on $\Fl_n^{\ge 0}$.
\end{enumerate}
\end{lem}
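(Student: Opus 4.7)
My plan for (i) is a direct chain of substitutions. I would unwind $\iota(\rev(\iota(g)))$ using only the definitions and the identity $\delta_n^2 = \I_n$: the two outer $\delta_n$'s produced by $\iota$ cancel against those inside $\rev$, leaving $\rev(\iota(g)) = \mathring{w}_0 g^{-1}$. Applying $\iota$ once more yields $\delta_n g \mathring{w}_0^{-1} \delta_n$. To recognize this as $\flip(g) = \delta_n g \delta_n \mathring{w}_0$, I would substitute the identity $\mathring{w}_0^{-1} = \delta_n \mathring{w}_0 \delta_n$, which is the $w = w_0$ case of the formula $\mathring{(w^{-1})} = \delta_n \mathring{w}^{-1} \delta_n$ recorded in Definition 2.11 (using $w_0^{-1} = w_0$). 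This is pure matrix algebra and no estimates or positivity inputs are needed.

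For (ii), my plan is to descend the identity from (i) along the identification $\phi \colon \U_n^{\ge 0} \xrightarrow{\cong} \Fl_n^{\ge 0}$ of Proposition 3.2(i). By construction (Definition 3.13), $\phi$ intertwines $\iota$ on $\U_n^{\ge 0}$ with $\twist$ on $\Fl_n^{\ge 0}$. What remains is to check that $\phi$ also intertwines $\rev$ on $\U_n$ with $\rev$ on $\Fl_n^{\ge 0}$, and likewise for $\flip$. The $\rev$-intertwining is essentially immediate: the extra $\delta_n$ on the right in $\rev(g) = \mathring{w}_0 \delta_n g \delta_n$ lies in $\T_n \subseteq \B_n(\mathbb{C})$ and is absorbed by the quotient, so $\rev(g)$ represents $\mathring{w}_0 \delta_n \cdot V = \rev(V)$. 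The $\flip$-intertwining is slightly more involved: I would check that the span of the first $k$ columns of $\flip(g) = \delta_n g \delta_n \mathring{w}_0$ equals $\delta_n \cdot V_{n-k}^\perp = \flip(V)_k$, using that right multiplication by $\mathring{w}_0$ reverses column order up to sign and that $g$ has orthonormal columns. Once these intertwining properties are in place, applying $\phi$ to the identity $\iota \circ \rev \circ \iota = \flip$ from (i) delivers $\twist \circ \rev \circ \twist = \flip$.

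The main obstacle I anticipate is almost negligible — it is just the bookkeeping to confirm the $\flip$-intertwining in (ii), which retraces the reasoning that motivated Definition 3.7. Everything else is a matter of carefully pushing around $\delta_n$'s and $\mathring{w}_0$'s.
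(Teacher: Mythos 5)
Your proof is correct and follows the same route as the paper, which simply says ``We can verify part (i) from the definitions, whence part (ii) follows''; you have filled in exactly those computations. The direct-substitution chain in (i) and the descent of (ii) along the identification $\U_n^{\ge 0}\cong\Fl_n^{\ge 0}$ (using that $\twist$, $\rev$, and $\flip$ on $\Fl_n^{\ge 0}$ are by construction induced from $\iota$, $\rev$, and $\flip$ on $\U_n^{\ge 0}$) are both sound.
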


\begin{proof}
We can verify part \ref{twist_flip_rev_U} from the definitions, whence part \ref{twist_flip_rev_Fl} follows.
\end{proof}

Like the twist map $\twist$, the maps $\rev$ and $\flip$ act on the cell decomposition \eqref{cell_decomposition_equation} of $\Fl_n^{\ge 0}$:
\begin{lem}\label{flip_rev_action}
Let $n\in\mathbb{N}$, and let $v,w\in\mathfrak{S}_n$ with $v\le w$.
\begin{enumerate}[label=(\roman*), leftmargin=*, itemsep=2pt]
\item\label{flip_rev_action_rev} The map $\rev : \Fl_n^{\ge 0} \to \Fl_n^{\ge 0}$ restricts to a diffeomorphism $C_{v,w} \xrightarrow{\cong} C_{w_0w,w_0v}$.
\item\label{flip_rev_action_flip} The map $\flip : \Fl_n^{\ge 0} \to \Fl_n^{\ge 0}$ restricts to a diffeomorphism $C_{v,w} \xrightarrow{\cong} C_{ww_0,vw_0}$.
\end{enumerate}
\end{lem}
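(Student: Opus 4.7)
My plan is to prove \ref{flip_rev_action_rev} by a direct calculation on Bruhat double cosets, and then deduce \ref{flip_rev_action_flip} formally from \cref{twist_flip_rev}\ref{twist_flip_rev_Fl} combined with \cref{twist_action} and part \ref{flip_rev_action_rev}.

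For \ref{flip_rev_action_rev}, I recall from \cref{defn_rev} that $\rev$ is given on $\Fl_n(\mathbb{C})$ by left multiplication by $\mathring{w}_0\delta_n$ and already preserves $\Fl_n^{\ge 0}$. So it suffices to show that this left multiplication sends $(\Bminus_n(\mathbb{C})\cdot\mathring{v})\cap(\B_n(\mathbb{C})\cdot\mathring{w})$ into $(\Bminus_n(\mathbb{C})\cdot\mathring{(w_0w)})\cap(\B_n(\mathbb{C})\cdot\mathring{(w_0v)})$ as subsets of $\Fl_n(\mathbb{C})$. Fix $V$ in the former intersection and write $V=b\mathring{w}\B_n(\mathbb{C})=b^-\mathring{v}\B_n(\mathbb{C})$ for some $b\in\B_n(\mathbb{C})$ and $b^-\in\Bminus_n(\mathbb{C})$. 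The key structural facts are that conjugation by the diagonal matrix $\delta_n$ preserves both $\B_n(\mathbb{C})$ and $\Bminus_n(\mathbb{C})$, while conjugation by $\mathring{w}_0$ swaps them. Rewriting,
\[
\rev(V) \;=\; \mathring{w}_0\delta_n b\mathring{w}\B_n(\mathbb{C}) \;=\; \big(\mathring{w}_0\,\delta_n b\delta_n^{-1}\,\mathring{w}_0^{-1}\big)\,\mathring{w}_0\delta_n\mathring{w}\B_n(\mathbb{C}),
\]
where the parenthesized factor lies in $\Bminus_n(\mathbb{C})$, and $\mathring{w}_0\delta_n\mathring{w}$ is a signed permutation matrix with underlying permutation $w_0w$, whose diagonal signs are absorbed into $\B_n(\mathbb{C})$ on the right. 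Hence $\rev(V)\in\Bminus_n(\mathbb{C})\cdot\mathring{(w_0w)}$. The identical manipulation applied to $V=b^-\mathring{v}\B_n(\mathbb{C})$ yields $\rev(V)\in\B_n(\mathbb{C})\cdot\mathring{(w_0v)}$. Since left multiplication by $w_0$ reverses Bruhat order, $v\le w$ implies $w_0w\le w_0v$, so $C_{w_0w,w_0v}$ is a legitimate cell and $\rev(V)\in C_{w_0w,w_0v}$. As $\rev$ is an involution of $\Fl_n^{\ge 0}$ and a smooth bijection of the ambient $\Fl_n(\mathbb{R})$, this containment is an equality and the restriction is a diffeomorphism.

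For \ref{flip_rev_action_flip}, I just compose. By \cref{twist_flip_rev}\ref{twist_flip_rev_Fl}, $\flip=\twist\circ\rev\circ\twist$ on $\Fl_n^{\ge 0}$, so by \cref{twist_action} and part \ref{flip_rev_action_rev} I chain
\[
C_{v,w}\xrightarrow{\twist}C_{v^{-1},w^{-1}}\xrightarrow{\rev}C_{w_0w^{-1},w_0v^{-1}}\xrightarrow{\twist}C_{(w_0w^{-1})^{-1},(w_0v^{-1})^{-1}}=C_{ww_0,vw_0},
\]
using $w_0^{-1}=w_0$. Each arrow is a diffeomorphism between cells, so the composite is as well.

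The only real subtlety lies in the sign bookkeeping in part \ref{flip_rev_action_rev}: one must note that $\mathring{w}_0\delta_n\mathring{w}$, while not literally equal to $\mathring{(w_0w)}$, differs from it by a diagonal sign matrix which is absorbed into the Borel factor on the right, so the coset in $\Fl_n(\mathbb{C})$ is indeed the Schubert cell indexed by the permutation $w_0w$. Once this point is pinned down, the rest of the argument is routine conjugation algebra, and part \ref{flip_rev_action_flip} is purely formal.
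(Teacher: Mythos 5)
Your proof is correct and follows essentially the same route as the paper: for part \ref{flip_rev_action_rev}, rewrite a representative of the coset so that conjugation by $\mathring{w}_0\delta_n$ swaps the Borel and opposite Borel while shifting the permutation index by $w_0$ on the left (with diagonal signs absorbed), then conclude equality from involutivity; for part \ref{flip_rev_action_flip}, compose $\twist\circ\rev\circ\twist$ via \cref{twist_flip_rev}\ref{twist_flip_rev_Fl} and \cref{twist_action}. You spell out some bookkeeping the paper leaves implicit (the sign absorption and the explicit index chase in the composition), but the structure and key manipulations are identical.
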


\begin{proof}
We prove part \ref{flip_rev_action_rev}, whence part \ref{flip_rev_action_flip} follows from \cref{twist_flip_rev}\ref{twist_flip_rev_Fl} and \cref{twist_action}. Since $\rev$ is an involution, it suffices to prove the containment $\rev(C_{v,w})\subseteq C_{w_0w,w_0v}$. We show that given
$$
g \in (\Bminus_n(\mathbb{C})\hspace*{1pt}\mathring{v}\B_n(\mathbb{C})) \cap (\B_n(\mathbb{C})\hspace*{1pt}\mathring{w}\B_n(\mathbb{C})) \cap \U_n,
$$
we have
$$
\rev(g) \in (\Bminus_n(\mathbb{C})\hspace*{1pt}\mathring{(w_0w)}\B_n(\mathbb{C})) \cap (\B_n(\mathbb{C})\hspace*{1pt}\mathring{(w_0v)}\B_n(\mathbb{C})).
$$
Indeed, we have
\begin{align*}
\rev(g) = \mathring{w}_0\delta_ng\delta_n &\in (\mathring{w}_0\Bminus_n(\mathbb{C})\hspace*{1pt}\mathring{v}\B_n(\mathbb{C})) \cap (\mathring{w}_0\B_n(\mathbb{C})\hspace*{1pt}\mathring{w}\B_n(\mathbb{C})) \\
&= (\B_n(\mathbb{C})\hspace*{1pt}\mathring{(w_0v)}\B_n(\mathbb{C})) \cap (\Bminus_n(\mathbb{C})\hspace*{1pt}\mathring{(w_0w)}\B_n(\mathbb{C})).\qedhere
\end{align*}

\end{proof}

\section{The totally nonnegative part of an adjoint orbit}\label{sec_tnn_orbit}

\noindent In this section we introduce the totally positive and totally nonnegative parts of any adjoint orbit $\Orbit_{\bflambda}$ of $\uu_n$. We can identify $\Orbit_{\bflambda}$ with some partial flag variety $\PFl{K}{n}(\mathbb{C})$, and its totally positive and totally nonnegative parts are defined so as to agree with those for $\PFl{K}{n}(\mathbb{C})$. We then study this notion in more detail in three cases of particular interest: when the corresponding flag variety is the complete flag variety, when the corresponding flag variety is a Grassmannian, and for tridiagonal matrices.

\subsection{Adjoint orbits of \texorpdfstring{$\uu_n$}{u(n)}}\label{sec_tnn_u}
We introduce adjoint orbits of the Lie algebra $\uu_n$ of $\U_n$.
\begin{defn}\label{defn_orbit}
Let $\bflambda = (\lambda_1, \dots, \lambda_n)\in\mathbb{R}^n$ be weakly decreasing, i.e., $\lambda_1 \ge \cdots \ge \lambda_n$. We define the {\itshape adjoint orbit}
$$
\Orbit_{\bflambda} := \{g(\ii\Diag{\bflambda})g^{-1} : g\in\U_n\} \subseteq \uu_n.
$$
We define the totally positive and totally nonnegative parts of $\Orbit_{\bflambda}$ by 
$$
\Orbit_{\bflambda}^{>0} := \{g(\ii\Diag{\bflambda})g^{-1} : g\in\U_n^{>0}\}, \quad \Orbit_{\bflambda}^{\ge 0} := \overline{\Orbit_{\bflambda}^{>0}} = \{g(\ii\Diag{\bflambda})g^{-1} : g\in\U_n^{\ge 0}\},
$$
where the latter description of $\Orbit_{\bflambda}^{\ge 0}$ will follow from \cref{Fl_to_orbit}.
\end{defn}

\begin{rmk}
We note that every adjoint orbit of $\uu_n$ is of the form $\Orbit_{\bflambda}$ for some $\bflambda$. The assumption that $\bflambda$ is weakly decreasing is not an arbitrary convention; it is essential for defining $\Orbit_{\bflambda}^{>0}$ and $\Orbit_{\bflambda}^{\ge 0}$.
\end{rmk}

\begin{rmk}\label{left_right_action}
We have defined $\Orbit_{\bflambda}^{>0}$ and $\Orbit_{\bflambda}^{\ge 0}$ using the left action of $\U_n$ on $\uu_n$. If instead we use the right action, we obtain the same spaces conjugated by $\delta_n$:
$$
\{g^{-1}(\ii\Diag{\bflambda})g : g\in\U_n^{>0}\} = \delta_n\Orbit_{\bflambda}^{>0}\delta_n, \quad \{g^{-1}(\ii\Diag{\bflambda})g : g\in\U_n^{\ge 0}\} = \delta_n\Orbit_{\bflambda}^{\ge 0}\delta_n.
$$
This follows from \cref{positive_inverse}\ref{positive_inverse_action}.
\end{rmk}

\begin{eg}\label{eg_orbit}
Let $\bflambda := (\lambda_1, \lambda_2)\in\mathbb{R}^2$ with $\lambda_1 \ge \lambda_2$. Then by \cref{eg_tnn_U_n=2}, we have
\begin{gather*}
\Orbit_{\bflambda}^{>0} = \left\{\ii\begin{bmatrix}
\lambda_1\cos^2(\alpha) + \lambda_2\sin^2(\alpha) & (\lambda_1 - \lambda_2)\sin(\alpha)\cos(\alpha) \\[2pt]
(\lambda_1 - \lambda_2)\sin(\alpha)\cos(\alpha) & \lambda_1\sin^2(\alpha) + \lambda_2\cos^2(\alpha)
\end{bmatrix} : \alpha \in (0, \textstyle\frac{\pi}{2})\right\}.\qedhere
\end{gather*}

\end{eg}

\begin{lem}\label{Fl_to_orbit}
Let $\bflambda\in\mathbb{R}^n$ be weakly decreasing, and set $K := \{i \in [n-1] : \lambda_i > \lambda_{i+1}\}$. Then the map
\begin{align}\label{Fl_to_orbit_map}
\begin{gathered}
\PFl{K}{n}(\mathbb{C}) \to \Orbit_{\bflambda}, \\
g\in\U_n \mapsto g(\ii\Diag{\bflambda})g^{-1}
\end{gathered}
\end{align}
is a diffeomorphism which takes $\PFl{K}{n}^{>0}$ onto $\Orbit_{\bflambda}^{>0}$ and $\PFl{K}{n}^{\ge 0}$ onto $\Orbit_{\bflambda}^{\ge 0}$.
\end{lem}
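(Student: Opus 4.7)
The plan is to first establish that \eqref{Fl_to_orbit_map} is a diffeomorphism using the orbit-stabilizer picture, and then translate the positivity claims through the identification of $\U_n^{>0}$ with $\Fl_n^{>0}$ given by \cref{U_to_Fl}.

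For the diffeomorphism, I would first identify the stabilizer of $\ii\Diag{\bflambda}$ under the conjugation action of $\U_n$. Since $\bflambda$ is weakly decreasing with strict descents exactly at the positions in $K = \{k_1 < \cdots < k_l\}$, the eigenspaces of $\Diag{\bflambda}$ are spanned by coordinate axes in blocks of sizes $k_1, k_2-k_1, \dots, n-k_l$, so the centralizer of $\Diag{\bflambda}$ in $\U_n$ is precisely the block-diagonal unitary subgroup, which coincides with $\P{K}{n}(\mathbb{C})\cap\U_n$. By \cref{iwasawa_decomposition}\ref{iwasawa_decomposition_isomorphism} (and its partial-flag analogue, obtained by further right-multiplying by the unitary diagonal blocks contained in each parabolic), the inclusion $\U_n\hookrightarrow\GL_n(\mathbb{C})$ induces a diffeomorphism $\U_n/(\P{K}{n}(\mathbb{C})\cap\U_n)\xrightarrow{\cong}\GL_n(\mathbb{C})/\P{K}{n}(\mathbb{C})=\PFl{K}{n}(\mathbb{C})$. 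Combined with the stabilizer computation, \eqref{Fl_to_orbit_map} factors as a continuous bijection from the compact space $\PFl{K}{n}(\mathbb{C})$ onto the Hausdorff space $\Orbit_{\bflambda}$, and is smooth with surjective differential at each point by the standard orbit-stabilizer principle for smooth Lie group actions; hence it is a diffeomorphism.

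For positivity, I would use that under \cref{U_to_Fl}\ref{U_to_Fl_isomorphism} the projection $\U_n\twoheadrightarrow\Fl_n(\mathbb{C})$ restricts to a bijection $\U_n^{>0}\xrightarrow{\cong}\Fl_n^{>0}$, and then compose with the canonical surjection $\Fl_n(\mathbb{C})\twoheadrightarrow\PFl{K}{n}(\mathbb{C})$ from \eqref{defn_Fl_surjection}, which by \cref{defn_tnn_Fl} and \eqref{defn_tnn_Fl_surjections} maps $\Fl_n^{>0}$ onto $\PFl{K}{n}^{>0}$. This shows that the image of $\U_n^{>0}$ in $\PFl{K}{n}(\mathbb{C})$ is exactly $\PFl{K}{n}^{>0}$, so applying \eqref{Fl_to_orbit_map} sends $\PFl{K}{n}^{>0}$ onto $\{g(\ii\Diag{\bflambda})g^{-1}:g\in\U_n^{>0}\}=\Orbit_{\bflambda}^{>0}$. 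Since \eqref{Fl_to_orbit_map} is a homeomorphism, it commutes with taking closures, and therefore $\PFl{K}{n}^{\ge 0}=\overline{\PFl{K}{n}^{>0}}$ maps onto $\overline{\Orbit_{\bflambda}^{>0}}=\Orbit_{\bflambda}^{\ge 0}$. The same argument applied to $\U_n^{\ge 0}$ (using closures in \cref{U_to_Fl} and in \cref{defn_tnn_Fl}) gives the alternative description $\Orbit_{\bflambda}^{\ge 0}=\{g(\ii\Diag{\bflambda})g^{-1}:g\in\U_n^{\ge 0}\}$ claimed in \cref{defn_orbit}.

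The only genuinely non-routine point is verifying that the stabilizer of $\ii\Diag{\bflambda}$ in $\U_n$ equals $\P{K}{n}(\mathbb{C})\cap\U_n$; the rest follows by routine homogeneous-space reasoning together with the already-established correspondence $\U_n^{\ge 0}\leftrightarrow\Fl_n^{\ge 0}$. The stabilizer statement is standard once one observes that commuting with $\Diag{\bflambda}$ forces a unitary matrix to preserve each eigenspace, hence to be block diagonal of the prescribed shape.
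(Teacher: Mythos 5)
Your proposal is correct and follows essentially the same route as the paper's own (very terse) proof: identify $\PFl{K}{n}(\mathbb{C})$ with $\U_n/(\P{K}{n}(\mathbb{C})\cap\U_n)$ via the Iwasawa decomposition, observe that $\P{K}{n}(\mathbb{C})\cap\U_n$ is exactly the $\U_n$-centralizer of $\ii\Diag{\bflambda}$, apply orbit--stabilizer to get the diffeomorphism, and then push the positivity statements through the identification $\U_n^{>0}\cong\Fl_n^{>0}$ from \cref{U_to_Fl} and the projection \eqref{defn_tnn_Fl_surjections}, passing to closures for the nonnegative case. You have supplied the details (stabilizer computation, compact-to-Hausdorff bijectivity, closure argument) that the paper compresses into two sentences.
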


\begin{proof}
By \cref{iwasawa_decomposition}, we have $\PFl{K}{n}(\mathbb{C}) = \U_n/(\P{K}{n}(\mathbb{C})\cap\U_n)$, and $\P{K}{n}(\mathbb{C})\cap\U_n$ is the centralizer of $\ii\Diag{\bflambda}$. Therefore \eqref{Fl_to_orbit_map} is well-defined and a diffeomorphism, and the remaining assertions follow from \cref{U_to_Fl}\ref{U_to_Fl_isomorphism}.
\end{proof}

\subsection{The complete flag variety and eventually totally positive matrices}\label{sec_Fl_orbit} We consider the case when $\Orbit_{\bflambda}\cong\Fl_n(\mathbb{C})$, i.e., when $\bflambda$ is strictly decreasing (or {\itshape generic}). After translating $\bflambda$ by a multiple of $(1, \dots, 1)$, we may additionally assume that all its components are positive. Then results of Gantmakher and Krein \cite{gantmakher_krein37} and Kushel \cite{kushel15} characterize $-\ii \Orbit_{\bflambda}^{>0}$ as a space of {\itshape eventually totally positive matrices}. For completeness, we provide a proof.
\begin{prop}[Gantmakher and Krein {\cite[Theorem 16]{gantmakher_krein37}}; Kushel {\cite[Theorem 7]{kushel15}}]\label{complete_to_orbit}
Let $\bflambda = (\lambda_1, \dots, \lambda_n)$, where $\lambda_1 > \cdots > \lambda_n > 0$, and let $\ii L\in\Orbit_{\bflambda}$. Then the following are equivalent:
\begin{enumerate}[label=(\roman*), leftmargin=*, itemsep=2pt]
\item\label{complete_to_orbit_positivity} $\ii L\in\Orbit_{\bflambda}^{>0}$;
\item\label{complete_to_orbit_some} $L^m\in\GL_n^{>0}$ for some $m\in\mathbb{Z}_{>0}$; and
\item\label{complete_to_orbit_all} $L^m\in\GL_n^{>0}$ for all sufficiently large $m\in\mathbb{Z}_{>0}$.
\end{enumerate}

\end{prop}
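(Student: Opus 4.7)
The plan is to pick a representative $g\in\U_n$ with $L = g\Diag{\bflambda}g^{-1}$, so that $L^m = g\Diag{\lambda_1^m,\ldots,\lambda_n^m}g^{-1}$ for every $m\ge 1$. Since \ref{complete_to_orbit_all} $\Rightarrow$ \ref{complete_to_orbit_some} is immediate, it suffices to establish \ref{complete_to_orbit_positivity} $\Rightarrow$ \ref{complete_to_orbit_all} and \ref{complete_to_orbit_some} $\Rightarrow$ \ref{complete_to_orbit_positivity}.

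For \ref{complete_to_orbit_positivity} $\Rightarrow$ \ref{complete_to_orbit_all}, take $g\in\U_n^{>0}$. Two applications of Cauchy--Binet \eqref{cauchy-binet}, together with the fact that $\Diag{\bflambda^m}$ is diagonal, yield
\[\Delta_{I,J}(L^m) = \sum_{K\in\binom{[n]}{r}}\lambda_K^m\,\Delta_{I,K}(g)\,\Delta_{K,J}(g^{-1})\]
for any $I,J\in\binom{[n]}{r}$, where $\lambda_K^m := \prod_{k\in K}\lambda_k^m$. Since $g^{-1} = \adjoint{g}$, we have $\Delta_{K,J}(g^{-1}) = \overline{\Delta_{J,K}(g)}$, so the $K=[r]$ term equals $\lambda_{[r]}^m\,\Delta_I(g)\,\overline{\Delta_J(g)}$; by \cref{U_to_Fl}\ref{U_to_Fl_tp} both $\Delta_I(g)$ and $\Delta_J(g)$ are strictly positive real numbers. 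The hypothesis $\lambda_1 > \cdots > \lambda_n > 0$ implies that $\lambda_K$ is uniquely maximized over $\binom{[n]}{r}$ at $K=[r]$, so this term geometrically dominates the remaining finite sum as $m\to\infty$. Since there are only finitely many pairs $(I,J)$, we have $\Delta_{I,J}(L^m) > 0$ for every such pair once $m$ is sufficiently large, giving \ref{complete_to_orbit_all}.

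For \ref{complete_to_orbit_some} $\Rightarrow$ \ref{complete_to_orbit_positivity}, observe that $L^m$ is diagonalizable with distinct positive eigenvalues $\lambda_1^m > \cdots > \lambda_n^m$ and that its ordered eigenspaces (by decreasing eigenvalue) coincide with those of $L$. Applying \cref{gk}\ref{gk_eigenvectors} to $L^m\in\GL_n^{>0}$ shows that the complete flag spanned by these eigenvectors lies in $\Fl_n^{>0}$. Under the diffeomorphism \eqref{Fl_to_orbit_map} of \cref{Fl_to_orbit}, this flag is exactly the element of $\Fl_n(\mathbb{C})$ corresponding to $\ii L\in\Orbit_{\bflambda}$, so $\ii L\in\Orbit_{\bflambda}^{>0}$.

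The main step to get right is the asymptotic analysis in \ref{complete_to_orbit_positivity} $\Rightarrow$ \ref{complete_to_orbit_all}: one needs to isolate the unique dominant term in the Cauchy--Binet expansion and verify that its coefficient is strictly positive. This genuinely uses Lusztig's full notion of total positivity for $\U_n^{>0}$ via the positivity of \emph{all} left-justified minors of $g$, rather than just Pl\"ucker positivity of the associated flag. Once these facts are in hand, the subdominant terms decay geometrically in $m$ relative to the dominant term, and a uniform bound over the finitely many index pairs completes the argument.
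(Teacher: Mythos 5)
Your proof is correct and follows essentially the same route as the paper: the equivalence \ref{complete_to_orbit_some}~$\Rightarrow$~\ref{complete_to_orbit_positivity} via \cref{gk}\ref{gk_eigenvectors} applied to $L^m$, and \ref{complete_to_orbit_positivity}~$\Rightarrow$~\ref{complete_to_orbit_all} via the Cauchy--Binet expansion of $\Delta_{I,J}(L^m)$ with the $K=[r]$ term dominating. One small quibble with the closing remark: positivity of the left-justified minors of $g\in\U_n$ is precisely Pl\"ucker positivity of the associated flag, and for $\Fl_n$ this coincides with Lusztig positivity by \cref{tnn_Fl_converse}, so the contrast you draw there is not really operative here --- but this is a side comment and does not affect the argument.
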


\begin{proof}
Note that \ref{complete_to_orbit_all} $\Rightarrow$ \ref{complete_to_orbit_some} holds, and \ref{complete_to_orbit_some} $\Rightarrow$ \ref{complete_to_orbit_positivity} follows from \cref{gk}\ref{gk_eigenvectors}. We now prove \ref{complete_to_orbit_positivity} $\Rightarrow$ \ref{complete_to_orbit_all}. Suppose that $\ii L\in\Orbit_{\bflambda}^{>0}$, so that $L = g\Diag{\bflambda}g^{-1}$ for some $g\in\U_n^{>0}$. Let $I,J\in\binom{[n]}{k}$, where $1 \le k \le n$. By \eqref{cauchy-binet}, we have
$$
\Delta_{I,J}(L^m) = \sum_{K\in\binom{[n]}{k}}\!\big(\!\textstyle\prod_{i\in K}\lambda_i\big)^m\Delta_{I,K}(g)\Delta_{J,K}(g) = (\lambda_1\cdots\lambda_k)^m(\Delta_I(g)\Delta_J(g) + o(1))
$$
as $m\to\infty$. Since $\Delta_I(g), \Delta_J(g) > 0$ by \cref{U_to_Fl}\ref{U_to_Fl_tp}, we see that $\Delta_{I,J}(L^m) > 0$ for all $m$ sufficiently large.
\end{proof}

\begin{rmk}\label{oscillatory_remark}
A matrix $g\in\GL_n(\mathbb{R})$ is called {\itshape oscillatory} \cite[Section 2]{gantmakher_krein37} if $g\in\GL_n^{\ge 0}$ and $g^m\in\GL_n^{>0}$ for some $m > 0$ (equivalently, for all $m \ge n-1$). Every eventually totally positive matrix is oscillatory, but the converse does not hold. For example, the matrix
$$
g = \begin{bmatrix}
11 & 3\sqrt{2} & -1 \\[2pt]
3\sqrt{2} & 10 & 3\sqrt{2} \\[2pt]
-1 & 3\sqrt{2} & 11
\end{bmatrix}
$$
is eventually totally positive, but it is not totally nonnegative.
\end{rmk}

\begin{rmk}\label{complete_to_orbit_tnn}
We observe that in \cref{complete_to_orbit}\ref{complete_to_orbit_some}, the required power $m\in\mathbb{Z}_{>0}$ may be arbitrarily large, even when $\bflambda$ is fixed. (This is in contrast to the situation for oscillatory matrices, where the required power $m$ is at most $n-1$.) To see this, take $\alpha\in (0,\frac{\pi}{2})$, and define
$$
\ii L := g(\ii \Diag{\lambda_1, \lambda_2, \lambda_3})g^{-1} \in\Orbit_{\bflambda}^{>0}, \quad \text{ where } g := \begin{bmatrix}
\frac{1}{\sqrt{2}}\sin(\alpha) & -\frac{1}{\sqrt{2}} & \frac{1}{\sqrt{2}}\cos(\alpha) \\[6pt]
\cos(\alpha) & 0 & -\hspace*{-1pt}\sin(\alpha) \\[6pt]
\frac{1}{\sqrt{2}}\sin(\alpha) & \frac{1}{\sqrt{2}} & \frac{1}{\sqrt{2}}\cos(\alpha) \\[2pt]
\end{bmatrix}\in\U_3^{>0}.
$$
Then
$$
(L^m)_{1,3} = \textstyle\frac{1}{2}\big(\!\sin^2(\alpha)(\lambda_1^m - \lambda_2^m) - \cos^2(\alpha)(\lambda_2^m - \lambda_3^m)\big).
$$
If $L^m\in\GL_3^{>0}$, then $(L^m)_{1,3} > 0$, which implies
$$
\frac{\lambda_1^m - \lambda_2^m}{\lambda_2^m - \lambda_3^m} > \frac{1}{\tan^2(\alpha)}.
$$
As $\alpha\to 0$, this requires $m\to\infty$.

We also observe that the analogue of \cref{complete_to_orbit} for $\Orbit_{\bflambda}^{\ge 0}$ fails to hold. To see this, take $\alpha := 0$ above, so that $\ii L\in\Orbit_{\bflambda}^{\ge 0}$. Then
$$
(L^m)_{1,3} = -\textstyle\frac{1}{2}(\lambda_2^m - \lambda_3^m) < 0,
$$
so $L^m\notin\GL_3^{\ge 0}$ for all $m\in\mathbb{Z}_{>0}$.
\end{rmk}

\subsection{The Grassmannian and projection matrices}\label{sec_Gr_orbit}
We consider the case when $\Orbit_{\bflambda}\cong\Gr_{k,n}(\mathbb{C})$, i.e., when $\lambda_1 = \cdots = \lambda_k > \lambda_{k+1} = \cdots = \lambda_n$. After translating $\bflambda$ by a scalar multiple of $(1, \dots, 1)$ and rescaling it by a positive constant, we may assume that $\bflambda = (1, \dots, 1, 0, \dots, 0)$.
\begin{defn}\label{defn_projection}
Given $0 \le k \le n$, we let $\bfomega{k} := (1, \dots, 1, 0, \dots, 0)$ denote the vector of $k$ ones followed by $n-k$ zeros. Then $-\ii \Orbit_{\bfomega{k}}$ is a space of projection matrices:
$$
\Orbit_{\bfomega{k}} = \{\ii P : P \in \gl_n(\mathbb{C}) \text{ with } P^2 = P = \adjoint{P} \text{ and } \tr(P) = k\}.
$$
(We may replace the condition $\tr(P) = k$ with $\rank(P) = k$.)

Given $V\in\Gr_{k,n}(\mathbb{C})$, let $\Proj{V}\in\gl_n(\mathbb{C})$ denote the orthogonal projection from $\mathbb{C}^n$ onto the subspace $V$. If we regard $V$ as an $n\times k$ matrix modulo column operations, then $\Proj{V} = V(\adjoint{V}V)^{-1}\adjoint{V}$.
\end{defn}

\begin{lem}\label{Gr_to_orbit}
Let $0 \le k \le n$. Then the map
$$
\Gr_{k,n}(\mathbb{C}) \to \Orbit_{\bfomega{k}}, \quad V \mapsto \ii\Proj{V}
$$
is a diffeomorphism which takes $\Gr_{k,n}^{>0}$ onto $\Orbit_{\bfomega{k}}^{>0}$ and $\Gr_{k,n}^{\ge 0}$ onto $\Orbit_{\bfomega{k}}^{\ge 0}$.
\end{lem}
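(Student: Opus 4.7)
The plan is to recognize this as the special case $\bflambda = \bfomega{k}$ of \cref{Fl_to_orbit}, once we translate between the two descriptions of the map. When $\bflambda = \bfomega{k} = (1,\dots,1,0,\dots,0)$, the set $K = \{i\in[n-1] : \lambda_i > \lambda_{i+1}\}$ from \cref{Fl_to_orbit} is the singleton $\{k\}$, so $\PFl{K}{n}(\mathbb{C}) = \Gr_{k,n}(\mathbb{C})$. Thus \cref{Fl_to_orbit} already provides a diffeomorphism $\Gr_{k,n}(\mathbb{C}) \to \Orbit_{\bfomega{k}}$, given by $g\in\U_n \mapsto g(\ii\Diag{\bfomega{k}})g^{-1}$, which restricts to bijections on the totally positive and totally nonnegative parts.

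The only remaining task is to verify that, under the standard identification of a flag with a unitary representative, this map coincides with $V \mapsto \ii\Proj{V}$. To do so, I would represent $V\in\Gr_{k,n}(\mathbb{C})$ by a matrix $g\in\U_n$ whose first $k$ columns form an orthonormal basis of $V$ (such a $g$ exists by the Gram--Schmidt procedure, and any two such representatives differ by an element of $\P{\{k\}}{n}(\mathbb{C})\cap\U_n$, so the image is well-defined). The matrix $\Diag{\bfomega{k}}$ is precisely the orthogonal projection of $\mathbb{C}^n$ onto $\spn(e_1,\dots,e_k)$, and since $g$ is unitary we have $g^{-1} = \adjoint{g}$. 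Therefore $g\,\Diag{\bfomega{k}}\,g^{-1} = g\,\Diag{\bfomega{k}}\,\adjoint{g}$ is the orthogonal projection onto $g\cdot\spn(e_1,\dots,e_k) = V$, namely $\Proj{V}$. Multiplying by $\ii$ gives the desired identification.

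There is essentially no obstacle beyond this identification, since the nontrivial content---that the map is a diffeomorphism compatible with positivity---is imported wholesale from \cref{Fl_to_orbit} (which in turn relies on \cref{U_to_Fl} and \cref{iwasawa_decomposition}). One might also wish to sanity-check the description of $\Orbit_{\bfomega{k}}$ given in \cref{defn_projection} as the set of $\ii P$ with $P^2 = P = \adjoint{P}$ and $\tr(P) = k$; this is straightforward, since any such $P$ is Hermitian with spectrum in $\{0,1\}$ and trace $k$, hence unitarily conjugate to $\Diag{\bfomega{k}}$, and conversely every element of $\Orbit_{\bfomega{k}}$ manifestly has this form.
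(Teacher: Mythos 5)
Your proof is correct and takes essentially the same approach as the paper: the paper's proof is the one-liner ``This follows from \cref{Fl_to_orbit}, since the map $V\mapsto\ii\Proj{V}$ is precisely \eqref{Fl_to_orbit_map}.'' You have simply spelled out the verification that the two descriptions of the map agree, which the paper leaves implicit.
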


\begin{proof}
This follows from \cref{Fl_to_orbit}, since the map $V \mapsto \ii\Proj{V}$ is precisely \eqref{Fl_to_orbit_map}.
\end{proof}

We explain how to recover the Pl\"{u}cker coordinates of $V$ from $\Proj{V}$. This will lead to explicit descriptions of $\Orbit_{\bfomega{k}}^{>0}$ and $\Orbit_{\bfomega{k}}^{\ge 0}$. We recall that $\inv{I}{J}$ denotes the number of pairs $(i,j)\in I\times J$ such that $i > j$.
\begin{lem}\label{projection_minors}
Let $V\in\Gr_{k,n}(\mathbb{C})$. Then for $1 \le l \le n$, we have
\begin{align}\label{projection_minors_equation}
\Delta_{I,J}(\Proj{V}) = \frac{\displaystyle\sum_{K\in\binom{[n]\setminus (I\cup J)}{k-l}}\!(-1)^{\inv{I}{K} + \inv{J}{K}}\Delta_{I\cup K}(V)\ccon{\Delta_{J\cup K}(V)}}{\displaystyle\sum_{K\in\binom{[n]}{k}}\!|\Delta_K(V)|^2} \quad \text{ for all } I,J\in\textstyle\binom{[n]}{l}.
\end{align}
In particular, we have
\begin{align}\label{projection_maximal_minors}
\Delta_{I,J}(\Proj{V}) = \frac{\Delta_I(V)\ccon{\Delta_J(V)}}{\displaystyle\sum_{K\in\binom{[n]}{k}}\!|\Delta_K(V)|^2} \quad \text{ for all } I,J\in\textstyle\binom{[n]}{k}.
\end{align}

\end{lem}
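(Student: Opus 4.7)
The plan is to start from $\Proj{V} = V M^{-1} \adjoint{V}$ with $M := \adjoint{V}V$, and to expand $\Delta_{I,J}(\Proj V)$ by three applications of Cauchy--Binet \eqref{cauchy-binet} together with one application of Jacobi \eqref{jacobi}, finishing with a Laplace expansion \eqref{laplace}. The denominator appears for free: Cauchy--Binet applied to $M = \adjoint{V}V$ identifies $\det M = \sum_{L\in\binom{[n]}{k}} |\Delta_L(V)|^2$. Both sides of \eqref{projection_minors_equation} vanish if $l>k$ (there are no $l$-subsets of $[k]$, and $\binom{[n]\setminus(I\cup J)}{k-l}$ is empty), so the interesting range is $1\le l\le k$.

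First I would apply Cauchy--Binet twice to the product $V M^{-1}\adjoint{V}$, turning $\Delta_{I,J}(\Proj V)$ into a double sum over $A,B \in \binom{[k]}{l}$ of terms $\Delta_{I,A}(V)\,\Delta_{A,B}(M^{-1})\,\ccon{\Delta_{J,B}(V)}$. Jacobi then rewrites $\Delta_{A,B}(M^{-1})$ as $\frac{(-1)^{\sumof{A}+\sumof{B}}}{\det M}\,\Delta_{[k]\setminus B,[k]\setminus A}(M)$, and a third application of Cauchy--Binet expands $\Delta_{[k]\setminus B,[k]\setminus A}(\adjoint{V}V)$ as a sum over a new index $K\in\binom{[n]}{k-l}$ of $\ccon{\Delta_{K,[k]\setminus B}(V)}\,\Delta_{K,[k]\setminus A}(V)$.

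Next I would collect terms by $K$. For each fixed $K$ the expression factors as a bracket in $A$ times the conjugate of a bracket in $B$, and each bracket is precisely the Laplace expansion of $\det(V_{I\cup K,[k]})$ (respectively $\det(V_{J\cup K,[k]})$) along the rows indexed by $I$ (respectively $J$). This produces $\Delta_{I\cup K}(V)$ and $\ccon{\Delta_{J\cup K}(V)}$, and the sum over $K$ restricts automatically to $K\subseteq[n]\setminus(I\cup J)$: otherwise a row is repeated and the determinant vanishes.

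The main obstacle will be the sign bookkeeping. The row-positions occupied by $I$ inside the sorted union $I\cup K$ sum to $\binom{l+1}{2}+\inv{I}{K}$, so the Laplace sign from the $I$-bracket is $(-1)^{\binom{l+1}{2}+\inv{I}{K}}$, with an analogous factor from the $J$-bracket. The two copies of $(-1)^{\binom{l+1}{2}}$ cancel, and combined with the $(-1)^{\sumof{A}+\sumof{B}}$ from Jacobi and from the two Laplace expansions, the net sign simplifies to exactly $(-1)^{\inv{I}{K}+\inv{J}{K}}$, giving \eqref{projection_minors_equation}. The special case \eqref{projection_maximal_minors} is then immediate by setting $l=k$: the only $K$ is $\emptyset$, both $\inv{I}{K}$ and $\inv{J}{K}$ vanish, and the sum collapses to the single term $\Delta_I(V)\ccon{\Delta_J(V)}$.
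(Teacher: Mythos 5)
Your proposal is correct and follows essentially the same route as the paper: expand $\Proj{V} = V(\adjoint{V}V)^{-1}\adjoint{V}$ via two applications of Cauchy--Binet, apply Jacobi to the inverse, apply Cauchy--Binet a third time to $\Delta_{[k]\setminus B,[k]\setminus A}(\adjoint{V}V)$, collect by $K$, and recognize the two inner sums as Laplace expansions of $\Delta_{I\cup K}(V)$ and $\ccon{\Delta_{J\cup K}(V)}$. Your sign accounting (the $\binom{l+1}{2}+\inv{I}{K}$ row-position sums, the cancellation of the two $(-1)^{\binom{l+1}{2}}$ factors, and the automatic restriction of $K$ to $[n]\setminus(I\cup J)$ via vanishing of degenerate determinants) is precisely the ``simplifies by Cauchy--Binet and Laplace'' step that the paper leaves to the reader.
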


\begin{proof}
We regard $V$ as an $n\times k$ matrix, so that $\Proj{V} = V(\adjoint{V}V)^{-1}\adjoint{V}$. Then for $1 \le l \le n$ and $I,J\in\binom{[n]}{l}$, we have
\begin{align*}
&\Delta_{I,J}(\Proj{V}) = \sum_{I',J'\in\binom{[k]}{l}}\!\Delta_{I,I'}(V)\Delta_{I',J'}((\adjoint{V}V)^{-1})\Delta_{J',J}(\adjoint{V}) \qquad \text{(by \eqref{cauchy-binet})} \\
&= \sum_{I',J'\in\binom{[k]}{l}}\!\Delta_{I,I'}(V)\left(\frac{(-1)^{\sumof{I'} + \sumof{J'}}}{\displaystyle\det(\adjoint{V}V)}\Delta_{[k]\setminus J',[k]\setminus I'}(\adjoint{V}V)\right)\ccon{\Delta_{J,J'}(V)} \qquad \text{(by \eqref{jacobi})} \\
&= \sum_{I',J'\in\binom{[k]}{l}}\!\Delta_{I,I'}(V)\Bigg(\frac{(-1)^{\sumof{I'} + \sumof{J'}}}{\displaystyle\det(\adjoint{V}V)}\sum_{K\in\binom{[n]}{k-l}}\!\ccon{\Delta_{K,[k]\setminus J'}(V)}\Delta_{K,[k]\setminus I'}(V)\Bigg)\ccon{\Delta_{J,J'}(V)} \qquad \text{(by \eqref{cauchy-binet})} \\
&= \scalebox{0.92}{$\displaystyle\frac{1}{\displaystyle\det(\adjoint{V}V)}\sum_{K\in\binom{[n]}{k-l}}\raisebox{-4pt}{$\Bigg($}\sum_{I'\in\binom{[k]}{l}}(-1)^{\sumof{I'}}\Delta_{I,I'}(V)\Delta_{K,[k]\setminus I'}(V)\raisebox{-4pt}{$\Bigg)$}\raisebox{-4pt}{$\Bigg($}\sum_{J'\in\binom{[k]}{l}}(-1)^{\sumof{J'}}\ccon{\Delta_{J,J'}(V)\Delta_{K,[k]\setminus J'}(V)}\raisebox{-4pt}{$\Bigg)$}$},
\end{align*}
which simplifies to \eqref{projection_minors_equation} by \eqref{cauchy-binet} and \eqref{laplace}.
\end{proof}

\begin{cor}\label{projection_tnn_description}
Let $0 \le k \le n$.
\begin{enumerate}[label=(\roman*), leftmargin=*, itemsep=4pt]
\item\label{projection_tnn_description_tp} We have $\Orbit_{\bfomega{k}}^{>0} = \{\ii P\in\Orbit_{\bfomega{k}} : \textnormal{all $k\times k$ minors of $P$ are real and positive}\}$.
\item\label{projection_tnn_description_tnn} We have $\Orbit_{\bfomega{k}}^{\ge 0} = \{\ii P\in\Orbit_{\bfomega{k}} : \textnormal{all $k\times k$ minors of $P$ are real and nonnegative}\}$.
\end{enumerate}

\end{cor}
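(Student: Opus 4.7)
The plan is to use \cref{Gr_to_orbit} to reduce the statement to a claim about Pl\"{u}cker coordinates, and then to use \cref{projection_minors} (in particular the clean formula \eqref{projection_maximal_minors}) together with \cref{tnn_Fl_converse} applied to the Grassmannian. Throughout, write $P = \Proj{V}$ for some $V\in\Gr_{k,n}(\mathbb{C})$, and recall that $\ii P\in\Orbit_{\bfomega{k}}^{>0}$ (respectively, $\Orbit_{\bfomega{k}}^{\ge 0}$) if and only if $V\in\Gr_{k,n}^{>0}$ (respectively, $V\in\Gr_{k,n}^{\ge 0}$).

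For the forward direction of (i), suppose $V\in\Gr_{k,n}^{>0}$. By \cref{tnn_Fl}\ref{tnn_Fl_tp} (or directly by \cref{tnn_Fl_converse}), every Pl\"{u}cker coordinate $\Delta_I(V)$ is a positive real number. Then \eqref{projection_maximal_minors} expresses each $k\times k$ minor $\Delta_{I,J}(P)$ as a ratio of positive reals, hence is itself positive real. The analogous argument for (ii) uses $V\in\Gr_{k,n}^{\ge 0}$, so that $\Delta_I(V)\ccon{\Delta_J(V)}\ge 0$ for all $I,J$, giving $\Delta_{I,J}(P)\ge 0$.

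For the backward direction of (i), suppose $\ii P\in\Orbit_{\bfomega{k}}$ has all $k\times k$ minors real and positive. By \eqref{projection_maximal_minors}, the quantity $\Delta_I(V)\ccon{\Delta_J(V)}$ is a positive real number for every $I,J\in\binom{[n]}{k}$. Taking $I=J$ shows that every $\Delta_I(V)$ is nonzero. Fixing any $I_0$ and setting $c:=\Delta_{I_0}(V)$, the positivity of $\Delta_I(V)\ccon{c}$ for all $I$ forces $\Delta_I(V)\in \mathbb{R}_{>0}\cdot (c/|c|^2)$. Rescaling $V$ by the nonzero complex scalar $|c|^2/c$ (a Pl\"{u}cker-coordinate-wise rescaling which does not change $V\in\Gr_{k,n}(\mathbb{C})$) makes every $\Delta_I(V)$ a positive real number. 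Thus $V\in\Gr_{k,n}^{\Delta > 0}$, and since $\{k\}$ is trivially a set of consecutive integers, \cref{tnn_Fl_converse} gives $V\in\Gr_{k,n}^{>0}$, proving $\ii P\in\Orbit_{\bfomega{k}}^{>0}$.

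For the backward direction of (ii), suppose $\ii P\in\Orbit_{\bfomega{k}}$ has all $k\times k$ minors real and nonnegative. Since $P$ has rank $k$, some $\Delta_I(V)$ is nonzero; fix $I_0$ with $c:=\Delta_{I_0}(V)\ne 0$. By \eqref{projection_maximal_minors}, $\Delta_I(V)\ccon{c}\ge 0$ for every $I$, so $\Delta_I(V)$ is a nonnegative real multiple of $c/|c|^2$. Rescaling $V$ by $|c|^2/c$ makes every $\Delta_I(V)\ge 0$ real, giving $V\in\Gr_{k,n}^{\Delta \ge 0}=\Gr_{k,n}^{\ge 0}$ by \cref{tnn_Fl_converse}. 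The only subtle point — and the closest thing to an obstacle — is this rescaling argument: Pl\"{u}cker coordinates are defined only up to a global complex scalar, and one must notice that the hypothesis on the \emph{pairs} $(I,J)$ (not merely on $I=J$) pins down that scalar up to a positive real factor, which is exactly what is needed to invoke \cref{tnn_Fl_converse}.
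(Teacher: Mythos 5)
Your proposal is correct and takes exactly the same route as the paper, whose proof is the one-line citation ``This follows from \eqref{projection_maximal_minors}, \cref{Gr_to_orbit}, and \cref{tnn_Fl_converse}.'' You have simply spelled out the details of how those three ingredients combine, including the (correct, and worth noticing) observation that the hypothesis on the off-diagonal pairs $(I,J)$ is what pins down the projective scalar ambiguity in the Pl\"{u}cker coordinates.
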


\begin{proof}
This follows from \eqref{projection_maximal_minors}, \cref{Gr_to_orbit}, and \cref{tnn_Fl_converse}.
\end{proof}

\begin{cor}\label{evenness}
Let $n\in\mathbb{N}$, and let $\ii P\in\Orbit_{\bfomega{k}}^{>0}$. Let $1 \le l \le k$, and suppose that $I,J\in\binom{[n]}{l}$ satisfy the {\itshape evenness condition}: between any two elements of $\mathbb{Z}\setminus (I\cup J)$, there are an even number of elements in the multiset union $I\cup J$.
\begin{enumerate}[label=(\roman*), leftmargin=*, itemsep=2pt]
\item\label{evenness_positive} If $|I\cap J| \ge k+l-n$, then $\Delta_{I,J}(P) > 0$.
\item\label{evenness_zero} If $|I\cap J| < k+l-n$, then $\Delta_{I,J}(P) = 0$.
\end{enumerate}

\end{cor}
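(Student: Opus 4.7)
The plan is to reduce the statement to \cref{projection_minors} via the identification $\Gr_{k,n}(\mathbb{C})\to\Orbit_{\bfomega{k}}$ of \cref{Gr_to_orbit}. Given $\ii P\in\Orbit_{\bfomega{k}}^{>0}$, I would write $P = \Proj{V}$ for some $V\in\Gr_{k,n}^{>0}$, and use that $\Gr_{k,n}^{>0}$ is the image of $\GL_n^{>0}$ to choose a matrix representative for $V$ with all Plücker coordinates $\Delta_S(V)$ positive real numbers. Then \eqref{projection_minors_equation} expresses $\Delta_{I,J}(P)$, up to a positive denominator, as
$$
\sum_{K\in\binom{[n]\setminus(I\cup J)}{k-l}}(-1)^{\inv{I}{K}+\inv{J}{K}}\Delta_{I\cup K}(V)\Delta_{J\cup K}(V),
$$
and both parts of the claim become statements about this sum.

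For part \ref{evenness_zero}, I would simply observe that $|[n]\setminus(I\cup J)| = n - 2l + |I\cap J|$, so the hypothesis $|I\cap J| < k+l-n$ forces $|[n]\setminus(I\cup J)| < k-l$. The index set $\binom{[n]\setminus(I\cup J)}{k-l}$ is therefore empty, the sum is vacuous, and $\Delta_{I,J}(P) = 0$.

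For part \ref{evenness_positive}, the hypothesis gives $|[n]\setminus(I\cup J)|\ge k-l$, so the index set is nonempty. The key step is then to show that every sign $(-1)^{\inv{I}{K}+\inv{J}{K}}$ equals $+1$. For each $k\in[n]\setminus(I\cup J)$, set $f(k) := |\{x \in I\cup J : x > k\}|$ counted with multiplicity; then $\inv{I}{K}+\inv{J}{K} = \sum_{k\in K} f(k)$. I would argue that the evenness condition, applied to the pair $(k,N)$ with $N$ any integer greater than $n$ (which lies in $\mathbb{Z}\setminus(I\cup J)$), forces $f(k)$ to be even, since $f(k)$ equals the multiset cardinality of $(I\cup J)\cap(k,N)$. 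Hence $\sum_{k\in K}f(k)$ is even for every $K$, the sign is always $+1$, and the sum becomes a nonempty sum of strictly positive products, yielding $\Delta_{I,J}(P) > 0$.

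The main subtlety — more a point to be careful about than a true obstacle — is to invoke the evenness condition with one endpoint lying outside $[n]$ so as to conclude that each $f(k)$ is individually even, rather than only that $f(k)$ has constant parity as $k$ varies over $[n]\setminus(I\cup J)$. The latter would merely yield that the sign $(-1)^{\inv{I}{K}+\inv{J}{K}}$ is constant in $K$, not that it equals $+1$, and would only give $\Delta_{I,J}(P)\ne 0$ rather than the required strict positivity.
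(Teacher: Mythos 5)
Your proposal is correct and follows essentially the same route as the paper: reduce via \cref{Gr_to_orbit} and \eqref{projection_minors_equation} to a sum over $K\in\binom{[n]\setminus(I\cup J)}{k-l}$, observe that this index set is empty exactly when $|I\cap J| < k+l-n$, and use the evenness condition to show every sign $(-1)^{\inv{I}{K}+\inv{J}{K}}$ equals $+1$. Your remark about comparing each $k\in[n]\setminus(I\cup J)$ with a point $N>n$ (rather than only with other points of $[n]\setminus(I\cup J)$) is precisely the detail the paper leaves implicit, and it is the right way to make the sign argument airtight.
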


\begin{proof}
The evenness condition implies that $\inv{I}{K} + \inv{J}{K}$ is even for all $K\subseteq \mathbb{Z}\setminus (I\cup J)$. Therefore the numerator of the right-hand side of \eqref{projection_minors_equation} is a sum of positive terms, by \cref{Gr_to_orbit} and \cref{tnn_Fl}\ref{tnn_Fl_tp}. The two cases correspond to whether the sum has at least one term or not.
\end{proof}

\begin{rmk}
\cref{evenness} implies that certain minors $\Delta_{I,J}(P)$ are positive or zero for all $\ii P\in\Orbit_{\bfomega{k}}^{>0}$. We can similarly argue that every other minor is either zero, negative, or can take any sign (we omit the details). For example, let $(k,n) := (2,4)$, and consider the $1\times 1$ minors (i.e.\ the entries) of $P$. We have $P_{1,1}, P_{2,2}, P_{3,3}, P_{4,4}, P_{1,2}, P_{2,3}, P_{3,4} > 0$ and $P_{1,4} < 0$. The remaining entries $P_{1,3}$ and $P_{2,4}$ can take any sign, as demonstrated by the matrices
\begin{gather*}
17P = \scalebox{0.9}{$\begin{bmatrix}
6 & 7 & 1 & -4 \\
7 & 11 & 4 & 1 \\
1 & 4 & 3 & 5 \\
-4 & 1 & 5 & 14
\end{bmatrix}$},\hspace*{6pt}
\scalebox{0.9}{$\begin{bmatrix}
3 & 4 & 1 & -5 \\
4 & 11 & 7 & -1 \\
1 & 7 & 6 & 4 \\
-5 & -1 & 4 & 14
\end{bmatrix}$},\hspace*{6pt}
\scalebox{0.9}{$\begin{bmatrix}
6 & 4 & -1 & -7 \\
4 & 14 & 5 & 1 \\
-1 & 5 & 3 & 4 \\
-7 & 1 & 4 & 11
\end{bmatrix}$},\hspace*{6pt}
\scalebox{0.9}{$\begin{bmatrix}
3 & 5 & -1 & -4 \\
5 & 14 & 4 & -1 \\
-1 & 4 & 6 & 7 \\
-4 & -1 & 7 & 11
\end{bmatrix}$}.
\end{gather*}

\end{rmk}

\subsection{Tridiagonal matrices}\label{sec_tridiagonal_orbit}
Tridiagonal matrices are often of particular interest in applications, and will play an important role throughout the paper. We give an explicit description of the tridiagonal parts of $\Orbit_{\bflambda}^{>0}$ and $\Orbit_{\bflambda}^{\ge 0}$.
\begin{defn}\label{defn_jacobi_matrix}
Let $\bflambda\in\mathbb{R}^n$ be weakly decreasing. We define the spaces of {\itshape Jacobi matrices}
$$
\Jac_{\bflambda}^{>0} := (\ii\gl_n^{>0})\cap\Orbit_{\bflambda} \quad \text{ and } \quad \Jac_{\bflambda}^{\ge 0} := (\ii\gl_n^{\ge 0})\cap\Orbit_{\bflambda}.
$$
That is, $\Jac_{\bflambda}^{>0}$ (respectively, $\Jac_{\bflambda}^{\ge 0}$) is the set of elements $\ii L\in\Orbit_{\bflambda}$ such that $L$ is a real tridiagonal matrix with positive (respectively, nonnegative) entries immediately above and below the diagonal.
\end{defn}

We will show that $\Jac_{\bflambda}^{>0}$ (respectively, $\Jac_{\bflambda}^{\ge 0}$) is precisely the subset of tridiagonal elements of $\Orbit_{\bflambda}^{>0}$ (respectively, $\Orbit_{\bflambda}^{\ge 0}$). We then give an explicit description of $\Jac_{\bflambda}^{>0}$ in terms of {\itshape Vandermonde flags}.
\begin{lem}\label{projection_sum}
Let $\bflambda\in\mathbb{R}^n$ be weakly decreasing, and set $K := \{i \in [n-1] : \lambda_i > \lambda_{i+1}\}$. Given $\ii L\in\Orbit_{\bflambda}$, let $V = (V_k)_{k\in K}\in\PFl{K}{n}(\mathbb{C})$ be the corresponding flag under the inverse map of \eqref{Fl_to_orbit_map}. Then
\begin{align}\label{projection_sum_formula}
L = \Big(\sum_{k\in K}(\lambda_k - \lambda_{k+1})P_k\Big) + \lambda_n\I_n, \quad \text{ where } P_k := \Proj{V_k} \text{ for } k\in K.
\end{align}

\end{lem}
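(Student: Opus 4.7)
The plan is to rewrite the identity as the spectral decomposition of $L$ grouped by distinct eigenvalues, and then recognize that $(\lambda_k-\lambda_{k+1})P_k$ is exactly the Abel-summation reorganization of this decomposition. Since $\ii L\in\Orbit_{\bflambda}\subseteq\uu_n$, we have $\adjoint{L}=L$, so $L$ is Hermitian with real eigenvalues equal to $\lambda_1\ge\cdots\ge\lambda_n$. First I would pass from the weakly decreasing sequence $\bflambda$ to the list of its distinct values $\mu_1>\mu_2>\cdots>\mu_r$ with multiplicities $m_1,\dots,m_r$. Setting $s_j := m_1+\cdots+m_j$, we have $K=\{s_1,\dots,s_{r-1}\}$, and for $k=s_j\in K$ the eigenvalues satisfy $\lambda_k=\mu_j$ and $\lambda_{k+1}=\mu_{j+1}$.

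Next, using \cref{Fl_to_orbit} together with the description of the identification in the introduction, $V_{s_j}$ is precisely the direct sum of the eigenspaces of $L$ corresponding to $\mu_1,\dots,\mu_j$. Letting $E_i$ denote the orthogonal projection onto the $\mu_i$-eigenspace of $L$, we obtain
\[
P_{s_j} \;=\; E_1+\cdots+E_j \qquad (1\le j\le r-1),
\]
since $V_{s_j}$ is the orthogonal direct sum of the eigenspaces corresponding to $\mu_1,\dots,\mu_j$ (orthogonality follows from $L$ being Hermitian). The spectral theorem then gives the decomposition
\[
L=\sum_{i=1}^{r}\mu_i E_i,\qquad \sum_{i=1}^{r}E_i=\I_n.
\]

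The last step is a short telescoping computation. Substituting the formula for $P_{s_j}$ into the proposed right-hand side and swapping the order of summation,
\[
\sum_{j=1}^{r-1}(\mu_j-\mu_{j+1})(E_1+\cdots+E_j)+\mu_r\I_n
=\sum_{i=1}^{r}\Bigl(\sum_{j=i}^{r-1}(\mu_j-\mu_{j+1})\Bigr)E_i+\mu_r\sum_{i=1}^{r}E_i
=\sum_{i=1}^{r}\mu_i E_i,
\]
which coincides with $L$ by the spectral decomposition above. Translating back to the $\lambda_k$ indexing yields the claimed identity \eqref{projection_sum_formula}. There is no real obstacle here: the only care required is the bookkeeping between the $\lambda$-indexing (with repetitions) and the $\mu$-indexing (distinct eigenvalues), and the observation that $V_k$ is defined exactly at the indices $k\in K$ where these two indexings switch blocks.
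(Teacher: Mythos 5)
Your proof is correct, and it is essentially the same argument as the paper's, just phrased in the language of spectral decomposition rather than explicit conjugation. The paper's one-line proof invokes \cref{Gr_to_orbit} after writing $\bflambda = \big(\sum_{k\in K}(\lambda_k - \lambda_{k+1})\bfomega{k}\big) + \lambda_n\bfomega{n}$; this decomposition of the weight vector is exactly your Abel-summation/telescoping step in vector form, and conjugating each $\ii\Diag{\bfomega{k}}$ by $g$ is exactly your identification $P_{s_j} = E_1 + \cdots + E_j$ via the spectral theorem. The two proofs are the same up to whether one performs the telescoping before or after conjugating.
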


\begin{proof}
This follows from \cref{Gr_to_orbit}, by writing $\bflambda = \big(\sum_{k\in K}(\lambda_k - \lambda_{k+1})\bfomega{k}\big) + \lambda_n\bfomega{n}$.
\end{proof}

\begin{lem}\label{positive_superdiagonal}
Suppose that $\bflambda\in\mathbb{R}^n$ is weakly decreasing and nonconstant.
\begin{enumerate}[label=(\roman*), leftmargin=*, itemsep=2pt]
\item\label{positive_superdiagonal_tp} If $\ii L\in\Orbit_{\bflambda}^{>0}$, then $L_{i,i+1} = L_{i+1,i} > 0$ for $1 \le i \le n-1$.
\item\label{positive_superdiagonal_tnn} If $\ii L\in\Orbit_{\bflambda}^{\ge 0}$, then $L_{i,i+1} = L_{i+1,i} \ge 0$ for $1 \le i \le n-1$.
\end{enumerate}

\end{lem}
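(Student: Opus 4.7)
\smallskip

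The plan is to reduce the problem to computing the $(i,i+1)$-entry of an orthogonal projection matrix, for which the formula of \cref{projection_minors} gives a manifestly sign-definite expression. First, since $\ii L$ is skew-Hermitian, $L$ is Hermitian, so $L_{i+1,i} = \ccon{L_{i,i+1}}$; it therefore suffices to show that $L_{i,i+1}$ is a positive (respectively, nonnegative) real number. Let $V = (V_k)_{k\in K} \in \PFl{K}{n}(\mathbb{C})$ be the flag corresponding to $\ii L$ under \eqref{Fl_to_orbit_map}, where $K = \{k\in[n-1] : \lambda_k > \lambda_{k+1}\}$. By \cref{projection_sum},
$$
L_{i,i+1} = \sum_{k\in K}(\lambda_k - \lambda_{k+1})\,(\Proj{V_k})_{i,i+1},
$$
since the off-diagonal entries of $\lambda_n \I_n$ vanish.

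Next, I would apply \cref{projection_minors} with $I = \{i\}$, $J = \{i+1\}$, $l = 1$. The key observation is that for any $K' \in \binom{[n]\setminus\{i,i+1\}}{k-1}$, neither $i$ nor $i+1$ lies in $K'$, so
$$
\inv{\{i\}}{K'} = |\{j \in K' : j < i\}| = |\{j \in K' : j < i+1\}| = \inv{\{i+1\}}{K'},
$$
and hence the sign $(-1)^{\inv{\{i\}}{K'}+\inv{\{i+1\}}{K'}}$ is always $+1$. Thus
$$
(\Proj{V_k})_{i,i+1} = \frac{\displaystyle\sum_{K'\in\binom{[n]\setminus\{i,i+1\}}{k-1}}\Delta_{\{i\}\cup K'}(V_k)\,\ccon{\Delta_{\{i+1\}\cup K'}(V_k)}}{\displaystyle\sum_{K'\in\binom{[n]}{k}}\!|\Delta_{K'}(V_k)|^2}.
$$

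To finish, in case (i), \cref{Fl_to_orbit} gives $V \in \PFl{K}{n}^{>0}$, and then \cref{tnn_Fl}\ref{tnn_Fl_tp} together with \cref{plucker_positive_projection} yield $V_k \in \Gr_{k,n}^{>0}$, so all Pl\"{u}cker coordinates of $V_k$ can be chosen positive. Since $1 \le k \le n-1$, the index set $\binom{[n]\setminus\{i,i+1\}}{k-1}$ is nonempty, so every summand in the numerator is positive and $(\Proj{V_k})_{i,i+1} > 0$. Since $\bflambda$ is nonconstant, $K \ne \emptyset$, and $\lambda_k - \lambda_{k+1} > 0$ for $k \in K$, giving $L_{i,i+1} > 0$. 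Case (ii) is identical with ``${>}\,0$'' replaced by ``${\ge}\,0$'' throughout. There is no serious obstacle here: the only delicate step is the parity computation that eliminates the alternating signs, after which the result drops out of the Pl\"{u}cker-positivity of the flag.
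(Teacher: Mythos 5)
Your proof is correct and follows essentially the same route as the paper: both reduce via \eqref{projection_sum_formula} to showing $(\Proj{V_k})_{i,i+1} > 0$ (resp.\ $\ge 0$), and both derive this from \cref{projection_minors} together with Pl\"ucker-positivity of $V_k$. The only difference is that the paper invokes \cref{evenness}\ref{evenness_positive} to handle the sign $(-1)^{\inv{I}{K}+\inv{J}{K}}$, whereas you unpack that corollary inline for the specific case $I=\{i\}$, $J=\{i+1\}$; the paper also deduces part (ii) by taking closures rather than by rerunning the argument with weak inequalities, but both are valid.
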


We note that if $\bflambda$ is constant, then $\Orbit_{\bflambda}^{>0} = \Orbit_{\bflambda}^{\ge 0} = \Orbit_{\bflambda} = \{\ii\Diag{\bflambda}\hspace*{-1pt}\}$.
\begin{proof}
We prove part \ref{positive_superdiagonal_tp}, whence part \ref{positive_superdiagonal_tnn} follows since $\Orbit_{\bflambda}^{\ge 0} = \overline{\Orbit_{\bflambda}^{>0}}$. Set $K := \{i \in [n-1] : \lambda_i > \lambda_{i+1}\}$, which is nonempty by assumption. Let $\ii L\in\Orbit_{\bflambda}$, and let $V = (V_k)_{k\in K}\in\PFl{K}{n}^{>0}$ be the corresponding flag under the inverse map of \eqref{Fl_to_orbit_map}. Then by \eqref{projection_sum_formula}, we have
$$
L_{i,i+1} = \sum_{k\in K}(\lambda_k - \lambda_{k+1})(\Proj{V_k})_{i,i+1} \quad \text{ for } 1 \le i \le n-1.
$$
By \cref{Gr_to_orbit} and \cref{evenness}\ref{evenness_positive}, we have $(\Proj{V_k})_{i,i+1} > 0$ for all $k\in K$. Therefore $L_{i,i+1} > 0$.
\end{proof}

\begin{prop}\label{tridiagonal_orbit}
Let $\bflambda\in\mathbb{R}^n$ be weakly decreasing and nonconstant. Then
$$
\Jac_{\bflambda}^{>0} = \{L\in\Orbit_{\bflambda}^{>0} : L \textnormal{ is tridiagonal}\} \quad \text{ and } \quad \Jac_{\bflambda}^{\ge 0} = \{L\in\Orbit_{\bflambda}^{\ge 0} : L \textnormal{ is tridiagonal}\}.
$$
Moreover, if $\bflambda$ is not strictly decreasing, then $\Jac_{\bflambda}^{>0}$ is empty.
\end{prop}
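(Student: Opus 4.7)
The plan is to establish each displayed equality by mutual inclusion, and handle the \emph{moreover} clause via the classical fact that Jacobi matrices with positive off-diagonals have simple spectrum.

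The inclusions $\{L\in\Orbit_{\bflambda}^{>0}:L\text{ tridiagonal}\}\subseteq\Jac_{\bflambda}^{>0}$ and its $\ge 0$ analogue go quickly. If $\ii L\in\Orbit_{\bflambda}^{>0}$ (resp.\ $\Orbit_{\bflambda}^{\ge 0}$) with $L$ tridiagonal, then $\ii L\in\uu_n$ forces $L$ to be Hermitian, so its diagonal entries are real, while \cref{positive_superdiagonal} gives $L_{i,i+1}=L_{i+1,i}>0$ (resp.\ $\ge 0$). Combined with $L_{i,i+1}=\overline{L_{i+1,i}}$ from Hermiticity, this forces the off-diagonals to be real. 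Hence $L\in\gl_n^{>0}$ (resp.\ $\gl_n^{\ge 0}$) by the alternative descriptions in \cref{defn_tnn_intro}, so $\ii L\in\Jac_{\bflambda}^{>0}$ (resp.\ $\Jac_{\bflambda}^{\ge 0}$).

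For the reverse containments, suppose $\ii L\in\Jac_{\bflambda}^{\ge 0}$. By the Karlin and Loewner characterizations in \cref{defn_tnn_intro}, $\exp(L)\in\GL_n^{\ge 0}$, and $\exp(L)\in\GL_n^{>0}$ when $L\in\gl_n^{>0}$. Since $L$ is real symmetric, we may diagonalize $L=h\Diag{\bflambda}h^{-1}$ with $h\in\O_n$; this same $h$ diagonalizes $\exp(L)$ with eigenvalues $e^{\lambda_1}\ge\cdots\ge e^{\lambda_n}$ in matching order, and $K:=\{i:\lambda_i>\lambda_{i+1}\}$ coincides with $\{i:e^{\lambda_i}>e^{\lambda_{i+1}}\}$. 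Applying \cref{gk_tnn} to $\exp(L)$ (or \cref{gk} in the strict case) shows that the projection of $h$ to $\PFl{K}{n}(\mathbb{C})$ lies in $\PFl{K}{n}^{\ge 0}$ (resp.\ in $\Fl_n^{>0}$). Then \cref{Fl_to_orbit} yields $\ii L=h(\ii\Diag{\bflambda})h^{-1}\in\Orbit_{\bflambda}^{\ge 0}$ (resp.\ $\Orbit_{\bflambda}^{>0}$).

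For the moreover clause, if $\ii L\in\Jac_{\bflambda}^{>0}$ has diagonal entries $(a_j)$ and positive off-diagonals $(b_j)$, then any eigenvector $v$ for eigenvalue $\lambda$ satisfies the three-term recurrence $b_jv_{j+1}=(\lambda-a_j)v_j-b_{j-1}v_{j-1}$ (with $b_0:=0$), so $v_1$ determines $v$, each eigenspace is one-dimensional, the spectrum of $L$ is simple, and $\bflambda$ must be strictly decreasing. The main obstacle is the reverse inclusion, i.e.\ passing from the combinatorial Jacobi condition to Lusztig positivity. The key idea is to exponentiate $L$ and invoke the Gantmakher--Krein results (\cref{gk,gk_tnn}), which encode the desired positivity of the eigenvector flag via Perron--Frobenius, bypassing any explicit computation with eigenvectors or orthogonal polynomials.
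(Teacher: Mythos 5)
Your proof is correct and follows essentially the same strategy as the paper's: invoke \cref{positive_superdiagonal} for the containment of tridiagonal elements of $\Orbit_{\bflambda}^{\ge 0}$ into $\Jac_{\bflambda}^{\ge 0}$, and exponentiate $L$ to apply the Gantmakher--Krein results (\cref{gk}, \cref{gk_tnn}) for the reverse containment. The only genuine difference is in the \emph{moreover} clause, where the paper reuses the same machinery by reading simplicity of the spectrum off \cref{gk}\ref{gk_eigenvalues} applied to $\exp(L)\in\GL_n^{>0}$, whereas you give an independent elementary argument via the three-term recurrence for eigenvectors of a Jacobi matrix; both are correct, with the paper's being marginally more economical since it avoids a separate argument.
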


\begin{proof}
The containments $\supseteq$ follow from \cref{positive_superdiagonal}. To prove the first $\subseteq$ containment, let $\ii L\in\Jac_{\bflambda}^{>0}$.  Then $L\in\gl_n^{>0}$, so $\exp(L)\in\GL_n^{>0}$. Applying \cref{gk}\ref{gk_eigenvectors} to $\exp(L)$ implies $\ii L\in\Orbit_{\bflambda}^{>0}$, as desired. Moreover, \cref{gk}\ref{gk_eigenvalues} implies that if such an $\ii L$ exists, then $\bflambda$ is strictly decreasing. The second $\subseteq$ containment follows from a similar argument, using \cref{gk_tnn}\ref{gk_tnn_eigenvectors}.
\end{proof}

When $\bflambda$ is strictly decreasing, the space $\Jac_{\bflambda}^{>0}$ is known as an {\itshape isospectral manifold of Jacobi matrices}. It was first considered by Moser \cite{moser75} in connection with the Toda lattice, based on work of Flaschka \cite{flaschka74}. We will discuss the Toda lattice further in \cref{sec_toda}. The topology of $\Jac_{\bflambda}^{>0}$ was studied by Tomei \cite{tomei84}, who showed in particular that its closure is $\Jac_{\bflambda}^{\ge 0}$. Bloch, Flaschka, and Ratiu \cite{bloch_flaschka_ratiu90} gave the following descriptions of $\Jac_{\bflambda}^{>0}$ and $\Jac_{\bflambda}^{\ge 0}$, which hold for any compact Lie algebra.
\begin{thm}[Bloch, Flaschka, and Ratiu {\cite[Theorem p.\ 60]{bloch_flaschka_ratiu90}}]\label{jacobi_manifold}
Let $\bflambda\in\mathbb{R}^n$ be strictly decreasing, and let $C\subseteq\mathbb{R}^n$ denote the convex hull of all permutations of $\bflambda$, which is the moment polytope of $\Orbit_{\bflambda}$. Then there is a diffeomorphism from $\Jac_{\bflambda}^{>0}$ to the interior of $C$ which extends to a homeomorphism $\Jac_{\bflambda}^{\ge 0} \xrightarrow{\cong} C$.
\end{thm}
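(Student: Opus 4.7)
The plan is to identify this homeomorphism with the restriction of the moment map for the action of the maximal torus $\TK_n$ on $\Orbit_{\bflambda}$. Define $\mu : \Orbit_{\bflambda} \to \mathbb{R}^n$ by $\mu(\ii L) = (-\ii L_{1,1}, \dots, -\ii L_{n,n})$, the vector of diagonal entries of the Hermitian matrix $-\ii L$. By the Schur--Horn theorem (equivalently, the Atiyah--Guillemin--Sternberg convexity theorem applied to $\TK_n \curvearrowright \Orbit_{\bflambda}$), one has $\mu(\Orbit_{\bflambda}) = C$. I would show that the restriction $\mu|_{\Jac_{\bflambda}^{\ge 0}}$ is a homeomorphism onto $C$ whose further restriction to $\Jac_{\bflambda}^{>0}$ is a diffeomorphism onto $\interior{C}$.

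For injectivity, given $\ii L \in \Jac_{\bflambda}^{\ge 0}$, write $-\ii L$ as a real symmetric tridiagonal matrix with diagonal $(a_1, \dots, a_n)$ and nonnegative off-diagonal entries $(b_1, \dots, b_{n-1})$. The characteristic polynomials $p_k(x)$ of the leading $k \times k$ principal submatrices satisfy the three-term recursion
\begin{equation*}
p_k(x) = (x - a_k)\hspace*{1pt} p_{k-1}(x) - b_{k-1}^2\hspace*{1pt} p_{k-2}(x), \qquad p_0 \equiv 1,\quad p_{-1} \equiv 0,
\end{equation*}
with $p_n(x) = \prod_{i=1}^n (x - \lambda_i)$ determined by the spectrum. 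Running the recursion in reverse uniquely determines each $b_k^2$ as a rational function of $(a_1, \dots, a_n)$, and the sign constraint $b_k \ge 0$ then pins down each $b_k$.

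For surjectivity, the task is to show that for every $(a_1, \dots, a_n) \in C$, the recursively computed $b_k^2$ are all nonnegative. This is the technical heart of the argument. One approach is via Cauchy interlacing: a real symmetric tridiagonal matrix with prescribed spectrum and diagonal exists if and only if, at each step, the eigenvalues of consecutive leading principal submatrices interlace appropriately, and these interlacing inequalities cut out exactly the permutohedron $C$. An alternative, more topological, approach uses a degree/exhaustion argument: on the open subset $\Jac_{\bflambda}^{>0}$ the implicit function theorem applied to the three-term recursion shows $\mu$ is a local diffeomorphism onto $\interior{C}$, and since $\Jac_{\bflambda}^{\ge 0}$ is compact (closed in the compact orbit $\Orbit_{\bflambda}$) its image is a compact subset of $C$ containing $\interior{C}$, hence equal to $C$.

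It remains to upgrade the resulting continuous bijection $\Jac_{\bflambda}^{\ge 0} \to C$ to a homeomorphism; this is automatic from compactness of the source and Hausdorffness of the target. The diffeomorphism assertion on the interiors is immediate from the smooth invertibility of the recursion in the regime where all $b_k > 0$. The main obstacle is the surjectivity step together with the coherent treatment of the boundary: one must verify not only that the rational expressions for $b_k^2$ remain nonnegative throughout $C$, but also that the face stratification of $C$ corresponds to the loci where successively more $b_k$ vanish, so that the resulting block-decomposable Jacobi matrices assemble continuously (and not merely stratum-by-stratum) into the global homeomorphism.
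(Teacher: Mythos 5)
Your proposal is built on the claim that the diagonal map $\mu\colon \ii L\mapsto (-\ii L_{1,1},\dots,-\ii L_{n,n})$ restricts to a bijection $\Jac_{\bflambda}^{\ge 0}\to C$ and to a local diffeomorphism on $\Jac_{\bflambda}^{>0}$. That claim is false, and the paper says so explicitly in \cref{BFR_map}: ``the usual moment map is neither injective nor surjective on $\Jac_{\bflambda}^{\ge 0}$.'' A concrete counterexample to injectivity (already for $n=3$): take $\bflambda=(2,0,-2)$, so that $C$ is a hexagon centered at the origin of $\{a_1+a_2+a_3=0\}$. Any matrix
$$
-\ii L = \begin{bmatrix}0 & b_1 & 0 \\ b_1 & 0 & b_2 \\ 0 & b_2 & 0\end{bmatrix}, \qquad b_1,b_2 \ge 0,\ b_1^2 + b_2^2 = 4,
$$
has spectrum $\{2,0,-2\}$ and diagonal $(0,0,0)$. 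This is a one-parameter family inside $\Jac_{\bflambda}^{\ge 0}$ (indeed inside $\Jac_{\bflambda}^{>0}$ when $b_1,b_2>0$) lying over a single interior point of $C$. Since $\Jac_{\bflambda}^{>0}$ and $\interior{C}$ both have dimension $n-1=2$, the existence of a positive-dimensional fiber shows that $\mu$ is not a local diffeomorphism anywhere along that arc, so your implicit-function-theorem step cannot go through. The ``reverse recursion'' argument fails for the same reason: the system you write down for the $b_k^2$ given $(a_1,\dots,a_n)$ and the characteristic polynomial degenerates at such points (in the example, the determinant equation is automatically satisfied, leaving one equation in two unknowns).

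The actual construction of Bloch, Flaschka, and Ratiu, as reinterpreted in this paper, is to first apply the twist map $\twistorbit$ to $\Jac_{\bflambda}^{\ge 0}$ and only then take the moment map. The point is \cref{tridiagonal_flag} and \cref{all_tridiagonal_flags_torus_orbit}: $\twistorbit$ carries $\Jac_{\bflambda}^{>0}$ onto a totally positive torus orbit of Vandermonde flags in $\Fl_n^{>0}$, and $\Jac_{\bflambda}^{\ge 0}$ onto the closure of that torus orbit. The moment map \emph{is} a homeomorphism from the closure of a generic $\H_n(\mathbb{C})$-orbit onto the moment polytope (and a diffeomorphism on the interior), which is the standard toric fact being invoked. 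Without the twist you are applying the moment map to a set which is not a torus orbit closure, and the desired bijectivity simply does not hold. If you want to salvage your write-up, replace $\mu|_{\Jac_{\bflambda}^{\ge 0}}$ with $\mu\circ\twistorbit|_{\Jac_{\bflambda}^{\ge 0}}$, cite \cref{tridiagonal_flag} to identify the image of $\twistorbit$ as a torus orbit closure, and then appeal to the Atiyah convexity theorem for torus orbits; your compactness-plus-Hausdorff step to upgrade a continuous bijection to a homeomorphism is fine as is.
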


See \cref{BFR_map} for further discussion. We plan to study the homeomorphism $\Jac_{\bflambda}^{\ge 0} \to C$ in more detail in future work.

We now describe $\Jac_{\bflambda}^{>0}$ as a subset of $\Fl_n^{>0}$ under the identification \eqref{Fl_to_orbit_map}. Remarkably, it is a twisted $\H_n^{>0}$-orbit. This is based on a well-known correspondence in numerical analysis between orthogonal tridiagonalization of a symmetric matrix (which we uncharacteristically take to be a diagonal matrix) and Krylov subspaces; we refer to \cite{parlett98,golub_van_loan13} for further details. This description is also related to Moser's spectral variables \cite[Section 3]{moser75} for the manifold $\Jac_{\bflambda}^{>0}$; see \cref{moser_variables}.
\begin{defn}\label{defn_krylov}
Let $\bflambda = (\lambda_1, \dots, \lambda_n)\in\mathbb{C}^n$ have distinct entries, and let $x\in\mathbb{P}^{n-1}(\mathbb{C})$ have no zero entries. Define the {\itshape Vandermonde flag} $\Kry{\bflambda}{x}\in\Fl_n(\mathbb{C})$ as the complete flag $(V_1, \dots, V_{n-1})$, where
$$
V_k := \spn(x, \Diag{\bflambda}x, \dots, \Diag{\bflambda}\hspace*{-1pt}^{k-1}x) \quad \text{ for } 1 \le k \le n-1.
$$
That is, $\Kry{\bflambda}{x}$ is represented by the rescaled Vandermonde matrix $(\lambda_i^{j-1}x_i)_{1 \le i,j \le n}$. The fact that $\Kry{\bflambda}{x}$ lies in $\Fl_n(\mathbb{C})$ follows from \eqref{vandermonde}. Moreover, if $\lambda_1, \dots, \lambda_n$ are strictly decreasing real numbers and $x\in\mathbb{P}^{n-1}_{>0}$, then $\Kry{\bflambda}{x}\in\Fl_n^{>0}$, by \eqref{vandermonde} and \cref{tnn_Fl_converse}. We also observe that $\bflambda$ and $x$ are uniquely determined by $\Kry{\bflambda}{x}$, modulo translating $\bflambda$ by a scalar multiple of $(1, \dots, 1)$ and rescaling it by a nonzero constant.
\end{defn}

\begin{eg}\label{eg_krylov}
When $n=3$, the flag $\Kry{\bflambda}{x}$ is represented by $\scalebox{0.8}{$\begin{bmatrix}
x_1 & \lambda_1x_1 & \lambda_1^2x_1 \\
x_2 & \lambda_2x_2 & \lambda_2^2x_2 \\
x_3 & \lambda_3x_3 & \lambda_3^2x_3
\end{bmatrix}$}$.
\end{eg}

\begin{rmk}\label{krylov_torus_orbit}
Let $\bflambda\in\mathbb{C}^n$ have distinct entries. Recall the torus action on $\Fl_n(\mathbb{C})$ from \cref{torus_action}. For $x\in\mathbb{P}^{n-1}(\mathbb{C})$ with no zero entries and $h\in\H_n(\mathbb{C})$, we have
$$
h\hspace*{1pt}\Kry{\bflambda}{x} = \Kry{\bflambda}{hx}.
$$
In particular, the subset
$$
\{\Kry{\bflambda}{x} : x\in\mathbb{P}^{n-1}(\mathbb{C}) \text{ has no zero entries}\}\subseteq\Fl_n(\mathbb{C})
$$
is a $\H_n(\mathbb{C})$-orbit. Similarly, if the entries of $\lambda$ are strictly decreasing real numbers, then
$$
\{\Kry{\bflambda}{x} : x\in\mathbb{P}^{n-1}_{>0}\}\subseteq\Fl_n^{>0}
$$
is a $\H_n^{>0}$-orbit.
\end{rmk}

\begin{lem}[cf.\ {\cite[Theorem 8.3.1]{golub_van_loan13}}]\label{tridiagonal_to_krylov}
Let $\bflambda\in\mathbb{C}^n$ have distinct entries, let $g\in\U_n$, let $L := g\Diag{\bflambda}g^{-1}\in\gl_n(\mathbb{C})$, and let $x$ be the first column of $\iota(g)$. Then the following are equivalent:
\begin{enumerate}[label=(\roman*), leftmargin=*, itemsep=2pt]
\item\label{tridiagonal_to_krylov_tridiagonality} $L$ is tridiagonal and $L_{i,i+1}\neq 0$ for $1 \le i \le n-1$; and
\item\label{tridiagonal_to_krylov_flag} all entries of $x$ are nonzero, and the projection of $\iota(g)$ to $\Fl_n(\mathbb{C})$ equals $\Kry{\bflambda}{x}$.
\end{enumerate}

\end{lem}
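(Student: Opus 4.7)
The backbone of the proof is a single identity. Writing $h := \iota(g) = \delta_n g^{-1}\delta_n \in \U_n$, I substitute $g = \delta_n h^{-1}\delta_n$ into $L = g\Diag{\bflambda}g^{-1}$ and use that $\delta_n$ commutes with every diagonal matrix and that $\delta_n^2 = I_n$, obtaining
$$
\Diag{\bflambda}\,h \;=\; h\,M, \qquad M := \delta_n L\delta_n.
$$
Since $M_{i,j} = (-1)^{i+j}L_{i,j}$, the matrix $M$ is tridiagonal (respectively, has the same zero/nonzero pattern on its off-diagonals) iff $L$ is. Reading the identity column by column gives the crucial recurrence
$$
\Diag{\bflambda}\,h^{(j)} \;=\; \sum_{i=1}^{n} M_{i,j}\,h^{(i)} \qquad (1 \le j \le n),
$$
where $h^{(1)} = x$ by the definition of $x$.

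For the direction (i) $\Rightarrow$ (ii), the tridiagonality of $M$ together with nonvanishing of its sub/superdiagonal entries (which matches the condition $L_{i,i+1}\ne 0$ via the sign-flip $M_{i,j} = (-1)^{i+j}L_{i,j}$, combined with the symmetry $L_{i+1,i} = \overline{L_{i,i+1}}$ in the applications of interest) collapses the recurrence into three terms and lets me solve for $h^{(j+1)}$ in terms of $h^{(j-1)},\, h^{(j)},\, \Diag{\bflambda}h^{(j)}$. Induction on $j$ then produces $h^{(j)}$ as a polynomial in $\Diag{\bflambda}$ of degree exactly $j-1$ applied to $x$, so for every $k$,
$$
\spn(h^{(1)},\dots,h^{(k)}) \;=\; \spn(x,\Diag{\bflambda}x,\dots,\Diag{\bflambda}^{k-1}x).
$$
This is the projection of $\iota(g)$ to the relevant Grassmannian, giving $\Kry{\bflambda}{x}$. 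Invertibility of $h$ forces these $k$-dimensional Krylov spaces to reach all of $\mathbb{C}^n$; since the $\lambda_i$ are distinct, the rescaled Vandermonde matrix $(\lambda_i^{j-1}x_i)$ having full rank forces every $x_i\ne 0$.

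For (ii) $\Rightarrow$ (i), let $V_j := \spn(h^{(1)},\dots,h^{(j)})$. By hypothesis $V_j = \spn(x,\Diag{\bflambda}x,\dots,\Diag{\bflambda}^{j-1}x)$, so $\Diag{\bflambda}V_j\subseteq V_{j+1}$. Reading off coefficients in $h$'s basis gives $M_{i,j}=0$ for $i>j+1$, i.e., $M$ is upper Hessenberg. Running the symmetric argument on $M^* = h^{-1}\Diag{\bar\bflambda}h$, which uses unitarity of $h$ together with the parallel containment $\Diag{\bar\bflambda}V_j\subseteq V_{j+1}$ available from normality of $\Diag{\bflambda}$, forces $M$ to be lower Hessenberg too, hence tridiagonal; and so $L = \delta_n M\delta_n$ is tridiagonal. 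Finally, $M_{j+1,j}\ne 0$ (equivalently $L_{i+1,i}\ne 0$, and hence $L_{i,i+1}\ne 0$ by the symmetry of $L$) follows because $M_{j+1,j}$ is precisely the coefficient with which $h^{(j+1)}$ appears in the expansion of $\Diag{\bflambda}h^{(j)}$; were it zero, $\Diag{\bflambda}$ would preserve $V_j$, contradicting $\dim V_{j+1} = j+1$.

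The delicate step is the passage from upper Hessenberg to full tridiagonality of $M$ in (ii) $\Rightarrow$ (i); this is the point at which Lanczos-style symmetry of the eigenvalue structure of $\Diag{\bflambda}$ must be leveraged, and it is the place where one must most carefully exploit the unitarity of $h$ together with the normality of $\Diag{\bflambda}$. Everything else amounts to unpacking the column-by-column content of the identity $\Diag{\bflambda}h = hM$.
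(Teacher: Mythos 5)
Your setup via the identity $\Diag{\bflambda}\,h = hM$ with $h = \iota(g)$ and $M = \delta_n L\delta_n$ is the right engine, and your argument for (i) $\Rightarrow$ (ii) via the three-term recurrence is sound. The paper's own proof is shorter and takes a different route: it writes down the rescaled Vandermonde matrix $(\lambda_i^{j-1}x_i)$ representing $\Kry{\bflambda}{x}$, observes that its $j$-th column pre-multiplied by $\iota(g)^{-1}$ equals $\delta_n L^{j-1}e_1$, and reads off that both (i) and (ii) are equivalent to this product lying in $\B_n(\mathbb{C})$, with no induction needed.

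Your direction (ii) $\Rightarrow$ (i) has a genuine gap precisely at the step you flag as delicate. You assert that the containment $\Diag{\overline{\bflambda}}V_j\subseteq V_{j+1}$ is ``available from normality of $\Diag{\bflambda}$,'' but that does not follow: normality of a diagonal matrix is automatic and gives no control over how $\Diag{\overline{\bflambda}}$ interacts with the Krylov filtration of $\Diag{\bflambda}$. Concretely, take $\bflambda = (1, \ii, -1)$ and $x = (1,1,1)$; then $\Diag{\overline{\bflambda}}x = (1,-\ii,-1)$ is not a linear combination of $x = (1,1,1)$ and $\Diag{\bflambda}x = (1,\ii,-1)$, so $\Diag{\overline{\bflambda}}V_1\not\subseteq V_2$. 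Gram--Schmidt on this Krylov sequence produces a unitary $h$ for which $M = h^*\Diag{\bflambda}h$ is unreduced upper Hessenberg but has $M_{1,3}\neq 0$; with $g = \delta_3 h^{-1}\delta_3$ this gives an instance where (ii) holds but $L = \delta_3 M\delta_3$ is not tridiagonal. The passage from Hessenberg to tridiagonal genuinely requires $\Diag{\bflambda}$ to be Hermitian, i.e.\ $\bflambda\in\mathbb{R}^n$, so that $M$ is Hermitian and upper Hessenberg, hence tridiagonal; this is the setting of every application of the lemma in the paper (e.g.\ \cref{tridiagonal_flag} takes $\bflambda$ real and strictly decreasing). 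With that restriction your argument goes through once ``normality'' is replaced by ``Hermiticity.'' Note that the paper's own proof leaves the same step implicit: the equivalence of (i) with upper-triangularity of the matrix whose columns are $\delta_n L^{j-1}e_1$ is asserted rather than argued, and the backward direction of that equivalence also requires the real hypothesis.
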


\begin{proof}
Our argument follows \cite[Theorem 8.3.1]{golub_van_loan13}, which proves the implication \ref{tridiagonal_to_krylov_tridiagonality} $\Rightarrow$ \ref{tridiagonal_to_krylov_flag} over the real numbers. Let $M$ denote the matrix $(\lambda_i^{j-1}x_i)_{1 \le i,j \le n}$, which represents the flag $\Kry{\bflambda}{x}$. Then part \ref{tridiagonal_to_krylov_flag} is equivalent to the statement $\iota(g)^{-1}M\in\B_n(\mathbb{C})$. On the other hand, column $j$ of $\iota(g)^{-1}M$ (for $1 \le j \le n)$ is
$$
\iota(g)^{-1}\Diag{\bflambda}\hspace*{-1pt}^{j-1}x = (\delta_ng\delta_n)\Diag{\bflambda}\hspace*{-1pt}^{j-1}(\delta_ng^{-1}\delta_ne_1) = \delta_nL^{j-1}e_1.
$$
Therefore part \ref{tridiagonal_to_krylov_tridiagonality} is also equivalent to the statement $\iota(g)^{-1}M\in\B_n(\mathbb{C})$.
\end{proof}

\begin{cor}\label{tridiagonal_flag}
Let $\bflambda\in\mathbb{R}^n$ be strictly decreasing. Then the inverse map of \eqref{Fl_to_orbit_map} identifies $\Jac_{\bflambda}^{>0}$ with a twisted totally positive torus orbit of Vandermonde flags inside $\Fl_n^{>0}$:
$$
\Jac_{\bflambda}^{>0} \hspace*{2pt}\xrightarrow{\cong}\hspace*{2pt} \twist\big(\{\Kry{\bflambda}{x} : x\in\mathbb{P}^{n-1}_{>0}\}\big) \subseteq \Fl_n^{>0}.
$$
\end{cor}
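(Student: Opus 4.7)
The plan is to reduce the statement to \cref{tridiagonal_to_krylov}, using \cref{defn_twist} to recognize $\iota(g)$ as a representative of $\twist(V)$. Throughout, for $\ii L = g(\ii\Diag{\bflambda})g^{-1}$ with corresponding flag $V \in \Fl_n^{>0}$ under the inverse of \eqref{Fl_to_orbit_map}, I take $g \in \U_n^{>0}$ to be the unique representative provided by \cref{U_to_Fl}\ref{U_to_Fl_isomorphism}. The key preparatory remark is that the first column of $\iota(g)$ has all positive real entries: indeed, $\iota(g) \in \U_n^{>0}$ by \cref{positive_inverse}\ref{positive_inverse_action}, so the left-justified minors $\iota(g)_{i,1} = \Delta_{\{i\}}(\iota(g))$ are positive real by \cref{U_to_Fl}\ref{U_to_Fl_tp}.

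For the forward containment, suppose $\ii L \in \Jac_{\bflambda}^{>0}$. By \cref{positive_superdiagonal}\ref{positive_superdiagonal_tp}, $L_{i,i+1} > 0$ for all $i$, so the hypothesis of \cref{tridiagonal_to_krylov} holds, and that lemma yields that the projection of $\iota(g)$ to $\Fl_n(\mathbb{C})$ equals $\Kry{\bflambda}{x}$, where $x$ denotes the first column of $\iota(g)$. By the preparatory remark, $x \in \mathbb{P}^{n-1}_{>0}$, and by \cref{defn_twist}, $\iota(g)$ also represents $\twist(V)$. Hence $V = \twist(\Kry{\bflambda}{x})$.

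For the reverse containment, fix $x \in \mathbb{P}^{n-1}_{>0}$. By \cref{defn_krylov}, $\Kry{\bflambda}{x} \in \Fl_n^{>0}$, so $V := \twist(\Kry{\bflambda}{x}) \in \Fl_n^{>0}$ by \cref{twist_action}, and $\iota(g) \in \U_n^{>0}$ represents $\twist(V) = \Kry{\bflambda}{x}$. The first column of any representative of $\Kry{\bflambda}{x}$ spans the line through $x$, so the first column of $\iota(g)$ equals $cx$ for some $c > 0$ (using the preparatory remark), and in particular has no zero entries; since $\Kry{\bflambda}{cx} = \Kry{\bflambda}{x}$, applying \cref{tridiagonal_to_krylov} in the other direction forces $L$ to be tridiagonal. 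Combined with $\ii L \in \Orbit_{\bflambda}^{>0}$ (from $g \in \U_n^{>0}$), \cref{tridiagonal_orbit} then gives $\ii L \in \Jac_{\bflambda}^{>0}$. The identification is a diffeomorphism since each of \eqref{Fl_to_orbit_map}, the twist map $\twist$, and the torus-orbit parameterization $x \mapsto \Kry{\bflambda}{x}$ (see \cref{krylov_torus_orbit}) is a diffeomorphism on the relevant domain. I expect the only real subtlety to be bookkeeping: matching the projective scaling of $x$ against the first column of $\iota(g)$, which the preparatory remark resolves.
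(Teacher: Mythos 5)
Your proof is correct and follows essentially the same route as the paper's: both hinge on \cref{tridiagonal_to_krylov} together with the observation (from \cref{positive_inverse}\ref{positive_inverse_action} and \cref{U_to_Fl}\ref{U_to_Fl_tp}) that the first column of $\iota(g)$ has positive entries, and both invoke \cref{tridiagonal_orbit} to pass between the tridiagonal condition and membership in $\Jac_{\bflambda}^{>0}$. The paper states the equivalence more compactly rather than splitting into two containments, and uses the definition of $\Jac_{\bflambda}^{>0}$ directly where you cite \cref{positive_superdiagonal}, but these are only presentational differences.
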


\begin{proof}
Let $\ii L$ denote an arbitrary element of $\Orbit_{\bflambda}^{>0}$, so that $L = g\Diag{\bflambda}g^{-1}$ for some $g\in\U_n^{>0}$. Then the inverse map of \eqref{Fl_to_orbit_map} sends $\ii L$ to $g\in\Fl_n^{>0}$. By \cref{tridiagonal_orbit}, it suffices to prove that the following two statements are equivalent:
\begin{enumerate}[label=(\roman*), leftmargin=*, itemsep=2pt]
\item $L$ is tridiagonal and $L_{i,i+1}\neq 0$ for $1 \le i \le n-1$; and
\item $\twist(g) = \Kry{\bflambda}{x}$ for some $x\in\mathbb{P}^{n-1}_{>0}$.
\end{enumerate}
Note that  the first column of $\iota(g)$ has positive entries, by \cref{positive_inverse}\ref{positive_inverse_action} and \cref{U_to_Fl}\ref{U_to_Fl_tp}. Therefore the result follows from \cref{tridiagonal_to_krylov}.
\end{proof}

\begin{eg}\label{eg_tridiagonal_flag}
We illustrate \cref{tridiagonal_flag} in the case $n := 3$. Let $\bflambda := (1, 0, -1)$, and let $x\in\mathbb{P}^2_{>0}$. Then $\Kry{\bflambda}{x}\in\Fl_3^{>0}$ is represented by the matrix
$$
\begin{bmatrix}
x_1 & x_1 & x_1 \\
x_2 & 0 & 0 \\
x_3 & -x_3 & x_3
\end{bmatrix}.
$$
We act on the right by $\B_3(\mathbb{C})$ to turn this matrix into an element of $\U_3^{>0}$:
$$
\begin{bmatrix}
\frac{x_1}{\sqrt{x_1^2 + x_2^2 + x_3^2}} & \frac{-x_1(x_2^2 + 2x_3^2)}{\sqrt{(x_1^2 + x_2^2 + x_3^2)(x_1^2x_2^2 + 4x_1^2x_3^2 + x_2^2x_3^2)}} & \frac{x_2x_3}{\sqrt{x_1^2x_2^2 + 4x_1^2x_3^2 + x_2^2x_3^2}} \\[10pt]
\frac{x_2}{\sqrt{x_1^2 + x_2^2 + x_3^2}} & \frac{x_2(x_1^2 - x_3^2)}{\sqrt{(x_1^2 + x_2^2 + x_3^2)(x_1^2x_2^2 + 4x_1^2x_3^2 + x_2^2x_3^2)}} & \frac{-2x_1x_3}{\sqrt{x_1^2x_2^2 + 4x_1^2x_3^2 + x_2^2x_3^2}} \\[10pt]
\frac{x_3}{\sqrt{x_1^2 + x_2^2 + x_3^2}} & \frac{x_3(2x_1^2 + x_2^2)}{\sqrt{(x_1^2 + x_2^2 + x_3^2)(x_1^2x_2^2 + 4x_1^2x_3^2 + x_2^2x_3^2)}} & \frac{x_1x_2}{\sqrt{x_1^2x_2^2 + 4x_1^2x_3^2 + x_2^2x_3^2}} \\[8pt]
\end{bmatrix} =: \iota(g).
$$
Setting $\ii L := g(\ii\Diag{\bflambda})g^{-1}\in\Orbit_{\bflambda}$, we find that
$$
L = \begin{bmatrix}
\frac{x_1^2 - x_3^2}{x_1^2 + x_2^2 + x_3^2} & \frac{\sqrt{x_1^2x_2^2 + 4x_1^2x_3^2 + x_2^2x_3^2}}{x_1^2 + x_2^2 + x_3^2} & 0 \\[8pt]
\frac{\sqrt{x_1^2x_2^2 + 4x_1^2x_3^2 + x_2^2x_3^2}}{x_1^2 + x_2^2 + x_3^2} & \frac{(x_1^2 - x_3^2)(x_2^4 - 4x_1^2x_3^2)}{(x_1^2 + x_2^2 + x_3^2)(x_1^2x_2^2 + 4x_1^2x_3^2 + x_2^2x_3^2)} & \frac{2x_1x_2x_3\sqrt{x_1^2 + x_2^2 + x_3^2}}{x_1^2x_2^2 + 4x_1^2x_3^2 + x_2^2x_3^2} \\[8pt]
0 & \frac{2x_1x_2x_3\sqrt{x_1^2 + x_2^2 + x_3^2}}{x_1^2x_2^2 + 4x_1^2x_3^2 + x_2^2x_3^2} & \frac{x_2^2(x_3^2 - x_1^2)}{x_1^2x_2^2 + 4x_1^2x_3^2 + x_2^2x_3^2} \\[4pt]
\end{bmatrix}.
$$
Note that $L$ indeed lies in $\Jac_{\bflambda}^{>0}$, i.e., it is tridiagonal and $L_{1,2}, L_{2,3} > 0$.
\end{eg}

\begin{rmk}
\cref{tridiagonal_flag} demonstrates that the twist map $\twist$ acts in an elegant way on Vandermonde flags. We can also describe the action of the maps $\rev$ and $\flip$ on Vandermonde flags. Namely, let $\bflambda\in\mathbb{R}^n$ have distinct entries, and let $x\in\mathbb{P}^{n-1}(\mathbb{C})$. Then
\begin{align}\label{krylov_rev}
\rev(\Kry{(\lambda_1, \dots, \lambda_n)}{(x_1 : \cdots : x_n)}) = \Kry{(\lambda_n, \dots, \lambda_1)}{(x_n : \cdots : x_1)},
\end{align}
and
\begin{align}\label{krylov_flip}
\flip(\Kry{\bflambda}{x}) = \Kry{\bflambda}{y}, \quad \text{ where } \quad \ccon{y_i} = \frac{(-1)^{i-1}}{x_i\prod_{j\neq i}(\lambda_i - \lambda_j)}\hspace*{2pt} \text{ for } 1 \le i \le n.
\end{align}
The statement \eqref{krylov_rev} follows from \cref{defn_krylov}. We can prove \eqref{krylov_flip} using a version of \cref{tridiagonal_to_krylov} which involves the last column of $\iota(g)$, rather than the first column (we omit the details).

For example, let $\bflambda := (1, 0, -1)$, as in \cref{eg_tridiagonal_flag}. Then
$$
\rev(\Kry{\bflambda}{x}) = \Kry{(-1,0,1)}{(x_3 : x_2 : x_1)} = \Kry{\bflambda}{(x_3 : x_2 : x_1)},
$$
and
\begin{gather*}
\flip(\Kry{\bflambda}{x}) = \Kry{\bflambda}{y}, \quad \text{ where } y = \big(\textstyle\frac{1}{2x_1} : \frac{1}{x_2} : \frac{1}{2x_3}\big).
\end{gather*}

\end{rmk}

\begin{rmk}\label{moser_variables}
\cref{tridiagonal_flag} gives an explicit parametrization of $\Jac_{\bflambda}^{>0}$ by $\mathbb{P}_{>0}^{n-1}$. This parametrization was first introduced by Moser \cite[Section 3]{moser75}. Specifically, Moser's variables $r_1, \dots, r_n$ (required to be positive and satisfy $r_1^2 + \cdots + r_n^2 = 1$) are obtained by normalizing our $x\in\mathbb{P}_{>0}^{n-1}$, i.e.,
$$
r_j = \frac{x_j}{\sqrt{x_1^2 + \cdots + x_n^2}} \quad \text{ for } 1 \le j \le n.
$$
Moser's motivation was to give an explicit description of the tridiagonal symmetric Toda lattice, as we discuss further in \cref{torus_remark}.

While it is relatively simple to describe how to go from a matrix in $\Jac_{\bflambda}^{>0}$ to its parameters $r_1, \dots, r_n$ (for example, they are the normalized first components of the eigenvectors), the reverse process is nontrivial. The procedure we give above in terms of the twist map is qualitatively different from Moser's, while another approach was described by Deift, Lund, and Trubowitz \cite[Theorem p.\ 178]{deift_lund_trubowitz80} (cf.\ \cite[Theorem 2]{deift_nanda_tomei83}). These procedures are all ultimately equivalent; the novelty in our approach is our use of the twist map, and in the connection to total positivity.

For example, let us verify that the calculation of $L_{1,2}$ in \cref{eg_tridiagonal_flag} is consistent with the procedure described in \cite[Theorem 2]{deift_nanda_tomei83}. The formula therein states that
$$
L_{1,2}^2 = \sum_{j=1}^3((\lambda_j-a_1)r_j)^2, \quad \text{ where } r_j = \frac{x_j}{\sqrt{x_1^2 + x_2^2 + x_3^2}} \text{ and } a_1 = L_{1,1} = r_1^2 - r_3^2.
$$
Using $r_1^2 + r_2^2 + r_3^2 = 1$, we obtain
\begin{align*}
L_{1,2}^2 &= (1-a_1)^2r_1^2 + (0-a_1)^2r_2^2 + (-1-a_1)^2r_3^2 \\
&= (r_2^2 + 2r_3^2)^2r_1^2 + (-r_1^2 + r_3^2)^2r_2^2 + (-2r_1^2 - r_2^2)^2r_3^2 \\
&= (r_1^2r_2^2 + 4r_1^2r_3^2 + r_2^2r_3^2)(r_1^2 + r_2^2 + r_3^2) \\
&= r_1^2r_2^2 + 4r_1^2r_3^2 + r_2^2r_3^2,
\end{align*}
which indeed agrees with \cref{eg_tridiagonal_flag}.
\end{rmk}

Finally, we introduce the space of all totally positive Vandermonde flags. It will play an important role in \cref{sec_amplituhedron}.
\begin{defn}\label{defn_vandermonde_flags}
Given $n\in\mathbb{N}$, let $\Vand_n^{>0}$ denote the subset of $\Fl_n^{>0}$ of all totally positive Vandermonde flags:
$$
\Vand_n^{>0} := \{\Kry{\bflambda}{x} : \bflambda\in\mathbb{R}^n \text{ is strictly decreasing and } x\in\mathbb{P}^{n-1}_{>0}\} \subseteq \Fl_n^{>0}.
$$
\end{defn}

\begin{cor}\label{all_tridiagonal_flags}
We have the following bijection between the space of all Jacobi matrices modulo translation by scalar multiples of $\I_n$ and rescaling by $\mathbb{R}_{>0}$, and the space of twisted totally positive Vandermonde flags:
$$
(\ii\gl_n^{>0})\cap\uu_n /{\sim} \xrightarrow{\cong} \twist(\Vand_n^{>0}), \quad g(\ii\Diag{\bflambda})g^{-1} \mapsto g.
$$
Above, two matrices $L,M$ are equivalent under ${\sim}$ if and only if $M = t(L + c\I_n)$ for some $t > 0$ and $c\in\mathbb{R}$.
\end{cor}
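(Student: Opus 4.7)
The plan is to union the identifications in \cref{tridiagonal_flag} (one for each strictly decreasing spectrum $\bflambda$) into a single bijection, and then verify that the resulting multivaluedness is recorded precisely by the equivalence relation $\sim$.

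First I would unpack the source. An element of $(\ii\gl_n^{>0})\cap\uu_n$ is $\ii L$ where $L$ is a real symmetric tridiagonal matrix with positive off-diagonal entries, i.e., a Jacobi matrix. Applying \cref{gk}\ref{gk_eigenvalues} to $\exp(L)\in\GL_n^{>0}$ shows that $L$ has $n$ distinct real eigenvalues, which I arrange in strictly decreasing order as $\bflambda$. Thus $\ii L\in\Jac_{\bflambda}^{>0}$, and because the stabilizer of $\ii\Diag{\bflambda}$ in $\U_n$ is $\T_n$, the inverse of \eqref{Fl_to_orbit_map} produces a well-defined $g\in\Fl_n^{>0}$; by \cref{tridiagonal_flag} this $g$ lies in $\twist(\Vand_n^{>0})$. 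This defines the forward map at the level of representatives.

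Next, I would check that the map factors through $\sim$ and is injective. Factoring through $\sim$ is a one-line calculation: replacing $L$ by $t(L+c\I_n)$ with $t>0$ and $c\in\mathbb{R}$ replaces $\bflambda$ by $t\bflambda + tc(1,\dots,1)$ (still strictly decreasing) and leaves the diagonalizer $g$ unchanged. For injectivity, two Jacobi matrices $\ii L_1,\ii L_2$ with spectra $\bflambda^{(1)},\bflambda^{(2)}$ mapping to the same $g = \twist(\Kry{\bflambda^{(j)}}{x^{(j)}})$ give, upon applying the involution $\twist$, an equality $\Kry{\bflambda^{(1)}}{x^{(1)}} = \Kry{\bflambda^{(2)}}{x^{(2)}}$. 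The uniqueness statement recorded in \cref{defn_krylov} then forces $x^{(1)} = x^{(2)}$ in $\mathbb{P}^{n-1}_{>0}$ and $\bflambda^{(2)} = s\bflambda^{(1)} + r(1,\dots,1)$ for some $s\in\mathbb{R}\setminus\{0\}$ and $r\in\mathbb{R}$; strict decrease of both spectra forces $s>0$. Then $\ii L_2 = s\ii L_1 + r\ii\I_n$, hence $L_1\sim L_2$. Surjectivity is immediate: given $g\in\twist(\Vand_n^{>0})$, I would write $g = \twist(\Kry{\bflambda}{x})$ with $\bflambda$ strictly decreasing and $x\in\mathbb{P}^{n-1}_{>0}$, and set $\ii L := g(\ii\Diag{\bflambda})g^{-1}$, which is Jacobi by \cref{tridiagonal_flag}.

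Since the geometric content is already contained in \cref{tridiagonal_flag}, there is no real obstacle; the only step requiring attention is aligning the non-uniqueness of $(\bflambda,x)$ from \cref{defn_krylov} with the relation $\sim$. The key observation is that the strictly-decreasing constraint rules out the sign ambiguity in the rescaling of $\bflambda$, which is exactly what matches the $\mathbb{R}_{>0}$-rescaling built into $\sim$.
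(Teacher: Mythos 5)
Your proof is correct and follows essentially the same route as the paper: both reduce to \cref{tridiagonal_flag} together with the uniqueness of $(\bflambda,x)$ recorded in \cref{defn_krylov}, with the strictly-decreasing constraint pinning down the sign of the rescaling. The paper invokes \cref{tridiagonal_orbit} for the disjoint-union decomposition of $(\ii\gl_n^{>0})\cap\uu_n$ rather than citing \cref{gk} directly, but the underlying argument is the same, and your more explicit verification of well-definedness, injectivity, and surjectivity is a correct unwinding of what the paper leaves to the reader.
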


\begin{proof}
Recall that $\bflambda$ and $x$ are uniquely determined by $\Kry{\bflambda}{x}$, modulo translating $\bflambda$ by a scalar multiple of $(1, \dots, 1)$ and rescaling it by a nonzero constant. Also, by \cref{tridiagonal_orbit}, $(\ii\gl_n^{>0})\cap\uu_n$ is the disjoint union of $\Jac_{\bflambda}^{>0}$ over all strictly decreasing $\bflambda\in\mathbb{R}^n$. Therefore the result follows from \cref{tridiagonal_flag}.
\end{proof}

Recall from \cref{krylov_torus_orbit} that the totally positive part of the torus $\H_n^{>0}$ acts on $\Vand_n^{>0}$. Surprisingly, $\H_n^{>0}$ also acts on $\twist(\Vand_n^{>0})$:
\begin{lem}\label{all_tridiagonal_flags_torus_orbit}
The space of twisted totally positive Vandermonde flags $\twist(\Vand_n^{>0})$ is invariant under the action of the totally positive part of the torus $\H_n^{>0}$.
\end{lem}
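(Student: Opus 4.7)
The plan is to invoke the bijection from \cref{all_tridiagonal_flags}, which identifies $\twist(\Vand_n^{>0})$ with the set of Jacobi-matrix equivalence classes $(\ii\gl_n^{>0})\cap\uu_n/{\sim}$. Combined with \cref{tridiagonal_to_krylov} and \cref{tridiagonal_flag}, this means a flag $V\in\Fl_n^{>0}$ represented by $U_0\in\U_n^{>0}$ lies in $\twist(\Vand_n^{>0})$ precisely when there exists a strictly decreasing $\bflambda\in\mathbb{R}^n$ such that $U_0\Diag{\bflambda}U_0^{-1}$ is tridiagonal, equivalently, $\iota(U_0)\equiv M\pmod{\B_n(\mathbb{C})}$ for $M:=(\lambda_i^{j-1}x_i)_{i,j}$ with $x$ the (componentwise positive) first column of $\iota(U_0)$. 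It therefore suffices to show, for every such $V$ and every $h\in\H_n^{>0}$, that the representative $U_h:=\Kterm(hU_0)\in\U_n^{>0}$ of $hV$ has the analogous property for some strictly decreasing $\bfmu\in\mathbb{R}^n$.

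First I would compute $\twist(hV)$ explicitly. Writing the Iwasawa decomposition $hU_0=U_hB$ with $B\in\H_n^{>0}\N_n(\mathbb{C})$ and using that $h$ and $\delta_n$ commute, a direct calculation yields
\begin{align*}
\iota(U_h) \,=\, \delta_nBU_0^{-1}h^{-1}\delta_n \,=\, (\delta_nB\delta_n)\iota(U_0)\cdot h^{-1} \,\equiv\, (\delta_nB\delta_n)M \pmod{\B_n(\mathbb{C})}.
\end{align*}
Set $C:=\delta_n B\delta_n$, which is upper-triangular with positive diagonal. The lemma reduces to showing that the flag of $CM$ is again a Vandermonde flag. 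The candidate parameter is $y:=Cx$, whose entries are strictly positive since $C$ is upper-triangular with positive diagonal and $x\in\mathbb{P}^{n-1}_{>0}$. What remains is the construction of a strictly decreasing $\bfmu\in\mathbb{R}^n$ satisfying the Krylov-type condition $\Diag{\bfmu}\cdot(CM)_k\subseteq(CM)_{k+1}$ for every $k$, equivalently, that the upper-triangular operator $C^{-1}\Diag{\bfmu}C$ is upper-Hessenberg in the Vandermonde basis $\{x,\Diag{\bflambda}x,\dots,\Diag{\bflambda}^{n-1}x\}$.

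The existence of such $\bfmu$ is the main obstacle. A naive dimension count suggests the conditions are overdetermined, so the argument must genuinely exploit the hypothesis that $U_0$ arises from a Vandermonde flag. I expect to address this in two complementary ways. On the one hand, one can exploit the Cholesky identity $B^TB=U_0^Th^2U_0$ to relate $C$ to the original spectral data $(\bflambda,x)$ and $h$, then solve the first-step condition $\Diag{\bfmu}\,Cx=\alpha Cx+\beta C\Diag{\bflambda}x$ to obtain $\mu_i=\alpha+\beta\,(C\Diag{\bflambda}x)_i/(Cx)_i$; strict monotonicity of $\bfmu$ then follows from positivity of $C$'s diagonal, $x$, and strict decrease of $\bflambda$, and the higher-order filtration conditions become compatibility identities forced by the tridiagonality of $U_0\Diag{\bflambda}U_0^{-1}$. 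On the other hand, there is a Poisson-geometric perspective (cf.\ \cref{BFR_remark}): the Iwasawa-based $\H_n^{>0}$-action coincides with a dressing action, under which the distinguished stratum corresponding to Jacobi matrices is preserved. Once $\bfmu$ is produced, applying \cref{all_tridiagonal_flags} in reverse gives $hV\in\twist(\Vand_n^{>0})$, completing the proof.
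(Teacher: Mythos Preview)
Your reduction is sound up through the identification of the ``main obstacle'': you correctly compute that $\twist(hV)$ is represented by $C\cdot\Kry{\bflambda}{x}$ with $C=\delta_nB\delta_n$ upper-triangular, and correctly observe that what remains is to show this is again a Vandermonde flag. But you do not actually prove this. The first Krylov condition determines $\bfmu$ up to affine change, and the remaining $n-2$ conditions $C\Diag{\bflambda}^{j}x\in\spn\{(\Diag{\bfmu})^iCx:0\le i\le j\}$ are genuine constraints; your two proposed routes (``compatibility identities forced by tridiagonality'' and ``Poisson-geometric perspective'') are only programmatic, with no indication of what the identities are or why they hold. This is where the entire content of the lemma sits, so the proposal is incomplete. (A minor side point: your justification that $y=Cx$ has positive entries ``since $C$ is upper-triangular with positive diagonal'' is not valid, as the strictly upper-triangular entries of $C$ can have either sign; the conclusion is nevertheless true because $y$ is, up to a positive scalar, the first column of $\iota(U_h)\in\U_n^{>0}$.)

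The paper's argument is entirely different and far shorter: it invokes the conjugation action of $\H_n^{>0}$ and the bijection of \cref{all_tridiagonal_flags}, the point being that conjugation sends the eigenvector flag $[g]$ of $L$ to the eigenvector flag $[hg]$ of $hLh^{-1}$, so invariance on the Jacobi side transfers to invariance of $\twist(\Vand_n^{>0})$. Read literally, though, this runs into the same difficulty you flagged: for non-scalar $h\in\H_n^{>0}$ the conjugate $\ii hLh^{-1}$ lies in $\ii\gl_n^{>0}$ but not in $\uu_n$ (symmetry is lost), so $(\ii\gl_n^{>0})\cap\uu_n$ is \emph{not} conjugation-invariant, and one is left needing to know that the eigenvector flag of an arbitrary (non-symmetric) element of $\gl_n^{>0}$ lies in $\twist(\Vand_n^{>0})$. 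That is essentially your ``main obstacle'' again. So the paper's proof avoids your explicit Iwasawa computation but, taken at face value, does not fill the gap either; a complete argument (for instance via the QR/Toda mechanism showing that $\Kterm(a)^{-1}(-\ii L_0)\Kterm(a)$ remains symmetric tridiagonal for any positive-definite $a$ commuting with $L_0$) seems to require more than either you or the paper supplies.
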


\begin{proof}
Consider the action of $\H_n(\mathbb{C})$ on $\uu_n$ by conjugation. Note that $(\ii\gl_n^{>0})\cap\uu_n$ is invariant under $\H_n^{>0}$. The result then follows from \cref{all_tridiagonal_flags}.
\end{proof}

\begin{rmk}\label{vandermonde_flags_property}
A further property shared by $\Vand_n^{>0}$ and $\twist(\Vand_n^{>0})$ is that they are both naturally in bijection with $\PFl{\{1,2\}}{n}^{>0}$. In particular, the projection map $\Fl_n^{>0} \to \PFl{\{1,2\}}{n}^{>0}$ restricts to a bijection on both $\Vand_n^{>0}$ and $\twist(\Vand_n^{>0})$. In the case of $\Vand_n^{>0}$, this follows from \cref{defn_krylov} and \cref{tnn_Fl}\ref{tnn_Fl_tp}. In the case of $\twist(\Vand_n^{>0})$, this is not straightforward to prove; we will do so in \cref{twist_projection_bijection}.
\end{rmk}

\section{Gradient flows on adjoint orbits}\label{sec_gradient}

\noindent In this section, we study gradient flows on a partial flag variety, viewed as an adjoint orbit $\Orbit_{\bflambda}$ of $\uu_n$. We consider gradient flows for functions of the form $\kappa(\cdot,N)$ for fixed $N\in\uu_n$, where $\kappa$ is the Killing form of $\uu_n$, in three natural Riemannian metrics: the K\"{a}hler, normal, and induced metrics. We point out that when $\Orbit_{\bflambda}$ is isomorphic to a Grassmannian, then these three metrics coincide up to dilation, but otherwise they are distinct. Our goal will be to determine when such a flow preserves the totally nonnegative part $\Orbit_{\bflambda}^{\ge 0}$. In the case of the K\"{a}hler metric, we completely classify which gradient flows preserve positivity. In the case of the normal metric, we show that when $\Orbit_{\bflambda}$ is isomorphic to $\Fl_n(\mathbb{C})$ with $n\ge 3$, there are no nontrivial gradient flows which preserve positivity. In the case of the induced metric, we make some preliminary investigations which indicate that whether or not there exists a non-trivial gradient flow on $\Orbit_{\bflambda}$ which preserves positivity can depend on the spacing between the entries of $\bflambda$.

We refer to \cite[Section 4.1]{abraham_marsden_ratiu88}, \cite[Chapter 8]{besse87}, and \cite[Section 15.2]{bloch_morrison_ratiu13} for background. For a given flow under consideration, we let $L(t)$ (for $t\in\mathbb{R}$) denote the flow beginning at $L(0) = L_0$, and we let $\dot{L}(t)$ denote the derivative of $L(t)$ with respect to $t$. Since $\Orbit_{\bflambda}$ is compact, all flows we consider are complete, i.e., they are defined for all $t\in\mathbb{R}$ \cite[Corollary 4.1.20]{abraham_marsden_ratiu88}. If $\dot{L}(0) = 0$, we call $L_0$ an {\itshape equilibrium}.
\begin{defn}\label{defn_positivity_preserving}
Let $\bflambda\in\mathbb{R}^n$ be weakly decreasing. We say that a flow on $\Orbit_{\bflambda}$ {\itshape weakly preserves positivity} if
$$
L(t)\in\Orbit_{\bflambda}^{\ge 0} \quad \text{ for all } L_0\in\Orbit_{\bflambda}^{\ge 0} \text{ and } t \ge 0,
$$
and {\itshape strictly preserves positivity} if
$$
L(t)\in\Orbit_{\bflambda}^{>0} \quad \text{ for all } L_0\in\Orbit_{\bflambda}^{\ge 0} \text{ and } t > 0.
$$
(So, every flow which strictly preserves positivity also weakly preserves positivity.) We make the analogous definitions for $\PFl{K}{n}(\mathbb{C})$ and $\U_n$.
\end{defn}

For example, the constant flow on $\Orbit_{\bflambda}$ weakly preserves positivity, but it does not strictly preserve positivity unless $\bflambda$ is constant (in which case $\Orbit_{\bflambda}$ is a point). We emphasize that in \cref{defn_positivity_preserving}, we require that positivity is preserved for {\itshape all} initial choices $L_0\in\Orbit_{\bflambda}^{\ge 0}$. In general, it is possible that the flow $L(t)$ remains in $\Orbit_{\bflambda}^{\ge 0}$ for some choices of $L_0\in\Orbit_{\bflambda}^{\ge 0}$, but not for others; see \cref{sometimes_preserving} for an intriguing instance of this phenomenon.
\begin{defn}\label{defn_gradient_flow}
Let $\kappa$ denote the {\itshape Killing form} on $\gl_n(\mathbb{C})$, given by
$$
\kappa(L,M) := 2n\tr(LM) - 2\tr(L)\tr(M) \quad \text{ for all } L,M\in\gl_n(\mathbb{C}).
$$
Then $-\kappa(\cdot,\cdot)$ defines a $[\cdot,\cdot]$-invariant pairing (i.e.\ $\kappa(\ad_L(M),N) = -\kappa(M,\ad_L(N))$) which is positive semidefinite on $\uu_n$.

Now let $\bflambda\in\mathbb{R}^n$ be weakly decreasing, and fix a Riemannian metric on $\Orbit_{\bflambda}$. Given $N\in\uu_n$, we define the {\itshape gradient flow on $\Orbit_{\bflambda}$ with respect to $N$} (in the given metric) as the flow given by
\begin{align}\label{defn_gradient_flow_equation}
\dot{L}(t) = \grad(H)(L(t)), \quad \text{ where } H(M) := \kappa(M,N) \text{ for all } M\in\Orbit_{\bflambda}.
\end{align}
We emphasize that we use the steepest ascent sign convention for the gradient flow.
\end{defn}

\begin{rmk}\label{eg_constant_flow}
We are interested in gradient flows on $\Orbit_{\bflambda}$ which preserve positivity with respect to some $N\in\uu_n$ (in a given metric). We point out that a necessary condition on $N$ is that it is purely imaginary, i.e., $\ii N$ is a real symmetric matrix.
\end{rmk}

\subsection{Background}\label{sec_gradient_background}
We briefly review the definitions of the three metrics we will consider, following \cite[Section 15.2]{bloch_morrison_ratiu13}; also see \cite[Section 4]{atiyah82}.
\begin{defn}\label{defn_metrics}
Let $\bflambda\in\mathbb{R}^n$ be weakly decreasing, and let $L\in\Orbit_{\bflambda}$.
\begin{itemize}[leftmargin=24pt, itemsep=2pt]
\item For $X\in\uu_n$, define $X^L$ and $X_L$ by the (unique) decomposition
\begin{align}\label{ad_decomposition}
X = X^L + X_L, \quad \text{ where $X^L\in\im(\ad_L)$ and $X_L\in\ker(\ad_L)$}.
\end{align}
Then the {\itshape normal metric} (or {\itshape standard metric}) on $\Orbit_{\bflambda}$ is given at $L\in\Orbit_{\bflambda}$ by
$$
\langle [L,X],[L,Y]\rangle_{\textnormal{normal}} := -\kappa(X^L,Y^L)
$$
for all tangent vectors $[L,X]$ and $[L,Y]$ at $L$.
\item The {\itshape induced metric} on $\Orbit_{\bflambda}$ is given at $L\in\Orbit_{\bflambda}$ by
$$
\langle [L,X],[L,Y]\rangle_{\textnormal{induced}} := -\kappa([L,X],[L,Y]) = \langle -\hspace*{-2pt}\ad_L^2([L,X]),[L,Y]\rangle_{\textnormal{normal}}
$$
for all tangent vectors $[L,X]$ and $[L,Y]$ at $L$.

\item Let $\sqrt{-\hspace*{-2pt}\ad_L^2}$ denote the positive square root of the positive semidefinite operator $-\hspace*{-2pt}\ad_L^2$. Then the {\itshape K\"{a}hler metric} on $\Orbit_{\bflambda}$ is given at $L\in\Orbit_{\bflambda}$ by
$$
\langle [L,X],[L,Y]\rangle_{\textnormal{K\"{a}hler}} := \langle\textstyle\sqrt{-\hspace*{-2pt}\ad_L^2}([L,X]),[L,Y]\rangle_{\textnormal{normal}}
$$
for all tangent vectors $[L,X]$ and $[L,Y]$ at $L$.
\end{itemize}

\end{defn}

We remark that the K\"{a}hler metric depends only on the corresponding flag variety under the identification \eqref{Fl_to_orbit_map}, not on the specific values of $\bflambda$ (aside from their multiplicities). This is in contrast to the normal and induced metrics, which do depend on the specific values of $\bflambda$.

A notable special case is when $\Orbit_{\bflambda}$ is isomorphic to a Grassmannian, as in \cref{sec_Gr_orbit}. Then the three metrics coincide up to dilation (cf.\ \cite[Section 4.2]{bloch_flaschka_ratiu90}), as we prove below. Therefore the three metrics on such $\Orbit_{\bflambda}$ give rise to the same gradient flows, but their descriptions are not obviously equivalent (see the running example: \cref{running_example_kahler}, \cref{running_example_normal}, and \cref{running_example_induced}). When considering flows which preserve positivity on such $\Orbit_{\bflambda}$, it will be most convenient to work in the K\"{a}hler metric, while in \cref{sec_lyapunov} we will work in the normal metric.
\begin{prop}\label{grassmannian_metrics}
Let $\bflambda\in\mathbb{R}^n$ with $\lambda_1 = \cdots = \lambda_k > \lambda_{k+1} = \cdots = \lambda_n$, so that $\Orbit_{\bflambda}\cong\Gr_{k,n}(\mathbb{C})$. Then the K\"{a}hler, normal, and induced metrics on $\Orbit_{\bflambda}$ all coincide up to dilation.
\end{prop}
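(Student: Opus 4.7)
The plan is to show that on this orbit, the operator $-\hspace*{-2pt}\ad_L^2$ restricted to $\im(\ad_L)$ is always a scalar multiple of the identity, with the scalar independent of $L\in\Orbit_{\bflambda}$. Once this is established, comparing \cref{defn_metrics} directly shows that the three metrics differ by constant dilation factors.

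First, I would diagonalize $L\in\Orbit_{\bflambda}$. Writing $L = g(\ii\Diag{\bflambda})g^{-1}$ for some $g\in\U_n$ and recalling that $g$ acts on $\uu_n$ by conjugation, we have $\ad_L = \Ad_g\circ\ad_{\ii\Diag{\bflambda}}\circ\Ad_{g^{-1}}$, so it suffices to analyze $\ad_{\ii\Diag{\bflambda}}$. On the standard basis $\{E_{p,q} - E_{q,p},\, \ii(E_{p,q} + E_{q,p})\}_{p<q}\cup\{\ii E_{p,p}\}$ of $\uu_n$, one checks directly that $\ad_{\ii\Diag{\bflambda}}$ has eigenvalues $\pm\ii(\lambda_p - \lambda_q)$ for $p < q$ (together with $0$ on the diagonal piece), so the nonzero eigenvalues of $-\hspace*{-2pt}\ad_L^2$ are exactly $(\lambda_p - \lambda_q)^2$ for $1 \le p < q \le n$.

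Next I would specialize to the Grassmannian case: by hypothesis $\bflambda$ takes only the two values $\lambda_1$ and $\lambda_n$, so $\lambda_p - \lambda_q$ is either $0$ (when $p,q$ both lie in $\{1,\dots,k\}$ or both in $\{k+1,\dots,n\}$) or $\lambda_1 - \lambda_n$ otherwise. Therefore $-\hspace*{-2pt}\ad_L^2$ has only the eigenvalues $0$ and $(\lambda_1 - \lambda_n)^2$, with the zero eigenspace being $\ker(\ad_L)$ and the nonzero eigenspace being $\im(\ad_L)$ (using the $\ad$-invariant decomposition \eqref{ad_decomposition}). Hence on $\im(\ad_L)$ we have the pointwise identities
\[
-\hspace*{-2pt}\ad_L^2 = (\lambda_1 - \lambda_n)^2\cdot\id \quad\text{and}\quad \sqrt{-\hspace*{-2pt}\ad_L^2} = (\lambda_1 - \lambda_n)\cdot\id.
\]

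Finally I would substitute these identities into the three definitions. For any tangent vectors $[L,X],[L,Y]$ at $L$, decomposing $X = X^L + X_L$ and $Y = Y^L + Y_L$ as in \eqref{ad_decomposition} gives
\[
\langle [L,X],[L,Y]\rangle_{\textnormal{induced}} = (\lambda_1 - \lambda_n)^2\langle [L,X],[L,Y]\rangle_{\textnormal{normal}},
\]
\[
\langle [L,X],[L,Y]\rangle_{\textnormal{K\"{a}hler}} = (\lambda_1 - \lambda_n)\langle [L,X],[L,Y]\rangle_{\textnormal{normal}},
\]
which is the desired conclusion. The main (very mild) obstacle is just bookkeeping: making sure that the scalar identification of $-\hspace*{-2pt}\ad_L^2$ on $\im(\ad_L)$ is compatible with the domains on which the three metrics are defined, which is automatic from the orthogonal splitting $\uu_n = \im(\ad_L)\oplus\ker(\ad_L)$ with respect to $-\kappa$.
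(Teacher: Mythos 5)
Your proof is correct and follows the same high-level strategy as the paper: both reduce the statement to showing that $-\hspace*{-2pt}\ad_L^2$ acts as the positive scalar $(\lambda_1-\lambda_n)^2$ on $\im(\ad_L)$, after which the three definitions in \cref{defn_metrics} immediately give the dilation factors. The only difference is how the key identity is verified: the paper writes $-\ii L = (\lambda_1 - \lambda_n)P + \lambda_n\I_n$ with $P$ a projection (as in \eqref{projection_sum_formula}) and checks the triple-bracket identity $-[L,[L,[L,M]]] = (\lambda_1 - \lambda_n)^2[L,M]$ algebraically, whereas you conjugate $L$ to $\ii\Diag{\bflambda}$, compute the eigenvalues $\pm\ii(\lambda_p-\lambda_q)$ of $\ad_{\ii\Diag{\bflambda}}$ on the standard real basis, and observe that in the two-eigenvalue case all nonzero eigenvalues of $-\hspace*{-2pt}\ad_L^2$ coincide. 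Both computations are clean; yours has the small pedagogical advantage of making visible exactly why the hypothesis ``$\bflambda$ takes only two values'' matters (otherwise distinct $(\lambda_p - \lambda_q)^2$ would appear and the operator would not be a scalar), while the paper's projection identity is marginally shorter.
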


\begin{proof}
By \cref{defn_metrics}, it suffices to show that for any $L\in\Orbit_{\bflambda}$, the operator $-\hspace*{-2pt}\ad_L^2$ acts as a positive scalar multiple of the identity on $\im(\ad_L)$. Indeed, we claim that
$$
-[L,[L,[L,M]]] = (\lambda_1 - \lambda_n)^2\hspace*{1pt}[L,M] \quad \text{ for all } M\in\uu_n.
$$
We can verify this by writing $-\ii L = (\lambda_1 - \lambda_n)P + \lambda_n\I_n$ for some $P\in\gl_n(\mathbb{C})$ with $P^2 = P = \adjoint{P}$, as in \eqref{projection_sum_formula}.
\end{proof}

We will only need to work with \cref{defn_metrics} in the case of the induced metric; for the K\"{a}hler and normal metrics, we will instead use known descriptions for their gradient flows, which we introduce in the respective subsections. For the induced metric, we will use the following general computation from \cite{bloch_morrison_ratiu13}:
\begin{lem}[{Bloch, Morrison, and Ratiu \cite[(15.4)]{bloch_morrison_ratiu13}}]\label{gradient_computation}
Fix a weakly decreasing $\bflambda\in\mathbb{R}^n$, a metric on $\Orbit_{\bflambda}$, and $N\in\uu_n$. Let $L(t)\in\Orbit_{\bflambda}$ evolve according to \eqref{defn_gradient_flow_equation}, i.e., the gradient flow with respect to $N$. Suppose that $M(t)\in\uu_n$ satisfies
$$
\langle [L(t),X],[L(t),M(t)]\rangle_{\textnormal{metric}} = \kappa([L(t),X],N)
$$
for all $t$ and tangent vectors $[L(t),X]$ at $L(t)$. Then we can write \eqref{defn_gradient_flow_equation} as
\begin{align}\label{gradient_computation_equation}
\dot{L}(t) = [L(t),M(t)].
\end{align}

\end{lem}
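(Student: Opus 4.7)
The plan is to recognize the statement as an unpacking of the defining property of the Riemannian gradient, once the tangent space to $\Orbit_{\bflambda}$ at $L$ is identified. First I would recall that since $\Orbit_{\bflambda}$ is the $\U_n$-orbit of $\ii\Diag{\bflambda}$ under conjugation, differentiating the curve $s\mapsto \exp(sX)L\exp(-sX)$ at $s=0$ identifies
\[
T_L\Orbit_{\bflambda} = \{[X,L] : X\in\uu_n\} = \{[L,X] : X\in\uu_n\},
\]
which justifies the notation used in \cref{defn_metrics}. Note that the map $X\mapsto [L,X]$ has kernel exactly $\ker(\ad_L)$, so the representative $X$ of a given tangent vector is only unique modulo this kernel.

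Next I would compute the differential of the height function. Because $H$ is the restriction to $\Orbit_{\bflambda}$ of the $\mathbb{R}$-linear functional $M\mapsto\kappa(M,N)$ on $\uu_n$, and $\kappa$ is bilinear, differentiating $H\bigl(\exp(sX)L\exp(-sX)\bigr)$ at $s=0$ gives
\[
dH_L([L,X]) = \kappa([L,X],N) \qquad \text{for all } X\in\uu_n.
\]

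Finally, the gradient $\grad(H)(L)$ is characterized by being the unique element $v\in T_L\Orbit_{\bflambda}$ satisfying $\langle v,w\rangle_{\textnormal{metric}} = dH_L(w)$ for all $w\in T_L\Orbit_{\bflambda}$. The hypothesis on $M(t)$ asserts precisely that $v := [L(t),M(t)]$ meets this characterization with respect to the chosen metric, so $\grad(H)(L(t)) = [L(t),M(t)]$. Substituting into \eqref{defn_gradient_flow_equation} yields $\dot{L}(t) = [L(t),M(t)]$, which is \eqref{gradient_computation_equation}.

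There is no substantive obstacle here; the result is essentially a definition-chase. The only subtle point is that the $\uu_n$-element $M(t)$ is not uniquely determined by its defining relation, since any shift by an element of $\ker(\ad_{L(t)})$ preserves both sides of the hypothesis; however, such a shift also preserves the commutator $[L(t),M(t)]$, so the conclusion is well-defined. It is also worth checking in passing that the three pairings in \cref{defn_metrics} really yield (positive-definite) Riemannian metrics on $\Orbit_{\bflambda}$, which reduces to the negative definiteness of $\kappa$ on $\uu_n$ together with the decomposition \eqref{ad_decomposition}, so that the uniqueness step in the gradient characterization is valid in each case.
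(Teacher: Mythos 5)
Your proof is correct. The paper cites this lemma from Bloch, Morrison, and Ratiu without giving its own proof, and your argument is exactly the natural definition-unwinding one would supply: identify $T_L\Orbit_{\bflambda}$ with $\im(\ad_L)$, compute $dH_L([L,X]) = \kappa([L,X],N)$ from linearity of $M\mapsto\kappa(M,N)$, and observe that the hypothesis on $M(t)$ is precisely the statement that $[L(t),M(t)]$ satisfies the defining property of $\grad(H)(L(t))$. Your remark that $M(t)$ is determined only modulo $\ker(\ad_{L(t)})$ but that $[L(t),M(t)]$ is nonetheless well-defined is exactly the right thing to note, and is also the reason the paper's subsequent fix of a specific representative $\adinverse{L}(N)$ in \cref{defn_ad_inverse} is harmless.

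One small imprecision in your closing aside: $\kappa$ is not negative \emph{definite} on $\uu_n$ but only negative semidefinite, with a one-dimensional kernel spanned by $\ii\I_n$ (this is stated in \cref{defn_gradient_flow}). Positive definiteness of the metrics on $T_L\Orbit_{\bflambda}$ nonetheless holds because $\ii\I_n \in \ker(\ad_L)$ for every $L$, so its contribution is killed when one passes to $\im(\ad_L)$ via the decomposition \eqref{ad_decomposition}. This does not affect the validity of your main argument, but you should word that sentence accordingly.
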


Since \eqref{gradient_computation_equation} is in {\itshape Lax form} \cite{lax68}, we can easily translate it into a flow on $\U_n$. We make some general observations about such flows.
\begin{lem}\label{lax_flow}
Let $\bflambda\in\mathbb{R}^n$ be weakly decreasing, set $K := \{i \in [n-1] : \lambda_i > \lambda_{i+1}\}$, and let $M(t)\in\uu_n$.
\begin{enumerate}[label=(\roman*), leftmargin=*, itemsep=2pt]
\item\label{lax_flow_translation} \textnormal{(Lax \cite[p.\ 470]{lax68})} Consider the flow on $\U_n$
$$
\dot{g}(t) = -M(t)g(t).
$$
Letting $L(t)$ denote $g(t)(\ii\Diag{\bflambda})g(t)^{-1}\in\Orbit_{\bflambda}$, we have the evolution
$$
\dot{L}(t) = [L(t),M(t)].
$$
\item\label{lax_flow_minors} Further suppose that $M(t)\in\oo_n$, and that $L(t)$ weakly (respectively, strictly) preserves positivity. Then for all $g_0\in\U_n^{\ge 0}$, we have
$$
\Delta_I(g(t)) \ge 0 \quad (\text{respectively}, > 0) \quad \text{ for all $k\in K$, $I\in\textstyle\binom{[n]}{k}$, and $t > 0$}.
$$
(If $K = [n-1]$, this means precisely that $g(t)$ weakly (respectively, strictly) preserves positivity in $\U_n$.) In particular, for all $k\in K$ and $I\in\binom{[n]}{k}$,
\begin{align}\label{lax_flow_minors_inequality}
\text{if } \Delta_I(g_0) = 0 \quad \text{ then } \quad \textstyle\frac{d}{dt}\eval{t=0}\Delta_I(g(t)) \ge 0.
\end{align}
\end{enumerate}

\end{lem}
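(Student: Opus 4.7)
The plan is to dispatch (i) by a short differentiation, and to prove (ii) by transferring the positivity hypothesis from $L(t)$ to the flag $V(t) \in \PFl{K}{n}(\mathbb{C})$ that it represents, and then arguing that the specific representative $g(t) \in \O_n$ inherits the correct sign of Pl\"ucker coordinates.

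For (i), I would differentiate $L(t) = g(t)(\ii\Diag{\bflambda})g(t)^{-1}$ using the product rule together with the identity $\frac{d}{dt}g(t)^{-1} = -g(t)^{-1}\dot{g}(t)g(t)^{-1}$, and then substitute $\dot{g}(t) = -M(t)g(t)$. The two resulting terms collapse to $\dot{L}(t) = -M(t)L(t) + L(t)M(t) = [L(t), M(t)]$, exactly as stated.

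For (ii), the first observation is that since $M(t) \in \oo_n$ is real skew-Hermitian and $g_0 \in \U_n^{\ge 0} \subseteq \O_n$ (by \cref{U_to_Fl}\ref{U_to_Fl_isomorphism}), the ODE $\dot{g} = -Mg$ keeps $g(t) \in \O_n$ for all $t$. Via the diffeomorphism in \cref{Fl_to_orbit}, let $V(t) \in \PFl{K}{n}(\mathbb{C})$ be the flag corresponding to $L(t)$; by construction it is represented by $g(t)$ in $\GL_n(\mathbb{C})/\P{K}{n}(\mathbb{C})$. The positivity hypothesis then gives $V(t) \in \PFl{K}{n}^{\ge 0}$ (respectively, $\PFl{K}{n}^{>0}$ for $t > 0$), so by \cref{tnn_Fl} the Pl\"ucker vector $(\Delta_I(V(t)))_{I \in \binom{[n]}{k}}$ admits, for each $k \in K$, a normalization with all entries $\ge 0$ (respectively, $> 0$). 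Since $V(t)$ is represented by $g(t)$, this forces the entries $\Delta_I(g(t))$ for $I \in \binom{[n]}{k}$ to share a common sign $s_k(t) \in \{+1,-1\}$.

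The only real obstacle is to verify that $s_k(t) = +1$ rather than $-1$. Here I would argue by connectedness: since $g(t) \in \GL_n$, the $k$-th Pl\"ucker vector of $g(t)$ is nonzero, so $s_k(t)$ is well-defined, and by continuity of $g(t)$ it is locally constant, hence constant on $[0,\infty)$. At $t = 0$, $g_0 \in \U_n^{\ge 0}$ gives $\Delta_I(g_0) \ge 0$ by \cref{U_to_Fl}\ref{U_to_Fl_tnn}, forcing $s_k(0) = +1$. This yields $\Delta_I(g(t)) \ge 0$ (respectively, $> 0$) for all $t \ge 0$; specializing to $K = [n-1]$ and invoking \cref{U_to_Fl}\ref{U_to_Fl_tnn} and \ref{U_to_Fl_tp} recovers the parenthetical claim about $g(t)$ itself preserving positivity in $\U_n$. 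Finally, \eqref{lax_flow_minors_inequality} is immediate from a right-hand limit at $t = 0$: if $\Delta_I(g_0) = 0$, then $\frac{d}{dt}\big\rvert_{t=0}\Delta_I(g(t)) = \lim_{t \to 0^+}\Delta_I(g(t))/t \ge 0$.
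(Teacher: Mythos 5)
Your proof is correct and matches the paper's approach. The paper dispatches (i) by direct differentiation and proves (ii) by invoking precisely the lemmas you use (\cref{Fl_to_orbit}, \cref{tnn_Fl}, \cref{U_to_Fl}) together with continuity of $g(t)\in\O_n$; you have simply spelled out the sign-continuity argument (that the common sign $s_k(t)$ of the $k$-th Pl\"ucker vector is locally constant, hence constant, and equals $+1$ at $t=0$ because $g_0\in\U_n^{\ge 0}$) which the paper leaves implicit under the phrase ``continuity of $g(t)\in\O_n$.''
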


\begin{proof}
We can verify part \ref{lax_flow_translation} directly. Part \ref{lax_flow_minors} follows from \cref{Fl_to_orbit}, \cref{tnn_Fl}, \cref{U_to_Fl}, and continuity of $g(t)\in\O_n$.
\end{proof}

\begin{rmk}\label{multilinearity_remark}
By multilinearity, we may express the derivated determinant in \eqref{lax_flow_minors_inequality} as follows:
\begin{align}\label{determinant_derivative}
\textstyle\frac{d}{dt}\eval{t=0}\Delta_I(g(t)) = \displaystyle\sum_{j=1}^k\Delta_I(g_0 \text{ with column $j$ replaced by column $j$ of } \dot{g}(0)).
\end{align}

\end{rmk}

\subsection{The K\"{a}hler metric}\label{sec_gradient_kahler}
In this subsection, we classify which gradient flows on $\Orbit_{\bflambda}$ with respect to $N\in\uu_n$ in the K\"{a}hler metric weakly or strictly preserve positivity. Namely, if $\ii N\in\gl_n^{\ge 0}$ then positivity is weakly preserved, and if $\ii N\in\gl_n^{>0}$ then positivity is strictly preserved. If $\Orbit_{\bflambda}$ is not isomorphic to a Grassmannian, then the converses to these statements hold. By contrast, in the Grassmannian case, there are additional such $N$ for which positivity is preserved; see \cref{positivity_preserving_kahler_grassmannian} and \cref{positivity_preserving_kahler_full}.

While the definition of the K\"{a}hler metric on $\Orbit_{\bflambda}$ is difficult to work with directly, its gradient flows admit a beautiful explicit solution. This has appeared in the literature in several places; see the work of Duistermaat, Kolk, and Varadarajan \cite[Section 3]{duistermaat_kolk_varadarajan83} and of Guest and Ohnita \cite[Appendix]{guest_ohnita93}, and the references therein.
\begin{prop}[{\cite[Section 3]{duistermaat_kolk_varadarajan83}; \cite[Appendix]{guest_ohnita93}}]\label{gradient_flow_kahler}
Let $\bflambda\in\mathbb{R}^n$ be weakly decreasing, set $K := \{i \in [n-1] : \lambda_i > \lambda_{i+1}\}$, and let $N\in\uu_n$. Let $L(t)$ evolve according to the gradient flow on $\Orbit_{\bflambda}$ with respect to $N$ in the K\"{a}hler metric, and let $V(t)\in\PFl{K}{n}(\mathbb{C})$ be the corresponding partial flag under the inverse map of \eqref{Fl_to_orbit_map}. Then
\begin{align}\label{gradient_flow_kahler_equation}
V(t) = \exp(t\ii N)V_0 \quad \text{ for all } t.
\end{align}
Letting $g(t)\in\U_n$ be any representative of $V(t)$, we have $L(t) = g(t)(\ii\Diag{\bflambda})g(t)^{-1}$. Explicitly, we can take $g_0\in\U_n$ representing $V_0$, and then take (cf.\ \cref{defn_iwasawa_projection})
\begin{align}\label{gradient_flow_kahler_equation_iwasawa}
g(t) = \Kterm(\exp(t\ii N)g_0) \quad \text{ for all } t.
\end{align}

\end{prop}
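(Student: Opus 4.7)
The plan is to identify the K\"{a}hler gradient flow of $H(M)=\kappa(M,N)$ as the image, under the complex structure $J$, of the Hamiltonian flow of $H$ with respect to the Kirillov--Kostant--Souriau symplectic form, and then to recognize this as the restriction to $\U_n$ of the holomorphic $\GL_n(\mathbb{C})$-action on $\PFl{K}{n}(\mathbb{C})$.

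First, I would use \cref{Fl_to_orbit} to transfer the problem to the complex manifold $\PFl{K}{n}(\mathbb{C})$, on which $\GL_n(\mathbb{C})$ acts holomorphically by left multiplication. For any $X\in\gl_n(\mathbb{C})$, let $\xi_X$ denote the real vector field on $\PFl{K}{n}(\mathbb{C})$ whose time-$t$ flow is $V\mapsto\exp(tX)V$. The key property of the complex structure $J$ is that $J\xi_M=\xi_{\ii M}$ for every $M\in\uu_n$: this is immediate from the fact that the $\GL_n(\mathbb{C})$-action is holomorphic, and that $\uu_n\oplus\ii\hspace*{1pt}\uu_n=\gl_n(\mathbb{C})$ realises the complexification.

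Second, I would identify the Hamiltonian vector field of $H$ under the K\"{a}hler form $\omega$ of \cref{defn_metrics}. A direct calculation, using the ad-invariance of $\kappa$, shows that this metric corresponds to the standard KKS symplectic form
\[
\omega_L([L,X],[L,Y]) = -\kappa(L,[X,Y]),
\]
and that $dH_L([L,Y])=\kappa([L,Y],N)=-\kappa(L,[Y,N])=\omega_L([N,L],[L,Y])$. Hence $\xi_H=\xi_N$ at the level of vector fields on $\Orbit_{\bflambda}$.

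Third, combining the two, the K\"{a}hler identity $\grad H=J\xi_H$ gives
\[
\dot V(t) \;=\; J\xi_N(V(t)) \;=\; \xi_{\ii N}(V(t)),
\]
and integrating the flow of the holomorphic vector field $\xi_{\ii N}$ yields $V(t)=\exp(t\ii N)V_0$, which is \eqref{gradient_flow_kahler_equation}. To obtain the explicit representative \eqref{gradient_flow_kahler_equation_iwasawa}, I would take any $g_0\in\U_n$ representing $V_0$, apply $\exp(t\ii N)$ to get an element of $\GL_n(\mathbb{C})$, and extract its $\U_n$-factor via the Iwasawa decomposition (\cref{iwasawa_decomposition}); by \cref{iwasawa_decomposition}\ref{iwasawa_decomposition_isomorphism} this factor represents the same flag $V(t)$, and the identity $L(t)=g(t)(\ii\Diag{\bflambda})g(t)^{-1}$ then follows from \eqref{Fl_to_orbit_map}.

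The main obstacle is verifying that the K\"{a}hler metric defined in \cref{defn_metrics} via the operator $\sqrt{-\hspace*{-2pt}\ad_L^2}$ really does induce (up to the expected signs) both the KKS symplectic form and the complex structure inherited from $\PFl{K}{n}(\mathbb{C})$. Granting this compatibility, the remaining argument is essentially formal and amounts to the statement that applying $J$ to a fundamental vector field on the orbit rotates its generator from $\uu_n$ to $\ii\hspace*{1pt}\uu_n$. I would also cross-check the result against the Trotter product formula (\cref{trotter}), writing $\exp(t\ii N)=\lim_{m\to\infty}(\exp(\tfrac{t}{m}\kterm(\ii N))\exp(\tfrac{t}{m}(\ii N-\kterm(\ii N))))^m$ so that the upper-triangular factors act trivially on the flag and only the $\U_n$-factors contribute to $V(t)$.
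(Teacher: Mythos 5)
The paper does not prove this proposition; it cites the result to Duistermaat--Kolk--Varadarajan and Guest--Ohnita (see the paragraph preceding the statement) and uses it as a black box. So there is no ``paper proof'' to compare against. Your proposal is a correct sketch of the standard argument that presumably underlies the cited references: identify the Kähler gradient of $H(\cdot)=\kappa(\cdot,N)$ with $J$ applied to the Hamiltonian vector field of $H$, recognize the latter as the fundamental vector field $\xi_N$ of the $\U_n$-action (via the cyclic invariance of $\kappa$ and the KKS form), and then use holomorphicity of the $\GL_n(\mathbb{C})$-action to trade $J\xi_N$ for $\xi_{\ii N}$, whose flow is $\exp(t\ii N)$. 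The derivation of \eqref{gradient_flow_kahler_equation_iwasawa} from \eqref{gradient_flow_kahler_equation} via \cref{iwasawa_decomposition} is clean and correct.

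The one genuine gap is exactly the one you flag yourself: verifying that the metric the paper \emph{defines} via $\sqrt{-\ad_L^2}$ and the normal metric is the Kähler metric in the usual sense, i.e.\ is of the form $g(\cdot,\cdot)=\omega(\cdot,J\cdot)$ for the KKS form $\omega$ and the integrable complex structure $J$ pulled back from $\PFl{K}{n}(\mathbb{C})$. Without that, the identity $\grad H = J\xi_H$ you invoke is not available, and the whole argument hangs. The verification is not hard but should be carried out: working at $L=\ii\Diag{\bflambda}$, the operator $\ad_L(\sqrt{-\ad_L^2}\hspace*{1pt})^{-1}$ acts entrywise by $\ii\,\mathrm{sign}(\lambda_i-\lambda_j)$, which is $\mp\ii$ on the strictly lower (resp.\ upper) block-triangular parts, and this is exactly (up to overall sign) the complex structure inherited from the identification $T_{[\,e\,]}\PFl{K}{n}(\mathbb{C})\cong\gl_n(\mathbb{C})/\p_K$. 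Since $\ad_L J = -\sqrt{-\ad_L^2}$, the paper's Kähler metric unwinds to $g([L,X],[L,Y])=\omega([L,X],J[L,Y])$ as required, and then your formal chain $\grad H = J\xi_H = J\xi_N = \xi_{\ii N}$ goes through. One small caveat on your closing Trotter cross-check: the iterated product of $\exp(\tfrac{t}{m}\kterm(\ii N))$ and $\exp(\tfrac{t}{m}(\ii N - \kterm(\ii N)))$ does not let you simply collect the unitary factors, since the factors do not commute; so that remark is better read as a heuristic than as an independent verification, and the Iwasawa argument you already gave is the correct route to \eqref{gradient_flow_kahler_equation_iwasawa}.
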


We emphasize that $N\in\uu_n$, and $\ii N$ is Hermitian. The assumption that $\bflambda$ is weakly decreasing is not important (until we consider the totally nonnegative part); only its multiplicities are relevant. Also, \eqref{gradient_flow_kahler_equation} should be regarded only as a flow on $\PFl{K}{n}(\mathbb{C})$, not on $\U_n$; in order to obtain a flow on $\U_n$, we must apply the Iwasawa decomposition, as in \eqref{gradient_flow_kahler_equation_iwasawa}.
\begin{rmk}\label{kahler_flow_decomposition}
There is an alternative way to describe the solution $L(t)$ in \cref{gradient_flow_kahler}. As in \eqref{projection_sum_formula}, write
\begin{align}\label{kahler_flow_decomposition_equation}
-\ii L(t) = \Big(\sum_{k\in K}(\lambda_k - \lambda_{k+1})P_k(t)\Big) + \lambda_n\I_n,
\end{align}
where $P_k(t)$ is the orthogonal projection onto the subspace spanned by the eigenvectors of $-\ii L(t)$ corresponding to the eigenvalues $\lambda_1, \dots, \lambda_k$. Explicitly, let $V(t) = (V_k(t))_{k\in K}\in\PFl{K}{n}(\mathbb{C})$ be as in \eqref{gradient_flow_kahler_equation}, with $V_0 = ((V_0)_k)_{k\in K}$, so that $V_k(t) = \exp(t\ii N)(V_0)_k$ for $k\in K$. Regarding elements of $\Gr_{k,n}(\mathbb{C})$ as $n\times k$ matrices, we have
\begin{multline}\label{kahler_flow_decomposition_formula}
P_k(t) = \Proj{V_k(t)} = V_k(t)(V_k(t)^*V_k(t))^{-1}V_k(t)^* \\
= \exp(t\ii N)(V_0)_k((V_0)_k^*\exp(2t\ii N)(V_0)_k)^{-1}(V_0)_k^*\exp(t\ii N).
\end{multline}

Note that \eqref{kahler_flow_decomposition_equation}, via \eqref{kahler_flow_decomposition_formula}, gives an explicit expression for $L(t)$. It does not require computing an Iwasawa decomposition; we only need to know $(V_0)_k$ for all $k\in K$. Also, by \cref{gradient_flow_kahler}, each $\ii P_k(t)$ evolves according to the gradient flow on $\Orbit_{\bfomega{k}}$ with respect to $N$ in the K\"{a}hler metric. But since $\Orbit_{\bfomega{k}} \cong \Gr_{k,n}(\mathbb{C})$, by \cref{grassmannian_metrics}, the K\"{a}hler, normal, and induced metrics coincide (because the dilation factors are $1$). For example, if we work instead in the normal metric, we will find (see \cref{gradient_flow_normal}) that
\begin{gather*}
\ii\dot{P}_k(t) = [\ii P_k(t), [\ii P_k(t),N]].
\end{gather*}

\end{rmk}

As a consequence of \cref{gradient_flow_kahler}, when considering flows which preserve positivity in the K\"{a}hler metric, we need only work with \eqref{gradient_flow_kahler_equation}:
\begin{cor}\label{kahler_flow_positivity_preserving}
Let $K\subseteq [n-1]$, and let $N\in\uu_n$. Then for all weakly decreasing $\bflambda\in\mathbb{R}^n$ with $\{i \in [n-1] : \lambda_i > \lambda_{i+1}\} = K$, the gradient flow on $\Orbit_{\bflambda}$ with respect to $N$ in the K\"{a}hler metric weakly preserves positivity if and only if the flow \eqref{gradient_flow_kahler_equation} on $\PFl{K}{n}(\mathbb{C})$ weakly  preserves positivity. If so, then the gradient flow on $\Orbit_{\bflambda'}$ with respect to $N$ in the K\"{a}hler metric also weakly preserves positivity, for all $K'\subseteq K$ and weakly decreasing $\bflambda'$ with $\{i \in [n-1] : \lambda'_i > \lambda'_{i+1}\} = K'$. The same statements hold with ``weakly'' replaced by ``strictly''.
\end{cor}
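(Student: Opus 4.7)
This corollary is a direct consequence of \cref{gradient_flow_kahler} together with the definitions of the totally positive/nonnegative parts and the functoriality of the partial flag varieties, so there is no substantial obstacle; I organize the argument in two short steps.

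For the first equivalence, the plan is to unpack the diffeomorphism of \cref{Fl_to_orbit}: the map $V\mapsto g(\ii\Diag{\bflambda})g^{-1}$ (for any representative $g\in\U_n$ of $V$) identifies $\PFl{K}{n}(\mathbb{C})$ with $\Orbit_{\bflambda}$ and restricts to bijections $\PFl{K}{n}^{>0}\xrightarrow{\cong}\Orbit_{\bflambda}^{>0}$ and $\PFl{K}{n}^{\ge 0}\xrightarrow{\cong}\Orbit_{\bflambda}^{\ge 0}$. By \cref{gradient_flow_kahler}, this same diffeomorphism intertwines the gradient flow $L(t)$ on $\Orbit_{\bflambda}$ with the flow $V(t)=\exp(t\ii N)V_0$ on $\PFl{K}{n}(\mathbb{C})$. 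Hence $L(t)\in\Orbit_{\bflambda}^{\ge 0}$ (resp.\ $\in\Orbit_{\bflambda}^{>0}$) if and only if $V(t)\in\PFl{K}{n}^{\ge 0}$ (resp.\ $\in\PFl{K}{n}^{>0}$). Since the flow \eqref{gradient_flow_kahler_equation} on the flag side depends only on $K$ and $N$ and not on the particular $\bflambda$, the equivalence follows, uniformly over all admissible $\bflambda$.

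For the second statement (functoriality under $K'\subseteq K$), the plan is to use that left multiplication by $\exp(t\ii N)$ on $\GL_n(\mathbb{C})$ commutes with the projection $\pi\colon\PFl{K}{n}(\mathbb{C})\twoheadrightarrow\PFl{K'}{n}(\mathbb{C})$ of \eqref{defn_Fl_surjection}, which by \eqref{defn_tnn_Fl_surjections} restricts to surjections of both the totally nonnegative and totally positive parts. Concretely, given $V'_0\in\PFl{K'}{n}^{\ge 0}$, I lift it to some $V_0\in\PFl{K}{n}^{\ge 0}$ with $\pi(V_0)=V'_0$; by hypothesis and the first equivalence, $\exp(t\ii N)V_0\in\PFl{K}{n}^{\ge 0}$ for $t\ge 0$ (respectively, $\in\PFl{K}{n}^{>0}$ for $t>0$), and projecting under $\pi$ gives $\exp(t\ii N)V'_0\in\PFl{K'}{n}^{\ge 0}$ (respectively, $\in\PFl{K'}{n}^{>0}$). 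Transferring back through the diffeomorphism of \cref{Fl_to_orbit} for $\bflambda'$ concludes the argument.

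The only point that deserves attention is that one should argue on the flag side rather than on $\U_n$: the $\U_n$-lift \eqref{gradient_flow_kahler_equation_iwasawa} involves the nonlinear Iwasawa projection $\Kterm$, whereas on the flag side $\exp(t\ii N)$ acts genuinely linearly by left multiplication, and is therefore automatically equivariant with respect to~$\pi$.
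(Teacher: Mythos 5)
Your proposal is correct and follows essentially the same approach as the paper: the paper's proof is simply ``follows from \cref{gradient_flow_kahler} and \eqref{defn_tnn_Fl_surjections},'' which is exactly the combination of facts you invoke (the flag-side linearization of the K\"{a}hler gradient flow plus the surjectivity of the projection maps on the totally positive/nonnegative parts). Your additional remark about arguing on the flag side rather than on $\U_n$ is a useful clarification but reflects the same underlying idea.
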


\begin{proof}
This follows from \cref{gradient_flow_kahler} and \eqref{defn_tnn_Fl_surjections}.
\end{proof}

\begin{eg}\label{running_example_kahler}
Let us consider an example in the case $n=2$. Set
$$
L_0 := \ii\begin{bmatrix}a & b \\ b & -a\end{bmatrix} \quad \text{ and } \quad \ii N := \begin{bmatrix}p & q \\ q & -p\end{bmatrix},
$$
where $a,b,p,q\in\mathbb{R}$ such that $a$ or $b$ is nonzero. We assume that $b \ge 0$. We have $L_0\in\Orbit_{\bflambda}$, where
$$
\lambda_1 := \sqrt{a^2+b^2} \quad \text{ and } \quad \lambda_2 := -\sqrt{a^2+b^2}.
$$

Let $L(t)\in\Orbit_{\bflambda}$ evolve according to the gradient flow with respect to $N$ in the K\"{a}hler metric. We have
$$
L_0 = g_0(\ii\Diag{\bflambda})g_0^{-1}, \quad \text{ where } g_0 := \frac{1}{\sqrt{2\lambda_1}}\begin{bmatrix}
\sqrt{\lambda_1 + a} & -\sqrt{\lambda_1 - a} \\[4pt]
\sqrt{\lambda_1 - a} & \sqrt{\lambda_1 + a}
\end{bmatrix}\in\U_2.
$$
By \cref{gradient_flow_kahler}, we have the explicit solution
$$
L(t) = g(t)(\ii\Diag{\bflambda})g(t)^{-1}, \quad \text{ where } g(t) = \Kterm(\exp(t\ii N)g_0).
$$
However, this involves computing a matrix exponential and an Iwasawa decomposition, which is already cumbersome when $n=2$. Instead, for the purposes of illustration as well as comparison with the normal and induced metrics, let us calculate $\dot{L}(0)$.

For the remainder of this example we write `$\equiv$' to mean equality up to $O(t^2)$ as $t\to 0$. Let $V_0\in\Fl_2(\mathbb{C})$ be the flag represented by $g_0$, and let $V(t)$ be defined by \eqref{gradient_flow_kahler_equation}. Then
\begin{multline*}
V(t) = \exp(t\ii N)V_0 \equiv (\I_2 + t\ii N)V_0 \\
= \frac{1}{\sqrt{2\lambda_1}}\begin{bmatrix}
\sqrt{\lambda_1 + a} + t(p\sqrt{\lambda_1 + a} + q\sqrt{\lambda_1 - a}) & -\sqrt{\lambda_1 - a} + t(-p\sqrt{\lambda_1 - a} + q\sqrt{\lambda_1 + a}) \\[6pt]
\sqrt{\lambda_1 - a} + t(q\sqrt{\lambda_1 + a} - p\sqrt{\lambda_1 - a}) & \sqrt{\lambda_1 + a} + t(-q\sqrt{\lambda_1 - a} - p\sqrt{\lambda_1 + a})
\end{bmatrix}.
\end{multline*}
Applying the Iwasawa decomposition gives
$$
g(t) \equiv \frac{1}{\sqrt{2\lambda_1}}\begin{bmatrix}
\sqrt{\lambda_1 + a} - t(\frac{aq-bp}{\lambda_1})\sqrt{\lambda_1 - a} & -\sqrt{\lambda_1 - a} - t(\frac{aq-bp}{\lambda_1})\sqrt{\lambda_1 + a} \\[8pt]
\sqrt{\lambda_1 - a} + t(\frac{aq-bp}{\lambda_1})\sqrt{\lambda_1 + a} & \sqrt{\lambda_1 + a} - t(\frac{aq-bp}{\lambda_1})\sqrt{\lambda_1 - a}
\end{bmatrix},
$$
and so
$$
L(t) = g(t)(\ii\Diag{\bflambda})\transpose{g(t)} \equiv \ii\begin{bmatrix}
a - t(\frac{aq-bp}{\lambda_1})2b & b + t(\frac{aq-bp}{\lambda_1})2a \\[6pt]
b + t(\frac{aq-bp}{\lambda_1})2a & -a + t(\frac{aq-bp}{\lambda_1})2b
\end{bmatrix}.
$$
Since $L(t) \equiv L_0 + t\dot{L}(0)$, we obtain
\begin{gather*}
\dot{L}(0) = \frac{2(aq-bp)}{\lambda_1}\ii\begin{bmatrix}-b & a \\ a & b\end{bmatrix}.\qedhere
\end{gather*}

\end{eg}

\begin{eg}\label{eg_kahler_diagonal}
We consider the same setup as in \cref{running_example_kahler}, but take $\ii N$ to be diagonal:
$$
\ii N := \begin{bmatrix}p & 0 \\[2pt] 0 & -p\end{bmatrix}.
$$
Let us calculate the explicit solution $L(t) = g(t)(\ii\Diag{\bflambda})g(t)^{-1}$ to the gradient flow in the K\"{a}hler metric. We have
$$
V(t) = \exp(t\ii N)V_0 = \frac{1}{\sqrt{2\lambda_1}}\begin{bmatrix}
e^{tp}\sqrt{\lambda_1 + a} & -e^{tp}\sqrt{\lambda_1 - a} \\[4pt]
e^{-tp}\sqrt{\lambda_1 - a} & e^{-tp}\sqrt{\lambda_1 + a}
\end{bmatrix}.
$$
Applying the Iwasawa decomposition gives
$$
g(t) = \frac{1}{\sqrt{e^{2tp}(\lambda_1+a) + e^{-2pt}(\lambda_1-a)}}\begin{bmatrix}
e^{tp}\sqrt{\lambda_1 + a} & -e^{-tp}\sqrt{\lambda_1 - a} \\[4pt]
e^{-tp}\sqrt{\lambda_1 - a} & e^{tp}\sqrt{\lambda_1 + a}
\end{bmatrix} \in \U_2,
$$
and so $L(t)$ equals
$$
\frac{\lambda_1}{e^{2tp}(\lambda_1+a) + e^{-2tp}(\lambda_1-a)}\ii\begin{bmatrix}
e^{2tp}(\lambda_1+a) - e^{-2tp}(\lambda_1-a) & 2b \\[4pt]
2b & -e^{2tp}(\lambda_1+a) + e^{-2tp}(\lambda_1-a)
\end{bmatrix}.
$$

We can use the formula above to compute the limits of $L(t)$ as $t\to\pm\infty$. If $b=0$ (i.e.\ $\lambda_1 = \pm a$) or $p = 0$, then $L(t)$ is constant. Otherwise, we have $\lambda_1\pm a > 0$. If $p > 0$, we obtain
$$
\lim_{t\to\infty}L(t) = \ii\begin{bmatrix}\lambda_1 & 0 \\ 0 & -\lambda_1\end{bmatrix} \quad \text{ and } \quad \lim_{t\to -\infty}L(t) = \ii\begin{bmatrix}-\lambda_1 & 0 \\ 0 & \lambda_1\end{bmatrix}.
$$
If $p < 0$, the limits are exchanged.
\end{eg}

We recall from \cref{defn_tnn_intro} that $\gl_n^{\ge 0}$ and $\gl_n^{>0}$ are the infinitesimal parts of $\GL_n^{\ge 0}$ and $\GL_n^{>0}$, respectively. The following theorem is an analogue of this statement, where instead of considering all minors of an $n\times n$ matrix, we only consider minors of a fixed order $k$. It will be the key to classifying positivity-preserving gradient flows on $\Orbit_{\bflambda}$, when $\Orbit_{\bflambda}\cong\Gr_{k,n}(\mathbb{C})$. We will only apply it when $M$ is a symmetric matrix, but for completeness we state it for general real $M$. The result and its proof are natural extensions of \cite[Section 3.2]{galashin_karp_lam}, which considered the case of a specific matrix $M$ (the {\itshape cyclic shift matrix}).
\begin{thm}\label{infinitesimal_k}
Let $1 \le k \le n-1$, and let $M\in\gl_n(\mathbb{R})$. Then the following are equivalent:
\begin{enumerate}[label=(\roman*), leftmargin=*, itemsep=4pt]
\item\label{infinitesimal_k_nonnegative_entries} if $k=1$: we have $M_{i,j} \ge 0$ for all $i\neq j$ in $[n]$; \\[2pt]
if $k=n-1$: we have $(-1)^{i+j-1}M_{i,j} \ge 0$ for all $i\neq j$ in $[n]$; \\[2pt]
if $2 \le k \le n-2$: we have
$$
M_{1,2}, M_{2,1}, M_{2,3}, M_{3,2}, \dots, M_{n-1,n}, M_{n,n-1}, (-1)^{k-1}M_{n,1}, (-1)^{k-1}M_{1,n} \ge 0
$$
and
$$
M_{i,j} = 0 \quad \text{ for all $i,j\in [n]$ such that } i-j \not\equiv -1, 0, 1\modulo{n};
$$
\item\label{infinitesimal_k_nonnegative_minors} all $k\times k$ minors of $\exp(tM)$ are nonnegative, for all $t \ge 0$; and
\item\label{infinitesimal_k_nonnegative_preserving} $\exp(tM)V \in \Gr_{k,n}^{\ge 0}$ for all $V\in\Gr_{k,n}^{\ge 0}$ and $t \ge 0$.
\end{enumerate}

Now let $D$ be the directed graph on the vertex set $[n]$, where $i\to j$ (for $i\neq j$) is an edge of $D$ if and only if $M_{i,j}\neq 0$. Then analogously, the following are equivalent:
\begin{enumerate}[resume, label=(\roman*), leftmargin=*, itemsep=4pt]
\item\label{infinitesimal_k_positive_entries} condition \ref{infinitesimal_k_nonnegative_entries} holds, and in addition, $D$ is strongly connected (i.e.\ for any $i\neq j$ in $[n]$, there exists a directed path from $i$ to $j$);
\item\label{infinitesimal_k_positive_minors} all $k\times k$ minors of $\exp(tM)$ are positive, for all $t>0$; and
\item\label{infinitesimal_k_positive_preserving} $\exp(tM)V \in \Gr_{k,n}^{>0}$ for all $V\in\Gr_{k,n}^{\ge 0}$ and $t>0$.
\end{enumerate}
\end{thm}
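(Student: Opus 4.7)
The plan is to reduce the theorem to the classical Metzler characterization of matrices with nonnegative matrix exponentials, applied to the $k$th exterior power $\bigwedge^{\hspace*{-1pt}k} M$. Concretely, $\bigwedge^{\hspace*{-1pt}k} M$ acts as a derivation on $\bigwedge^{\hspace*{-1pt}k}\mathbb{R}^n$, and in the standard basis $\{e_I : I\in \binom{[n]}{k}\}$ the matrix of $\bigwedge^{\hspace*{-1pt}k}\exp(tM) = \exp(t\bigwedge^{\hspace*{-1pt}k} M)$ has entries equal to the $k\times k$ minors $\Delta_{I,J}(\exp(tM))$. So condition (ii) is precisely the statement that $\exp(t\bigwedge^{\hspace*{-1pt}k} M)$ has nonnegative entries for all $t\ge 0$, which by the classical theorem (a real matrix $A$ satisfies $\exp(tA)\ge 0$ entrywise for all $t\ge 0$ iff the off-diagonal entries of $A$ are nonnegative) is equivalent to $\bigwedge^{\hspace*{-1pt}k} M$ being Metzler.

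I would first handle (ii) $\iff$ (iii) via the Cauchy-Binet identity combined with \cref{tnn_Fl_converse} applied to $\Gr_{k,n}(\mathbb{C})$ (which gives $\Gr_{k,n}^{\ge 0} = \Gr_{k,n}^{\Delta\ge 0}$). Namely, $\Delta_I(\exp(tM)\hspace*{1pt}V) = \sum_J \Delta_{I,J}(\exp(tM))\Delta_J(V)$, so nonnegative $k \times k$ minors preserve Plücker nonnegativity; conversely, specializing $V$ to the coordinate subspaces $V_J = \spn(e_j : j\in J)\in\Gr_{k,n}^{\ge 0}$ recovers each minor $\Delta_{I,J}(\exp(tM))$ as a Plücker coordinate of $\exp(tM)\hspace*{1pt}V_J$.

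The heart of the argument is the combinatorial equivalence between (i) and the Metzler condition on $\bigwedge^{\hspace*{-1pt}k} M$. From the derivation formula, $(\bigwedge^{\hspace*{-1pt}k} M)_{I,J}$ vanishes unless $|I\cap J| = k-1$, in which case writing $I\setminus J = \{i\}$ and $J\setminus I = \{j\}$, the entry equals $\epsilon_{I,J}\hspace*{1pt}M_{i,j}$ for a sign $\epsilon_{I,J}\in\{\pm 1\}$ determined by the positions of $i,j$ relative to $K := I\cap J$. A case analysis on $(i,j)$ then produces the conditions in (i): consecutive pairs ($|i-j| = 1$) give the uniformly-positive off-diagonal entries; the corner pair $\{1,n\}$ gives the sign $(-1)^{k-1}$; and for non-consecutive non-corner pairs (with $2\le k\le n-2$), one exhibits two valid choices of $K$ yielding opposite signs for $\epsilon_{I,J}$, forcing $M_{i,j} = 0$. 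The boundary cases $k = 1$ and $k = n-1$ collapse the consecutive/non-consecutive subcases into the alternating-sign pattern stated.

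For the strict version (iv)--(vi), the same framework applies using the Perron--Frobenius strengthening: a Metzler matrix $A$ has $\exp(tA)>0$ entrywise for every $t>0$ iff the digraph on the index set (with an edge at each nonzero off-diagonal entry) is strongly connected. I would then verify that under hypothesis (i), strong connectedness of $D$ is equivalent to strong connectedness of the graph of $\bigwedge^{\hspace*{-1pt}k} M$ on $\binom{[n]}{k}$, by lifting a directed path in $D$ to a sequence of single-element swaps on $k$-subsets while routing around elements already present. The main technical obstacle is the combinatorial sign analysis for $\epsilon_{I,J}$ in the third paragraph; in particular, showing in the non-consecutive non-corner case that the freedom to choose $K$ is sufficient to produce both signs relies on the presence of at least one element of $[n]\setminus\{i,j\}$ strictly between $\min(i,j)$ and $\max(i,j)$ and at least one strictly outside, which is precisely what the range $2\le k\le n-2$ and the non-corner condition guarantee.
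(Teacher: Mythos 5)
Your proposal is in essence the same proof as the paper's, repackaged so as to work explicitly with the exterior-power matrix $\bigwedge^k M$ and to invoke two classical theorems as black boxes: the Metzler characterization ($\exp(tA)\ge 0$ entrywise for all $t\ge 0$ iff the off-diagonal entries of $A$ are nonnegative) for the loop through \ref{infinitesimal_k_nonnegative_entries} and \ref{infinitesimal_k_nonnegative_minors}, and its Perron--Frobenius strengthening ($\exp(tA)>0$ for all $t>0$ iff additionally $A$ is irreducible) for \ref{infinitesimal_k_positive_entries} and \ref{infinitesimal_k_positive_minors}. The paper never names these facts but effectively reproves them in place: the Trotter/Cauchy--Binet reduction to a single nonzero entry in \ref{infinitesimal_k_nonnegative_entries}~$\Rightarrow$~\ref{infinitesimal_k_nonnegative_minors} is a proof of the Metzler forward implication for $\bigwedge^k M$, and the derivative-and-contradiction argument closing \ref{infinitesimal_k_positive_entries}~$\Rightarrow$~\ref{infinitesimal_k_positive_minors} is a proof of the Perron--Frobenius strengthening. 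Your version is shorter and cleaner about where the combinatorics ends and general theory begins. The combinatorial core is the same in both: the sign $(-1)^{\inv{i}{K}+\inv{j}{K}}$ governing the off-diagonal entries of $\bigwedge^k M$, the case analysis showing this sign is constant exactly for adjacent or corner pairs (given $2\le k\le n-2$), and the lifting of directed paths in $D$ to single-swap directed paths in the induced graph on $\binom{[n]}{k}$.

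One point where the proposal is too terse and would not close as written: in the direction \ref{infinitesimal_k_nonnegative_preserving}~$\Rightarrow$~\ref{infinitesimal_k_nonnegative_minors}, specializing $V$ to the coordinate subspace $V_J$ recovers the column $(\Delta_{I,J}(\exp(tM)))_{I}$ only as a Pl\"{u}cker vector, i.e.\ up to a global nonzero scalar. Membership of $\exp(tM)V_J$ in $\Gr_{k,n}^{\ge 0}$ therefore tells you only that this column has constant sign, not that it is nonnegative. The paper handles this with a continuity argument: since the columns indexed by $J$ of $\exp(tM)$ are linearly independent, the quantity $\sum_{I}\Delta_{I,J}(\exp(tM))$ never vanishes, and it is positive at $t=0$, hence positive for all $t\ge 0$. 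You need this (or an equivalent) step; the same caveat applies to \ref{infinitesimal_k_positive_preserving}~$\Rightarrow$~\ref{infinitesimal_k_positive_minors}.
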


We observe that for $2 \le k \le n-2$, conditions \ref{infinitesimal_k_nonnegative_entries} and \ref{infinitesimal_k_positive_entries} above depend only on the parity of $k$. Therefore the other four conditions also only depend on the parity of $k$, which is far from obvious. We also remark that the condition that $D$ is strongly connected arises naturally in the Perron--Frobenius theory of nonnegative matrices and the theory of Markov chains (see e.g.\ \cite[Chapter XIII]{gantmacher59}), where it is known as {\itshape irreducibility}.
\begin{proof}
\ref{infinitesimal_k_nonnegative_entries} $\Rightarrow$ \ref{infinitesimal_k_nonnegative_minors}: We adapt an argument of Br\"{a}nd\'{e}n \cite[Proposition 2.3]{branden}. Note that by \cref{trotter} and the Cauchy--Binet identity \eqref{cauchy-binet}, the sum of two matrices satisfying \ref{infinitesimal_k_nonnegative_minors} also satisfies \ref{infinitesimal_k_nonnegative_minors}. Therefore it suffices to consider the case when $M$ has a single nonzero entry, say entry $(i,j)$. If $i=j$, then $\exp(tM) = \Diag{1, \dots, 1, e^{tM_i}, 1, \dots, 1}$, and \ref{infinitesimal_k_nonnegative_minors} holds. Otherwise, we have $\exp(tM) = \I_n + tM$, and so every $k\times k$ minor of $\exp(tM)$ equals either $1$ or $(-1)^{l-1}M_{i,j}t$, for some $l\in [k]$ satisfying $l \le |i-j|$ and $k-l \le n-1-|i-j|$. Therefore if \ref{infinitesimal_k_nonnegative_entries} holds, then so does \ref{infinitesimal_k_nonnegative_minors}.

\ref{infinitesimal_k_nonnegative_minors} $\Rightarrow$ \ref{infinitesimal_k_nonnegative_entries}: Suppose that \ref{infinitesimal_k_nonnegative_minors} holds. Note that for all $I,J\in\binom{[n]}{k}$ with $I\neq J$, we have $\Delta_{I,J}(\exp(tM)) = 0$ at $t=0$. Therefore, by \cref{multilinearity_remark}, we have
$$
\textstyle\frac{d}{dt}\eval{t=0}\Delta_{I,J}(\exp(tM)) = \displaystyle\sum_{j\in J}\Delta_{I,J}(\I_n \text{ with column $j$ replaced by column $j$ of } M) \ge 0.
$$
Let us take $I := K\cup\{i\}$ and $J := K\cup\{j\}$, where $i,j\in [n]$ with $i\neq j$, and $K\in\binom{[n]\setminus\{i,j\}}{k-1}$. Then we get
\begin{align}\label{inv_inequalities}
(-1)^{\inv{i}{K} + \inv{j}{K}}M_{i,j} \ge 0,
\end{align}
where $\inv{i'}{K}$ denotes the number of $j'\in K$ with $i' > j'$. We can then verify that these inequalities reduce to those in \ref{infinitesimal_k_nonnegative_entries}.

\ref{infinitesimal_k_nonnegative_minors} $\Rightarrow$ \ref{infinitesimal_k_nonnegative_preserving} and \ref{infinitesimal_k_positive_minors} $\Rightarrow$ \ref{infinitesimal_k_positive_preserving}: Let $V\in\Gr_{k,n}^{\ge 0}$. By \cref{tnn_Fl}\ref{tnn_Fl_tnn}, we can regard $V$ as an $n\times k$ matrix whose $k\times k$ minors are nonnegative, where at least one of these minors is positive. Therefore the implications follow from the Cauchy--Binet identity \eqref{cauchy-binet} and \cref{tnn_Fl_converse}.

\ref{infinitesimal_k_nonnegative_preserving} $\Rightarrow$ \ref{infinitesimal_k_nonnegative_minors} and \ref{infinitesimal_k_positive_preserving} $\Rightarrow$ \ref{infinitesimal_k_positive_minors}: For $J\in\binom{[n]}{k}$, let $V_J$ be the $n\times k$ matrix which has an identity matrix in rows $J$ and zeros elsewhere. We regard $V_J$ as the element of $\Gr_{k,n}^{\ge 0}$ with $\Delta_I(V_J) = \delta_{I,J}$ for all $I\in\binom{[n]}{k}$. Then for all $I\in\binom{[n]}{k}$ and $t\in\mathbb{R}$, we have
$$
\Delta_{I,J}(\exp(tM)) = \Delta_I(\exp(tM)V_J)
$$
(however, we caution that the Pl\"{u}cker coordinates on the right-hand side are only well-defined modulo a global scalar).

Now suppose that \ref{infinitesimal_k_nonnegative_preserving} holds, and let $J\in\binom{[n]}{k}$. Then for every $t\ge 0$, either 
$$
\Delta_{I,J}(\exp(tM)) \ge 0 \text{ for all } I\in\textstyle\binom{[n]}{k} \quad \text{ or } \quad \Delta_{I,J}(\exp(tM)) \le 0 \text{ for all } I\in\textstyle\binom{[n]}{k}.
$$
In order to prove \ref{infinitesimal_k_nonnegative_minors}, it suffices to rule out the latter case. Note that the columns $J$ of $\exp(tM)$ are linearly independent, so $\Delta_{I,J}(\exp(tM)) \neq 0$ for some $I\in\binom{[n]}{k}$. Hence in either case, we have
$$
\sum_{I\in\binom{[n]}{k}}\Delta_{I,J}(\exp(tM)) \neq 0 \quad \text{ for all } t \ge 0.
$$
Since the left-hand side is positive when $t=0$, by continuity it is positive for all $t\ge 0$. This proves \ref{infinitesimal_k_nonnegative_minors}. We can similarly prove \ref{infinitesimal_k_positive_preserving} $\Rightarrow$ \ref{infinitesimal_k_positive_minors}.

\ref{infinitesimal_k_positive_entries} $\Rightarrow$ \ref{infinitesimal_k_positive_minors}: We adapt the proof of \cite[Lemma 3.5]{galashin_karp_lam}. Suppose that \ref{infinitesimal_k_positive_entries} holds. Form the directed graph $\widehat{D}$ on the vertex set $\binom{[n]}{k}$, where $I\to J$ (for $I\neq J$) is an edge of $\widehat{D}$ if and only if there exists an edge $i\to j$ of $D$ such that $J = (I\setminus\{i\})\cup\{j\}$. We claim that $\widehat{D}$ is strongly connected. Indeed, it suffices to show that given $I\in\binom{[n]}{k}$, $i\in I$, and $j\in [n]\setminus I$, there exists a directed path from $I$ to $(I\setminus\{i\})\cup\{j\}$. We prove this by induction on the length $l\ge 1$ of the shortest directed path from $i$ to $j$ in $D$ (which exists since $D$ is strongly connected), with no base case. Given $l\ge 1$, suppose that the result holds for strictly smaller values of $l$. Take a directed path $i = j_0 \to \cdots \to j_l = j$ from $i$ to $j$, and let $0 \le m \le l-1$ be maximal such that $j_0, \dots, j_m\in I$. Then $j_{m+1}\notin I$, so we have the directed path in $\widehat{D}$
$$
I \to (I\setminus\{j_m\})\cup\{j_{m+1}\} \to (I\setminus\{j_{m-1}\})\cup\{j_{m+1}\} \to \cdots \to (I\setminus\{i\})\cup\{j_{m+1}\}.
$$
If $m+1 = l$, we are done. Otherwise, by the induction hypothesis, there exists a directed path from $(I\setminus\{i\})\cup\{j_{m+1}\}$ to $(I\setminus\{i\})\cup\{j\}$. Therefore we get a directed path from $I$ to $(I\setminus\{i\})\cup\{j\}$, completing the induction.

Since \ref{infinitesimal_k_positive_entries} $\Rightarrow$ \ref{infinitesimal_k_nonnegative_entries} $\Rightarrow$ \ref{infinitesimal_k_nonnegative_minors}, we know that all $k\times k$ minors of $\exp(tM)$ are nonnegative for all $t > 0$; it remains to show that no such minor is zero. Suppose otherwise that there exist $s > 0$ and $I,J_0\in\binom{[n]}{k}$ such that $\Delta_{I,J_0}\exp(sM) = 0$. Since the rows $I$ of $\exp(sM)$ are linearly independent, there exists $J_1\in\binom{[n]}{k}$ with $\Delta_{I,J_1}(\exp(sM)) \neq 0$. Since $\widehat{D}$ is strongly connected, there is a directed path from $J_1$ to $J_0$; it passes through an edge $J'\to J$ with $\Delta_{I,J'}(\exp(sM)) \neq 0$ and $\Delta_{I,J}(\exp(sM)) = 0$. We may write $J = (J'\setminus\{i'\})\cup\{j'\}$, where $M_{i',j'}\neq 0$.

Recall that all $k\times k$ minors of $\exp((s+t)M)$ are nonnegative for $t > -s$. In particular, $\Delta_{I,J}(\exp((s+t)M))$ equals $0$ at $t=0$, and it is nonnegative near $t=0$. Therefore
$$
\textstyle\frac{d}{dt}\eval{t=0}\Delta_{I,J}(\exp((s+t)M)) = 0.
$$
By \cref{multilinearity_remark} and multilinearity of the determinant, the left-hand side above equals
\begin{align*}
&\sum_{j\in J}\Delta_{I,J}(\exp(sM) \text{ with column $j$ replaced by column $j$ of } \exp(sM)M) \\
=& \sum_{j\in J}\sum_{i\notin J}(-1)^{\inv{i}{J\setminus\{j\}}+\inv{j}{J\setminus\{j\}}}M_{i,j}\Delta_{I,(J\setminus\{j\})\cup\{i\}}(\exp(sM)).
\end{align*}
By assumption, each summand above is nonnegative (cf.\ \eqref{inv_inequalities}), and the summand with $j = j'$ and $i = i'$ is nonzero. Therefore the sum is nonzero, a contradiction.

\ref{infinitesimal_k_positive_minors} $\Rightarrow$ \ref{infinitesimal_k_positive_entries}: Suppose that \ref{infinitesimal_k_positive_minors} holds. Since \ref{infinitesimal_k_positive_minors} $\Rightarrow$ \ref{infinitesimal_k_nonnegative_minors} $\Rightarrow$ \ref{infinitesimal_k_nonnegative_entries}, it remains to show that $D$ is strongly connected. Suppose otherwise, so that there exist distinct $i_0,j_0\in [n]$ such that there is no directed path from $i_0$ to $j_0$. Let $I_0\subseteq [n]$ denote the set of $i\in [n]$ (including $i_0$) such that there exists a directed path from $i_0$ to $i$. Then there are no edges from $I_0$ to $[n]\setminus I_0$, and $j_0\notin I_0$. From the expression $\exp(tM) = \lim_{m\to\infty}(\I_n + \frac{t}{m}M)^m$, we see that $\exp(tM)_{I_0,[n]\setminus I_0} = 0$. Taking any $I,J\in\binom{[n]}{k}$ such that $|I\cap I_0|$ and $|J\cap ([n]\setminus I_0)|$ are maximized, we have $\Delta_{I,J}(\exp(tM)) = 0$, a contradiction.
\end{proof}

\begin{cor}\label{positivity_preserving_kahler_grassmannian}
Let $\bflambda\in\mathbb{R}^n$ with $\lambda_1 = \cdots = \lambda_k > \lambda_{k+1} = \cdots = \lambda_n$, so that $\Orbit_{\bflambda}\cong\Gr_{k,n}(\mathbb{C})$, and let $N\in\uu_n$. Then the gradient flow on $\Orbit_{\bflambda}$ with respect to $N$ in the K\"{a}hler metric (equivalently, by \cref{grassmannian_metrics}, in the normal or induced metrics) weakly preserves positivity if and only if the following condition holds, depending on the value of $k$:
\begin{enumerate}[label=(\roman*), leftmargin=*, itemsep=2pt]
\item\label{positivity_preserving_kahler_grassmannian_first} $k=1$:
$$
\ii N_{i,j} \ge 0 \quad \text{ for all $i\neq j$ in } [n];
$$
\item\label{positivity_preserving_kahler_grassmannian_last} $k=n-1$:
$$
(-1)^{i+j-1}\ii N_{i,j} \ge 0 \quad \text{ for all $i\neq j$ in } [n];
$$
\item\label{positivity_preserving_kahler_grassmannian_general} $2 \le k \le n-2$:
$$
\ii N_{1,2},\, \ii N_{2,3},\, \dots,\, \ii N_{n-1,n},\, (-1)^{k-1}\ii N_{n,1} \ge 0,
$$
and
$$
N_{i,j} = 0 \quad \text{ for all $i,j\in [n]$ such that } i-j \not\equiv -1, 0, 1\modulo{n}.
$$
\end{enumerate}
Moreover, let $\Gamma$ be the undirected graph on the vertex set $[n]$, where $\{i,j\}$ is an edge of $\Gamma$ if and only if $N_{i,j}\neq 0$. Then the gradient flow strictly preserves positivity if and only if, additionally, $\Gamma$ is connected. (For $2 \le k \le n-2$, this means that at least $n-1$ of the $n$ inequalities in the first line of \ref{positivity_preserving_kahler_grassmannian_general} hold strictly.)
\end{cor}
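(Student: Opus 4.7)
The plan is to translate the gradient flow on $\Orbit_{\bflambda}$ into a linear flow on the Grassmannian, and then apply \cref{infinitesimal_k}. Since $\bflambda$ has at most two distinct values, the associated flag variety is $\Gr_{k,n}(\mathbb{C})$, and by \cref{Fl_to_orbit} the identification $\Gr_{k,n}(\mathbb{C})\xrightarrow{\cong}\Orbit_{\bflambda}$ sends $\Gr_{k,n}^{\ge 0}$ to $\Orbit_{\bflambda}^{\ge 0}$ and $\Gr_{k,n}^{>0}$ to $\Orbit_{\bflambda}^{>0}$. Under this identification, \cref{gradient_flow_kahler} tells us the gradient flow with respect to $N$ in the K\"{a}hler metric becomes $V(t)=\exp(t\ii N)V_0$ on $\Gr_{k,n}(\mathbb{C})$. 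So weak (respectively, strict) preservation of positivity on $\Orbit_{\bflambda}$ is equivalent to $\exp(t\ii N)V\in\Gr_{k,n}^{\ge 0}$ for all $V\in\Gr_{k,n}^{\ge 0}$ and all $t\ge 0$ (respectively, $\exp(t\ii N)V\in\Gr_{k,n}^{>0}$ for all such $V$ and all $t>0$).

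The next step is to show that the preservation hypothesis forces $\ii N$ to lie in $\gl_n(\mathbb{R})$. Since $N\in\uu_n$, the matrix $\ii N$ is Hermitian, so its diagonal entries are already real. For off-diagonal entries, pick any $V_0\in\Gr_{k,n}^{>0}$. By openness of $\Gr_{k,n}^{>0}$ in $\Gr_{k,n}^{\ge 0}$ and continuity, $V(t)\in\Gr_{k,n}^{>0}$ on some interval $[0,\varepsilon)$. Normalizing so that $\Delta_{[k]}(V(t))>0$ throughout, every ratio $\Delta_I(V(t))/\Delta_{[k]}(V(t))$ is a positive real-valued function of $t$ on $[0,\varepsilon)$, so its derivative at $t=0$ is real. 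Expanding that derivative via \cref{multilinearity_remark} yields linear relations on $(\ii N)_{i,j}$ whose coefficients are rational expressions in the Pl\"{u}cker coordinates of $V_0$; varying $V_0$ across $\Gr_{k,n}^{>0}$ then forces $(\ii N)_{i,j}\in\mathbb{R}$ for every $i\ne j$. A slicker alternative: $\Gr_{k,n}^{>0}$ is a non-empty open subset of $\Gr_{k,n}(\mathbb{R})$, and $\exp(t\ii N)$ maps it into $\Gr_{k,n}(\mathbb{R})$ for small $t\ge 0$; holomorphic rigidity then forces $\exp(t\ii N)$ to be a complex scalar times a real matrix, and Hermiticity of $\ii N$ makes that scalar real, yielding $\ii N\in\gl_n(\mathbb{R})$.

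With $M:=\ii N$ real, the characterization is a direct application of \cref{infinitesimal_k}. The equivalence of \cref{infinitesimal_k_nonnegative_preserving} and \cref{infinitesimal_k_nonnegative_entries} translates exactly to the weakly-preserving case, giving the entrywise conditions \cref{positivity_preserving_kahler_grassmannian_first}, \cref{positivity_preserving_kahler_grassmannian_last}, or \cref{positivity_preserving_kahler_grassmannian_general} according as $k=1$, $k=n-1$, or $2\le k\le n-2$. The equivalence of \cref{infinitesimal_k_positive_preserving} and \cref{infinitesimal_k_positive_entries} handles the strictly-preserving case, adding the strong-connectivity requirement on the digraph $D$. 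Because $\ii N$ is real and Hermitian, it is symmetric, so $D$ is a symmetric digraph, and strong connectivity of $D$ is the same as connectivity of the underlying undirected graph $\Gamma$. In case \cref{positivity_preserving_kahler_grassmannian_general}, the potentially nonzero off-diagonal entries of $\ii N$ determine an $n$-cycle on $[n]$, so connectedness of $\Gamma$ is equivalent to at least $n-1$ of those $n$ edges being present, i.e., at least $n-1$ of the $n$ listed inequalities holding strictly.

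The main obstacle is the reality step: \cref{infinitesimal_k} does all the combinatorial work once $M$ is known to be real, but nothing in the setup a priori constrains the Hermitian matrix $\ii N$ to be real, so a separate density or holomorphic-rigidity argument along one of the lines sketched above is needed.
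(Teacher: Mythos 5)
Your proof follows the same route as the paper's one-line argument, which cites \cref{kahler_flow_positivity_preserving} to translate the gradient flow in the K\"ahler metric into the linear flow $V(t)=\exp(t\ii N)V_0$ on $\Gr_{k,n}(\mathbb{C})$ and then applies \cref{infinitesimal_k} with $M=\ii N$; the paper dispatches the reality of $\ii N$ in a single sentence in a preceding remark (\cref{eg_constant_flow}) without supplying an argument. Your contribution is to make that step explicit. Both variants you sketch work: the minor-derivative argument is cleanest if you take $V_0 = V_J$ for the coordinate subspaces, where $\tfrac{d}{dt}\big|_{t=0}\Delta_{I,J}(\exp(t\ii N)) = \pm(\ii N)_{i,j}$ must be real because $\Delta_{I,J}(\exp(t\ii N))$ vanishes at $t=0$ and is real on $[0,\varepsilon)$ after normalizing by $\Delta_{J,J}>0$; and the ``holomorphic rigidity'' alternative is a correct application of the identity theorem, since $\sigma\circ\exp(t\ii N)\circ\sigma$ (with $\sigma$ complex conjugation on $\Gr_{k,n}(\mathbb{C})$) is a holomorphic automorphism agreeing with $\exp(t\ii N)$ on the open real set $\Gr_{k,n}^{>0}$, though your phrase ``complex scalar times a real matrix'' needs the follow-up you give (Hermiticity pins the scalar to be real). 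The reduction from strong connectivity of $D$ to connectivity of $\Gamma$ via symmetry of $\ii N$, and the ``at least $n-1$ of $n$ edges of an $n$-cycle'' reformulation for $2\le k\le n-2$, are both correct. In short: correct, same approach, with a small gap in the paper's exposition filled in.
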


For example, for the choice of $N$ in \cref{running_example_kahler}, we are in both the cases \ref{positivity_preserving_kahler_grassmannian_first} and \ref{positivity_preserving_kahler_grassmannian_last} above. The gradient flow with respect to $N$ in the K\"{a}hler metric weakly preserves positivity if and only if $q \ge 0$, and it strictly preserves positivity if and only if $q > 0$.

\begin{proof}
This follows from \cref{kahler_flow_positivity_preserving} and \cref{infinitesimal_k} (with $M = \ii N$).
\end{proof}

We now consider the case when $\Orbit_{\bflambda}$ is not isomorphic to a Grassmannian, i.e., $\bflambda$ has at least three distinct entries. Our analysis will be based on \cref{infinitesimal_k} along with the following two technical results.
\begin{lem}\label{corner_entry}
Let $K\subseteq [n-1]$ such that $|K| \ge 2$, and suppose that $M\in\gl_n(\mathbb{R})$ such that
$$
\exp(tM)V \in \PFl{K}{n}^{\ge 0} \quad \text{ for all } V\in\PFl{K}{n}^{\ge 0} \text{ and } t \ge 0.
$$
Then $M_{n,1} = M_{1,n} = 0$.
\end{lem}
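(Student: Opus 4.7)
The plan is to combine the Grassmannian information from \cref{infinitesimal_k} with a flag-level constraint. Via the surjections \eqref{defn_tnn_Fl_surjections}, $\exp(tM)$ preserves $\Gr_{k,n}^{\ge 0}$ for each $k\in K$, so by \cref{infinitesimal_k} the matrix $M$ satisfies condition~\ref{infinitesimal_k_nonnegative_entries} of that theorem for every $k\in K$. Unpacking the ``corner'' clauses uniformly across the three sub-cases ($k=1$, $k=n-1$, and $2\le k\le n-2$), this yields
\[
(-1)^{k-1}M_{n,1}\ge 0\qquad\text{and}\qquad(-1)^{k-1}M_{1,n}\ge 0\qquad\text{for every }k\in K.
\]
If $K$ contains two elements of opposite parity, the signs above flip, giving $M_{n,1}=M_{1,n}=0$ at once. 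So from now on assume all elements of $K$ share a common parity; then, since $|K|\ge 2$, we may pick $k_1<k_2$ in $K$ with $k_2\ge k_1+2$, so that the intermediate value $k_1+1\in[n-1]\setminus K$ has the opposite parity.

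In this remaining case I would use the standard test flag $V^0=(V_l^0)_{l\in K}$ with $V_l^0=\mathrm{span}(e_1,\dots,e_l)$, which lifts to the standard complete flag in $\Fl_n^{\ge 0}$ and so lies in $\PFl{K}{n}^{\ge 0}$. By hypothesis, $V(t):=\exp(tM)V^0$ remains in $\PFl{K}{n}^{\ge 0}$, so by \cref{defn_tnn_Fl} it admits a lift to a complete flag in $\Fl_n^{\ge 0}$; in particular there exists $W(t)\in\Gr_{k_1+1,n}^{\ge 0}$ with $V_{k_1}(t)\subseteq W(t)\subseteq V_{k_2}(t)$.

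The technical heart is to show that the total nonnegativity of $W(t)$ forces $(-1)^{k_1}M_{n,1}\ge 0$; combined with $(-1)^{k_1-1}M_{n,1}\ge 0$ from the first paragraph, this yields $M_{n,1}=0$. Using that $M$ is ``almost tridiagonal'' by \cref{infinitesimal_k} (this is automatic as soon as some $k\in K$ lies in $\{2,\dots,n-2\}$; the residual case $K=\{1,n-1\}$ is handled identically with $k_1+1=2$), one computes to first order in $t$ that $V_{k_1}(t)=\mathrm{span}(e_1+tM_{n,1}e_n,\,e_2,\dots,e_{k_1})+O(t^2)$, and similarly for $V_{k_2}(t)$ with $k_2$ in place of $k_1$. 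Parameterizing $W(t)=V_{k_1}(t)+\mathrm{span}(v)$ with a representative $v\equiv\sum_{i=k_1+1}^{k_2}\alpha_ie_i\pmod{V_{k_1}(t)}$, the two relevant Pl\"ucker coordinates evaluate to
\[
\Delta_{\{1,\dots,k_1,j\}}(W(t))=\alpha_j+O(t),\qquad \Delta_{\{2,\dots,k_1,j,n\}}(W(t))=(-1)^{k_1}tM_{n,1}\alpha_j+O(t^2)
\]
for $j\in\{k_1+1,\dots,k_2\}$. Nonnegativity of the first forces $\alpha_j\ge 0$; if $(-1)^{k_1}M_{n,1}<0$, nonnegativity of the second then forces $\alpha_j=0$ for every $j$, making $v\in V_{k_1}(t)$ and contradicting $\dim W(t)=k_1+1$.

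Finally, $M_{1,n}=0$ follows by the same argument applied to the reversed test flag $V_l^0=\mathrm{span}(e_{n-l+1},\dots,e_n)$ (equivalently, by conjugating with the reversal map $\rev$ of \cref{defn_rev}, which preserves total nonnegativity and swaps the roles of $M_{n,1}$ and $M_{1,n}$). The principal obstacle is the Pl\"ucker coordinate computation in the previous paragraph: one has to verify that the secondary first-order perturbations of $V_{k_1}(t)$ and $V_{k_2}(t)$ coming from entries of $M$ other than $M_{n,1}$ (such as the $tM_{k_1+1,k_1}e_{k_1+1}$ contribution in the last column of $V_{k_1}(t)$) lie in directions absorbed by the other columns of $W(t)$ and therefore do not alter the signs of the leading-order terms of the two highlighted Pl\"ucker coordinates.
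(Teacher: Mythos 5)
The proposal is correct in outline and takes a genuinely different route from the paper. The paper reduces to $M_{n,1}=0$ by symmetry, tests against the signed permutation flag $\mathring{w}$ for the cycle $w=(1\,2\,\cdots\,k)$, takes the torus limit $W=\lim_{t\to 0^+}\Diag{1,\dots,1,t^{-1}}\exp(tM)V$ (which exists and lies in $\PFl{K}{n}^{\ge 0}$), shows $e_1+M_{n,1}e_n\in W_k$, and invokes \cref{tnn_counterexample} to force $e_1\in W_l$ and hence $M_{n,1}=0$. You instead (a) pull sign constraints on $M_{n,1},M_{1,n}$ from \cref{infinitesimal_k} for each $k\in K$, killing the mixed-parity case outright — this is a nice observation the paper does not need; and (b) in the same-parity case, choose $k_1<k_2$ in $K$ (automatically $k_2\ge k_1+2$), test against the identity flag, and compare two Plücker coordinates of a lift $W(t)\in\Gr_{k_1+1,n}^{\ge 0}$. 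I have verified that the two displayed first-order formulas for $\Delta_{\{1,\dots,k_1,j\}}(W(t))$ and $\Delta_{\{2,\dots,k_1,j,n\}}(W(t))$ are correct, including in the residual case $K=\{1,n-1\}$ where $M$ is only parity-constrained rather than cyclic-tridiagonal (the ``secondary perturbations'' you flag do indeed only enter at $O(t^2)$). What the paper's approach buys is a single clean computation that needs no parity analysis and avoids any perturbation bookkeeping, at the cost of the auxiliary result \cref{tnn_counterexample}; yours avoids that lemma but requires the Plücker expansion.

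There is, however, a real gap in the final step. The lift $W(t)$ and the coefficients $\alpha_j$ depend on $t$, and $W(t)$ need not vary continuously with $t$. Consequently ``nonnegativity of the first forces $\alpha_j\ge 0$'' only yields $\alpha_j\ge -O(t)$, and the second inequality (when $(-1)^{k_1}M_{n,1}<0$) yields $\alpha_j\le O(t)$; the correct conclusion is $\alpha_j=O(t)$, not $\alpha_j=0$, and the step ``$v\in V_{k_1}(t)$'' does not follow. The fix is standard but must be stated: normalize the representative $v=\sum_{j=k_1+1}^{k_2}\beta_j\,(\text{col}_j\text{ of }\exp(tM))$ so that $\sum_j|\beta_j|=1$ (here $\beta_j=\alpha_j$ up to $O(t)$); note that the error terms in your two Plücker expansions are bounded uniformly over this compact parameter set; deduce $|\beta_j|\le Ct$ for all $j$ with a constant $C$ independent of $W$; and obtain $1=\sum_j|\beta_j|\le (k_2-k_1)Ct$, which is a contradiction for $t$ small. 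As written, without the normalization and uniformity clause, the argument does not close.
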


\begin{proof}
By symmetry (specifically, using the map $\rev$ from \cref{defn_rev}), it suffices to show that $M_{n,1} = 0$. Take distinct elements $k < l$ of $K$, and let $w\in\mathfrak{S}_n$ be the cycle $(1 \hspace*{8pt} 2 \hspace*{4pt} \cdots \hspace*{4pt} k)$, so that
$$
\mathring{w} = \begin{bmatrix}
0 & (-1)^{k-1} & 0 \\
\I_{k-1} & 0 & 0 \\
0 & 0 & \I_{n-k}
\end{bmatrix} \in \U_n^{\ge 0}.
$$
Let $V\in\PFl{K}{n}^{\ge 0}$ be represented by $\mathring{w}\in\U_n^{\ge 0}$. Recall from \cref{torus_action} that $\H_n^{>0}$ acts on $\PFl{K}{n}^{\ge 0}$. In particular,
$$
W := \lim_{t\to 0,\, t > 0}\Diag{1, \dots, 1, t^{-1}}\exp(tM)V
$$
lies in $\PFl{K}{n}^{\ge 0}$, if the limit exists.

To calculate the limit, we replace $V$ with $\mathring{w}$ and work in the space of matrices, ignoring the last column. Since $\exp(tM) = \I_n + tM + O(t^2)$ as $t \to 0$, and row $n$ of $\mathring{w}_{[n],[n-1]}$ is zero, we obtain
\begin{multline*}
\lim_{t\to 0,\, t > 0}\Diag{1, \dots, 1, t^{-1}}\exp(tM)\mathring{w}_{[n],[n-1]} = \mathring{w}_{[n],[n-1]} + \Diag{0,\dots,0,1}M\mathring{w}_{[n],[n-1]} \\
= \begin{bmatrix}
0 & (-1)^{k-1} & 0 \\
\I_{k-1} & 0 & 0 \\
0 & 0 & \I_{n-k-1} \\
\ast & (-1)^{k-1}M_{n,1} & \ast
\end{bmatrix}.
\end{multline*}
(The entries $\ast$ will turn out to be unimportant.) This shows that the limit defining $W$ exists. Since $W\in\PFl{K}{n}^{\ge 0}$, it extends to a complete flag $(W_1, \dots, W_{n-1}) \in \Fl_n^{\ge 0}$. Observe that $e_1 + M_{n,1}e_n\in W_k$, so by \cref{tnn_counterexample}, we have $e_1\in W_{k+1} \subseteq W_l$. Because $W_l$ is spanned by the first $l$ columns of the matrix above, we see that $M_{n,1} = 0$.
\end{proof}

\begin{lem}\label{kahler_special_case}
Let $K := \{1, n-1\}$, and suppose that $M\in\gl_n(\mathbb{R})$ such that
$$
\exp(tM)V \in \PFl{K}{n}^{\ge 0} \quad \text{ for all } V\in\PFl{K}{n}^{\ge 0} \text{ and } t \ge 0.
$$
Then $M_{i,j} = 0$ for all $i,j\in [n]$ such that $|i-j| \ge 2$.
\end{lem}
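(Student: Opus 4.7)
The plan proceeds in three stages, combining the projection to Grassmannians, the corner-entry result, and a boundary/limit argument for the remaining off-tridiagonal entries.

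First, by the surjections in \eqref{defn_tnn_Fl_surjections}, the hypothesis forces $\exp(tM)$ to preserve both $\Gr_{1,n}^{\ge 0}$ and $\Gr_{n-1,n}^{\ge 0}$ for all $t \ge 0$. Invoking the implication \cref{infinitesimal_k}\ref{infinitesimal_k_nonnegative_preserving} $\Rightarrow$ \ref{infinitesimal_k_nonnegative_entries} at $k=1$ gives $M_{i,j} \ge 0$ for all $i \ne j$, while at $k = n-1$ it gives $(-1)^{i+j-1} M_{i,j} \ge 0$ for all $i \ne j$. Taken together, these inequalities force $M_{i,j} = 0$ whenever $i \ne j$ and $i+j$ is even, thereby handling all entries at even distance $\ge 2$ from the diagonal.

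Second, since $|K| = 2 \ge 2$, \cref{corner_entry} applies and yields $M_{1,n} = M_{n,1} = 0$, disposing of the two extreme corner entries. These two stages already suffice when $n \le 4$; for $n \ge 5$, what remains are the entries $M_{i,j}$ with $|i-j|$ odd and $3 \le |i-j| \le n-2$.

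Third, for each such remaining pair $(i,j)$ I adapt the rescaled-limit technique from the proof of \cref{corner_entry}. Concretely, I choose a signed permutation matrix $\mathring{w} \in \U_n^{\ge 0}$ whose projection to $\PFl{\{1,n-1\}}{n}^{\ge 0}$ is a suitable boundary flag $V$, and a diagonal rescaling $D(t) \in \H_n^{>0}$, so that the limit $W := \lim_{t \to 0^+} D(t)\exp(tM) V$ exists. Because $\H_n^{>0}$ preserves $\PFl{\{1,n-1\}}{n}^{\ge 0}$ (by \cref{matrices_acting_on_flags}) and $\PFl{\{1,n-1\}}{n}^{\ge 0}$ is closed, the limit $W$ lies in $\PFl{\{1,n-1\}}{n}^{\ge 0}$. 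The matrix representation of $W$ will have a column of the form $e_\ell + M_{i,j} \cdot e_m$, and applying \cref{tnn_counterexample} to an extension of $W$ to a totally nonnegative complete flag will force $M_{i,j} = 0$, by the same linear-combination analysis used at the end of the proof of \cref{corner_entry}. The symmetries induced by $\rev$ (\cref{flip_rev_action}\ref{flip_rev_action_rev}) and $\flip$ (which preserves $\PFl{\{1,n-1\}}{n}^{\ge 0}$ since $\{1,n-1\}^\perp = \{1,n-1\}$, by \cref{flip_action}\ref{flip_action_tp}) reduce the combinatorics to a manageable set of representative pairs $(i,j)$.

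The main obstacle is the third stage. Unlike \cref{corner_entry}, which deals only with the extreme pair $(1,n)$ using a single permutation $w = (1\;2\;\cdots\;k)$ and the rescaling $\Diag{1,\ldots,1,t^{-1}}$, the residual entries form a combinatorial family, and for each pair $(i,j)$ one must tailor the choice of $\mathring{w}$ and $D(t)$ so that the limit flag $W$ exposes $M_{i,j}$ precisely in a position where the conclusion $e_1 \in W_l$ of \cref{tnn_counterexample} (or an image of it under $\rev$/$\flip$) translates into the algebraic relation $M_{i,j} = 0$. This case analysis, rather than the structural outline, is where the technical work lies.
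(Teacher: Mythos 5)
Your Stage 1 observation (using the implication \cref{infinitesimal_k}\ref{infinitesimal_k_nonnegative_preserving} $\Rightarrow$ \ref{infinitesimal_k_nonnegative_entries} at both $k=1$ and $k=n-1$ to kill the entries with $|i-j|$ even) is a correct and slightly different reduction from what the paper does, and Stage 2 correctly cites \cref{corner_entry}. However, Stage 3 — the entries with $|i-j|$ odd and $3 \le |i-j| \le n-2$, which exist once $n \ge 5$ — is where the real content lies, and your sketch there has a genuine gap.

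The issue is that your plan says the limit flag will contain a column of the form $e_\ell + M_{i,j}\hspace*{1pt}e_m$, and that then \cref{tnn_counterexample} "will force $M_{i,j}=0$ by the same linear-combination analysis used at the end of the proof of \cref{corner_entry}." But \cref{tnn_counterexample} is a statement specifically about vectors of the form $e_1 + c\hspace*{1pt}e_n$, i.e.\ about the \emph{first and last} unit vectors of the ambient space. In the proof of \cref{corner_entry} the exposed vector is indeed $e_1 + M_{n,1}e_n\in W_k$, so the lemma applies directly. In the case of \cref{kahler_special_case}, after choosing $w := (1\hspace*{4pt}2\hspace*{4pt}\cdots\hspace*{4pt}j)^{-1}(i\hspace*{4pt}i{+}1\hspace*{4pt}\cdots\hspace*{4pt}n)$ and rescaling the $i$-th row, the exposed vector is $e_j + M_{i,j}\hspace*{1pt}e_i$ with $1\neq j<i\neq n$ (in general), to which \cref{tnn_counterexample} does not apply directly. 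The paper's proof gets around this by intersecting the complete flag $(W_1,\dots,W_{n-1})\in\Fl_n^{\ge 0}$ with the coordinate subspace $\mathbb{C}^{[j,i]}$ — so that $e_j$ and $e_i$ become the extreme unit vectors of the smaller ambient space — and then running a dimension-counting argument on $d_k := \dim(W_k\cap\mathbb{C}^{[j,i]})$ to find a consecutive pair of these intersections of dimensions $1$ and $2$, at which point \cref{tnn_counterexample} can be applied inside $\Gr_{\bullet,i-j+1}^{\ge 0}$. This restriction-and-counting step is qualitatively new relative to \cref{corner_entry}, it is not a cosmetic "tailoring" of $\mathring{w}$ and $D(t)$, and your outline neither anticipates nor supplies it. You have correctly identified that the corner argument does not just transfer, but the missing ingredient is not the combinatorics of choosing permutations — it is the subspace-restriction mechanism that makes \cref{tnn_counterexample} applicable for non-extreme $(i,j)$.
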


\begin{proof}
We use a similar argument as in the proof of \cref{corner_entry}. By symmetry, it suffices to show that $M_{i,j} = 0$ for $i,j\in [n]$ with $i-j \ge 2$. Let
$$
w := (1 \hspace*{8pt} 2 \hspace*{4pt} \cdots \hspace*{4pt} j)^{-1}(i \hspace*{8pt} i+1 \hspace*{4pt} \cdots \hspace*{4pt} n)\in\mathfrak{S}_n,
$$
so that
$$
\mathring{w} = \begin{bmatrix}
0 & -\I_{j-1} & 0 & 0 & 0 \\
1 & 0 & 0 & 0 & 0 \\
0 & 0 & \I_{i-j-1} & 0 & 0 \\
0 & 0 & 0 & 0 & (-1)^{n-i} \\
0 & 0 & 0 & \I_{n-i} & 0
\end{bmatrix} \in \U_n^{\ge 0}.
$$
Let $V\in\PFl{K}{n}^{\ge 0}$ be represented by $\mathring{w}\in\U_n^{\ge 0}$. For $t > 0$, let $h(t)\in\H_n^{>0}$ be obtained from $\I_n$ by replacing the $(i,i)$-entry with $t^{-1}$. Assuming the limit exists, define
$$
W := \lim_{t\to 0,\, t > 0}h(t)\exp(tM)V \in\PFl{K}{n}^{\ge 0}.
$$

To calculate the limit, we replace $V$ with $\mathring{w}$ and work in the space of matrices, ignoring the last column. Since $\exp(tM) = \I_n + tM + O(t^2)$ as $t \to 0$, and row $i$ of $\mathring{w}_{[n],[n-1]}$ is zero, we obtain
$$
\lim_{t\to 0,\, t > 0}h(t)\exp(tM)\mathring{w}_{[n],[n-1]} = \begin{bmatrix}
0 & -\I_{j-1} & 0 & 0 \\
1 & 0 & 0 & 0 \\
0 & 0 & \I_{i-j-1} & 0 \\
M_{i,j} & \ast & \ast & \ast \\
0 & 0 & 0 & \I_{n-i}
\end{bmatrix}.
$$
(The entries $\ast$ will turn out to be unimportant.) This shows that the limit defining $W$ exists. Since $W\in\PFl{K}{n}^{\ge 0}$, it extends to a complete flag $(W_1, \dots, W_{n-1}) \in \Fl_n^{\ge 0}$.

Let $\mathbb{C}^{[j,i]}$ denote the span of $e_k$ for $j \le k \le i$, which has dimension at least $3$. For $1 \le k \le n-1$, let $d_k$ denote the dimension of $W_k\cap\mathbb{C}^{[j,i]}$, so that $W_k\cap\mathbb{C}^{[j,i]}\in\Gr_{d_k,i-j+1}^{\ge 0}$. Observe that the sequence $d_1, \dots, d_{n-1}$ increases by either $0$ or $1$ at each step. Since $d_2 \le 2 \le d_{n-1}$, we have $d_k = 2$ for some $2 \le k \le n-1$. Applying \cref{tnn_counterexample} to $W_1\cap\mathbb{C}^{[j,i]}$ and $W_k\cap\mathbb{C}^{[j,i]}$, we get that $e_j\in W_k\cap\mathbb{C}^{[j,i]} \subseteq W_{n-1}$. Because $W_{n-1}$ is spanned by the columns of the matrix above, we see that $M_{i,j} = 0$.
\end{proof}

We have the following analogue of \cref{infinitesimal_k} for an arbitrary partial flag variety which is not a Grassmannian:
\begin{thm}\label{infinitesimal_full}
Let $K\subseteq [n-1]$ such that $|K| \ge 2$, and let $M\in\gl_n(\mathbb{R})$.
\begin{enumerate}[label=(\roman*), leftmargin=*, itemsep=2pt]
\item\label{infinitesimal_full_nonnegative} We have $M\in\gl_n^{\ge 0}$ if and only if
\begin{align}\label{infinitesimal_full_nonnegative_equation}
\exp(tM)V \in \PFl{K}{n}^{\ge 0} \quad \text{ for all } V\in\PFl{K}{n}^{\ge 0} \text{ and } t \ge 0.
\end{align}
\item\label{infinitesimal_full_positive} We have $M\in\gl_n^{>0}$ if and only if
\begin{align}\label{infinitesimal_full_positive_equation}
\exp(tM)V \in \PFl{K}{n}^{>0} \quad \text{ for all } V\in\PFl{K}{n}^{\ge 0} \text{ and } t \ge 0.
\end{align}
\end{enumerate}
\end{thm}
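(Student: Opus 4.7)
The plan is to reduce both parts to the Grassmannian case (\cref{infinitesimal_k}), using \cref{corner_entry} and \cref{kahler_special_case} to kill entries of $M$ away from the main tridiagonal. The forward directions are immediate from the infinitesimal characterizations in \cref{defn_tnn_intro}: if $M \in \gl_n^{\ge 0}$ then $\exp(tM) \in \GL_n^{\ge 0}$ for $t \ge 0$, so \eqref{infinitesimal_full_nonnegative_equation} follows by taking closures in \cref{matrices_acting_on_flags}\ref{matrices_acting_on_flags_tnn}; likewise if $M \in \gl_n^{>0}$ then $\exp(tM) \in \GL_n^{>0}$ for $t > 0$, so \eqref{infinitesimal_full_positive_equation} (read with $t > 0$, since the conclusion at $t=0$ would be vacuous) follows from \cref{matrices_acting_on_flags}\ref{matrices_acting_on_flags_tp}.

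For the reverse direction of \ref{infinitesimal_full_nonnegative}, I would assume \eqref{infinitesimal_full_nonnegative_equation} and split on $K$. If there exists $k \in K$ with $2 \le k \le n-2$, then composing $\exp(tM)$ with the projection $\PFl{K}{n}^{\ge 0} \twoheadrightarrow \Gr_{k,n}^{\ge 0}$ from \eqref{defn_tnn_Fl_surjections} (which commutes with left multiplication by $\exp(tM)$ and is surjective) shows that $\exp(tM)$ preserves $\Gr_{k,n}^{\ge 0}$. By \cref{infinitesimal_k} this forces $M_{i,j} = 0$ for $i-j \not\equiv -1, 0, 1 \modulo{n}$ and $M_{i,i+1}, M_{i+1,i} \ge 0$; the corner entries $M_{n,1}, M_{1,n}$ vanish by \cref{corner_entry} (applicable since $|K| \ge 2$), giving $M \in \gl_n^{\ge 0}$. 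Otherwise $K \cap [2, n-2] = \emptyset$ and $|K| \ge 2$ force $K = \{1, n-1\}$ with $n \ge 3$; then \cref{kahler_special_case} gives $M_{i,j} = 0$ for $|i-j| \ge 2$ (so $M$ is tridiagonal), and projecting to $\Gr_{1,n}^{\ge 0}$ and applying \cref{infinitesimal_k} with $k=1$ gives $M_{i,i+1}, M_{i+1,i} \ge 0$, again yielding $M \in \gl_n^{\ge 0}$.

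For the reverse direction of \ref{infinitesimal_full_positive}, the inclusion $\PFl{K}{n}^{>0} \subseteq \PFl{K}{n}^{\ge 0}$ combined with part \ref{infinitesimal_full_nonnegative} immediately yields $M \in \gl_n^{\ge 0}$, so $M$ is already tridiagonal with nonnegative super- and subdiagonal entries. To upgrade to $M \in \gl_n^{>0}$, I would choose $k \in K$ as before (taking $k=1$ if $K = \{1, n-1\}$) and project, obtaining a flow that sends $\Gr_{k,n}^{\ge 0}$ into $\Gr_{k,n}^{>0}$ for all $t > 0$; the strong-connectivity criterion in \cref{infinitesimal_k} then applies. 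Since $M$ is tridiagonal, the directed graph of its nonzero off-diagonal entries only admits edges $i \leftrightarrow i+1$, so strong connectivity forces all of them to be present, i.e., $M_{i,i+1}, M_{i+1,i} > 0$ for $1 \le i \le n-1$. The main technical obstacles (eliminating the corner entries and the far-off-diagonal entries in the $K = \{1, n-1\}$ case) have been absorbed into the rigidity lemmas \cref{corner_entry} and \cref{kahler_special_case}; the only remaining work is the case analysis outlined above.
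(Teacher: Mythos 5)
Your proof is correct and takes essentially the same approach as the paper's: forward directions via \cref{matrices_acting_on_flags}, reverse of (i) by projecting to the Grassmannians $\Gr_{k,n}^{\ge 0}$ for $k\in K$ and combining \cref{infinitesimal_k} with \cref{corner_entry} and \cref{kahler_special_case}, reverse of (ii) by projecting to one Grassmannian and invoking the strong-connectivity criterion together with the tridiagonal structure already secured. The only difference is that you make explicit the case split (whether $K$ meets $[2,n-2]$ or $K=\{1,n-1\}$) that the paper leaves implicit, showing in particular that for any fixed $K$ only one of \cref{corner_entry}, \cref{kahler_special_case} is actually needed; this is a minor clarification rather than a different route.
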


\begin{proof}
The forward directions of parts \ref{infinitesimal_full_nonnegative} and \ref{infinitesimal_full_positive} follow from \cref{matrices_acting_on_flags}. To prove the reverse directions, suppose that \eqref{infinitesimal_full_nonnegative_equation} holds. Then for every $k\in K$, \eqref{infinitesimal_full_nonnegative_equation} also holds with $K$ replaced by $\{k\}$, so the conditions of \cref{infinitesimal_k}\ref{infinitesimal_k_nonnegative_entries} hold. These conditions, along with \cref{corner_entry} and \cref{kahler_special_case}, imply that $M\in\gl_n^{\ge 0}$. This proves the reverse direction of part \ref{infinitesimal_full_nonnegative}. Now suppose that in addition, \eqref{infinitesimal_full_positive_equation} holds. Then taking any $k\in K$, we have that \eqref{infinitesimal_full_positive_equation} holds with $K$ replaced by $\{k\}$, so the condition of \cref{infinitesimal_k}\ref{infinitesimal_k_positive_entries} holds. Since $M\in\gl_n^{\ge 0}$, this implies that $M\in\gl_n^{>0}$. This proves the reverse direction of part \ref{infinitesimal_full_positive}.
\end{proof}

\begin{cor}\label{positivity_preserving_kahler_full}
Let $\bflambda\in\mathbb{R}^n$ be weakly decreasing with at least three distinct entries (so that $\Orbit_{\bflambda}$ is {\itshape not} isomorphic to a Grassmannian), and let $N\in\uu_n$. Then the gradient flow on $\Orbit_{\bflambda}$ with respect to $N$ in the K\"{a}hler metric weakly preserves positivity if and only if $\ii N\in\gl_n^{\ge 0}$, and it strictly preserves positivity if and only if $\ii N\in\gl_n^{>0}$.
\end{cor}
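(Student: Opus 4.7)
Set $K := \{i \in [n-1] : \lambda_i > \lambda_{i+1}\}$; since $\bflambda$ has at least three distinct entries, $|K| \ge 2$, placing us in the regime where Theorem~\ref{infinitesimal_full} applies. By Proposition~\ref{kahler_flow_positivity_preserving}, the gradient flow on $\Orbit_{\bflambda}$ in the K\"ahler metric with respect to $N$ weakly (respectively, strictly) preserves positivity if and only if the flag flow
\[
V(t) = \exp(t\ii N) V_0 \quad \text{on } \PFl{K}{n}(\mathbb{C})
\]
does so on $\PFl{K}{n}^{\ge 0}$. Thus the corollary reduces to the infinitesimal classification of positivity-preserving flag flows in terms of $\ii N$.

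The backward direction is immediate: if $\ii N \in \gl_n^{\ge 0}$ (respectively, $\gl_n^{>0}$), Theorem~\ref{infinitesimal_full}\ref{infinitesimal_full_nonnegative} (respectively, \ref{infinitesimal_full_positive}) applied with $M = \ii N$ yields the desired preservation of $\PFl{K}{n}^{\ge 0}$. For the forward direction, Theorem~\ref{infinitesimal_full} requires $M \in \gl_n(\mathbb{R})$, so I would first argue that preservation of positivity forces $\ii N$ to be real symmetric (necessary, as noted in Remark~\ref{eg_constant_flow}); once established, Theorem~\ref{infinitesimal_full} applied to $M = \ii N$ immediately gives $\ii N \in \gl_n^{\ge 0}$ (respectively, $\gl_n^{>0}$). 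To show $\ii N$ is real, I would expand $\Delta_I(\exp(t\ii N)\hspace*{1pt} g_0)$ to first order in $t$, for $g_0 \in \GL_n(\mathbb{R})$ representing some $V_0 \in \PFl{K}{n}^{\ge 0}$: since $\PFl{K}{n}^{\ge 0} \subset \PFl{K}{n}(\mathbb{R})$, the Pl\"ucker ratios $\Delta_I/\Delta_{[k]}$ along the flow must remain real-valued, which forces the linear-in-$\ii N$ coefficients to be real. Taking $g_0 = \I_n$ and $I = ([k]\setminus\{i\}) \cup \{j\}$ with $k \in K$ and $i \le k < j$ gives reality of $(\ii N)_{j,i}$ for those pairs; varying $g_0$ over $\mathring{w}$ for enough $w \in \mathfrak{S}_n$, combined with Hermiticity $(\ii N)_{i,j} = \overline{(\ii N)_{j,i}}$, sweeps out any remaining off-diagonal pairs.

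The main obstacle is precisely this reality reduction: the classification itself is already encapsulated in Theorem~\ref{infinitesimal_full} (which rests on Lemmas~\ref{corner_entry} and~\ref{kahler_special_case}, both exploiting $|K| \ge 2$), so the only genuine work remaining is ensuring the real hypothesis of that theorem is met. In contrast to the Grassmannian case (Corollary~\ref{positivity_preserving_kahler_grassmannian}), the hypothesis $|K| \ge 2$ rules out off-band entries of $\ii N$ and forces the clean tridiagonal Jacobi characterization.
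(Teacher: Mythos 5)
Your proposal is correct and takes essentially the same route as the paper: reduce to the flag flow $\exp(t\ii N)V_0$ via \cref{kahler_flow_positivity_preserving}, then invoke \cref{infinitesimal_full} with $M = \ii N$. The one place you go beyond the paper's (very terse) proof is in explicitly justifying the reality reduction: \cref{infinitesimal_full} requires $M \in \gl_n(\mathbb{R})$, and the paper silently relies on \cref{eg_constant_flow}, which asserts without argument that positivity preservation forces $\ii N$ to be real symmetric. Your first-order Taylor sketch to pin down each $(\ii N)_{b,a}$ is sound, and in fact the "sweeping" step is easier than you suggest: varying $g_0$ over $\mathring{w}$ so that $w([k])$ ranges over all $k$-element subsets for a single fixed $k \in K$ already covers every off-diagonal pair $(b,a)$ with $a$ inside and $b$ outside the subset, so the hypothesis $|K| \ge 2$ is needed only for \cref{infinitesimal_full} itself, not for the reality step.
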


\begin{proof}
This follows from \cref{kahler_flow_positivity_preserving} and \cref{infinitesimal_full} (with $M = \ii N$).
\end{proof}

\begin{rmk}\label{plucker_gradient_flows}
Recall the notion Pl\"{u}cker positivity introduced in \cref{defn_plucker_positive}. In analogy with \cref{defn_positivity_preserving}, for any weakly decreasing $\bflambda\in\mathbb{R}^n$, we can consider flows on $\Orbit_{\bflambda}$ which weakly or strictly preserve Pl\"{u}cker positivity. Note that a flow which preserves Pl\"{u}cker positivity does not necessarily preserve positivity, and vice-versa. However, we expect the two notions to be closely related. Here we discuss the case of the gradient flow with respect to $N\in\uu_n$ in the K\"{a}hler metric, and consider weak preservation (we have an entirely analogous analysis for strict preservation).

Let $K := \{i\in [n-1] : \lambda_i > \lambda_{i+1}\}$. For simplicity, we assume that $K\neq\{1,n-1\}$. By \cref{plucker_positive_projection} and \cref{gradient_flow_kahler} (cf.\ \cref{kahler_flow_decomposition}), the gradient flow on $\Orbit_{\bflambda}$ with respect to $N$ weakly preserves Pl\"{u}cker positivity if and only if the gradient flow on $\Orbit_{\bfomega{k}}$ with respect to $N$ weakly preserves Pl\"{u}cker positivity, for all $k\in K$; and this holds if and only if each $k\in K$ satisfies the condition of \cref{positivity_preserving_kahler_grassmannian}. Comparing this with \cref{positivity_preserving_kahler_full}, we see that if the gradient flow on $\Orbit_{\bflambda}$ weakly preserves positivity, then it weakly preserves Pl\"{u}cker positivity. The converse holds for all $N$ if and only if $K$ is a singleton or contains both an even and an odd number. Indeed, suppose that $|K|\ge 2$ and that all elements of $K$ have the same parity. Then in order for positivity to be weakly preserved with respect to $N$, we must have $\ii N_{n,1} = \ii N_{1,n} = 0$. However, Pl\"{u}cker positivity is preserved as long as $\ii N_{n,1} = \ii N_{1,n}$ has fixed sign (depending on the parity of the elements of $K$).
\end{rmk}

\subsection{The normal metric}\label{sec_gradient_normal}
In this subsection, we show that when $\Orbit_{\bflambda}$ is isomorphic to the complete flag variety $\Fl_n(\mathbb{C})$ with $n\ge 3$, the only gradient flow in the normal metric which weakly preserves positivity is the constant flow (\cref{positivity_preserving_normal}). This is in stark contrast to the case that $\Orbit_{\bflambda}$ is isomorphic to the Grassmannian $\Gr_{k,n}(\mathbb{C})$, whence the normal metric coincides with the K\"{a}hler metric up to dilation (see \cref{grassmannian_metrics}), and the gradient flows which preserve positivity are classified by \cref{positivity_preserving_kahler_grassmannian}. We do not consider here the remaining cases (i.e.\ when $\Orbit_{\bflambda}$ is isomorphic to neither $\Fl_n(\mathbb{C})$ nor $\Gr_{k,n}(\mathbb{C})$); we leave this to future work.

We use an explicit description of the gradient flow as a {\itshape double-bracket flow}, which was first observed by Brockett \cite{brockett91}. It can be verified from \cref{gradient_computation} (we omit the derivation).
\begin{prop}[{Brockett \cite{brockett91}; Bloch, Brockett, and Ratiu \cite[Proposition 1.4]{bloch_brockett_ratiu92}}]\label{gradient_flow_normal}
Let $\bflambda\in\mathbb{R}^n$ and let $N\in\uu_n$. Then the gradient flow on $\Orbit_{\bflambda}$ with respect to $N$ in the normal metric is given by
\begin{align}\label{gradient_flow_normal_equation}
\dot{L}(t) = [L(t),[L(t),N]].
\end{align}

\end{prop}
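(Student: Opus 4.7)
The plan is to invoke \cref{gradient_computation} with the candidate $M(t) = [L(t),N]$ and verify the defining pairing identity directly. Setting $M = [L,N] \in \uu_n$ (which lies in $\uu_n$ since the bracket of skew-Hermitian matrices is skew-Hermitian) reduces everything to showing that
$$
\langle [L,X], [L,[L,N]] \rangle_{\textnormal{normal}} = \kappa([L,X], N)
$$
for every $L \in \Orbit_{\bflambda}$ and $X \in \uu_n$; then \cref{gradient_computation} yields $\dot{L}(t) = [L(t),[L(t),N]]$.

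The first step is to record the orthogonality $\im(\ad_L) \perp \ker(\ad_L)$ with respect to $\kappa$. This is immediate from the $[\cdot,\cdot]$-invariance of $\kappa$ noted in \cref{defn_gradient_flow}: if $X = [L,Y] \in \im(\ad_L)$ and $Z \in \ker(\ad_L)$, then $\kappa(X,Z) = \kappa([L,Y],Z) = -\kappa(Y,[L,Z]) = 0$. Hence, for any $W \in \uu_n$, the component $W^L$ from \eqref{ad_decomposition} is the $\kappa$-orthogonal projection of $W$ onto $\im(\ad_L)$.

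The second step is to compute both sides of the proposed identity. Since $[L,N] \in \im(\ad_L)$ automatically, $[L,N]^L = [L,N]$, and the definition of the normal metric gives
$$
\langle [L,X], [L,[L,N]] \rangle_{\textnormal{normal}} = -\kappa(X^L, [L,N]).
$$
Using invariance of $\kappa$ and the fact that $[L,X_L] = 0$, we rewrite
$$
-\kappa(X^L, [L,N]) = \kappa([L,X^L], N) = \kappa([L,X], N),
$$
which is exactly the desired equality. This verifies the hypothesis of \cref{gradient_computation} and completes the derivation.

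There is really no main obstacle here; the argument is a short algebraic verification once one identifies the correct candidate $M(t)$. The only subtlety is the two invocations of the $[\cdot,\cdot]$-invariance of the Killing form, once to establish $\im(\ad_L) \perp \ker(\ad_L)$ and once to move the bracket across $\kappa$ in the main identity.
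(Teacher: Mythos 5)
Your proof is correct, and it carries out exactly the derivation the paper leaves implicit: the paper introduces \cref{gradient_flow_normal} with the remark ``It can be verified from \cref{gradient_computation} (we omit the derivation),'' and your argument supplies that omitted verification by taking $M(t) = [L(t),N]$ and checking the pairing identity from \cref{gradient_computation}. One small remark: the orthogonality $\im(\ad_L)\perp\ker(\ad_L)$ that you establish in your first step is not actually used in your final computation---there you only need that $[L,N]^L = [L,N]$ (which holds since $[L,N]\in\im(\ad_L)$ by definition) and that $[L,X_L] = 0$ (which holds since $X_L\in\ker(\ad_L)$ by definition)---so that paragraph could be trimmed, though it does no harm.
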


\begin{eg}\label{running_example_normal}
Let us consider the same setup as in \cref{running_example_kahler}, but let $L(t)$ evolve in the normal metric rather than the K\"{a}hler metric. By \cref{grassmannian_metrics}, these two evolutions must agree up to a dilation in $t$:
$$
L_{\textnormal{normal}}(t) = L_{\textnormal{K\"{a}hler}}((\lambda_1 - \lambda_2)t).
$$
Indeed, using the result of \cref{running_example_kahler} and \cref{gradient_flow_normal}, we can verify that this holds for $\dot{L}(0)$:
\begin{gather*}
\dot{L}_{\textnormal{normal}}(0) = [L_0, [L_0, N]] = 4(aq-bp)\ii\begin{bmatrix}-b & a \\ a & b\end{bmatrix} = (\lambda_1 - \lambda_2)\dot{L}_{\textnormal{K\"{a}hler}}(0).\qedhere
\end{gather*}

\end{eg}

\begin{lem}\label{normal_tridiagonal_necessary}
Let $\bflambda\in\mathbb{R}^n$ be strictly decreasing, and suppose that the gradient flow \eqref{gradient_flow_normal_equation} on $\Orbit_{\bflambda}$ with respect to $N\in\uu_n$ in the normal metric weakly preserves positivity. Then $\ii N\in\gl_n^{\ge 0}$.
\end{lem}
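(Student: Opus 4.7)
The plan is to lift the gradient flow \eqref{gradient_flow_normal_equation} to $\U_n$ via \cref{lax_flow}\ref{lax_flow_translation} starting at the identity, and then apply the infinitesimal Pl\"{u}cker positivity criterion \eqref{lax_flow_minors_inequality}. The point is that at $g_0 = I_n$, every left-justified Pl\"{u}cker coordinate $\Delta_I$ with $I \neq [k]$ vanishes, so the derivative inequality of \cref{lax_flow}\ref{lax_flow_minors} will extract information about every off-diagonal entry of $\ii N$ simultaneously.

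First I would show that $\ii N$ is real. Starting at $L_0 = \ii\Diag{\bflambda} \in \Orbit_{\bflambda}^{\ge 0}$, the double-bracket evaluation $\dot{L}(0) = [L_0,[L_0,N]]$ has $(i,j)$-entry equal to $-(\lambda_i - \lambda_j)^2 N_{i,j}$. For $L(t)$ to remain in $\Orbit_{\bflambda}^{\ge 0}$, which consists of purely imaginary matrices, and using that $\bflambda$ is strictly decreasing, each off-diagonal $N_{i,j}$ must be purely imaginary; combined with skew-Hermiticity of $N$, this forces $\ii N$ to be real symmetric. Consequently $M(t) := [L(t),N] \in \oo_n$ along any trajectory starting in $\Orbit_{\bflambda}^{\ge 0}$. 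By \cref{lax_flow}\ref{lax_flow_translation}, the solution of $\dot g(t) = -M(t) g(t)$ with $g_0 = I_n \in \U_n^{\ge 0}$ projects to $L(t)$, and a direct computation yields $\dot g(0) = -[\ii\Diag{\bflambda}, N]$ with $(i,j)$-entry $(\lambda_j - \lambda_i)(\ii N)_{i,j}$.

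Since $\bflambda$ is strictly decreasing we have $K = [n-1]$, so \cref{lax_flow}\ref{lax_flow_minors} applies to every $I \in \binom{[n]}{k}$ with $I \neq [k]$. Taking $I = ([k] \setminus \{s\}) \cup \{j\}$ for $s \in [k]$ and $j \in [n] \setminus [k]$, and expanding the relevant $k \times k$ minor along the (last) row indexed by $j$ to leading order in $t$, only the column $s$ term survives, yielding
\[
\tfrac{d}{dt}\eval{t=0}\Delta_I(g(t)) = (-1)^{k+s}(\lambda_s - \lambda_j)(\ii N)_{j,s} \ge 0.
\]
Since $s < j$ forces $\lambda_s - \lambda_j > 0$, this reduces to $(-1)^{k+s}(\ii N)_{j,s} \ge 0$. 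Varying $k$ over $\{s, s+1, \dots, j-1\}$ produces both parities of $k+s$ as soon as $j - s \ge 2$, forcing $(\ii N)_{j,s} = 0$ there; the boundary case $j = s+1$ gives $(\ii N)_{s+1,s} \ge 0$. Using the symmetry of $\ii N$, these constraints say exactly that $\ii N$ is real tridiagonal with nonnegative super- and subdiagonal entries, i.e., $\ii N \in \gl_n^{\ge 0}$.

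The main obstacle is the careful sign/index bookkeeping in the minor expansion (and verifying that the reality argument of the first step is rigorous, using that $L(t)$ stays in $\Orbit_{\bflambda}^{\ge 0}$ for at least short positive times). Conceptually, however, the proof is an immediate application of \eqref{lax_flow_minors_inequality} at the maximally degenerate test point $g_0 = I_n$, where every single off-diagonal entry of $\ii N$ is probed simultaneously by varying $k$, $s$, and $j$.
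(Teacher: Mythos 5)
Your argument is correct and takes essentially the same route as the paper's: lift the flow to $\U_n$ via \cref{lax_flow}\ref{lax_flow_translation} with $g_0 = \I_n$, compute $\dot g(0) = -[\ii\Diag{\bflambda}, N]$, and apply the infinitesimal inequality \eqref{lax_flow_minors_inequality} to left-justified minors that vanish at $\I_n$. The paper tests the two index sets $I = [j-1]\cup\{i\}$ and $I = [j-1]\cup\{j+1, i\}$, which are precisely your $k = s$ and $k = s+1$ cases; the only genuine addition on your side is the explicit derivation that $\ii N$ is real from the $(i,j)$-entries of $[L_0,[L_0,N]]$ at $L_0 = \ii\Diag{\bflambda}$, a point the paper simply assumes.
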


\begin{proof}
We assume that $\ii N$ is real. We must show that
$$
\ii N_{i,j} = 0 \text{ for all } i\ge j+2 \quad \text{ and } \quad \ii N_{j+1,j} \ge 0 \text{ for all } j.
$$
To this end, set $g_0 := \I_n\in\U_n^{\ge 0}$, and let $g(t)\in\U_n$ and $L(t)\in\Orbit_{\bflambda}$ evolve as in \cref{lax_flow}\ref{lax_flow_translation}, with $M(t) := [L(t),N]$. By \eqref{lax_flow_minors_inequality}, we have
$$
\textstyle\frac{d}{dt}\eval{t=0}\Delta_I(g(t)) \ge 0 \quad \text{ for all } I\subseteq [n] \text{ such that } I\neq [1], \dots, [n].
$$
Note that
$$
\dot{g}(0) = -[L_0,N]g_0 = -[\ii\Diag{\bflambda},N], \quad \text{ so } \quad \dot{g}(0)_{i,j} = \ii(\lambda_j - \lambda_i)N_{i,j} \text{ for } 1 \le i,j \le n.
$$
Using \eqref{determinant_derivative}, for $i \ge j+1$ we calculate
$$
\textstyle\frac{d}{dt}\eval{t=0}\Delta_{[j-1]\cup\{i\}}(g(t)) = \ii(\lambda_j - \lambda_i)N_{i,j}, \quad \text{ so } \quad \ii N_{i,j} \ge 0.
$$
Similarly, for $i \ge j+2$ we calculate
\begin{gather*}
\textstyle\frac{d}{dt}\eval{t=0}\Delta_{[j-1]\cup\{j+1,i\}}(g(t)) = -\ii(\lambda_j - \lambda_i)N_{i,j}, \quad \text{ so } \quad \ii N_{i,j} \le 0.\qedhere
\end{gather*}

\end{proof}

\begin{rmk}\label{tridiagonal_necessary_extension}
We observe that \cref{normal_tridiagonal_necessary} and its proof extend to the case that $\bflambda$ is weakly decreasing. Rather than obtaining that $\ii N$ lies in $\gl_n^{\ge 0}$, the conclusion is that $\ii N$ is a block Jacobi matrix, where the block sizes are determined by the multiplicities of $\bflambda$.
\end{rmk}

\begin{thm}\label{positivity_preserving_normal}
Let $\bflambda\in\mathbb{R}^n$ be strictly decreasing, and let $N\in\uu_n$. Then the gradient flow \eqref{gradient_flow_normal_equation} on $\Orbit_{\bflambda}$ with respect to $N$ in the normal metric does not strictly preserves positivity, and it weakly preserves positivity if and only if $N$ is a scalar multiple of $\I_n$ (i.e.\ the flow is constant).
\end{thm}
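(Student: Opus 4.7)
The plan is to first reduce the strict preservation claim to the weak preservation direction. If $N = c\I_n$ with $c \in \ii\mathbb{R}$, then $[L, N] \equiv 0$, so the flow is constant: it weakly preserves positivity trivially, but fails strict preservation since any $L_0 \in \Orbit_{\bflambda}^{\ge 0} \setminus \Orbit_{\bflambda}^{>0}$ is fixed. Conversely, if the main direction below holds, then any non-scalar $N$ already fails weak preservation, so strict preservation fails a fortiori. Thus it suffices to prove that weak preservation forces $N$ to be a scalar multiple of $\I_n$.

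For the main direction, suppose the flow weakly preserves positivity and set $A := \ii N$. By \cref{normal_tridiagonal_necessary}, $A \in \gl_n^{\ge 0}$, i.e., $A$ is a real symmetric tridiagonal matrix with $A_{i,i+1} \ge 0$. The key tool is \cref{lax_flow}\ref{lax_flow_minors}: for any $g_0 \in \U_n^{\ge 0}$, the corresponding $L_0 = g_0(\ii\Diag{\bflambda})g_0^{-1}$ is purely imaginary, so $L(t)$ remains purely imaginary and $M(t) = [L(t), N]$ stays in $\oo_n$, forcing every left-justified minor $\Delta_I(g(t))$ of the Lax flow $\dot g = -Mg$ to remain nonnegative for $t \ge 0$. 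I would extend the argument of \cref{normal_tridiagonal_necessary} by taking $g_0 = \mathring{w} \in \U_n^{\ge 0}$ for various $w \in \mathfrak{S}_n$ (not just $w = \id$), giving $L_0 = \ii\Diag{\lambda_{w^{-1}(1)}, \dots, \lambda_{w^{-1}(n)}}$ and $[L_0, N]_{i,j} = (\lambda_{w^{-1}(i)} - \lambda_{w^{-1}(j)})A_{i,j}$. Taylor-expanding $g(t) = g_0 + t\dot g(0) + \frac{t^2}{2}\ddot g(0) + O(t^3)$ with $\dot g(0) = -[L_0,N]g_0$, and choosing $w$ and $I$ so that $\Delta_I(g_0) = 0$, the leading-order coefficient of $\Delta_I(g(t))$ in $t$ contains a negative cross term proportional to a product $A_{i,i+1} A_{j,j+1}$ of adjacent off-diagonal entries; this forces consecutive off-diagonals of $A$ to vanish pairwise, and iterating over all adjacent pairs gives $A_{i,i+1} = 0$ for every $i$.

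It then remains to show that a diagonal $A$ must be scalar. Here the permutation test points $\mathring{w}$ yield equilibria of the flow (since a diagonal $A$ commutes with $\ii\Diag{\lambda_{w^{-1}(1)}, \dots, \lambda_{w^{-1}(n)}}$) and give no further constraints, so I would instead take $g_0 \in \U_n^{>0}$ totally positive and argue globally: the double-bracket flow with generic diagonal $A$ is a gradient flow converging to a permutation flag on the boundary of $\Orbit_{\bflambda}^{\ge 0}$ as $t \to \infty$, and unless $A$ is scalar, a direct or perturbative calculation shows that some left-justified minor $\Delta_I(g(t))$ must change sign at a finite $t > 0$, contradicting weak preservation. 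The hardest part of the proof will be producing these contradictions uniformly in $\bflambda$: the leading Taylor coefficients in the off-diagonal case are sensitive to the spacings of the $\lambda_i$, so several tests (different $w$ and $I$) may be needed, and in the diagonal case one needs a careful analysis of the double-bracket flow's $\omega$-limit structure together with tracking of the relevant Pl\"{u}cker minor along the trajectory.
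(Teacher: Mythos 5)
Your reduction of the strict claim to the weak one is correct and matches the paper, and your use of \cref{normal_tridiagonal_necessary} to get $\ii N\in\gl_n^{\ge 0}$ is also the right first step. But the remainder of your plan — permutation test points $g_0 = \mathring{w}$ with second-order Taylor expansions for the off-diagonal entries, and a global $\omega$-limit argument for the diagonal case — has genuine gaps, and the spacing-dependence problem you flag at the end is fatal rather than just a technical nuisance.

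For the off-diagonal entries: at $g_0 = \I_n$, $I = \{3\}$, the first-order coefficient of $\Delta_I(g(t))$ vanishes automatically for tridiagonal $N$, and the second-order coefficient is proportional to $A_{2,1}A_{3,2}\bigl(a(a+b) - b^2\bigr)$ where $a := \lambda_1 - \lambda_2$, $b := \lambda_2 - \lambda_3$. This expression changes sign as $a/b$ crosses $(\sqrt{5}-1)/2$, so for half of the parameter space the inequality $\frac{d^2}{dt^2}\eval{t=0}\Delta_I(g(t)) \ge 0$ is already satisfied and gives no constraint. You would need to find complementary tests for the complementary regime, and it is unclear that permutation points alone supply them. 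More seriously, for the diagonal case the permutation points are \emph{all} equilibria of the normal-metric flow, so the $t$-expansion at those points is identically zero and gives nothing; your fallback to a global analysis of sign changes of Pl\"{u}cker coordinates along the trajectory is a substantial undertaking that you have not carried out and that the paper does not need.

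The paper instead uses test points $g_0\in\U_n^{\ge 0}$ that are \emph{not} permutation matrices — orthogonal matrices built from $45^\circ$ rotation blocks, e.g.\
$g_0 = \scalebox{0.85}{$\begin{bmatrix}1 & 0 & 0 \\ 0 & \tfrac{1}{\sqrt 2} & -\tfrac{1}{\sqrt 2} \\ 0 & \tfrac{1}{\sqrt 2} & \tfrac{1}{\sqrt 2}\end{bmatrix}$}$ —
at which some left-justified minor vanishes but the flow is not at an equilibrium. The \emph{first-order} condition \eqref{lax_flow_minors_inequality} at such a point yields
$-\tfrac{\lambda_2-\lambda_3}{2}\,\ii N_{2,1}\ge 0$, forcing $\ii N_{2,1}\le 0$; combined with $\ii N_{2,1}\ge 0$ from \cref{normal_tridiagonal_necessary}, this gives $N_{2,1}=0$ with no dependence on the spacing of $\bflambda$. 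Analogous choices of $g_0$ handle the other off-diagonal entry and the four inequalities $\ii(N_{1,1}-N_{2,2})\gtrless 0$, $\ii(N_{3,3}-N_{2,2})\gtrless 0$ that pin down the diagonal. The general $n$ case then follows by embedding these $3\times 3$ blocks into $\I_n$. The lesson is that one should not restrict to equilibria or near-equilibria as test points: the non-permutation points in $\U_n^{\ge 0}$ with vanishing Pl\"{u}cker coordinates are exactly where the first-order obstruction is visible.
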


\begin{proof}
Suppose that the gradient flow \eqref{gradient_flow_normal_equation} with respect to $N$ weakly preserves positivity. We must show that $N$ is a scalar multiple of $\I_n$. By \cref{normal_tridiagonal_necessary}, we have $\ii N\in\gl_n^{\ge 0}$. It suffices to show that for all $1 \le j \le n-2$, the principal submatrix of $N$ using rows and columns $\{j,j+1,j+2\}$ is a scalar multiple of $\I_3$.

To this end, we first consider the case $n=3$. Let $g_0\in\U_3^{\ge 0}$, and let $g(t)\in\U_3$ and $L(t)\in\Orbit_{\bflambda}$ evolve as in \cref{lax_flow}\ref{lax_flow_translation}, with $M(t) := [L(t),N]$. For various choices of $g_0$ and $I$ such that $\Delta_I(g_0) = 0$, we apply \eqref{lax_flow_minors_inequality} and obtain $\textstyle\frac{d}{dt}\eval{t=0}\Delta_I(g(t)) \ge 0$.

We have
$$
g_0 = \begin{bmatrix}
1 & 0 & 0 \\[2pt]
0 & \frac{1}{\sqrt{2}} & -\frac{1}{\sqrt{2}} \\[6pt]
0 & \frac{1}{\sqrt{2}} & \frac{1}{\sqrt{2}} \\[2pt]
\end{bmatrix},\; I = \{3\} \quad\Longrightarrow\quad -\displaystyle\frac{\lambda_2 - \lambda_3}{2}\ii N_{2,1} \ge 0,
$$
and
$$
g_0 = \begin{bmatrix}
0 & -\frac{1}{\sqrt{2}} & \frac{1}{\sqrt{2}} \\[6pt]
0 & -\frac{1}{\sqrt{2}} & -\frac{1}{\sqrt{2}} \\[6pt]
1 & 0 & 0
\end{bmatrix},\; I = \{1\} \quad\Longrightarrow\quad -\displaystyle\frac{\lambda_2 - \lambda_3}{2}\ii N_{2,3} \ge 0.
$$
Since $\ii N_{2,1}\ge 0$ and $\ii N_{2,3} \ge 0$, we get $N_{2,1} = N_{2,3} = 0$. Therefore $N$ is diagonal.

Now we have
\begin{gather*}
g_0 = \begin{bmatrix}
\frac{1}{\sqrt{2}} & -\frac{1}{2} & \frac{1}{2} \\[6pt]
\frac{1}{\sqrt{2}} & \frac{1}{2} & -\frac{1}{2} \\[6pt]
0 & \frac{1}{\sqrt{2}} & \frac{1}{\sqrt{2}} \\[2pt]
\end{bmatrix},\; I = \{3\} \quad\Longrightarrow\quad \displaystyle\frac{\lambda_2 - \lambda_3}{4}\ii(N_{1,1} - N_{2,2}) \ge 0, \\[4pt]
g_0 = \begin{bmatrix}
0 & -\frac{1}{\sqrt{2}} & \frac{1}{\sqrt{2}} \\[6pt]
\frac{1}{\sqrt{2}} & -\frac{1}{2} & -\frac{1}{2} \\[6pt]
\frac{1}{\sqrt{2}} & \frac{1}{2} & \frac{1}{2} \\[2pt]
\end{bmatrix},\; I = \{1\} \quad\Longrightarrow\quad \displaystyle\frac{\lambda_2 - \lambda_3}{4}\ii(N_{3,3} - N_{2,2}) \ge 0, \\[4pt]
g_0 = \begin{bmatrix}
\frac{1}{2} & -\frac{1}{2} & \frac{1}{\sqrt{2}} \\[6pt]
\frac{1}{2} & -\frac{1}{2} & -\frac{1}{\sqrt{2}} \\[6pt]
\frac{1}{\sqrt{2}} & \frac{1}{\sqrt{2}} & 0 \\[2pt]
\end{bmatrix},\; I = \{1,2\} \quad\Longrightarrow\quad \displaystyle\frac{\lambda_1 - \lambda_2}{4}\ii(N_{2,2} - N_{1,1}) \ge 0,
\end{gather*}
and
$$
g_0 = \begin{bmatrix}
\frac{1}{\sqrt{2}} & -\frac{1}{\sqrt{2}} & 0 \\[6pt]
\frac{1}{2} & \frac{1}{2} & -\frac{1}{\sqrt{2}} \\[6pt]
\frac{1}{2} & \frac{1}{2} & \frac{1}{\sqrt{2}} \\[6pt]
\end{bmatrix},\; I = \{2,3\} \quad\Longrightarrow\quad \displaystyle\frac{\lambda_1 - \lambda_2}{4}\ii(N_{2,2} - N_{3,3}) \ge 0.
$$
(We note that these four choices are related by applying the maps $\rev$ and $\flip$; cf.\ \cref{flip_rev_action}.) Therefore $N_{1,1} = N_{2,2} = N_{3,3}$, so $N$ is a scalar multiple of $\I_3$, as desired.

Now we consider the case of general $n\ge 3$. Let $\tilde{N}$ denote the principal submatrix of $N$ using rows and columns $\{j,j+1,j+2\}$, where $1 \le j \le n-2$. We prove by induction on $j$ (with no base case) that $\tilde{N}$ is a scalar multiple of $\I_3$. Given $\tilde{g}_0\in\U_3^{\ge 0}$, define
$$
g_0 := \begin{bmatrix}\I_{j-1} & 0 & 0 \\ 0 & \tilde{g}_0 & 0 \\ 0 & 0 & \I_{n-j-2}\end{bmatrix}\in\U_n^{\ge 0}.
$$
Let $g(t)\in\U_n$ and $L(t)\in\Orbit_{\bflambda}$ evolve as in \cref{lax_flow}\ref{lax_flow_translation}, with $M(t) := [L(t),N]$. Let $\tilde{g}(t)\in\U_3$ and $\tilde{L}(t)\in\Orbit_{(\lambda_j, \lambda_{j+1}, \lambda_{j+2})}$ evolve similarly, with $\tilde{M}(t) := [\tilde{L}(t),\tilde{N}]$. By induction, we may assume that
$$
N = \begin{bmatrix}c\I_{j-1} & 0 & 0 \\ 0 & \tilde{N} & \ast \\ 0 & \ast & \ast\end{bmatrix} \text{ for some scalar } c, \text{ so that } \dot{g}(0) = \begin{bmatrix}0 & 0 & 0 \\ 0 & \dot{\tilde{g}}(0) & \ast \\ 0 & \ast & \ast\end{bmatrix}.
$$
Now for any $\tilde{I}\subseteq [3]$, define $I\subseteq [j+2]$ by $I := [j-1]\cup\{j-1+i : i\in\tilde{I}\}$. Then using \eqref{determinant_derivative}, we find
$$
\Delta_I(g_0) = \Delta_{\tilde{I}}(\tilde{g}_0) \quad \text{ and } \quad \textstyle\frac{d}{dt}\eval{t=0}\Delta_I(g(t)) = \textstyle\frac{d}{dt}\eval{t=0}\Delta_{\tilde{I}}(\tilde{g}(t)).
$$
Therefore by \eqref{lax_flow_minors_inequality}, choosing $\tilde{g}_0$ and $\tilde{I}$ as in the case $n=3$ above, we find that $\tilde{N}$ is a scalar multiple of $\I_3$. This completes the induction.
\end{proof}

\begin{rmk}\label{sometimes_preserving}
Let $\bflambda\in\mathbb{R}^n$ be strictly decreasing, where $n \ge 3$. We note that while the constant flow on $\Orbit_{\bflambda}$ is the only gradient flow in the normal metric which weakly preserves positivity, there do exist nonconstant gradient flows which preserve the tridiagonal subset $\Jac_{\bflambda}^{\ge 0}$ of $\Orbit_{\bflambda}^{\ge 0}$. Indeed, the gradient flow with respect to $N := -\ii\Diag{n-1, \dots, 1, 0} \in \uu_n$ preserves $\Jac_{\bflambda}^{\ge 0}$ in both time directions, by \cref{tridiagonal_symmetric_toda_gradient} and \cref{full_symmetric_toda_gradient}\ref{full_symmetric_toda_gradient_positivity}. This is the Toda lattice flow, which we study in detail in \cref{sec_toda}. It would be interesting to know if there are other natural subsets of $\Orbit_{\bflambda}^{\ge 0}$ which are preserved by some nonconstant gradient flow.
\end{rmk}

\subsection{The induced metric}\label{sec_gradient_induced}
In this subsection, we consider the gradient flow on $\Orbit_{\bflambda}$ with respect to $N$ in the induced metric, when $\Orbit_{\bflambda}$ is isomorphic to the complete flag variety $\Fl_n(\mathbb{C})$. We will show (see \cref{induced_tridiagonal_necessary}) that a necessary condition for positivity to be preserved is that $\ii N\in\gl_n^{\ge 0}$. We will also give an example (see \cref{induced_extended_example} and \cref{induced_no_preserving}) showing that the condition $\ii N\in\gl_n^{\ge 0}$ is not sufficient. While we are not able to determine necessary and sufficient conditions in general, our investigations indicate that such conditions likely depend in an intricate way on both $N$ and $\bflambda$. This is in contrast to gradient flows on $\Orbit_{\bflambda}$ in the other two metrics, where the conditions do not depend on $\bflambda$. In the case of the K\"{a}hler metric, this is because by definition, the metric does not depend on the choice of $\bflambda$. In the case of the normal metric, this is not obvious beforehand, but it follows from \cref{positivity_preserving_normal}.

We begin by giving explicit descriptions for gradient flows in the induced metric. We begin by considering any weakly decreasing $\bflambda$, and will later specialize to the case that $\bflambda$ is strictly decreasing. We recall the decomposition \eqref{ad_decomposition}.
\begin{prop}\label{gradient_flow_induced}
Let $\bflambda\in\mathbb{R}^n$ be weakly decreasing, and let $N\in\uu_n$. Then the gradient flow on $\Orbit_{\bflambda}$ with respect to $N$ in the induced metric is given by
\begin{align}\label{gradient_flow_induced_equation}
\dot{L}(t) = -N^{L(t)}.
\end{align}

\end{prop}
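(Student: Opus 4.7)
The plan is to apply Lemma~\ref{gradient_computation} to convert the problem into finding an element $M\in\uu_n$ satisfying the defining pairing equation, and then to solve that equation using the $\ad$-invariance of the Killing form together with the decomposition \eqref{ad_decomposition}. I expect this to be a short, mostly formal calculation; the only subtlety is to verify that $-\kappa$ is nondegenerate on the relevant subspace so that the defining equation admits a clean solution.

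First I would record two easy facts about the setup. Because $\kappa$ is $[\cdot,\cdot]$-invariant, the operator $\ad_L$ is skew with respect to $-\kappa$, so the decomposition $\uu_n = \im(\ad_L)\oplus\ker(\ad_L)$ in \eqref{ad_decomposition} is $(-\kappa)$-orthogonal. Moreover, $\im(\ad_L)\subseteq\su_n$ because every commutator is traceless, and $-\kappa$ restricts to a positive definite inner product on $\su_n$. In particular, if $V\in\su_n$ satisfies $\kappa(X,V)=0$ for all $X\in\uu_n$, then $V=0$.

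Next, by Lemma~\ref{gradient_computation} the gradient flow in the induced metric can be written as $\dot{L}(t) = [L(t),M(t)]$, where $M(t)\in\uu_n$ is determined (modulo $\ker(\ad_{L(t)})$) by the equation
$$
\langle [L,X],[L,M]\rangle_{\textnormal{induced}} = \kappa([L,X],N)\quad\text{ for all }X\in\uu_n.
$$
Using the definition of the induced metric and $\ad$-invariance of $\kappa$ twice, the left-hand side equals $-\kappa([L,X],[L,M]) = \kappa(X,\ad_L^2(M))$, while the right-hand side equals $-\kappa(X,\ad_L(N))$. Thus the defining condition is
$$
\kappa\bigl(X,\ad_L(\ad_L(M)+N)\bigr) = 0\quad\text{ for all }X\in\uu_n.
$$
Since $\ad_L(\ad_L(M)+N)\in\im(\ad_L)\subseteq\su_n$, the preliminary fact above forces $\ad_L(\ad_L(M)+N)=0$, i.e., $\ad_L(M)+N\in\ker(\ad_L)$.

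Finally, decomposing $N = N^L + N_L$ according to \eqref{ad_decomposition} and noting that $[L,M]\in\im(\ad_L)$ while $[L,M]+N\in\ker(\ad_L)$, we read off $[L,M] = -N^L$. Plugging this back into the Lax form from Lemma~\ref{gradient_computation} yields $\dot{L}(t) = -N^{L(t)}$, which is the claimed formula \eqref{gradient_flow_induced_equation}. Existence of such an $M$ is automatic: $N^L\in\im(\ad_L) = [L,\uu_n]$ by construction, so there is some $M\in\uu_n$ with $[L,M]=-N^L$, and any two such choices differ by an element of $\ker(\ad_L)$, which does not affect $[L,M]$.
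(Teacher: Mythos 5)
Your proof is correct, and every ingredient it uses also appears in the paper's proof (Lemma~\ref{gradient_computation}, $\ad$-invariance of $\kappa$, the decomposition \eqref{ad_decomposition}); what differs is the logical direction. The paper simply \emph{posits} an $M$ with $[L,M]=-N^L$ and verifies the defining pairing identity, using $\ad$-invariance to reduce it to the observation $[L,N]=[L,N^L]$. You instead \emph{derive} the formula: you rewrite the pairing condition as $\kappa\bigl(X,\ad_L(\ad_L M + N)\bigr)=0$ for all $X\in\uu_n$, then invoke nondegeneracy of $-\kappa$ on $\su_n\supseteq\im(\ad_L)$ to force $\ad_L M + N\in\ker(\ad_L)$, and finally project onto the direct sum \eqref{ad_decomposition} to obtain $[L,M]=-N^L$. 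Your route is slightly longer but shows that the Lax form is unique and forced rather than merely sufficient; the paper's verification is shorter precisely because it sidesteps the nondegeneracy argument. One small point worth making explicit: the step from $\ad_L M + N\in\ker(\ad_L)$ to $[L,M]=-N^L$ uses both that $\ad_L M + N^L\in\im(\ad_L)$ and that $\im(\ad_L)\cap\ker(\ad_L)=\{0\}$, which is exactly the directness of the sum in \eqref{ad_decomposition}; your write-up gestures at this but could state it in one line.
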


\begin{proof}
Take $M(t)\in\uu_n$ such that $[L(t),M(t)] = -N^{L(t)}$. Using \cref{defn_metrics} and \cref{gradient_computation}, we must show that
$$
\kappa([L(t),X],N^{L(t)}) = \kappa([L(t),X],N)
$$
for all $t$ and tangent vectors $[L(t),X]$ at $L(t)$. Indeed, since $\kappa$ is $[\cdot,\cdot]$-invariant, we have
\begin{gather*}
\kappa([L(t),X],N) = -\kappa(X,[L(t),N]) = -\kappa(X,[L(t),N^{L(t)}]) = \kappa([L(t),X],N^{L(t)}).\qedhere
\end{gather*}

\end{proof}

\begin{eg}\label{running_example_induced}
Let us consider the same setup as in \cref{running_example_kahler} and \cref{running_example_normal}, but let $L(t)$ evolve in the induced metric. By \cref{grassmannian_metrics}, we must have
$$
L_{\textnormal{induced}}(t) = L_{\textnormal{K\"{a}hler}}((\lambda_1 - \lambda_2)^{-1}t) = L_{\textnormal{normal}}((\lambda_1 - \lambda_2)^{-2}t).
$$
Let us verify that this holds for $\dot{L}(0)$. We have the decomposition
$$
N = N^{L_0} + N_{L_0} = -\frac{aq - bp}{a^2 + b^2}\ii\begin{bmatrix}-b & a \\ a & b\end{bmatrix} - \frac{ap + bq}{a^2 + b^2}\ii\begin{bmatrix}a & b \\ b & -a\end{bmatrix}.
$$
By \cref{gradient_flow_induced}, we obtain
\begin{gather*}
\dot{L}_{\textnormal{induced}}(0) = -N^{L_0} = \frac{aq - bp}{a^2 + b^2}\ii\begin{bmatrix}-b & a \\ a & b\end{bmatrix} = \frac{1}{\lambda_1 - \lambda_2}\dot{L}_{\textnormal{K\"{a}hler}}(0) = \frac{1}{(\lambda_1 - \lambda_2)^2}\dot{L}_{\textnormal{normal}}(0).\qedhere
\end{gather*}

\end{eg}

We now use \cref{lax_flow} to translate \eqref{gradient_flow_induced_equation} into a flow on $\PFl{K}{n}(\mathbb{C})$, by defining for all $L,N\in\uu_n$ an element $M\in\uu_n$ such that $[L,M] = - N^L$. While such an $M$ is only uniquely defined modulo $\ker(\ad_L)$, we fix a specific choice of $M$, which we denote by $\adinverse{L}(-N)$.
\begin{defn}\label{defn_ad_inverse}
Let $\bflambda\in\mathbb{R}^n$ be weakly decreasing. Define the linear operator $\adinverse{\ii\hspace*{-0.5pt}\Diag{\bflambda}}$ on $\uu_n$ by
$$
(\adinverse{\ii\hspace*{-0.5pt}\Diag{\bflambda}}(M))_{i,j} := \begin{cases}
0, & \text{ if $\lambda_i = \lambda_j$}; \\
\frac{\ii}{\lambda_j - \lambda_i}M_{i,j}, & \text{ otherwise},
\end{cases} \quad \text{ for } 1 \le i,j \le n.
$$
Then given $L\in\Orbit_{\bflambda}$, write $L = g(\ii\Diag{\bflambda})g^{-1}$ for some $g\in\U_n$, and define the linear operator $\adinverse{L}$ on $\uu_n$ by
\begin{align}\label{defn_ad_inverse_expansion}
\adinverse{L}(M) := g\adinverse{\ii\hspace*{-0.5pt}\Diag{\bflambda}}(g^{-1}Mg)g^{-1}.
\end{align}
We can verify that the definition of $\adinverse{L}$ depends only on $L$, not on the choice of $g$. In particular, $\ad_L^{-1}(M)$ is a smooth function of $L\in\Orbit_{\bflambda}$ and $M\in\uu_n$.
\end{defn}

\begin{lem}\label{ad_inverse_property}
Let $\bflambda\in\mathbb{R}^n$ be weakly decreasing, and let $L\in\Orbit_{\bflambda}$. Then
$$
[L,\adinverse{L}(M)] = M^L \quad \text{ for all } M\in\uu_n.
$$
\end{lem}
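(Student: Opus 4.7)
The plan is to reduce to the diagonal case by exploiting the conjugation-equivariance built into the definition. Write $L = g\Lambda g^{-1}$ with $\Lambda := \ii\Diag{\bflambda}$ and $g\in\U_n$. Since $\ad_L = \ad_g \circ \ad_\Lambda \circ \ad_{g^{-1}}$ (where $\ad_g$ denotes conjugation by $g$), both $\im(\ad_L)$ and $\ker(\ad_L)$ are the images under $\ad_g$ of $\im(\ad_\Lambda)$ and $\ker(\ad_\Lambda)$, respectively. Consequently the decomposition $M = M^L + M_L$ satisfies $(gYg^{-1})^L = g\hspace*{1pt} Y^\Lambda g^{-1}$ and $(gYg^{-1})_L = g\hspace*{1pt} Y_\Lambda g^{-1}$ for every $Y\in\uu_n$, so it is enough to verify the identity in the case $L = \Lambda$.

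In the diagonal case, I would compute entrywise. Since $(\ad_\Lambda(X))_{i,j} = \ii(\lambda_i - \lambda_j)X_{i,j}$, the kernel $\ker(\ad_\Lambda)$ consists of block-diagonal matrices whose blocks correspond to the multiplicities of $\bflambda$, and $\im(\ad_\Lambda)$ consists of those $M$ whose $(i,j)$-entry vanishes whenever $\lambda_i = \lambda_j$. The decomposition $M = M^\Lambda + M_\Lambda$ therefore splits entrywise: $(M^\Lambda)_{i,j} = M_{i,j}$ when $\lambda_i \neq \lambda_j$ and $0$ otherwise. By the very definition of $\adinverse{\Lambda}$ in \cref{defn_ad_inverse}, we then get
\begin{align*}
\big([\Lambda, \adinverse{\Lambda}(M)]\big)_{i,j} = \ii(\lambda_i - \lambda_j)\cdot\frac{\ii}{\lambda_j-\lambda_i}M_{i,j} = M_{i,j} \quad \text{ when } \lambda_i \neq \lambda_j,
\end{align*}
and zero otherwise, which is exactly $M^\Lambda$.

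Transporting back via $g$: using bilinearity and $\ad_g[X,Y] = [\ad_g X, \ad_g Y]$,
\begin{align*}
[L,\adinverse{L}(M)] = \big[g\Lambda g^{-1},\, g\hspace*{1pt}\adinverse{\Lambda}(g^{-1}Mg)\hspace*{1pt}g^{-1}\big] = g\big[\Lambda,\adinverse{\Lambda}(g^{-1}Mg)\big]g^{-1} = g\hspace*{1pt}(g^{-1}Mg)^\Lambda g^{-1} = M^L,
\end{align*}
as required. Along the way I would note that well-definedness of $\adinverse{L}(M)$ (independence of the choice of $g$) follows from the same reasoning: any two choices of $g$ differ by right multiplication by an element of the centralizer of $\Lambda$, i.e.\ a block-diagonal unitary $h$ with $\ad_h$ preserving both the entrywise formula for $\adinverse{\Lambda}$ and the splitting into block-diagonal and block-off-diagonal parts. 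There is no serious obstacle here; the proof is essentially a computation, and the only thing to be careful about is verifying the equivariance of the decomposition under $\ad_g$ before reducing to $L = \Lambda$.
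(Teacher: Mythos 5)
Your proof is correct and follows essentially the same route as the paper: verify the identity for $L = \ii\Diag{\bflambda}$ by an entrywise computation, then reduce the general case by conjugating through $g$ using the equivariance of the decomposition $M = M^L + M_L$. The paper states the equivariance $M^L = g(g^{-1}Mg)^{\ii\hspace*{-0.5pt}\Diag{\bflambda}}g^{-1}$ directly and conjugates the target identity, while you derive it from $\ad_L = \ad_g \circ \ad_\Lambda \circ \ad_{g^{-1}}$, but these are the same argument.
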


\begin{proof}
First we consider the case $L = \ii\Diag{\bflambda}$. The desired equality follows directly using
$$
(M^{\ii\hspace*{-0.5pt}\Diag{\bflambda}})_{i,j} = \begin{cases}
0, & \text{ if $\lambda_i = \lambda_j$}; \\
M_{i,j}, & \text{ otherwise},
\end{cases} \quad \text{ for } 1 \le i,j \le n.
$$

Now we consider the case of general $L$. Write $L = g(\ii\Diag{\bflambda})g^{-1}$ for some $g\in\U_n$. Note that $M^L = g(g^{-1}Mg)^{\ii\hspace*{-0.5pt}\Diag{\bflambda}}g^{-1}$. Therefore, taking the desired equality $[L,\adinverse{L}(M)] = M^L$ and conjugating it by $g^{-1}$, we obtain
$$
[\ii\Diag{\bflambda}, \adinverse{\ii\hspace*{-0.5pt}\Diag{\bflambda}}(g^{-1}Mg)] = (g^{-1}Mg)^{\ii\hspace*{-0.5pt}\Diag{\bflambda}},
$$
which we have verified above.
\end{proof}

\begin{lem}\label{gradient_flow_induced_flag}
Let $\bflambda\in\mathbb{R}^n$ be weakly decreasing, and let $N\in\uu_n$. Let $g(t)\in\U_n$ evolve according to
\begin{align}\label{gradient_flow_induced_flag_equation}
\dot{g}(t) = \adinverse{L(t)}(N)g(t), \quad \text{ where } L(t) = g(t)(\ii\Diag{\bflambda})g(t)^{-1},
\end{align}
beginning at $g_0\in\U_n$. Then
$$
\dot{L}(t) = -[L(t),\adinverse{L(t)}(N)],
$$
and $L(t)$ is the gradient flow \eqref{gradient_flow_induced_equation} on $\Orbit_{\bflambda}$ with respect to $N$ in the induced metric, beginning at $L_0 = g_0(\ii\Diag{\bflambda})g_0^{-1}\in\Orbit_{\bflambda}$.
\end{lem}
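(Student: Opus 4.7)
The plan is a direct computation: differentiate $L(t) = g(t)(\ii\Diag{\bflambda})g(t)^{-1}$ via the product rule, translate the result into Lax form, and then invoke \cref{ad_inverse_property} to recognize the right-hand side as $-N^{L(t)}$, matching \eqref{gradient_flow_induced_equation}.

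Before differentiating, I would first check that the flow \eqref{gradient_flow_induced_flag_equation} is well-defined on $\U_n$, i.e., that $\adinverse{L(t)}(N) \in \uu_n$ whenever $N \in \uu_n$. By \eqref{defn_ad_inverse_expansion}, this reduces to showing that $\adinverse{\ii\Diag{\bflambda}}$ preserves $\uu_n$: indeed, it multiplies entry $(i,j)$ by the purely imaginary scalar $\ii/(\lambda_j - \lambda_i)$ (or by $0$), and such an operation preserves skew-Hermiticity since $(\lambda_j - \lambda_i) = -(\lambda_i - \lambda_j)$. Conjugation by $g \in \U_n$ then also preserves $\uu_n$. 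Consequently, the ODE \eqref{gradient_flow_induced_flag_equation} evolves $g(t)$ inside $\U_n$, so $L(t) \in \Orbit_{\bflambda}$ is well-defined.

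Next I would compute $\dot{L}(t)$. Using $\frac{d}{dt}(g^{-1}) = -g^{-1}\dot{g}\hspace*{1pt}g^{-1}$, the product rule gives
\[
\dot{L}(t) = \dot{g}(t)(\ii\Diag{\bflambda})g(t)^{-1} - g(t)(\ii\Diag{\bflambda})g(t)^{-1}\dot{g}(t)g(t)^{-1} = [\dot{g}(t)g(t)^{-1},\, L(t)].
\]
Substituting $\dot{g}(t)g(t)^{-1} = \adinverse{L(t)}(N)$ from \eqref{gradient_flow_induced_flag_equation} yields the first claim
\[
\dot{L}(t) = [\adinverse{L(t)}(N),\, L(t)] = -[L(t),\, \adinverse{L(t)}(N)].
\]
Finally, applying \cref{ad_inverse_property} with $M = N$ identifies $[L(t), \adinverse{L(t)}(N)] = N^{L(t)}$, so $\dot{L}(t) = -N^{L(t)}$, which is exactly the gradient flow \eqref{gradient_flow_induced_equation} from \cref{gradient_flow_induced}.

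There is no genuine obstacle here, since \cref{ad_inverse_property} does the heavy lifting; the only mild subtlety is the preliminary skew-Hermiticity check ensuring that \eqref{gradient_flow_induced_flag_equation} is a legitimate evolution on $\U_n$.
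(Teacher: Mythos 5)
Your proof is correct and follows essentially the same route as the paper's: rewrite the flow in Lax form and apply \cref{ad_inverse_property} to identify $[L(t),\adinverse{L(t)}(N)]$ with $N^{L(t)}$, matching \cref{gradient_flow_induced}. The only cosmetic difference is that you carry out the product-rule computation $\dot L=[\dot g g^{-1},L]$ directly, whereas the paper cites it as \cref{lax_flow}\ref{lax_flow_translation} (with $M(t)=-\adinverse{L(t)}(N)$); your preliminary check that $\adinverse{L(t)}(N)\in\uu_n$ is a reasonable extra remark that the paper leaves implicit in \cref{defn_ad_inverse}.
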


\begin{proof}
This follows from \cref{lax_flow}\ref{lax_flow_translation}, using \cref{gradient_flow_induced} and \cref{ad_inverse_property}.
\end{proof}

\begin{eg}\label{eg_ad_inverse}
Let us consider the same setup as in \cref{running_example_induced}, i.e.,
$$
L_0 := \ii\begin{bmatrix}a & b \\ b & -a\end{bmatrix} \quad \text{ and } \quad N := -\ii\begin{bmatrix}p & q \\ q & -p\end{bmatrix}.
$$
As in \cref{running_example_kahler}, we have $L_0\in\Orbit_{\bflambda}$, where $\lambda_1 = \sqrt{a^2+b^2} = -\lambda_2$. Also,
$$
L_0 = g_0(\ii\Diag{\bflambda})g_0^{-1}, \quad \text{ where } g_0 := \frac{1}{\sqrt{2\lambda_1}}\begin{bmatrix}
\sqrt{\lambda_1 + a} & -\sqrt{\lambda_1 - a} \\[4pt]
\sqrt{\lambda_1 - a} & \sqrt{\lambda_1 + a}
\end{bmatrix}\in\U_2.
$$
By \eqref{defn_ad_inverse_expansion}, we have
$$
\adinverse{L_0}(N) = g_0\adinverse{\ii\hspace*{-0.5pt}\Diag{\bflambda}}(g_0^{-1}Ng_0)g_0^{-1} = g_0\hspace*{1pt}\frac{aq-bp}{2(a^2 + b^2)}\begin{bmatrix}
0 & -1 \\[1pt]
1 & 0
\end{bmatrix}g_0^{-1} = \frac{aq-bp}{2(a^2 + b^2)}\begin{bmatrix}
0 & -1 \\[1pt]
1 & 0
\end{bmatrix}.
$$
Therefore by \cref{gradient_flow_induced_flag}, we have
$$
\dot{L}_{\textnormal{induced}}(0) = -[L_0,\adinverse{L_0}(N)] = \frac{aq - bp}{a^2 + b^2}\ii\begin{bmatrix}-b & a \\ a & b\end{bmatrix},
$$
in agreement with \cref{running_example_induced}.
\end{eg}

In the remainder of this subsection, we focus on the case that $\bflambda$ is strictly decreasing, i.e., $\Orbit_{\bflambda} \cong \Fl_n(\mathbb{C})$. The following result and its proof are analogous to \cref{normal_tridiagonal_necessary}, with the normal metric replaced by the induced metric; \cref{tridiagonal_necessary_extension} also applies.
\begin{prop}\label{induced_tridiagonal_necessary}
Let $\bflambda\in\mathbb{R}^n$ be strictly decreasing, and suppose that the gradient flow \eqref{gradient_flow_induced_equation} on $\Orbit_{\bflambda}$ with respect to $N\in\uu_n$ in the induced metric weakly preserves positivity. Then $\ii N\in\gl_n^{\ge 0}$.
\end{prop}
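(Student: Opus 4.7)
The plan is to mirror the proof of \cref{normal_tridiagonal_necessary}, replacing the flow of \cref{gradient_flow_normal} with the lift of the induced-metric flow to $\U_n$ given by \cref{gradient_flow_induced_flag}. As in the normal case, by \cref{eg_constant_flow} we may assume that $\ii N$ is real (otherwise the hypothesis fails), and it suffices to show that $\ii N_{i,j} = 0$ for $i \ge j+2$ and $\ii N_{j+1,j} \ge 0$ for all $j$; together with the skew-Hermicity of $N$, this forces $\ii N\in\gl_n^{\ge 0}$.

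To set up, I would take $g_0 := \I_n\in\U_n^{\ge 0}$, and let $g(t)\in\U_n$ and $L(t)\in\Orbit_{\bflambda}$ evolve as in \cref{gradient_flow_induced_flag}, which in the notation of \cref{lax_flow}\ref{lax_flow_translation} means $M(t) := -\adinverse{L(t)}(N)$. I need to check that $M(t)\in\oo_n$ so that \cref{lax_flow}\ref{lax_flow_minors} applies: since $g_0\in\O_n$ and $M(0)\in\oo_n$ (see below), we get $g(t)\in\O_n$ for all $t$, hence $\ii L(t)$ is a real symmetric matrix. Together with $\ii N$ being real symmetric, \cref{defn_ad_inverse} then yields that $\adinverse{L(t)}(N)$ has only real entries, so $M(t)\in\oo_n$ as required. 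The upshot is that \eqref{lax_flow_minors_inequality} gives
\begin{align*}
\textstyle\frac{d}{dt}\eval{t=0}\Delta_I(g(t)) \ge 0 \quad \text{ for all } I\subseteq [n] \text{ such that } I\neq [1], \dots, [n].
\end{align*}

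The key computation is that, by \cref{defn_ad_inverse} applied at $L_0 = \ii\Diag{\bflambda}$,
\begin{align*}
\dot{g}(0) = \adinverse{L_0}(N)g_0 = \adinverse{\ii\hspace*{-0.5pt}\Diag{\bflambda}}(N), \quad \text{ so } \quad \dot{g}(0)_{i,j} = \frac{\ii N_{i,j}}{\lambda_j - \lambda_i} \text{ for } 1 \le i,j \le n.
\end{align*}
Compared with the formula $\dot{g}(0)_{i,j} = \ii(\lambda_j - \lambda_i)N_{i,j}$ in the proof of \cref{normal_tridiagonal_necessary}, the factor $\lambda_j - \lambda_i$ is inverted, but since $\bflambda$ is strictly decreasing it has a fixed nonzero sign for each pair $i\neq j$. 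Consequently, the sign analysis proceeds identically: using \eqref{determinant_derivative} with $I = [j-1]\cup\{i\}$ for $i \ge j+1$ yields $\frac{d}{dt}\big|_{t=0}\Delta_I(g(t)) = \dot{g}(0)_{i,j}$, hence $\ii N_{i,j} \ge 0$ (using $\lambda_j - \lambda_i > 0$); and using $I = [j-1]\cup\{j+1,i\}$ for $i \ge j+2$ yields $\frac{d}{dt}\big|_{t=0}\Delta_I(g(t)) = -\dot{g}(0)_{i,j}$, hence $\ii N_{i,j} \le 0$. Combining gives the required conditions.

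The main obstacle is the verification that $M(t)\in\oo_n$, which is what lets us invoke \cref{lax_flow}\ref{lax_flow_minors}; beyond that, the argument is a direct transcription of \cref{normal_tridiagonal_necessary} with the single cosmetic change in the formula for $\dot{g}(0)$. I would also note (following \cref{tridiagonal_necessary_extension}) that the same argument extends to weakly decreasing $\bflambda$, with the conclusion that $\ii N$ must have block-Jacobi form whose block sizes match the multiplicities of $\bflambda$.
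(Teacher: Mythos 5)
Your proposal is correct and reproduces the paper's own proof almost line for line: same initialization $g_0 = \I_n$, same identification of $\dot{g}(0) = \adinverse{\ii\Diag{\bflambda}}(N)$ as the only change from the normal-metric case (with the factor $\lambda_j - \lambda_i$ moving from numerator to denominator), and the same two families of minors $I = [j-1]\cup\{i\}$ and $I = [j-1]\cup\{j+1,i\}$ fed into \eqref{determinant_derivative} and \eqref{lax_flow_minors_inequality}.

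The one place your write-up is slightly off is the verification that $M(t)\in\oo_n$, which the paper indeed leaves implicit. As phrased, your argument is circular: you cite ``$g_0\in\O_n$ and $M(0)\in\oo_n$'' as grounds for ``$g(t)\in\O_n$ for all $t$,'' but that inference already needs $M(t)\in\oo_n$ for all $t$, which is the thing you are trying to establish. The standard fix is to observe instead that the vector field $g\mapsto\adinverse{g(\ii\Diag{\bflambda})g^{-1}}(N)\,g$ is tangent to $\O_n$ at every point of $\O_n$ (because when $g\in\O_n$ and $\ii N$ is real symmetric, $\ii L$ is real symmetric and $\adinverse{L}(N)$ is real skew-symmetric), so $\O_n$ is an invariant submanifold and $g_0\in\O_n$ forces $g(t)\in\O_n$ for all $t$. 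One further cosmetic point: your displayed formula $\dot{g}(0)_{i,j} = \ii N_{i,j}/(\lambda_j - \lambda_i)$ ``for $1\le i,j\le n$'' is a $0/0$ form on the diagonal; \cref{defn_ad_inverse} sets those entries to $0$, which matters for correctness of the statement but not for the sign analysis, since only $i > j$ is used.
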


\begin{proof}
We assume that $\ii N$ is real. We must show that
$$
\ii N_{i,j} = 0 \text{ for all } i\ge j+2 \quad \text{ and } \quad \ii N_{j+1,j} \ge 0 \text{ for all } j.
$$
To this end, set $g_0 := \I_n\in\U_n^{\ge 0}$, and let $g(t)\in\U_n$ evolve as in \eqref{gradient_flow_induced_flag_equation}, with $L(t) = g(t)(\ii\Diag{\bflambda})g(t)^{-1}\in\Orbit_{\bflambda}$. By \eqref{lax_flow_minors_inequality}, we have
$$
\textstyle\frac{d}{dt}\eval{t=0}\Delta_I(g(t)) \ge 0 \quad \text{ for all } I\subseteq [n] \text{ such that } I\neq [1], \dots, [n].
$$
Note that
$$
\dot{g}(0) = \adinverse{L_0}(N)g_0 = \adinverse{\ii\hspace*{-0.5pt}\Diag{\bflambda}}(N), \quad \text{ so } \quad \dot{g}(0)_{i,j} = \begin{cases}
0, & \text{ if $i=j$}; \\
\frac{\ii}{\lambda_j - \lambda_i}N_{i,j}, & \text{ otherwise}.
\end{cases}
$$
Using \eqref{determinant_derivative}, for $i \ge j+1$ we calculate
$$
\textstyle\frac{d}{dt}\eval{t=0}\Delta_{[j-1]\cup\{i\}}(g(t)) = \frac{\ii}{\lambda_j - \lambda_i}N_{i,j}, \quad \text{ so } \quad \ii N_{i,j} \ge 0.
$$
Similarly, for $i \ge j+2$ we calculate
\begin{gather*}
\textstyle\frac{d}{dt}\eval{t=0}\Delta_{[j-1]\cup\{j+1,i\}}(g(t)) = \frac{-\ii}{\lambda_j - \lambda_i}N_{i,j}, \quad \text{ so } \quad \ii N_{i,j} \le 0.\qedhere
\end{gather*}

\end{proof}

We now further consider the flow \eqref{gradient_flow_induced_flag_equation}. Using \eqref{defn_ad_inverse_expansion}, we can rewrite \eqref{gradient_flow_induced_flag_equation} as
\begin{align}\label{gradient_flow_induced_flag_equation_rewritten}
\dot{g}(t) = g(t)\adinverse{\ii\hspace*{-0.5pt}\Diag{\bflambda}}(g(t)^{-1}Ng(t)).
\end{align}
When $\bflambda$ is strictly decreasing, we wish to view \eqref{gradient_flow_induced_flag_equation_rewritten} as a flow on $\Fl_n(\mathbb{C})$, and it will be more convenient to have $g(t)$ acted upon on the left, rather than the right. To achieve this, we apply the twist map from \cref{sec_twist}. Since the twist map preserves total positivity and total nonnegativity (see \cref{twist_action}), we may work with the twisted flow when considering which flows \eqref{gradient_flow_induced_flag_equation} preserve positivity. This in turn is equivalent to working with \eqref{gradient_flow_induced_equation}, by \cref{lax_flow}\ref{lax_flow_minors}. We summarize these observations in the following result:
\begin{lem}\label{gradient_flow_induced_flag_twisted}
Let $\bflambda\in\mathbb{R}^n$ be strictly decreasing, and let $N\in\uu_n$. Let $g(t)\in\U_n$ evolve according to \eqref{gradient_flow_induced_flag_equation}, and set $h(t) := \iota(t) = \delta_ng(t)^{-1}\delta_n \in \U_n$. Then $h(t)$ evolves according to
\begin{align}\label{gradient_flow_induced_flag_twisted_equation}
\dot{h}(t) = -\hspace*{-1pt}\adinverse{\ii\hspace*{-0.5pt}\Diag{\bflambda}}(h(t)\delta_nN\delta_nh(t)^{-1})h(t).
\end{align}
Furthermore, the gradient flow on $\Orbit_{\bflambda}$ with respect to $N$ in the induced metric weakly (respectively, strictly) preserves positivity if and only if the flow \eqref{gradient_flow_induced_flag_twisted_equation} on $\U_n$ weakly (respectively, strictly) preserves positivity.
\end{lem}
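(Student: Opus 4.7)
The plan is to verify the ODE \eqref{gradient_flow_induced_flag_twisted_equation} and the positivity equivalence in turn. For the ODE, I would differentiate $h = \delta_n g^{-1} \delta_n$, apply $\frac{d}{dt} g^{-1} = -g^{-1} \dot g\, g^{-1}$, and substitute \eqref{gradient_flow_induced_flag_equation_rewritten} to obtain
$$
\dot h = -\delta_n\, \adinverse{\ii\hspace*{-0.5pt}\Diag{\bflambda}}(g^{-1} N g)\, g^{-1} \delta_n.
$$
The key algebraic observation is that $\adinverse{\ii\hspace*{-0.5pt}\Diag{\bflambda}}$ and conjugation by $\delta_n$ commute: both act diagonally on matrix entries, the former by the scalar $\ii/(\lambda_j - \lambda_i)$ and the latter by $(-1)^{i+j}$. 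Pulling the $\delta_n$-conjugation inside $\adinverse{\ii\hspace*{-0.5pt}\Diag{\bflambda}}$ and using $\delta_n g \delta_n = h^{-1}$ then yields \eqref{gradient_flow_induced_flag_twisted_equation}.

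For the positivity equivalence, \cref{positive_inverse}\ref{positive_inverse_action} says that $\iota$ restricts to involutions of $\U_n^{\ge 0}$ and of $\U_n^{>0}$, so $h_0 = \iota(g_0)$ bijects the relevant initial-data sets with themselves, and $h(t)$ lies in the set if and only if $g(t)$ does. The equivalence therefore reduces to showing that, for $g_0 \in \U_n^{\ge 0}$, one has $g(t) \in \U_n^{\ge 0}$ for all $t \ge 0$ if and only if $L(t) \in \Orbit_{\bflambda}^{\ge 0}$ for all $t \ge 0$, and analogously for strict positivity. The forward direction is immediate from the definition of $\Orbit_{\bflambda}^{\ge 0}$. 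For the reverse, I would first observe that both positivity-preservation hypotheses force $\ii N$ to be real symmetric: for the $L$-flow by \cref{eg_constant_flow}, and for the $h$-flow by computing $\dot h(0) = -\adinverse{\ii\hspace*{-0.5pt}\Diag{\bflambda}}(\delta_n N \delta_n)$ at $h_0 = \I_n \in \U_n^{\ge 0}$ and noting that this lies in $\oo_n$ only when $\ii N$ is real.

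Under the assumption that $\ii N = R$ is real symmetric, the flow preserves $\O_n$: for $g \in \O_n$, $g^{-1} N g = \ii \transpose{g} R g$ with $\transpose{g} R g$ real symmetric, so $\adinverse{\ii\hspace*{-0.5pt}\Diag{\bflambda}}(g^{-1} N g)$ is real antisymmetric (the two $\ii$'s cancel, and the scalars $1/(\lambda_j - \lambda_i)$ are antisymmetric in $(i,j)$, making their entrywise product with the symmetric matrix $\transpose{g} R g$ antisymmetric); conjugation by $g \in \O_n$ then keeps it in $\oo_n$. Since $\O_n \to \Fl_n(\mathbb{R})$ is a finite covering with discrete deck group $\T_n(\mathbb{R}) = \T_n \cap \O_n$ of order $2^n$, and the homeomorphism $\U_n^{\ge 0} \xrightarrow{\cong} \Fl_n^{\ge 0}$ of \cref{U_to_Fl}\ref{U_to_Fl_isomorphism} defines a continuous section $\tilde g : \Fl_n^{\ge 0} \to \U_n^{\ge 0}$, assuming $L(t) \in \Orbit_{\bflambda}^{\ge 0}$ for all $t \ge 0$ (with corresponding flag $V(t) \in \Fl_n^{\ge 0}$) makes $\tau(t) := \tilde g(V(t))^{-1} g(t) \in \T_n(\mathbb{R})$ a continuous discrete-valued path, hence constant. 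Since $\tau(0) = \I_n$, this forces $g(t) = \tilde g(V(t)) \in \U_n^{\ge 0}$; the strict case is identical, using $\U_n^{>0} \cong \Fl_n^{>0}$. \emph{The main obstacle} is this final continuity step, which crucially relies on the $\O_n$-invariance established just above: without it, $g(t)$ could a priori occupy a different $\T_n$-fiber over $V(t)$ than the $\U_n^{\ge 0}$-representative, and the equivalence could fail even when $V(t) \in \Fl_n^{\ge 0}$.
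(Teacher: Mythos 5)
Your proposal is correct, and it follows essentially the same route as the paper; the difference is only in how much is made explicit. The paper's entire proof is the sentence ``This follows from the preceding discussion,'' which in turn outsources the work to \cref{twist_action} (the twist map $\iota$ preserves $\U_n^{>0}$ and $\U_n^{\ge 0}$) and to \cref{lax_flow}\ref{lax_flow_minors} (translating between positivity of the $L$-flow on $\Orbit_{\bflambda}$ and positivity of the left-justified minors of $g(t)$). Your verification of the ODE by differentiating $h = \delta_n g^{-1}\delta_n$, commuting $\adinverse{\ii\hspace*{-0.5pt}\Diag{\bflambda}}$ with conjugation by $\delta_n$, and using $\delta_n g\delta_n = h^{-1}$ is exactly the intended computation. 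Your account of the positivity equivalence unpacks what is folded into \cref{lax_flow}\ref{lax_flow_minors}: that lemma carries the hypothesis $M(t)\in\oo_n$, which is what forces the reduction to the case $\ii N$ real (equivalently, cf.\ \cref{eg_constant_flow}, the only case where either flow could preserve positivity), and its proof --- ``\cref{Fl_to_orbit}, \cref{tnn_Fl}, \cref{U_to_Fl}, and continuity of $g(t)\in\O_n$'' --- is precisely the discrete-fiber/continuity argument you spell out via the section $\tilde g$ and the constancy of the $\T_n(\mathbb{R})$-valued path $\tau(t)$. Your closing remark, that the continuity step hinges on first establishing $\O_n$-invariance because the full torus $\T_n$ is not discrete, correctly identifies the one genuine subtlety that the paper's citation of \cref{lax_flow}\ref{lax_flow_minors} handles via the $M(t)\in\oo_n$ hypothesis.
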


\begin{proof}
This follows from the preceding discussion, where we obtain \eqref{gradient_flow_induced_flag_twisted_equation} from \eqref{gradient_flow_induced_flag_equation_rewritten}.
\end{proof}

We emphasize that since we are employing the twist map, \cref{gradient_flow_induced_flag_twisted} only applies when $\bflambda$ is strictly decreasing. We also observe that the technique of applying the twist map can be employed to flows much more generally, and we will do so again for the symmetric Toda flow in \cref{sec_toda_normal}.

We believe it may be possible to use \eqref{gradient_flow_induced_flag_twisted_equation} to classify which gradient flows on $\Orbit_{\bflambda}$ (when $\bflambda$ is strictly decreasing) in the induced metric preserve positivity. As a first step in this direction, we investigate the case $n=3$. We will find that, curiously, whether or not positivity is preserved appears to depend on the choice of $\bflambda$ (though we are unable to prove this); see \eqref{induced_extended_example_edges_symmetrized} and \cref{induced_no_preserving}.

\begin{eg}\label{induced_extended_example}
Let $n := 3$, let $\bflambda\in\mathbb{R}^3$ be strictly decreasing, and let $N\in\uu_3$. We wish to determine when the gradient flow on $\Orbit_{\bflambda}$ with respect to $N$ in the induced metric weakly preserves positivity. By \cref{induced_tridiagonal_necessary}, it suffices to consider the case when $\ii N\in\gl_3^{\ge 0}$. Also, after translating $N$ by a scalar multiple of $\I_3$ (which does not change the gradient flow), we may assume that $N_{2,2} = 0$. That is,
\begin{align}\label{induced_extended_example_convention}
N = -\ii\begin{bmatrix}p & u & 0 \\ u & 0 & v \\ 0 & v & q\end{bmatrix} \quad \text{ for some } p,q\in\mathbb{R} \text{ and } u,v \ge 0.
\end{align}
For convenience, we also set
$$
c := \lambda_1 - \lambda_2 > 0 \quad \text{ and } \quad d := \lambda_2 - \lambda_3 > 0.
$$

Let $g(t)$ evolve according to \eqref{gradient_flow_induced_flag_twisted_equation}, beginning at an arbitrary $g_0\in\U_3^{\ge 0}$. In particular, we have
$$
\dot{g}(0) = -\hspace*{-1pt}\adinverse{\ii\hspace*{-0.5pt}\Diag{\bflambda}}\Bigg(g_0\ii\begin{bmatrix}-p & u & 0 \\ u & 0 & v \\ 0 & v & -q\end{bmatrix}g_0^{-1}\Bigg)g_0.
$$
We will determine when the inequalities in \eqref{lax_flow_minors_inequality} hold:
\begin{align}\label{induced_extended_example_inequality}
\textstyle\frac{d}{dt}\eval{t=0}\Delta_I(g(t)) \ge 0 \quad \text{ for all } I\subseteq [3] \text{ such that } \Delta_I(g_0) = 0.
\end{align}
We can express the left-hand side above as follows:
$$
\textstyle\frac{d}{dt}\eval{t=0}\Delta_{\{i\}}(g(t)) = \dot{g}(0)_{i,1} \quad \text{ and } \quad \textstyle\frac{d}{dt}\eval{t=0}\Delta_{[3]\setminus\{i\}}(g(t)) = (-1)^{i-1}\dot{g}(0)_{i,3}
$$
for all $i\in [3]$, where the second equality follows from \eqref{jacobi}. We emphasize that our approach based on \eqref{induced_extended_example_inequality} gives a necessary condition for positivity to be preserved, but not necessarily a sufficient condition, because \eqref{lax_flow_minors_inequality} only considers $g(t)$ to first order in $t$.

We consider several cases, depending on which cell $C_{v,w}$ contains $g_0$ in the cell decomposition \eqref{cell_decomposition_equation} of $\Fl_3^{\ge 0}$. Here, $v$ and $w$ are permutations in $\mathfrak{S}_3$ such that $v \le w$ (cf.\ \cref{figure_S3}). We observe that by symmetry, some cases are redundant. Namely, recall the involutions $\rev$ and $\flip$ defined on $\U_n$ from \cref{sec_flip}, which act on the cell decomposition \eqref{cell_decomposition_equation} according to \cref{flip_rev_action}. Therefore we only need to consider one cell among the orbit
$$
C_{v,w},\hspace*{1pt} C_{w_0w,w_0v},\hspace*{1pt} C_{ww_0,vw_0},\hspace*{1pt} C_{w_0vw_0,w_0ww_0},
$$
where $w_0 = 321$. On the other hand, $\rev$ and $\flip$ are compatible with \eqref{gradient_flow_induced_flag_twisted_equation}: the latter is invariant under the transformations
$$
h \leftrightarrow \rev(h) = \mathring{w}_0\delta_3h\delta_3, \quad N \leftrightarrow -\delta_3N\delta_3, \quad (\lambda_1, \lambda_2, \lambda_3) \leftrightarrow (-\lambda_3, -\lambda_2, -\lambda_1);
$$
and
$$
h \leftrightarrow \flip(h) = \delta_3h\delta_3\mathring{w}_0, \quad N \leftrightarrow \mathring{w}_0\delta_3N\mathring{w}_0\delta_3.
$$
In terms of the data $(c, d, p, q, u, v)$, these transformations correspond to, respectively,
\begin{align}\label{induced_extended_example_symmetries}
c \leftrightarrow d, \quad p \leftrightarrow -p, \quad q \leftrightarrow -q; \quad \text{ and } \quad p \leftrightarrow q, \quad u \leftrightarrow v.
\end{align}

Also observe that when $(v,w) = (123,321)$, we have $C_{v,w} = \Fl_3^{>0}$, so that $\Delta_I(g_0) \neq 0$ for all $I\subseteq [3]$. Therefore \eqref{induced_extended_example_inequality} is vacuously satisfied in this case, and so we do not need to consider it below. We note that the discussion above for $n=3$ can be easily generalized to any $n$.

We now consider the six possible cases. Below, we let $\alpha$ and $\beta$ denote arbitrary numbers in the interval $(0, \frac{\pi}{2})$.\vspace*{2pt}

{\itshape Case 1: $(v,w)$ equals $(123,123)$ or $(321,321)$.} We assume that $(v,w) = (123,123)$. Then
$$
g_0 = \begin{bmatrix}
1 & 0 & 0 \\
0 & 1 & 0 \\
0 & 0 & 1
\end{bmatrix} \quad \text{ and } \quad \dot{g}(0) = \begin{bmatrix}
0 & -\frac{u}{c} & 0 \\[4pt]
\frac{u}{c} & 0 & -\frac{v}{d} \\[4pt]
0 & \frac{v}{d} & 0
\end{bmatrix}.
$$
We must check \eqref{induced_extended_example_inequality} when $I = \{2\}, \{3\}, \{1,3\}, \{2,3\}$:
$$
\frac{u}{c} \ge 0, \qquad 0 \ge 0, \qquad \frac{v}{d} \ge 0, \qquad 0 \ge 0.
$$
These inequalities are always satisfied.\vspace*{2pt}

{\itshape Case 2: $(v,w)$ equals $(132,132)$, $(312,312)$, $(231,231)$, or $(213,213)$.} We assume that $(v,w) = (132,132)$. Then
$$
g_0 = \begin{bmatrix}
1 & 0 & 0 \\
0 & 0 & -1 \\
0 & 1 & 0
\end{bmatrix} \quad \text{ and } \quad \dot{g}(0) = \begin{bmatrix}
0 & -\frac{u}{c+d} & 0 \\[4pt]
0 & \frac{v}{d} & 0 \\[4pt]
\frac{u}{c+d} & 0 & \frac{v}{d}
\end{bmatrix}.
$$
We must check \eqref{induced_extended_example_inequality} when $I = \{2\}, \{3\}, \{1,2\}, \{2,3\}$:
$$
0 \ge 0, \qquad \frac{u}{c+d} \ge 0, \qquad \frac{v}{d} \ge 0, \qquad 0 \ge 0.
$$
These inequalities are always satisfied.\vspace*{2pt}

{\itshape Case 3: $(v,w)$ equals $(123,132)$, $(312,321)$, $(231,321)$, or $(123,213)$.} We assume that $(v,w) = (123,132)$. Then
$$
g_0 = \begin{bmatrix}
1 & 0 & 0 \\[1pt]
0 & \cos(\alpha) & -\hspace*{-1pt}\sin(\alpha) \\[1pt]
0 & \sin(\alpha) & \cos(\alpha)
\end{bmatrix} \quad \text{ and } \quad \dot{g}(0) = \begin{bmatrix}
\ast & \ast & \frac{du\sin(2\alpha)}{2c(c+d)} \\[6pt]
\frac{u\cos(\alpha)}{c} & \ast & \ast \\[6pt]
\frac{u\sin(\alpha)}{c+d} & \ast & \ast
\end{bmatrix},
$$
where the entries $\ast$ are unimportant. We must check \eqref{induced_extended_example_inequality} when $I = \{2\}, \{3\}, \{2,3\}$:
$$
\frac{u\cos(\alpha)}{c} \ge 0, \qquad \frac{u\sin(\alpha)}{c+d} \ge 0, \qquad \frac{du\sin(2\alpha)}{2c(c+d)} \ge 0.
$$
These inequalities are always satisfied.\vspace*{2pt}

{\itshape Case 4: $(v,w)$ equals $(213,231)$ or $(132,312)$.} We assume that $(v,w) = (213,231)$. Then
$$
g_0 = \begin{bmatrix}
0 & -\hspace*{-1pt}\cos(\alpha) & \sin(\alpha) \\[1pt]
1 & 0 & 0 \\[1pt]
0 & \sin(\alpha) & \cos(\alpha)
\end{bmatrix} \quad \text{ and } \quad \dot{g}(0) = \begin{bmatrix}
\frac{u\cos(\alpha)}{c} & \ast & \ast \\[1pt]
\ast & \ast & -\frac{(c+d)u\sin(2\alpha)}{2cd} \\[1pt]
\frac{u\sin(\alpha)}{d} & \ast & \ast
\end{bmatrix},
$$
where the entries $\ast$ are unimportant. We must check \eqref{induced_extended_example_inequality} when $I = \{1\}, \{3\}, \{1,3\}$:
$$
\frac{u\cos(\alpha)}{c} \ge 0, \qquad \frac{u\sin(\alpha)}{d} \ge 0, \qquad \frac{(c+d)u\sin(2\alpha)}{2cd} \ge 0.
$$
These inequalities are always satisfied.\vspace*{2pt}

{\itshape Case 5: $(v,w)$ equals $(132,231)$ or $(213,312)$.} We assume that $(v,w) = (132,231)$. Then
\begin{multline*}
g_0 = \begin{bmatrix}
\cos(\alpha) & 0 & \sin(\alpha) \\[1pt]
\sin(\alpha) & 0 & -\hspace*{-1pt}\cos(\alpha) \\[1pt]
0 & 1 & 0
\end{bmatrix} \quad \text{ and} \\
\dot{g}(0) = \begin{bmatrix}
\ast & \ast & \ast \\
\ast & \ast & \ast \\
\frac{cu(1 - \cos(2\alpha)) - cv\sin(2\alpha) + 2du}{2(c+d)d} & \ast & \frac{cv(1 + \cos(2\alpha)) - cu\sin(2\alpha) + 2dv}{2(c+d)d}
\end{bmatrix},
\end{multline*}
where the entries $\ast$ are unimportant. We must check \eqref{induced_extended_example_inequality} when $I = \{3\}, \{1,2\}$:
$$
\frac{cu(1 - \cos(2\alpha)) - cv\sin(2\alpha) + 2du}{2(c+d)d} \ge 0, \qquad \frac{cv(1 + \cos(2\alpha)) - cu\sin(2\alpha) + 2dv}{2(c+d)d} \ge 0.
$$
The left-hand side of the first inequality above is minimized (as a function of $\alpha$) when $\tan(2\alpha) = \frac{v}{u}$, and the left-hand side of the second inequality is minimized when $\tan(2\alpha) = -\frac{u}{v}$. Therefore these inequalities are equivalent to
\begin{align}\label{induced_extended_example_edges}
c(u - \sqrt{u^2 + v^2}) + 2du \ge 0, \qquad c(v - \sqrt{u^2 + v^2}) + 2dv \ge 0.
\end{align}
Symmetrizing according to \eqref{induced_extended_example_symmetries}, we conclude that \eqref{induced_extended_example_inequality} holds in this case if and only if
\begin{align}\label{induced_extended_example_edges_symmetrized}
u = v = 0 \quad \text{ or } \quad \min\bigg(\frac{u}{\sqrt{u^2 + v^2}}, \frac{v}{\sqrt{u^2 + v^2}}\bigg) \ge \max\bigg(\frac{c}{c + 2d}, \frac{d}{2c + d}\bigg).
\end{align}

{\itshape Case 6: $(v,w)$ equals $(123,231)$, $(213,321)$, $(132,321)$, or $(123,312)$.} We assume that $(v,w) = (123,231)$. Then
$$
g_0 = \begin{bmatrix}
\cos(\alpha) & -\hspace*{-1pt}\sin(\alpha)\cos(\beta) & \sin(\alpha)\sin(\beta) \\[1pt]
\sin(\alpha) & \cos(\alpha)\cos(\beta) & -\hspace*{-1pt}\cos(\alpha)\sin(\beta) \\[1pt]
0 & \sin(\beta) & \cos(\beta)
\end{bmatrix}.
$$
We must check \eqref{induced_extended_example_inequality} when $I = \{3\}$:
$$
\dot{g}(0)_{3,1} = \scalebox{0.9}{$\displaystyle\frac{cq\sin(2\alpha)\sin(2\beta) + 2cu(1 - \cos(2\alpha))\sin(\beta) + 2cv\sin(2\alpha)\cos(2\beta) + 4du\sin(\beta)}{4(c+d)d}$} \ge 0.
$$
Multiplying by $\frac{2(c+d)d}{\sin(\beta)}$, we obtain the equivalent inequality
\begin{align}\label{induced_extended_example_faces}
cq\sin(2\alpha)\cos(\beta) + cu(1 - \cos(2\alpha)) + cv\sin(2\alpha)\frac{\cos(2\beta)}{\sin(\beta)} + 2du \ge 0.
\end{align}
Note that if $q \ge 0$, then the left-hand side above is a weakly decreasing function of $\beta$, whence it is minimized as $\beta \to \frac{\pi}{2}$. The inequality then becomes
$$
cu(1 - \cos(2\alpha)) - cv\sin(2\alpha) + 2du \ge 0,
$$
which we considered in Case 5. In particular, if $p = q = 0$, then after symmetrizing according to \eqref{induced_extended_example_symmetries}, we find that \eqref{induced_extended_example_inequality} holds if and only if \eqref{induced_extended_example_edges_symmetrized} holds. In the general case when $p$ or $q$ is nonzero, \eqref{induced_extended_example_faces} (and its images under \eqref{induced_extended_example_symmetries}) will yield stronger conditions than \eqref{induced_extended_example_edges_symmetrized}.

In conclusion, \eqref{induced_extended_example_inequality} is equivalent to the inequality \eqref{induced_extended_example_faces} along with its images under \eqref{induced_extended_example_symmetries}. These inequalities imply \eqref{induced_extended_example_edges_symmetrized}, and they are equivalent to \eqref{induced_extended_example_edges_symmetrized} in the case that $p = q = 0$.

In particular, when $\bflambda$ is fixed (i.e.\ $c,d$ are fixed), there exists a nonzero $N$ (i.e.\ there exist $p,q,u,v$ not all zero) satisfying \eqref{induced_extended_example_inequality} if and only if
\begin{align}\label{induced_extended_example_interval}
\max\Big(\frac{c}{d}, \frac{d}{c}\Big) \le 2 + 2\sqrt{2}.
\end{align}
To see this, note that if \eqref{induced_extended_example_interval} holds, then we may take $(p,q,u,v) := (0,0,1,1)$. Conversely, suppose that \eqref{induced_extended_example_interval} does not hold; we must show that $p = q = u = v = 0$. First we consider the inequality in \eqref{induced_extended_example_edges_symmetrized}. If $u$ or $v$ is nonzero, then the left-hand side is at most $\frac{1}{\sqrt{2}}$, while by assumption, the right-hand side is greater than $\frac{1}{\sqrt{2}}$. Therefore $u = v = 0$. Then the inequality \eqref{induced_extended_example_faces} becomes $cq\sin(2\alpha)\cos(\beta) \ge 0$, which implies $q \ge 0$. Symmetrizing according to \eqref{induced_extended_example_symmetries} gives the inequalities $q \le 0$, $p \ge 0$, and $p \le 0$, so $p = q = 0$, as desired.
\end{eg}

Based on \cref{induced_extended_example}, we make the following observation:
\begin{prop}\label{induced_no_preserving}
Let $\bflambda\in\mathbb{R}^3$ be strictly decreasing such that $\frac{\lambda_1 - \lambda_2}{\lambda_2 - \lambda_3}$ lies outside the interval $\big[\frac{1}{2+2\sqrt{2}},\hspace*{1pt} 2+2\sqrt{2}\big]$, and let $N\in\uu_3$. Then the gradient flow \eqref{gradient_flow_induced_equation} on $\Orbit_{\bflambda}$ with respect to $N$ in the induced metric does not strictly preserves positivity, and it weakly preserves positivity if and only if $N$ is a scalar multiple of $\I_3$ (i.e.\ the flow is constant).
\end{prop}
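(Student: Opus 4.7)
The plan is to deduce the proposition directly from the detailed case-by-case analysis carried out in Example~\ref{induced_extended_example}, organizing the necessary conditions that weak positivity-preservation imposes on $N$ and then showing these conditions force $N$ to be scalar under the spectral spacing hypothesis.

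First I would reduce $N$ to a standard form. Suppose the gradient flow with respect to $N$ weakly preserves positivity. By Proposition~\ref{induced_tridiagonal_necessary}, $\ii N \in \gl_3^{\ge 0}$, so $\ii N$ is a real tridiagonal symmetric matrix with nonnegative off-diagonal entries. Since adding a scalar multiple of $\I_3$ to $N$ does not alter the gradient flow \eqref{gradient_flow_induced_equation}, I may subtract $N_{2,2}\I_3$ from $N$ and assume $N$ has precisely the shape \eqref{induced_extended_example_convention}, with parameters $p,q\in\mathbb{R}$ and $u,v\ge 0$. With this normalization, showing $N$ is a scalar multiple of $\I_3$ becomes the assertion $p=q=u=v=0$.

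Next I would extract the key necessary inequalities. Setting $c:=\lambda_1-\lambda_2$ and $d:=\lambda_2-\lambda_3$, the hypothesis that the ratio $c/d$ lies outside $[\tfrac{1}{2+2\sqrt{2}},\,2+2\sqrt{2}]$ is equivalent to $\max(c/d,d/c) > 2+2\sqrt{2}$, which I would show implies
$$
\max\bigg(\frac{c}{c+2d},\,\frac{d}{2c+d}\bigg) > \frac{1}{\sqrt{2}}
$$
by a short algebraic manipulation. Applying Lemma~\ref{lax_flow}\ref{lax_flow_minors} to the twisted gradient flow \eqref{gradient_flow_induced_flag_twisted_equation} at each boundary point $g_0\in\U_3^{\ge 0}$, Case 5 of Example~\ref{induced_extended_example} yields the necessary inequality \eqref{induced_extended_example_edges_symmetrized}. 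Since $u/\sqrt{u^2+v^2}$ and $v/\sqrt{u^2+v^2}$ are both at most $1/\sqrt{2}$, the displayed strict inequality above forces $u=v=0$.

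With $u=v=0$ in hand, I would invoke Case 6 of Example~\ref{induced_extended_example}: the inequality \eqref{induced_extended_example_faces} collapses to $cq\sin(2\alpha)\cos(\beta) \ge 0$ for all $\alpha,\beta\in(0,\tfrac{\pi}{2})$, giving $q \ge 0$; symmetrizing via the involutions \eqref{induced_extended_example_symmetries} gives $q\le 0$ and $p = 0$. Hence $p=q=u=v=0$, so $N$ is a scalar multiple of $\I_3$ and the flow is constant. The converse (a constant flow trivially preserves positivity weakly) is immediate, and the constant flow never sends a boundary point into $\Orbit_{\bflambda}^{>0}$, so no flow strictly preserves positivity under this hypothesis. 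The main subtlety I anticipate is simply verifying the numerical equivalence between the spectral-ratio bound and the threshold $1/\sqrt{2}$, and being careful that the symmetrization argument indeed kills both $p$ and $q$ once $u=v=0$; the rest is bookkeeping on top of Example~\ref{induced_extended_example}.
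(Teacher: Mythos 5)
Your proof follows exactly the same route as the paper: the paper's proof of this proposition is literally the one-line ``This follows from the last paragraph of Example~\ref{induced_extended_example}'', and your proposal is a faithful unpacking of that paragraph. Your algebraic verification that $\max(c/d,d/c) > 2+2\sqrt{2}$ is equivalent to $\max\big(\tfrac{c}{c+2d},\tfrac{d}{2c+d}\big) > \tfrac{1}{\sqrt{2}}$ is correct, and the reduction via \cref{induced_tridiagonal_necessary} and the normalization to the form \eqref{induced_extended_example_convention} are exactly as in the paper.

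One sentence is wrong as written: ``Since $u/\sqrt{u^2+v^2}$ and $v/\sqrt{u^2+v^2}$ are both at most $1/\sqrt{2}$'' is false --- if, say, $u>0$ and $v=0$, then $u/\sqrt{u^2+v^2}=1$. What is true (and what \eqref{induced_extended_example_edges_symmetrized} actually needs) is that $\min\big(\tfrac{u}{\sqrt{u^2+v^2}},\tfrac{v}{\sqrt{u^2+v^2}}\big) \le \tfrac{1}{\sqrt{2}}$ whenever $u,v$ are not both zero: since the left-hand side of \eqref{induced_extended_example_edges_symmetrized} is exactly this minimum, and the right-hand side is strictly greater than $1/\sqrt{2}$ under the spectral-ratio hypothesis, the second alternative in \eqref{induced_extended_example_edges_symmetrized} fails and hence $u=v=0$. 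The conclusion is the same, and the rest of your argument (collapsing \eqref{induced_extended_example_faces} to $cq\sin(2\alpha)\cos(\beta)\ge 0$ and symmetrizing to kill $p$ and $q$) is correct and matches the paper.
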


\begin{proof}
This follows from the last paragraph of \cref{induced_extended_example}.
\end{proof}

\section{Lyapunov function and homeomorphism onto a closed ball}\label{sec_ball}

\noindent Galashin, Karp, and Lam \cite{galashin_karp_lam, galashin_karp_lam19} recently employed the notion of a {\itshape contractive flow} in order to show that the totally nonnegative part of any partial flag variety $G/P$ (as well as several other spaces appearing in algebraic combinatorics) is homeomorphic to a closed ball. In this section we rephrase this argument in the case that $G/P = \PFl{K}{n}(\mathbb{C})$ in terms of the orbit language. The key point is that by \cref{gradient_flow_kahler}, the flows on $\PFl{K}{n}(\mathbb{C})$ considered in \cite{galashin_karp_lam, galashin_karp_lam19} (which were defined by the explicit formula \eqref{gradient_flow_kahler_equation}) are in fact gradient flows in the K\"{a}hler metric. Therefore there is a natural candidate for a {\itshape Lyapunov function}, which we can then substitute for the role of the metric which was used in \cite{galashin_karp_lam, galashin_karp_lam19}.

\subsection{Stable manifold}\label{sec_stable_manifold}
In this subsection, we describe the stable manifold inside $\Orbit_{\bflambda}$ of the unique global attractor for a gradient flow in the K\"{a}hler metric.
\begin{defn}\label{defn_stable_manifold}
Let $-N\in\Orbit_{\bfmu}$, and set $K := \{i\in [n-1] : \mu_i > \mu_{i+1}\}$. (The reason that we are letting $\bfmu$ index the orbit of $-N$, rather than the orbit of $N$, is that we wish to consider the eigenvalues of $\ii N$ in decreasing order.) As in \eqref{projection_sum_formula}, let us write
\begin{align}\label{defn_stable_manifold_decomposition}
\ii N = \Big(\sum_{k\in K}(\mu_k - \mu_{k+1})P_k\Big) + \mu_n\I_n,
\end{align}
where $P_k$ is orthogonal projection from $\mathbb{C}^n$ onto the subspace spanned by the eigenvectors of $\ii N$ corresponding to the eigenvalues $\mu_1, \dots, \mu_k$. We define $\limitproj{N}{k} := P_k$ for all $k\in K$.

Now let $\bflambda\in\mathbb{R}^n$ be weakly decreasing such that $K' := \{i\in [n-1] : \lambda_i > \lambda_{i+1}\}$ is contained in $K$. Then we define
\begin{align}\label{limit_decomposition}
\limitorbit{N}{\bflambda} := \Big(\sum_{k\in K'}(\lambda_k - \lambda_{k+1})\ii\limitproj{N}{k}\Big) + \lambda_n\ii\I_n\in\Orbit_{\bflambda}.
\end{align}
We define the {\itshape stable manifold} (of $\limitorbit{N}{\bflambda}$ under the gradient flow with respect to $N$ in the K\"{a}hler metric) as
$$
\stableorbit{N}{\bflambda} := \{L_0\in\Orbit_{\bflambda} : L(t) \to \limitorbit{N}{\bflambda} \text{ as } t \to\infty\},
$$
where $L(t)$ evolves as in \cref{gradient_flow_kahler}.
\end{defn}

\begin{eg}\label{eg_stable_manifold}
We set $n := 2$, and consider (cf.\ \cref{running_example_kahler})
$$
\ii N := \begin{bmatrix}p & q \\ \overline{q} & -p\end{bmatrix}, \quad \text{ where $p\in\mathbb{R}$, $q\in\mathbb{C}$, and $p$ and $q$ are not both zero}.
$$
We have $-N\in\Orbit_{\bfmu}$, where
$$
\mu_1 := \sqrt{p^2 + |q|^2} \quad \text{ and } \quad \mu_2 := -\sqrt{p^2 + |q|^2}.
$$
Since $p$ and $q$ are not both zero, we have $\mu_1 > \mu_2$, and $\limitproj{N}{1}$ is orthogonal projection onto the eigenspace of $\mu_1$. Therefore the expansion \eqref{defn_stable_manifold_decomposition} is
$$
\ii N = (\mu_1 - \mu_2)\limitproj{N}{1} + \mu_2\I_2, \quad \text{ where } \limitproj{N}{1} = \frac{1}{2\mu_1}\begin{bmatrix}
\mu_1+p & q \\
\overline{q} & \mu_1-p
\end{bmatrix}.
$$ 

Now let $\lambda_1 \ge \lambda_2$. Then
$$
\limitorbit{N}{\bflambda} = (\lambda_1 - \lambda_2)\ii\limitproj{N}{1} + \lambda_2\ii\I_2 = \frac{\lambda_1 - \lambda_2}{2\mu_1}\ii\begin{bmatrix}p & q \\ \overline{q} & -p\end{bmatrix} + \frac{\lambda_1 + \lambda_2}{2}\ii \I_2 \in \Orbit_{\bflambda}.
$$
If $\lambda_1 = \lambda_2$, then $\Orbit_{\bflambda} = \{\limitorbit{N}{\bflambda}\}$ is a point. Otherwise, it will follow from \cref{stable_manifold_orbit}\ref{stable_manifold_orbit_condition} that the stable manifold $\stableorbit{N}{\bflambda}$ equals $\Orbit_{\bflambda}$ minus the single point
$$
(\lambda_1 - \lambda_2)\ii Q + \lambda_2\ii\I_2 = \frac{-\lambda_1 + \lambda_2}{2\mu_1}\ii\begin{bmatrix}p & q \\ \overline{q} & -p\end{bmatrix} + \frac{\lambda_1 + \lambda_2}{2}\ii \I_2.
$$
Here $Q = \I_2 - \limitproj{N}{1}$ is orthogonal projection onto the eigenspace of $\mu_2 = -\mu_1$.
\end{eg}

We show that \cref{defn_stable_manifold} is compatible with positivity:
\begin{lem}\label{eigenvalue_condition_satisfied}
Let $\bflambda\in\mathbb{R}^n$ be weakly decreasing, and set $K := \{i \in [n-1] : \lambda_i > \lambda_{i+1}\}$. Suppose that $-N\in\Orbit_{\bfmu}$ such that the gradient flow on $\Orbit_{\bflambda}$ with respect to $N$ in the K\"{a}hler metric strictly preserves positivity. Then for all $k\in K$, we have $\mu_k > \mu_{k+1}$ and $\ii\limitproj{N}{k}\in\Orbit_{\bfomega{k}}^{>0}$.
\end{lem}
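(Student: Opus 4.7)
The plan is to reduce to the Grassmannian case, where strict positivity of all $k\times k$ minors of $\exp(t\ii N)$ will let us invoke Perron--Frobenius on the $k$th exterior power of $\exp(t\ii N)$. First I would fix $k\in K$ and apply \cref{kahler_flow_positivity_preserving} with $K'=\{k\}$ to deduce that the gradient flow on $\Orbit_{\bfomega{k}}\cong\Gr_{k,n}(\mathbb{C})$ with respect to $N$ in the K\"{a}hler metric also strictly preserves positivity; by \cref{gradient_flow_kahler} this flow is $W(t)=\exp(t\ii N)W_0$. Since strict preservation of positivity forces $N$ to be purely imaginary (cf.\ \cref{eg_constant_flow}), the matrix $M:=\ii N$ lies in $\gl_n(\mathbb{R})$, so the equivalence \ref{infinitesimal_k_positive_preserving} $\Leftrightarrow$ \ref{infinitesimal_k_positive_minors} of \cref{infinitesimal_k} applies and yields that all $k\times k$ minors of $\exp(t\ii N)$ are strictly positive for every $t>0$.

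Next I would fix any $t>0$ and consider the matrix $(\exp(t\ii N))^{\wedge k}$ acting on $\bigwedge^{\hspace*{-1pt}k}\mathbb{C}^n$ in the standard basis indexed by $\binom{[n]}{k}$. By the Cauchy--Binet identity \eqref{cauchy-binet}, its entries are precisely the $k\times k$ minors of $\exp(t\ii N)$, hence all positive. Writing $\mu_1\geq\cdots\geq\mu_n$ for the eigenvalues of $\ii N$ (this decreasing order is consistent with the convention $-N\in\Orbit_{\bfmu}$), the eigenvalues of the exterior power are the positive reals $e^{t\sum_{i\in I}\mu_i}$ for $I\in\binom{[n]}{k}$, so the spectral radius equals $e^{t(\mu_1+\cdots+\mu_k)}$ and is achieved by $I=[k]$. \cref{perron}\ref{perron_eigenvalue} then forces this eigenvalue to be simple, so $\mu_1+\cdots+\mu_k$ must strictly exceed every other partial sum $\sum_{i\in J}\mu_i$; comparing with $J=[k-1]\cup\{k+1\}$ yields $\mu_k>\mu_{k+1}$, the first desired conclusion.

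Finally, let $u_1,\dots,u_n$ be orthonormal eigenvectors of $\ii N$ corresponding to $\mu_1,\dots,\mu_n$. The simple eigenvalue $e^{t(\mu_1+\cdots+\mu_k)}$ of $(\exp(t\ii N))^{\wedge k}$ has eigenvector $u_1\wedge\cdots\wedge u_k$, whose coordinates in the standard basis of $\bigwedge^{\hspace*{-1pt}k}\mathbb{C}^n$ are precisely the Pl\"{u}cker coordinates of the subspace $\spn(u_1,\dots,u_k)\in\Gr_{k,n}(\mathbb{C})$. \cref{perron}\ref{perron_eigenvector} guarantees these coordinates can be taken all positive, so $\spn(u_1,\dots,u_k)\in\Gr_{k,n}^{\Delta > 0}=\Gr_{k,n}^{>0}$ by \cref{tnn_Fl_converse}. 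Since $\mu_k>\mu_{k+1}$, the subspace defining $\limitproj{N}{k}$ in \cref{defn_stable_manifold} is exactly $\spn(u_1,\dots,u_k)$, and \cref{Gr_to_orbit} then gives $\ii\limitproj{N}{k}\in\Orbit_{\bfomega{k}}^{>0}$. The main point to watch is the double use of Perron--Frobenius---simplicity of the spectral radius to separate $\mu_k$ from $\mu_{k+1}$, and positivity of the spectral eigenvector to witness Pl\"{u}cker positivity---but once this double use is noticed, both conclusions assemble from results already in the excerpt.
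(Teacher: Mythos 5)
Your proof is correct and takes essentially the same route as the paper's: reduce to the Grassmannian via \cref{kahler_flow_positivity_preserving}, obtain positivity of all $k\times k$ minors of the exponential via \cref{infinitesimal_k}, then invoke Perron--Frobenius on the $k$th exterior power. The only difference is that you inline the exterior-power Perron--Frobenius argument directly, whereas the paper packages it as \cref{gk_k} and simply cites that; you also helpfully make explicit the observation (cf.\ \cref{eg_constant_flow}) that strict positivity preservation forces $\ii N$ to be real, which the paper's proof uses implicitly when applying \cref{infinitesimal_k}.
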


\begin{proof}
By \cref{kahler_flow_positivity_preserving}, for $k\in K$, the flow \eqref{gradient_flow_kahler_equation} on $\Gr_{k,n}(\mathbb{C})$ strictly preserves positivity. Hence by the implication \ref{infinitesimal_k_positive_preserving} $\Rightarrow$ \ref{infinitesimal_k_positive_minors} of \cref{infinitesimal_k}, all $k\times k$ minors of $\exp(\ii N)$ (which has eigenvalues $e^{\mu_1} \ge \cdots \ge e^{\mu_n}$) are positive. Then \cref{gk_k}\ref{gk_k_eigenvalues} implies $\mu_k > \mu_{k+1}$, and \cref{gk_k}\ref{gk_k_eigenvectors} and \cref{Gr_to_orbit} imply $\ii\limitproj{N}{k}\in\Orbit_{\bfomega{k}}^{>0}$.
\end{proof}

We begin by describing the stable manifold of $\Orbit_{\bflambda}$ in the Grassmannian case (cf.\ \cref{defn_projection}), adapting the proof of \cite[Proposition 3.4]{galashin_karp_lam}.
\begin{lem}\label{stable_manifold_grassmannian}
Let $1 \le k \le n-1$, and let $-N\in\Orbit_{\bfmu}$ such that $\mu_k > \mu_{k+1}$.
\begin{enumerate}[label=(\roman*), leftmargin=*, itemsep=2pt]
\item\label{stable_manifold_grassmannian_condition} We have $\stableorbit{N}{\bfomega{k}} = \{\ii P\in\Orbit_{\bfomega{k}} : \rank(\limitproj{N}{k}P) = k\}$.
\item\label{stable_manifold_grassmannian_positivity} If $\ii\limitproj{N}{k}\in\Orbit_{\bfomega{k}}^{>0}$, then the stable manifold $\stableorbit{N}{\bfomega{k}}$ contains $\Orbit_{\bfomega{k}}^{\ge 0}$.
\end{enumerate}

\end{lem}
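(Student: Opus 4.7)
The plan is to apply the explicit solution of the gradient flow from \cref{gradient_flow_kahler}: under the identification $\Gr_{k,n}(\mathbb{C})\xrightarrow{\cong}\Orbit_{\bfomega{k}}$ of \cref{Gr_to_orbit}, the flow starting from $V_0$ is simply $V(t) = \exp(t\ii N)\,V_0$. Since $-N\in\Orbit_{\bfmu}$, the operator $\ii N$ is Hermitian with eigenvalues $\mu_1 \ge \cdots \ge \mu_n$, and the hypothesis $\mu_k > \mu_{k+1}$ ensures that the subspace $U := \limitproj{N}{k}\mathbb{C}^n$ (the span of the top $k$ eigenvectors) is well-defined. By \eqref{limit_decomposition}, $\limitorbit{N}{\bfomega{k}}$ corresponds to $U\in\Gr_{k,n}(\mathbb{C})$, so the question is for which $V_0$ we have $V(t)\to U$.

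For part \ref{stable_manifold_grassmannian_condition}, I would choose an orthonormal basis of $\mathbb{C}^n$ diagonalizing $\ii N$ and ordered by decreasing eigenvalue, so that $U$ is spanned by the first $k$ basis vectors. In this basis, $\exp(t\ii N) = \Diag{e^{t\mu_1},\dots,e^{t\mu_n}}$, and the Pl\"{u}cker coordinates of $V(t)$ evolve as $p_I(t) = e^{t\sum_{i\in I}\mu_i}\,p_I$ for $I\in\binom{[n]}{k}$. A short check using $\mu_k > \mu_{k+1}$ and splitting $I$ according to membership in $[k]$ shows that $\sum_{i\in I}\mu_i < \mu_1+\cdots+\mu_k$ for every $I \neq [k]$. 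Therefore, in projective Pl\"{u}cker coordinates, $V(t)\to U$ if and only if $p_{[k]}\neq 0$, which is equivalent to $V_0\cap U^{\perp}=\{0\}$ and hence (by a dimension count) to $\rank(\limitproj{N}{k}P)=k$, where $P = \Proj{V_0}$.

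For part \ref{stable_manifold_grassmannian_positivity}, it suffices by \ref{stable_manifold_grassmannian_condition} to verify $\rank(\limitproj{N}{k}P)=k$ for every $V_0\in\Gr_{k,n}^{\ge 0}$. Representing $U$ by an $n\times k$ matrix $\widetilde U$ with orthonormal columns gives $\limitproj{N}{k}=\widetilde U\widetilde U^{*}$, so $\rank(\limitproj{N}{k}P)=\rank(\widetilde U^{*}V_0)$. The hypothesis $\ii\limitproj{N}{k}\in\Orbit_{\bfomega{k}}^{>0}$ yields $U\in\Gr_{k,n}^{>0}$ via \cref{Gr_to_orbit}, so every $\Delta_I(\widetilde U)$ is real and positive; meanwhile $V_0\in\Gr_{k,n}^{\ge 0}$ forces $\Delta_I(V_0)\ge 0$ with at least one strictly positive. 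The Cauchy--Binet identity \eqref{cauchy-binet} then gives
\[
\det(\widetilde U^{*}V_0) \;=\; \sum_{I\in\binom{[n]}{k}} \overline{\Delta_I(\widetilde U)}\,\Delta_I(V_0) \;>\; 0,
\]
so $\widetilde U^{*}V_0$ has rank $k$, as required.

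The step I expect to require the most care is making the convergence argument in \ref{stable_manifold_grassmannian_condition} genuinely biconditional: the vanishing of $p_{[k]}$ must be shown to obstruct convergence outright (not merely slow it), which follows because any limit of $[p_I(t)]_I$ in projective Pl\"{u}cker space must match the coordinate tuple $[\delta_{I,[k]}]_I$ of $U$ at every index, in particular at $I=[k]$. Everything else is a direct exponential-growth estimate or a Cauchy--Binet computation.
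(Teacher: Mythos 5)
Your proposal is correct, and for part \ref{stable_manifold_grassmannian_positivity} it is essentially the paper's own Cauchy--Binet argument (the paper avoids introducing $\widetilde U$ by working directly with the $k\times k$ minors of $\limitproj{N}{k}$ and $P$ via \cref{projection_tnn_description} and \eqref{projection_maximal_minors}, but the computation is the same). For part \ref{stable_manifold_grassmannian_condition} your route is genuinely different: you read off convergence from the Pl\"{u}cker coordinate evolution $p_I(t) = e^{t\sum_{i\in I}\mu_i}\,p_I$, using the strict inequality $\sum_{i\in I}\mu_i < \mu_1 + \cdots + \mu_k$ for $I\neq[k]$ (which indeed follows from $\mu_k>\mu_{k+1}$), whereas the paper normalizes a matrix representative $V_0 = \left[\begin{smallmatrix}\I_k\\Y\end{smallmatrix}\right]$ and shows the lower block decays. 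Both hinge on the same exponential-rate gap; yours is cleaner notationally, while the paper's block form gives a concrete formula for $P(t)$ and dovetails with the continuity-of-rank argument it uses for the forward implication. Your substitute for that forward implication (vanishing of $p_{[k]}(t)$ obstructs projective convergence to $U$ outright, since $U$'s $[k]$-coordinate is nonzero) is sound.

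Two small points worth tightening. First, in part \ref{stable_manifold_grassmannian_positivity} the claim ``every $\Delta_I(\widetilde U)$ is real and positive'' requires choosing $\widetilde U$ carefully: since $U\in\Gr_{k,n}^{>0}$ is a real subspace, take a real orthonormal basis (so the $\Delta_I(\widetilde U)$ are real and all of one sign) and flip the sign of a column if necessary; otherwise the $\Delta_I(\widetilde U)$ are only determined up to a common nonzero complex scalar and the asserted positivity of $\det(\widetilde U^{*}V_0)$ becomes mere nonvanishing (which still suffices for the rank conclusion). Second, the equivalence $p_{[k]}\neq 0 \iff \rank(\limitproj{N}{k}P)=k$ is correct but deserves a sentence: $\rank(\limitproj{N}{k}P)=k$ is equivalent to $V_0\cap U^{\perp}=\{0\}$, and in the eigenbasis $U^\perp=\operatorname{span}(e_{k+1},\dots,e_n)$, so this is exactly nonsingularity of the top $k\times k$ block of a matrix representing $V_0$, i.e.\ $p_{[k]}\neq 0$.
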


\begin{proof}
\ref{stable_manifold_grassmannian_condition} Let $\ii P\in\Orbit_{\bfomega{k}}$, and let $\ii P(t)$ evolve according to the gradient flow on $\Orbit_{\bfomega{k}}$ with respect to $N$ in the K\"{a}hler metric (with $P(0) = P$). We must show that $\lim_{t\to\infty}P(t) = \limitproj{N}{k}$ if and only if $\rank(\limitproj{N}{k}P) = k$.

For the forward direction, note that $t\mapsto\rank(\limitproj{N}{k}P(t))$ is a continuous function of $t$, and hence it is constant. If $\lim_{t\to\infty}P(t) = \limitproj{N}{k}$, then this function is identically equal to $k$; taking $t=0$ gives $\rank(\limitproj{N}{k}P) = k$.

Conversely, suppose that $\rank(\limitproj{N}{k}P) = k$. Let us work in an orthonormal basis of eigenvectors of $\ii N$ corresponding to the eigenvalues $\mu_1 \ge \dots \ge \mu_n$, so that $\ii N = \Diag{\bfmu}$ and $\limitproj{N}{k} = \scalebox{0.8}{$\begin{bmatrix}\I_k & 0 \\ 0 & 0\end{bmatrix}$}$.
By \cref{Gr_to_orbit}, we can write $P = \Proj{V}$ for some $V\in\Gr_{k,n}(\mathbb{C})$, which we regard as an $n\times k$ matrix. Write
$$
V = \begin{bmatrix}X \\ Y\end{bmatrix}, \quad \text{ where $X$ is $k\times k$ and $Y$ is $(n-k)\times k$}.
$$
For the moment, suppose that the columns of $V$ are orthonormal. Then
$$
P = VV^* = \begin{bmatrix}XX^* & XY^* \\[2pt] YX^* & YY^*\end{bmatrix}.
$$
Since $\rank(\limitproj{N}{k}P) = k$, we have $\rank(X) = k$. After multiplying $V$ on the right by $X^{-1}$, we may assume that $X = \I_k$.

By \eqref{gradient_flow_kahler_equation}, we have $P(t) = \Proj{V(t)} = V(t)(\adjoint{V(t)}V(t))^{-1}\adjoint{V(t)}$, where $V(t) := \exp(t\ii N)V$. Note that $\exp(t\ii N) = \Diag{e^{t\mu_1}, \dots, e^{t\mu_n}}$, so we may regard $V(t)$ as the $n\times k$ matrix
$$
V(t) = \begin{bmatrix}
\I_k \\
\Diag{e^{t\mu_{k+1}}, \dots, e^{t\mu_n}}Y\Diag{e^{-t\mu_1}, \dots, e^{-t\mu_k}}
\end{bmatrix}.
$$
Since $\mu_k > \mu_{k+1}$, we have $\Diag{e^{t\mu_{k+1}}, \dots, e^{t\mu_n}}Y\Diag{e^{-t\mu_1}, \dots, e^{-t\mu_k}}\to 0$ as $t\to\infty$. Therefore
$$
\lim_{t\to\infty}P(t) = \begin{bmatrix}\I_k & 0 \\ 0 & 0\end{bmatrix} = \limitproj{N}{k}.
$$

\ref{stable_manifold_grassmannian_positivity} Suppose that $\ii\limitproj{N}{k}\in\Orbit_{\bfomega{k}}^{>0}$. Given $\ii P\in\Orbit_{\bfomega{k}}^{\ge 0}$, we must show that $\ii P\in\stableorbit{N}{\bfomega{k}}$. By part \ref{stable_manifold_grassmannian_condition}, it is equivalent to show that $\rank(\limitproj{N}{k}P) = k$. Recall that $\limitproj{N}{k}$ and $P$ have rank $k$, so $\limitproj{N}{k}P$ has rank at most $k$. Conversely, by \cref{projection_tnn_description}, all $k\times k$ minors of $\limitproj{N}{k}$ are real and positive; also, all $k\times k$ minors of $P$ are real and nonnegative, and at least one such minor is positive. Therefore by the Cauchy--Binet identity \eqref{cauchy-binet}, $\limitproj{N}{k}P$ has a positive $k\times k$ minor, so its rank is at least $k$.
\end{proof}

\begin{rmk}\label{stable_manifold_grassmannian_remark}
We observe that in \cref{stable_manifold_grassmannian}, if $\ii\limitproj{N}{k}\in\Orbit_{\bfomega{k}}^{\ge 0}$, then the stable manifold $\stableorbit{N}{\bfomega{k}}$ contains $\Orbit_{\bfomega{k}}^{>0}$; the proof is similar to that of part \ref{stable_manifold_grassmannian_positivity}. Furthermore, if $\ii\limitproj{N}{k}\in\Orbit_{\bfomega{k}}^{\ge 0}\setminus\Orbit_{\bfomega{k}}^{>0}$, then there exists a point in $\Orbit_{\bfomega{k}}^{\ge 0}\setminus\Orbit_{\bfomega{k}}^{>0}$ which is not in the stable manifold $\stableorbit{N}{\bfomega{k}}$. Namely, by \cref{projection_tnn_description} and \eqref{projection_maximal_minors}, there exists $J\in\binom{[n]}{k}$ such that $\Delta_{I,J}(\limitproj{N}{k}) = 0$ for all $I\in\binom{[n]}{k}$. Then take $P$ to be orthogonal projection onto the span of $e_i$ for $i\in J$. The only nonzero minor of $P$ is $\Delta_{J,J}(P) = 1$, so $\ii P\in\Orbit_{\bfomega{k}}^{\ge 0}\setminus\Orbit_{\bfomega{k}}^{>0}$ by \cref{projection_tnn_description}. Also, by the Cauchy--Binet identity \eqref{cauchy-binet}, all $k\times k$ minors of $\limitproj{N}{k}P$ are zero. Hence $\ii P\notin\stableorbit{N}{\bfomega{k}}$ by \cref{stable_manifold_grassmannian}\ref{stable_manifold_grassmannian_condition}.
\end{rmk}

\begin{prop}\label{stable_manifold_orbit}
Let $\bflambda\in\mathbb{R}^n$ be weakly decreasing, set $K := \{i \in [n-1] : \lambda_i > \lambda_{i+1}\}$, and let $-N\in\Orbit_{\bfmu}$ such that $\mu_k > \mu_{k+1}$ for all $k\in K$.
\begin{enumerate}[label=(\roman*), leftmargin=*, itemsep=2pt]
\item\label{stable_manifold_orbit_condition} Let $L\in\Orbit_{\bflambda}$, and write $-\ii L = (\sum_{k\in K}(\lambda_k - \lambda_{k+1})P_k) + \lambda_n\I_n$ as in \eqref{projection_sum_formula}. Then
$$
L\in\stableorbit{N}{\bflambda} \quad \iff \quad \rank(\limitproj{N}{k}P_k) = k \; \text{ for all } k\in K.
$$
\item\label{stable_manifold_orbit_positivity} If $\ii\limitproj{N}{k}\in\Orbit_{\bfomega{k}}^{>0}$ for all $k\in K$, then the stable manifold $\stableorbit{N}{\bflambda}$ contains $\Orbit_{\bflambda}^{\ge 0}$.
\end{enumerate}

\end{prop}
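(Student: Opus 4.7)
The plan is to derive \cref{stable_manifold_orbit} from the Grassmannian case \cref{stable_manifold_grassmannian} by exploiting the decomposition \eqref{kahler_flow_decomposition_equation} from \cref{kahler_flow_decomposition}. Recall that if $L(t)$ evolves under the gradient flow on $\Orbit_{\bflambda}$ with respect to $N$ in the K\"{a}hler metric, and we write
\[
-\ii L(t) = \Big(\sum_{k\in K}(\lambda_k - \lambda_{k+1})P_k(t)\Big) + \lambda_n\I_n
\]
as in \eqref{projection_sum_formula}, then each $\ii P_k(t)$ is itself the K\"{a}hler gradient flow on $\Orbit_{\bfomega{k}}$ with respect to $N$, since \eqref{gradient_flow_kahler_equation} acts on $V_k$ alone. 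Thus the flow on $\Orbit_{\bflambda}$ splits into a coordinated collection of flows on the Grassmannian orbits $\Orbit_{\bfomega{k}}$ for $k\in K$, and the target point $\limitorbit{N}{\bflambda}$ in \eqref{limit_decomposition} is precisely the corresponding assembly of the Grassmannian attractors $\ii\limitproj{N}{k}$.

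For part \ref{stable_manifold_orbit_condition}, the key observation is that $L$ is recovered continuously from the tuple $(P_k)_{k\in K}$, and conversely each $P_k$ is a continuous function of $L$ (the spectral projector onto the span of the top $k$ eigenspaces of $-\ii L$, whose eigenvalues have fixed multiplicities by the choice of $K$). Consequently $L(t) \to \limitorbit{N}{\bflambda}$ as $t\to\infty$ if and only if $P_k(t) \to \limitproj{N}{k}$ for every $k\in K$, i.e., $\ii P_k \in \stableorbit{N}{\bfomega{k}}$ for every $k\in K$. Combining this equivalence with \cref{stable_manifold_grassmannian}\ref{stable_manifold_grassmannian_condition} applied to each $\ii P_k$ yields the rank criterion.

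For part \ref{stable_manifold_orbit_positivity}, suppose $L\in\Orbit_{\bflambda}^{\ge 0}$, and let $V\in\PFl{K}{n}^{\ge 0}$ be the flag corresponding to $L$ under the inverse of \eqref{Fl_to_orbit_map} given by \cref{Fl_to_orbit}. For each $k\in K$, the projection \eqref{defn_tnn_Fl_surjections} sends $V$ to $V_k\in\Gr_{k,n}^{\ge 0}$, and then \cref{Gr_to_orbit} gives $\ii P_k = \ii\Proj{V_k} \in \Orbit_{\bfomega{k}}^{\ge 0}$. Under the hypothesis $\ii\limitproj{N}{k}\in\Orbit_{\bfomega{k}}^{>0}$, \cref{stable_manifold_grassmannian}\ref{stable_manifold_grassmannian_positivity} then places $\ii P_k$ in $\stableorbit{N}{\bfomega{k}}$ for each $k\in K$, and part \ref{stable_manifold_orbit_condition} delivers $L\in\stableorbit{N}{\bflambda}$.

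I expect the only delicate point to be justifying that the map $L \mapsto (P_k)_{k\in K}$ is a homeomorphism onto its image, so that convergence of $L(t)$ really is equivalent to convergence of each $P_k(t)$. This reduces to the fact that \eqref{Fl_to_orbit_map} identifies $\Orbit_{\bflambda}$ with $\PFl{K}{n}(\mathbb{C})$, which in turn embeds as a closed submanifold into $\prod_{k\in K}\Gr_{k,n}(\mathbb{C})$ via the forgetful projections \eqref{defn_tnn_Fl_surjections}; composing with \cref{Gr_to_orbit} on each factor gives the required homeomorphism. Once this is in place, everything else is a direct transport of \cref{stable_manifold_grassmannian} through the decomposition.
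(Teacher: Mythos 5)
Your proposal is correct and takes essentially the same approach as the paper: decompose $L$ via \eqref{kahler_flow_decomposition_equation}, use that the K\"{a}hler flow on $\Orbit_{\bflambda}$ projects to the K\"{a}hler flows on each $\Orbit_{\bfomega{k}}$, and reduce to \cref{stable_manifold_grassmannian}. You spell out explicitly the continuity/homeomorphism justification for the equivalence $L\in\stableorbit{N}{\bflambda} \iff \ii P_k\in\stableorbit{N}{\bfomega{k}}$ for all $k\in K$, which the paper leaves implicit in its citation of ``the observations of \cref{kahler_flow_decomposition}.''
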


\begin{proof}
By the observations of \cref{kahler_flow_decomposition}, we see that
$$
L\in\stableorbit{N}{\bflambda} \quad \iff \quad \ii P_k \in \stableorbit{N}{\bfomega{k}} \text{ for all } k \in K.
$$
Also, by \eqref{defn_tnn_Fl_surjections}, if $L\in\Orbit_{\bflambda}^{\ge 0}$, then $\ii P_k\in\Orbit_{\bfomega{k}}^{\ge 0}$ for all $k\in K$. The results then follow from \cref{stable_manifold_grassmannian}.
\end{proof}

\begin{rmk}\label{lasalle}
In \cref{stable_manifold_orbit}\ref{stable_manifold_orbit_condition}, we have given an explicit description of the stable manifold $\stableorbit{N}{\bflambda}$. If we only wish to know that $\stableorbit{N}{\bflambda}$ contains $\Orbit_{\bflambda}^{\ge 0}$ when the gradient flow on $\Orbit_{\bflambda}$ with respect to $N$ in the K\"{a}hler metric strictly preserves positivity, then the following alternative proof suffices. Let $S\subseteq\Orbit_{\bflambda}$ denote the complement of the set of equilibrium points other than $\limitorbit{N}{\bflambda}$. By \cref{positivity_preserving_kahler_grassmannian}, we can argue (e.g.\ using Perron--Frobenius theory) that $S$ contains $\Orbit_{\bflambda}^{\ge 0}$. Then LaSalle's invariance principle \cite[Section 9.2]{hirsch_smale_devaney13} along with \cref{kahler_lyapunov} imply that $\stableorbit{N}{\bflambda}$ contains $\Orbit_{\bflambda}^{\ge 0}$.
\end{rmk}

\subsection{Lyapunov function}\label{sec_lyapunov}
In this subsection, we show that $-\kappa(\cdot,N)$ is a {\itshape Lyapunov function} for $\limitorbit{N}{\bflambda}$, for the gradient flow on $\Orbit_{\bflambda}$ with respect to $N$ in the K\"{a}hler metric. While this essentially follows from the fact that the flow is the gradient flow of the function $\kappa(\cdot,N)$, we also give an elementary direct proof using the explicit description of the flow in \cref{kahler_flow_decomposition}. We refer to \cite[Section 9]{hirsch_smale_devaney13} and \cite[Section 4.3]{abraham_marsden_ratiu88} for further background on Lyapunov stability theory.
\begin{defn}\label{defn_lyapunov}
Consider a flow defined on a differentiable manifold $R$, and let $M\in R$ be an equilibrium point. A {\itshape strict Lyapunov function} for $M$ is a differentiable function $V : S \to \mathbb{R}$, where $S\subseteq R$ is an open subset containing $M$, satisfying the following two properties:
\begin{enumerate}[label={(L\arabic*)}, leftmargin=36pt, itemsep=2pt]
\item\label{defn_lyapunov_minimum} $V(L) > V(M)$ for all $L\neq M$ in $S$; and
\item\label{defn_lyapunov_decreasing} $\frac{d}{dt}\eval{t=0}V(L(t)) < 0$ for all $L_0\neq M$ in $S$, where $L(t)$ denotes the flow beginning at $L_0$.
\end{enumerate}
The existence of a strict Lyapunov function for the equilibrium point $M$ implies that it is {\itshape asymptotically stable} \cite[Section 9.2]{hirsch_smale_devaney13}.
\end{defn}

We observe that if $S$ has a Riemannian metric $\langle\cdot,\cdot\rangle_{\textnormal{metric}}$, then
$$
\textstyle\frac{d}{dt}\eval{t=0}V(L(t)) = \langle\grad(V)(L),\dot{L}(0)\rangle_{\textnormal{metric}}.
$$
In particular, for the gradient flow of the function $-V$, i.e.,
$$
\dot{L}(t) = \grad(-V)(L(t)),
$$
\ref{defn_lyapunov_decreasing} is always satisfied for non-equilibrium points $L_0$. Therefore $V$ is a strict Lyapunov function for $M$ on the stable manifold of $M$ (cf.\ \cite[Section 9.3]{hirsch_smale_devaney13}).

We now prove a slightly stronger statement in the case of gradient flows on $\Orbit_{\bflambda}$ in the K\"{a}hler metric. Our proof of \ref{defn_lyapunov_decreasing} will use the explicit description of the flows, rather than the fact it is gradient. We adapt an argument of Bloch, Brockett, and Ratiu \cite[p.\ 70]{bloch_brockett_ratiu92} for double-bracket flows (i.e.\ gradient flows in the normal metric).
\begin{prop}\label{kahler_lyapunov}
Let $\bflambda\in\mathbb{R}^n$ be weakly decreasing, set $K := \{i \in [n-1] : \lambda_i > \lambda_{i+1}\}$, and let $-N\in\Orbit_{\bfmu}$ such that $\mu_k > \mu_{k+1}$ for all $k\in K$. Consider the gradient flow on $\Orbit_{\bflambda}$ with respect to $N$ in the K\"{a}hler metric, and let $S\subseteq\Orbit_{\bflambda}$ be the complement of the set of equilibrium points other than $\limitorbit{N}{\bflambda}$. (In particular, $S$ contains the stable manifold $\stableorbit{N}{\bflambda}$.) Then
$$
V : S \to \mathbb{R}, \quad L \mapsto -\kappa(L,N)
$$
is a strict Lyapunov function for $\limitorbit{N}{\bflambda}$ on $S$.
\end{prop}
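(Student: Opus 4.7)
I would prove the two defining properties of a strict Lyapunov function separately: (L1) is a static fact about the function $V$, while (L2) is an identity for the gradient flow that adapts the Bloch--Brockett--Ratiu computation from the normal metric to the K\"{a}hler metric. For (L1), it suffices to show that $H := -V = \kappa(\cdot,N)$ attains a unique global maximum on $\Orbit_{\bflambda}$ at $\limitorbit{N}{\bflambda}$. Writing $A := -\ii L$ and $B := \ii N$, which are Hermitian matrices with spectra $\lambda_1 \ge \cdots \ge \lambda_n$ and $\mu_1 \ge \cdots \ge \mu_n$ respectively, one computes
\[
H(L) = \kappa(L,N) = 2n\tr(AB) - 2\tr(A)\tr(B),
\]
and since $\tr(A) = \sum_k \lambda_k$ is constant on the orbit, only $\tr(AB)$ varies with $L$. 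The Von Neumann trace inequality (equivalently, the Hardy--Littlewood--P\'{o}lya rearrangement inequality for Hermitian matrices with prescribed spectra) yields $\tr(AB) \le \sum_k \lambda_k \mu_k$, with equality if and only if $A$ and $B$ can be simultaneously diagonalized with their decreasingly sorted spectra matched. By \eqref{projection_sum_formula}, this equality case corresponds precisely to the decomposition \eqref{limit_decomposition}, i.e.\ to $L = \limitorbit{N}{\bflambda}$; the hypothesis $\mu_k > \mu_{k+1}$ for every $k \in K$ renders the relevant eigenspaces of $B$ unambiguously defined, so the maximizer is unique and (L1) holds throughout $\Orbit_{\bflambda}\setminus\{\limitorbit{N}{\bflambda}\}$, hence on $S$.

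For (L2), I would exploit that by \eqref{defn_gradient_flow_equation} the flow is, by definition, the gradient flow of $H$ in the K\"{a}hler metric. Since $H$ is the restriction of a linear functional on $\uu_n$, its differential at any $L \in \Orbit_{\bflambda}$ satisfies $dH_L(X) = \kappa(X,N)$ for every tangent vector $X$, and the defining property of the K\"{a}hler gradient reads $\langle \grad H(L), X\rangle_{\textnormal{K\"{a}hler}} = \kappa(X,N)$. Combining with $\dot L(0) = \grad H(L_0)$ gives
\[
\frac{d}{dt}\eval{t=0}V(L(t)) = -\kappa(\dot L(0), N) = -\langle \dot L(0), \dot L(0)\rangle_{\textnormal{K\"{a}hler}} \le 0,
\]
with equality if and only if $\dot L(0) = 0$, i.e.\ $L_0$ is an equilibrium. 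Since $S$ excludes every equilibrium other than $\limitorbit{N}{\bflambda}$, equality is ruled out for $L_0 \in S\setminus\{\limitorbit{N}{\bflambda}\}$, establishing (L2). This is the K\"{a}hler-metric analogue of the identity $\dot H = -\kappa([L,N],[L,N])$ that underlies the Bloch--Brockett--Ratiu argument in the normal metric.

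The main obstacle is the uniqueness in the equality case of (L1). Von Neumann's inequality itself is classical, but extracting uniqueness requires handling multiplicities in both $\bflambda$ and $\bfmu$. The key point is that whenever $\lambda_k = \lambda_{k+1}$ the projector $\limitproj{N}{k}$ drops out of \eqref{limit_decomposition}, so only the eigenspaces of $B$ indexed by $k \in K$ enter the maximizer; the hypothesis $\mu_k > \mu_{k+1}$ for $k \in K$ is exactly what makes these eigenspaces well-defined and the maximizer unique.
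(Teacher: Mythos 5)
Your proof is correct, but it takes a genuinely different route from the paper's on both halves of the argument.

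For \ref{defn_lyapunov_minimum}, you invoke von Neumann's trace inequality $\tr(AB)\le\sum_k\lambda_k\mu_k$ and its equality case, whereas the paper instead decomposes $-\ii L=\sum_{k\in K}(\lambda_k-\lambda_{k+1})P_k+\lambda_n\I_n$ as in \eqref{projection_sum_formula}, proves $\tr(\limitproj{N}{k}\ii N)\ge\tr(P_k\ii N)$ separately for each $k\in K$ by diagonalizing $\ii N$ and using that the diagonal of $P_k$ lies in $[0,1]^n$ and sums to $k$ (a Schur--Horn-type estimate), and then sums with weights $\lambda_k-\lambda_{k+1}>0$. Your uniqueness analysis is sound: the hypothesis $\mu_k>\mu_{k+1}$ for $k\in K$ is exactly what pins down each $\limitproj{N}{k}$ and hence the maximizer via \eqref{limit_decomposition}, and the two arguments are morally equivalent (both are majorization arguments), though the paper's is self-contained and tracks the precise term that is strictly positive.

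For \ref{defn_lyapunov_decreasing}, you use the abstract fact that a gradient flow monotonically increases its potential: $\frac{d}{dt}V(L(t))=-\langle\dot L,\dot L\rangle_{\textnormal{K\"ahler}}\le 0$, strict off equilibria. This is precisely the observation the paper makes immediately after \cref{defn_lyapunov}; it suffices to establish the result on $S$, since $S$ is by definition free of equilibria other than $\limitorbit{N}{\bflambda}$. The paper deliberately avoids this route (``our proof of \ref{defn_lyapunov_decreasing} will use the explicit description of the flows, rather than the fact that it is gradient'') and instead computes $\frac{d}{dt}V$ via \cref{kahler_flow_decomposition}, obtaining $\sum_{k\in K}(\lambda_k-\lambda_{k+1})\kappa([\ii P_k,N],[\ii P_k,N])$ by $[\cdot,\cdot]$-invariance of $\kappa$. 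The paper's computational route adapts Bloch--Brockett--Ratiu and has the advantage of producing an explicit expression for the decay rate in terms of the $P_k$'s (useful for estimates such as \cref{boundary_time}); your argument is shorter and more metric-agnostic, and it is implicitly endorsed by the paper's own preliminary remarks. Both are valid proofs of the stated proposition.
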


\begin{proof}
We must verify the two conditions of \cref{defn_lyapunov}. First we consider \ref{defn_lyapunov_minimum}. We claim that in fact $V(L) > V(\limitorbit{N}{\bflambda})$ for all $L\neq\limitorbit{N}{\bflambda}$ in $\Orbit_{\bflambda}$. This essentially follows from a theorem of Schur \cite{schur23} (one direction of the {\itshape Schur--Horn theorem}); we give a detailed argument below.

As in \eqref{projection_sum_formula}, let us write
$$
-\ii L = \Big(\sum_{k\in K}(\lambda_k - \lambda_{k+1})P_k\Big) + \lambda_n\I_n,
$$
where $P_k^2 = P_k = P_k^*$ and $\tr(P_k) = k$. Recall the analogous expansion \eqref{limit_decomposition} of $\limitorbit{N}{\bflambda}$. We begin by proving that
\begin{align}\label{kahler_lyapunov_trace_inequality}
\tr(\limitproj{N}{k}\ii N) \ge \tr(P_k\ii N) \quad \text{ for all } k \in K.
\end{align}
Let us work in an orthonormal basis of eigenvectors of $\ii N$ corresponding to the eigenvalues $\mu_1 \ge \dots \ge \mu_n$, so that $\ii N = \Diag{\bfmu}$ and $\limitproj{N}{k} = \scalebox{0.8}{$\begin{bmatrix}\I_k & 0 \\ 0 & 0\end{bmatrix}$}$. Then \eqref{kahler_lyapunov_trace_inequality} becomes
$$
\mu_1 + \cdots + \mu_k \ge (P_k)_{1,1}\mu_1 + \cdots + (P_k)_{n,n}\mu_n.
$$
By assumption, the diagonal entries of $P_k$ lie in the interval $[0,1]$ and sum to $k$. Therefore we obtain \eqref{kahler_lyapunov_trace_inequality}. Moreover, since $\mu_k > \mu_{k+1}$, the inequality is strict if $P_k \neq \limitproj{N}{k}$; and the latter condition holds for some $k\in K$, because $L\neq\limitorbit{N}{\bflambda}$. Multiplying \eqref{kahler_lyapunov_trace_inequality} by $\lambda_k - \lambda_{k+1}$ and summing over $k$, we obtain $\tr(\limitorbit{N}{\bflambda}N) > \tr(LN)$, which is equivalent to the desired inequality $V(L) > V(\limitorbit{N}{\bflambda})$.

We now prove that \ref{defn_lyapunov_decreasing} holds for $L\neq \limitorbit{N}{\bflambda}$ in $S$. Let us expand $-\ii L(t)$ as in \eqref{kahler_flow_decomposition_equation}. Set $P_k := P_k(0)$ for $k\in K$. Then by \cref{kahler_flow_decomposition}, we have
\begin{multline*}
\textstyle\frac{d}{dt}\eval{t=0}V(L(t)) = -\kappa(\dot{L}(0),N) = -\displaystyle\sum_{k\in K}(\lambda_k - \lambda_{k+1})\kappa(\ii \dot{P}_k(0),N) \\
= -\sum_{k\in K}(\lambda_k - \lambda_{k+1})\kappa([\ii P_k,[\ii P_k,N]],N) = \sum_{k\in K}(\lambda_k - \lambda_{k+1})\kappa([\ii P_k, N], [\ii P_k, N]),
\end{multline*}
where in the last step we used the fact that $\kappa$ is $[\cdot,\cdot]$-invariant.

Since $-\kappa$ is positive semidefinite, we have $\kappa([\ii P_k, N], [\ii P_k, N]) \le 0$ for all $k\in K$. Moreover, since $\dot{L}(0) \neq 0$, we have $\ii \dot{P}_k(0) \neq 0$ for some $k\in K$; then $[\ii P_k, N] \neq 0$, and so $\kappa([\ii P_k, N], [\ii P_k, N]) < 0$. Therefore $\textstyle\frac{d}{dt}\eval{t=0}V(L(t)) < 0$.
\end{proof}

We will need the following consequence of \cref{kahler_lyapunov} in \cref{sec_homeomorphism}:
\begin{cor}\label{boundary_time}
Adopt the notation and assumptions of \cref{kahler_lyapunov}. Let $S_0$ be a compact subset of $S$. Then for any gradient flow $L(t)$ in $\Orbit_{\bflambda}$ which is not the constant flow at $\limitorbit{N}{\bflambda}$, we have $L(t)\notin S_0$ for some $t \le 0$.
\end{cor}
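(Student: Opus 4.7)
The plan is to argue by contradiction: assume the backward trajectory $\{L(t):t\le 0\}$ is entirely contained in the compact set $S_0\subseteq S$, and derive a contradiction from the strict Lyapunov property of $V(L):=-\kappa(L,N)$ established in \cref{kahler_lyapunov}. Since the given flow is not the constant flow at $\limitorbit{N}{\bflambda}$ and lies in $S$ (where the only equilibrium is $\limitorbit{N}{\bflambda}$), the initial point $L(0)$ is not an equilibrium, so by condition \ref{defn_lyapunov_decreasing} the function $t\mapsto V(L(t))$ is strictly decreasing, hence strictly increasing as $t$ decreases from $0$. Because $V$ is continuous on $\Orbit_{\bflambda}$ (which is compact), it is bounded above on $S_0$, so the monotone limit $V_\infty:=\lim_{t\to-\infty}V(L(t))$ exists and satisfies $V_\infty\ge V(L(0))>V(\limitorbit{N}{\bflambda})$ by \ref{defn_lyapunov_minimum}.

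Next I would use compactness of $S_0$ to extract a sequence $t_n\to-\infty$ with $L(t_n)\to L^*\in S_0\subseteq S$; continuity of $V$ gives $V(L^*)=V_\infty>V(\limitorbit{N}{\bflambda})$, so in particular $L^*\neq\limitorbit{N}{\bflambda}$. Let $L^*(s)$ denote the gradient flow starting at $L^*$. By continuous dependence on initial conditions (valid since $\Orbit_{\bflambda}$ is compact and the flow is complete), for any fixed $s\ge 0$ we have $L(t_n+s)\to L^*(s)$. Applying $V$ and taking the limit on both sides of the identity $V(L^*(s))=\lim_n V(L(t_n+s))$, the right-hand side also equals $V_\infty$ since $t_n+s\to-\infty$, so $V(L^*(s))=V_\infty=V(L^*)$ for every $s\ge 0$.

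This forces $V$ to be constant along the forward trajectory from $L^*$, contradicting \ref{defn_lyapunov_decreasing} unless $L^*$ is an equilibrium of the flow. But the only equilibrium lying in $S$ is $\limitorbit{N}{\bflambda}$, and we already ruled out $L^*=\limitorbit{N}{\bflambda}$ above, a contradiction. Therefore the backward trajectory must leave $S_0$ at some time $t\le 0$, as claimed.

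The main technical point to handle carefully will be the passage from the subsequential limit $L^*$ to the conclusion that the whole forward orbit of $L^*$ has $V$-value $V_\infty$; this needs continuous dependence on initial conditions, for which one invokes the fact that the gradient flow on the compact manifold $\Orbit_{\bflambda}$ is complete and smooth. Everything else is a routine monotone-limit argument driven by the two defining properties of a strict Lyapunov function.
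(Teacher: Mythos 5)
Your proof is correct, but it takes a genuinely different route from the paper's. You give a LaSalle-type limit-set argument: you extract a subsequential limit $L^*$ of the backward orbit, use continuous dependence on initial conditions to show $V$ is constant along the forward orbit of $L^*$, and then derive a contradiction from \ref{defn_lyapunov_decreasing} at $L^*$. The paper instead works with a uniform quantitative estimate: it sets $S_1 := V^{-1}([V(L_0),\infty))\cap S_0$, notes $\limitorbit{N}{\bflambda}\notin S_1$, takes $c := \min_{M\in S_1}\bigl(-\frac{d}{dt}\eval{t=0}V(M(t))\bigr) > 0$ by compactness and \ref{defn_lyapunov_decreasing}, and integrates to get $V(L(t))\ge V(L_0)-ct$ for $t\le 0$, which is unbounded on the compact set $S_1$. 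The paper's estimate avoids invoking continuous dependence on initial conditions (which you use in the $L(t_n+s)\to L^*(s)$ step, and which is supplied here by \cref{flow_continuity}), at the cost of needing a uniform lower bound on the dissipation rate; your argument is softer and closer to the standard LaSalle invariance principle, which the paper itself mentions as an alternative technique in \cref{lasalle}. One small detail you should make explicit: you need $L^*(s)\in S$ in order to apply $V$ and \ref{defn_lyapunov_decreasing} along the forward orbit of $L^*$; this follows because for $s\ge 0$ fixed and $n$ large we have $t_n+s\le 0$, so $L(t_n+s)\in S_0$, and $S_0$ is closed, giving $L^*(s)\in S_0\subseteq S$.
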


\begin{proof}
By \cref{kahler_lyapunov}, $V : S \to \mathbb{R}$ is a strict Lyapunov function for $\limitorbit{N}{\bflambda}$. Let $S_1 := V^{-1}([V(L_0),\infty))\cap S_0$, which is compact since $V$ is continuous. By \ref{defn_lyapunov_decreasing}, for $t \le 0$ we have $V(L(t)) \ge V(L_0)$, so if $L(t)\in S_0$ then $L(t)\in S_1$. Hence it suffices to show that $L(t)\notin S_1$ for some $t \le 0$.

We proceed by contradiction and suppose that $L(t)\in S_1$ for all $t \le 0$. For $M\in\Orbit_{\bflambda}$, let $M(t)\in\Orbit_{\bflambda}$ denote the gradient flow beginning at $M$. Define $c\in\mathbb{R}$ to be the minimum of $-\frac{d}{dt}\eval{t=0}V(M(t))$ over all $M$ in the compact set $S_1$. By \ref{defn_lyapunov_minimum} we have $\limitorbit{N}{\bflambda}\notin S_1$, so \ref{defn_lyapunov_decreasing} implies that $c > 0$. By \ref{defn_lyapunov_decreasing}, we obtain
$$
V(L(t)) \ge V(L_0) - ct \quad \text{ for all } t \le 0.
$$
Therefore $V$ is unbounded on the compact set $S_1$, a contradiction.
\end{proof}

\subsection{Homeomorphism onto a closed ball}\label{sec_homeomorphism}
We now use gradient flows to show that the totally nonnegative part $\Orbit_{\bflambda}^{\ge 0}$ of an adjoint orbit is homeomorphic to a closed ball. As we have mentioned, this result was proved by Galashin, Karp, and Lam \cite{galashin_karp_lam19} in general Lie type, which we rephrase in type $A$ in the orbit language. We adopt the framework of {\itshape contractive flows} developed in \cite[Section 2]{galashin_karp_lam}; the main modification is that we use a Lyapunov function in place of the Euclidean norm employed in \cite{galashin_karp_lam}. We deduce the result about $\Orbit_{\bflambda}^{\ge 0}$ as a consequence of the more general \cref{compact_invariant_ball}, which we will also use to show that the Pl\"{u}cker-nonnegative part of a partial flag variety is homeomorphic to a closed ball (see \cref{plucker_ball}), and to study the topology of amplituhedra (see \cref{sec_amplituhedra_ball}).

We will need a continuity result for gradient flows on $\Orbit_{\bflambda}$, which follows from general principles. In the case relevant to us, namely for the K\"{a}hler metric, it also follows from the explicit formula \eqref{gradient_flow_kahler_equation}.
\begin{lem}[{\cite[Proposition 4.1.17(iii)]{abraham_marsden_ratiu88}}]\label{flow_continuity}
Consider a gradient flow \eqref{defn_gradient_flow_equation} on $\Orbit_{\bflambda}$. For $L\in\Orbit_{\bflambda}$, let $L(t)\in\Orbit_{\bflambda}$ denote the gradient flow beginning at $L$. Then the function
$$
\mathbb{R}\times\Orbit_{\bflambda} \to \Orbit_{\bflambda}, \quad (t,L) \mapsto L(t)
$$
is continuous.
\end{lem}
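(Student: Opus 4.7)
The plan is to give two parallel arguments, one abstract and one concrete, matching the two justifications the authors suggest. Since $\Orbit_{\bflambda}$ is a compact smooth manifold and the gradient flow is defined by a smooth vector field $\grad(H)$ (here $H(M) = \kappa(M,N)$), the result is essentially a direct invocation of standard ODE theory for flows of smooth vector fields on manifolds. I would open the proof by noting that completeness of the flow on all of $\mathbb{R}\times\Orbit_{\bflambda}$ has already been established (via compactness, as stated at the start of \cref{sec_gradient}), and then cite \cite[Proposition 4.1.17(iii)]{abraham_marsden_ratiu88}, which asserts that the flow map $(t,L)\mapsto L(t)$ of a smooth complete vector field is smooth (in particular continuous).

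For completeness, and because the K\"{a}hler metric is the case we actually need, I would then give an elementary direct proof in that setting using \eqref{gradient_flow_kahler_equation_iwasawa}. Fix $L_0\in\Orbit_{\bflambda}$, choose a representative $g_0\in\U_n$ of the corresponding flag $V_0\in\PFl{K}{n}(\mathbb{C})$ under the inverse of \eqref{Fl_to_orbit_map}, and observe that by \cref{iwasawa_decomposition}, in a neighborhood of $g_0$ we may choose such a representative continuously in $L$. Then the explicit formula
\[
g(t) = \Kterm(\exp(t\ii N)\hspace*{1pt}g_0), \qquad L(t) = g(t)(\ii\Diag{\bflambda})g(t)^{-1}
\]
expresses $L(t)$ as a composition of: the matrix exponential $(t,M)\mapsto\exp(tM)$ (which is continuous by \cref{trotter}), matrix multiplication, the continuous projection $\Kterm$ from \cref{defn_iwasawa_projection}, and conjugation. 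Each of these is continuous in its arguments, so $(t,L)\mapsto L(t)$ is continuous on a neighborhood of $(t_0, L_0)$ for any $(t_0, L_0)$; since this holds locally, it holds globally.

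There is no real obstacle here; the only mild subtlety is the non-uniqueness of the representative $g_0$ of $V_0$, but this is handled by working locally and invoking the local diffeomorphism part of \cref{iwasawa_decomposition}. Alternatively one can bypass representatives entirely and use the projection-matrix formula \eqref{kahler_flow_decomposition_formula} of \cref{kahler_flow_decomposition}, which expresses each $\ii P_k(t)$ directly in terms of $V_0$ and $t$ via matrix exponentials, products and inverses of invertible matrices -- all manifestly continuous operations -- giving $L(t)$ via \eqref{kahler_flow_decomposition_equation}.
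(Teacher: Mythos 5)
Your proposal is correct and follows essentially the same route as the paper: the paper gives no displayed proof, but instead attaches the citation \cite[Proposition 4.1.17(iii)]{abraham_marsden_ratiu88} to the lemma and remarks just before it that the statement ``follows from general principles'' and, for the K\"ahler metric, ``also follows from the explicit formula \eqref{gradient_flow_kahler_equation}.'' You simply spell out both of these one-line justifications in detail — the abstract one via completeness plus smoothness of the vector field, and the concrete one via composition of continuous maps in \eqref{gradient_flow_kahler_equation_iwasawa} or \eqref{kahler_flow_decomposition_formula}, handling the choice of representative by a local section and noting that $L(t)$ is independent of that choice. Both elaborations are accurate.
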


\begin{thm}\label{compact_invariant_ball}
Let $\bflambda\in\mathbb{R}^n$ be weakly decreasing, set $K := \{i \in [n-1] : \lambda_i > \lambda_{i+1}\}$, and let $-N\in\Orbit_{\bfmu}$ such that $\mu_k > \mu_{k+1}$ for all $k\in K$. Consider the gradient flow on $\Orbit_{\bflambda}$ with respect to $N$ in the K\"{a}hler metric. Let $S$ be a nonempty compact subset of the stable manifold $\stableorbit{N}{\bflambda}$, and let $\interior{S}$ denote the interior of $S$ inside $\Orbit_{\bflambda}$. Suppose that any flow beginning in $S$ lies in $\interior{S}$ for all positive time. Then $S$ is homeomorphic to a closed ball, $\interior{S}$ is homeomorphic to an open ball, and its boundary $S\setminus\interior{S}$ is homeomorphic to a sphere.
\end{thm}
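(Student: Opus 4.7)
First, I would verify that $\limitorbit{N}{\bflambda}\in S$. The hypothesis gives $L(t)\in\interior{S}\subseteq S$ for every $L\in S$ and $t>0$, while the assumption $S\subseteq\stableorbit{N}{\bflambda}$ guarantees $L(t)\to\limitorbit{N}{\bflambda}$; so the limit lies in the closed set $S$. By Proposition \ref{kahler_lyapunov}, $V(L):=-\kappa(L,N)$ is a strict Lyapunov function for $\limitorbit{N}{\bflambda}$ on the ambient open set of equilibrium-free points, and in particular on $S$: it attains its strict minimum over $S$ uniquely at $\limitorbit{N}{\bflambda}$, and $t\mapsto V(L(t))$ is strictly decreasing along every non-constant trajectory.

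Second, for $L\in S\setminus\{\limitorbit{N}{\bflambda}\}$ I would define the backward exit time
\[
\tau(L):=\inf\big\{t\leq 0:L(s)\in S\text{ for all }s\in[t,0]\big\}.
\]
Corollary \ref{boundary_time} applied to the compact set $S$ (which sits inside the set of Proposition \ref{kahler_lyapunov}, since $\stableorbit{N}{\bflambda}$ contains no equilibria other than $\limitorbit{N}{\bflambda}$) shows $\tau(L)>-\infty$. The positive-time invariance hypothesis forces $L(s)\in\interior{S}$ for all $s\in(\tau(L),0]$, so the \emph{exit point} $\beta(L):=L(\tau(L))$ belongs to $\partial S:=S\setminus\interior{S}$. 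Continuity of the flow (Lemma \ref{flow_continuity}) together with the strict monotonicity of $V$ along each trajectory (which rules out tangential crossings of the Lyapunov level set) implies that $\tau$, and hence $\beta$, is continuous on $S\setminus\{\limitorbit{N}{\bflambda}\}$.

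Third, I would normalize the Lyapunov coordinate: set
\[
r(L):=\frac{V(L)-V(\limitorbit{N}{\bflambda})}{V(\beta(L))-V(\limitorbit{N}{\bflambda})}\in(0,1]\quad\text{for }L\neq\limitorbit{N}{\bflambda}.
\]
For each $(M,s)\in\partial S\times(0,1]$, strict monotonicity of $V$ along the trajectory of $M$ (which remains in $S$ by forward invariance) produces a unique time $t\geq 0$ with $V(M(t))=V(\limitorbit{N}{\bflambda})+s\bigl(V(M)-V(\limitorbit{N}{\bflambda})\bigr)$, giving an explicit continuous inverse to the map $(\beta,r):S\setminus\{\limitorbit{N}{\bflambda}\}\to\partial S\times(0,1]$. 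Thus $(\beta,r)$ is a homeomorphism, and extending it by sending $\limitorbit{N}{\bflambda}$ to the cone apex yields a homeomorphism from $S$ onto the mapping cone $C(\partial S)=(\partial S\times[0,1])/(\partial S\times\{0\})$.

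The main obstacle is the last, purely topological, step: deducing from the cone identification $S\cong C(\partial S)$ and the fact that $\interior S$ is open in the smooth manifold $\Orbit_{\bflambda}$ that $\partial S$ is a topological sphere (hence $S$ a closed ball and $\interior S$ an open ball). I would finish exactly as in \cite[Section 2]{galashin_karp_lam}: the flow lines parametrized by $\beta$ collar $\partial S$ inside $S$, exhibiting $S$ as a compact contractible topological manifold-with-boundary whose boundary is $\partial S$; the generalized topological Poincar\'e conjecture then forces the boundary to be a sphere and $S$ to be a ball. Structurally, once the flow, Lyapunov function, and exit map assembled in the first three paragraphs are in hand, the theorem becomes an instance of the contractive-flow framework of \cite{galashin_karp_lam}, with our Lyapunov function $V$ playing the role of their auxiliary norm.
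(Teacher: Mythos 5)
Your first three steps track the paper's argument closely: your backward exit time $\tau$ and exit map $\beta$ are the paper's $t_\partial$ and its associated map, the cone identification $S\cong C(\partial S)$ is sound, and the continuity of $\tau$ does hold --- though what actually drives it is the openness of $\interior{S}$ and of $\stableorbit{N}{\bflambda}\setminus S$ together with joint continuity of the flow (\cref{flow_continuity}), not the strict monotonicity of $V$, which is a red herring there.

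The gap is in your final step. The paper does not finish via ``collar plus the generalized Poincar\'e conjecture''; instead it applies the Morse lemma to the Lyapunov function $\nu=V-V(\limitorbit{N}{\bflambda})$ near the critical point $\limitorbit{N}{\bflambda}$, showing that for $r$ small the sublevel set $B_r:=\nu^{-1}([0,r])\cap\stableorbit{N}{\bflambda}$ is a closed ball inside $\interior{S}$ with sphere boundary, and then builds a homeomorphism $\alpha:S\to B_r$ by flowing each point for time $t_r(L)-t_\partial(L)$, respecting interiors and boundaries. This step cannot be elided in favour of the Poincar\'e conjecture: the identification $S\cong C(\partial S)$ does not by itself make $\partial S$ a topological manifold, so the hypothesis ``compact contractible manifold-with-boundary'' is not available. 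In particular, $\partial S\times(0,1)\cong\interior{S}\setminus\{\limitorbit{N}{\bflambda}\}$ being a manifold does not force $\partial S$ to be one --- the double-suspension theorem supplies non-manifold $X$ with $X\times\mathbb{R}$ a manifold --- so your ``purely topological'' conclusion is not forced by the cone structure alone. The Morse-lemma ball $B_r$ is precisely what pins down the local structure near the attractor and lets one recognize $\partial S$ as a sphere; replacing your last paragraph with the construction of $B_r$ and the flow homeomorphism $\alpha$ recovers the paper's proof and avoids any appeal to the Poincar\'e conjecture.
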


\begin{proof}
We closely follow the proof of \cite[Lemma 2.3]{galashin_karp_lam}. Let $V : \stableorbit{N}{\bflambda} \to \mathbb{R}$ denote the strict Lyapunov function for $\limitorbit{N}{\bflambda}$ from \cref{kahler_lyapunov}. Define the function
$$
\nu : \stableorbit{N}{\bflambda} \to \mathbb{R}, \quad L \mapsto V(L) - V(\limitorbit{N}{\bflambda}).
$$
In particular, $\nu$ is nonnegative and equals zero precisely at $\limitorbit{N}{\bflambda}$. If $L(t)$ is the gradient flow beginning at any point of $\stableorbit{N}{\bflambda}$ other than $\limitorbit{N}{\bflambda}$, then $\nu(L(t))$ is strictly decreasing as a function of $t$ and approaches $0$ as $t\to\infty$.

For $r > 0$, define $B_r := \norm^{-1}([0,r]) \cap \stableorbit{N}{\bflambda}$. By assumption, $\limitorbit{N}{\bflambda}$ is contained in $S$, and therefore also in $\interior{S}$. By the Morse lemma (cf.\ \cite[Lemma 5.4.9]{abraham_marsden_ratiu88}, \cite{durfee83}), we may take $r$ sufficiently small that $B_r$ is contained in $\interior{S}$ and is homeomorphic to a closed ball, $\interior{B_r} = \norm^{-1}([0,r)) \cap \stableorbit{N}{\bflambda}$ is homeomorphic to an open ball, and $B_r\setminus\interior{B_r} = \norm^{-1}(r) \cap \stableorbit{N}{\bflambda}$ is homeomorphic to a sphere. (In fact, by letting the gradient flow act on $B_r$, we get that $B_r$ is homeomorphic to a closed ball for all $r > 0$, though we will not need to use this.)

We now define two functions $t_r, t_\partial : \stableorbit{N}{\bflambda}\setminus\{\limitorbit{N}{\bflambda}\} \to \mathbb{R}$, as follows. Given $L\in\stableorbit{N}{\bflambda}\setminus\{\limitorbit{N}{\bflambda}\}$, let $L(t)\in\Orbit_{\bflambda}$ denote the gradient flow beginning at $L$. By \cref{boundary_time}, there exists $t_0\in\mathbb{R}$ such that $L(t_0)\notin S$. In particular, $\nu(L(t_0)) > r$. Since $\nu(L(t))$ is strictly decreasing as a function of $t$ and approaches $0$ as $t\to\infty$, there exists a unique $t\in\mathbb{R}$ such that $\nu(L(t)) = r$, which we define to be $t_r(L)$. Now observe that by assumption, we have $L(t)\notin S$ for all $t \le t_0$, and we also have $L(t_r(L))\in S$. Therefore we may define $t_\partial(L) := \inf\{t\in\mathbb{R} : L(t)\in S\}$. Again by assumption, we have $L(t_\partial(L))\in S\setminus\interior{S}$ and $L(t)\in\interior{S}$ for all $t > t_\partial(L)$.

We claim that $t_r$ and $t_\partial$ are continuous functions on $\stableorbit{N}{\bflambda}\setminus\{\limitorbit{N}{\bflambda}\}$. First we prove that $t_r$ is continuous. It suffices to show that given an open interval $I\subseteq\mathbb{R}$, the preimage $t_r^{-1}(I)$ is open. To this end, let $L\in t_r^{-1}(I)$, and let $L(t)\in\Orbit_{\bflambda}$ denote the gradient flow beginning at $L$. Take $t_1,t_2\in I$ such that $t_1 < t_r(L) < t_2$. Let $r_1 := \nu(L(t_1))$ and $r_2 := \nu(L(t_2))$, so that $r_1 > r > r_2$ by \ref{defn_lyapunov_decreasing}. For $M\in\Orbit_{\bflambda}$, let $M(t)\in\Orbit_{\bflambda}$ denote the gradient flow starting at $M$. By \cref{flow_continuity}, the function $M\mapsto \nu(M(t_1))$ is continuous. Hence there exists an open neighborhood $U_1$ of $L$ such that for all $M\in U_1$, we have $\nu(M(t_1)) > r$. Similarly, there exists an open neighborhood $U_2$ of $L$ such that for all $M\in U_2$, we have $r > \nu(M(t_2))$. Let $U := U_1 \cap U_2$, which is an open neighborhood of $L$. Then for all $M\in U$, we have $\nu(M(t_1)) > r > \nu(M(t_2))$, so \ref{defn_lyapunov_decreasing} implies that $t_r(M) \in (t_1, t_2) \subseteq I$. That is, $U\subseteq t_r^{-1}(I)$, and hence $t_r^{-1}(I)$ is open.

Now we prove that $t_\partial$ is continuous, by a similar argument. Let $L\in t_\partial^{-1}(I)$, where $I\subseteq\mathbb{R}$ is an open interval, and take $t_1, t_2 \in I$ such that $t_1 < t_\partial(L) < t_2$. Observe that $L(t_1)\in\stableorbit{N}{\bflambda}\setminus S$ and $L(t_2)\in\interior{S}$, and that both sets $\stableorbit{N}{\bflambda}\setminus S$ and $\interior{S}$ are open. Hence there exists an open neighborhood $U_1$ of $L$ such that for all $M\in U_1$, we have $M(t_1)\in\stableorbit{N}{\bflambda}\setminus S$. Similarly, there exists an open neighborhood $U_2$ of $L$ such that for all $M\in U_2$, we have $M(t_2)\in\interior{S}$. Then $U := U_1\cap U_2$ is an open neighborhood of $L$ contained in $t_\partial^{-1}(I)$. Thus $t_\partial$ is continuous.

We now define maps $\alpha : S \to B_r$ and $\beta : B_r \to S$ as follows. If $L\neq\limitorbit{N}{\bflambda}$, we set
$$
\alpha(L) := L(t_r(L) - t_\partial(L)) \quad \text{ and } \quad \beta(L) := L(t_\partial(L) - t_r(L)),
$$
where $L(t)\in\Orbit_{\bflambda}$ denotes the gradient flow beginning at $L$. We also set $\alpha(\limitorbit{N}{\bflambda}) := \limitorbit{N}{\bflambda}$ and $\beta(\limitorbit{N}{\bflambda}) := \limitorbit{N}{\bflambda}$. We can verify that $\alpha$ and $\beta$ are well-defined, and that they are inverses of each other. Also note that $\alpha(S\setminus S^\circ)\subseteq B_r\setminus B_r^\circ$ and $\beta(B_r\setminus B_r^\circ)\subseteq S\setminus S^\circ$. Thus $\alpha$ restricts to a bijection from $S\setminus S^\circ$ to $B_r\setminus B_r^\circ$, and hence also restricts to a bijection from $S^\circ$ to $B_r^\circ$.

Therefore to complete the proof, it suffices to show that $\alpha$ and $\beta$ are continuous. We prove that $\alpha$ is continuous; because $S$ is compact, this then implies that $\beta = \alpha^{-1}$ is continuous. By \cref{flow_continuity} and since $t_r$ and $t_\partial$ are continuous, we have that $\alpha$ is continuous except possibly at $\limitorbit{N}{\bflambda}$. Now observe that every open neighborhood of $\limitorbit{N}{\bflambda}$ in $B_r$ contains the open subset $\nu^{-1}([0,\varepsilon))$ for some $\varepsilon > 0$. By \ref{defn_lyapunov_decreasing} we have $\alpha(\nu^{-1}([0,\varepsilon)) \subseteq \nu^{-1}([0,\varepsilon))$, so $\alpha$ is continuous at $\limitorbit{N}{\bflambda}$.
\end{proof}

\begin{cor}[{Galashin, Karp, and Lam \cite[Theorem 1]{galashin_karp_lam19}}]\label{orbit_ball}
Let $\bflambda\in\mathbb{R}^n$ be weakly decreasing. Then $\Orbit_{\bflambda}^{\ge 0}$ is homeomorphic to a closed ball, its interior $\Orbit_{\bflambda}^{>0}$ is homeomorphic to an open ball, and its boundary $\Orbit_{\bflambda}^{\ge 0}\setminus\Orbit_{\bflambda}^{>0}$ is homeomorphic to a sphere.
\end{cor}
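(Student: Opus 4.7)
The plan is to deduce this as a direct application of \cref{compact_invariant_ball} with $S = \Orbit_{\bflambda}^{\ge 0}$, provided we can find a suitable $N \in \uu_n$. If $\bflambda$ is constant, then $\Orbit_{\bflambda}^{\ge 0} = \Orbit_{\bflambda} = \{\ii\Diag{\bflambda}\}$ is a single point, which is trivially a zero-dimensional closed ball. So assume $\bflambda$ is nonconstant, and set $K := \{i \in [n-1] : \lambda_i > \lambda_{i+1}\}$.

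The key step is to choose $N \in \uu_n$ so that $\ii N$ is a real tridiagonal matrix with all superdiagonal and subdiagonal entries strictly positive (for instance, diagonal zero and all off-diagonals equal to $1$). Then $\ii N \in \gl_n^{>0}$ by \cref{defn_tnn_intro}. By \cref{positivity_preserving_kahler_full} when $\bflambda$ has at least three distinct values, or by the tridiagonal specialization of \cref{positivity_preserving_kahler_grassmannian} when $\bflambda$ has exactly two distinct values (noting that the associated graph $\Gamma$ is the connected path $1{-}2{-}\cdots{-}n$), the gradient flow on $\Orbit_{\bflambda}$ with respect to $N$ in the K\"{a}hler metric strictly preserves positivity.

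With this choice of $N$, I now verify the hypotheses of \cref{compact_invariant_ball}. Write $-N \in \Orbit_{\bfmu}$. Since strict preservation of positivity holds, \cref{eigenvalue_condition_satisfied} gives $\mu_k > \mu_{k+1}$ for all $k \in K$, and $\ii\limitproj{N}{k} \in \Orbit_{\bfomega{k}}^{>0}$ for all $k \in K$. Then \cref{stable_manifold_orbit}\ref{stable_manifold_orbit_positivity} implies $\Orbit_{\bflambda}^{\ge 0} \subseteq \stableorbit{N}{\bflambda}$. Moreover, $\Orbit_{\bflambda}^{\ge 0}$ is a nonempty compact set (it is the closure of $\Orbit_{\bflambda}^{>0}$ in the compact orbit $\Orbit_{\bflambda}$, and it contains $\ii\Diag{\bflambda}$). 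Strict preservation of positivity means that any flow starting in $\Orbit_{\bflambda}^{\ge 0}$ enters $\Orbit_{\bflambda}^{>0}$ for all $t > 0$, and under the identification of \cref{Fl_to_orbit} with $\PFl{K}{n}^{\ge 0}$, the subset $\Orbit_{\bflambda}^{>0}$ coincides with the relative interior of $\Orbit_{\bflambda}^{\ge 0}$ that is addressed by \cref{compact_invariant_ball} (this is the top cell of the Lusztig stratification and is open in $\Orbit_{\bflambda}^{\ge 0}$ in the appropriate sense). Applying \cref{compact_invariant_ball} then yields the three conclusions simultaneously.

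The main verification to carry out carefully is the last one: matching the notion of ``interior'' used in \cref{compact_invariant_ball} with $\Orbit_{\bflambda}^{>0}$. This is essentially bookkeeping once the dynamical input (strict preservation of positivity for our tridiagonal choice of $N$, plus the Lyapunov function of \cref{kahler_lyapunov}) is in place; the genuine analytic work is hidden inside \cref{compact_invariant_ball}, and the contribution of this corollary is the existence of a single $N$ that simultaneously (i)~lies in an orbit with strictly separated eigenvalues along $K$, (ii)~has its spectral projections totally positive, and (iii)~produces a flow strictly preserving positivity---all three secured by the single condition $\ii N \in \gl_n^{>0}$ together with the structural results of \cref{sec_gradient,sec_stable_manifold}.
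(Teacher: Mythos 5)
Your proposal follows essentially the same route as the paper: apply \cref{compact_invariant_ball} with $S = \Orbit_{\bflambda}^{\ge 0}$, choose $\ii N \in \gl_n^{>0}$ (which is automatically tridiagonal with positive off-diagonals by \cref{defn_tnn_intro}), invoke \cref{positivity_preserving_kahler_grassmannian} or \cref{positivity_preserving_kahler_full} for strict preservation of positivity, and then \cref{eigenvalue_condition_satisfied} and \cref{stable_manifold_orbit}\ref{stable_manifold_orbit_positivity} to verify the remaining hypotheses. The separate treatment of constant $\bflambda$ is harmless but unnecessary (the argument goes through vacuously when $K = \emptyset$), and the point you flag at the end---identifying $\interior{\Orbit_{\bflambda}^{\ge 0}}$ with $\Orbit_{\bflambda}^{>0}$---is indeed needed to read off the conclusion of \cref{compact_invariant_ball} in the form stated, though the paper leaves this implicit as well and it is standard from the Lusztig cell decomposition.
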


The fact that $\Orbit_{\bflambda}^{>0}$ is homeomorphic to an open ball was originally proved by Rietsch \cite[Theorem 2.8]{rietsch99}.

\begin{proof}
We apply \cref{compact_invariant_ball}, taking $S$ to be $\Orbit_{\bflambda}^{\ge 0}$, and taking $-N \in \Orbit_{\bfmu}$ such that the gradient flow on $\Orbit_{\bflambda}$ with respect to $N$ in the K\"{a}hler metric strictly preserves positivity. (For example, we may take $\ii N\in\gl_n^{>0}$, by \cref{positivity_preserving_kahler_grassmannian} and \cref{positivity_preserving_kahler_full}.) Let us verify that the hypotheses of \cref{compact_invariant_ball} are satisfied. Setting $K := \{i \in [n-1] : \lambda_i > \lambda_{i+1}\}$, we have $\mu_k > \mu_{k+1}$ for all $k \in K$ by \cref{eigenvalue_condition_satisfied}. Also, $S$ is compact since it is a closed subset of the compact space $\Orbit_{\bflambda}$, and $S$ is contained in $\stableorbit{N}{\bflambda}$ by \cref{stable_manifold_orbit}\ref{stable_manifold_orbit_positivity} (using \cref{eigenvalue_condition_satisfied}).
\end{proof}

\begin{rmk}
In subsequent work, Galashin, Karp, and Lam \cite[Theorem 1.1]{galashin_karp_lam22} proved the stronger result that the cell decomposition \eqref{cell_decomposition_equation} (as well as its analogue in general Lie type) is a regular CW complex, confirming a conjecture of Williams \cite[Section 7]{williams07}. In particular, the closure of each cell $C_{v,w}$ is homeomorphic to a closed ball, and its boundary is homeomorphic to a sphere. The arguments employed in \cite{galashin_karp_lam22} are different than those of \cite{galashin_karp_lam, galashin_karp_lam19}, and in particular do not employ contractive flows. It would be very interesting to find a proof that \eqref{cell_decomposition_equation} is a regular CW complex along the lines of the arguments in this section.
\end{rmk}

Recall the Pl\"{u}cker-nonnegative part $\PFl{K}{n}^{\Delta\ge 0}$ of $\PFl{K}{n}(\mathbb{C})$ from \cref{defn_plucker_positive}. We now use \cref{compact_invariant_ball} to show that $\PFl{K}{n}^{\Delta\ge 0}$ is homeomorphic to a closed ball. We remark that Rietsch \cite[Lemma 5.2]{rietsch98} used a similar construction to show that $\PFl{K}{n}^{\Delta >0}$ is contractible. We will need the following result from \cite{bloch_karp2}:
\begin{lem}[{Bloch and Karp \cite{bloch_karp2}}]\label{plucker_positive_interior}
Let $K\subseteq [n-1]$. Then $\PFl{K}{n}^{\Delta > 0}$ is the interior of $\PFl{K}{n}^{\Delta \ge 0}$.
\end{lem}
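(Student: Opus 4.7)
The inclusion $\PFl{K}{n}^{\Delta > 0} \subseteq \operatorname{int}(\PFl{K}{n}^{\Delta\ge 0})$ is immediate: $\PFl{K}{n}^{\Delta > 0}$ is the preimage under the continuous Pl\"ucker embedding \eqref{plucker_embedding} of the open product of positive orthants $\prod_{k\in K}\mathbb{P}^{\binom{n}{k}-1}_{>0}$, hence open in $\PFl{K}{n}(\mathbb{C})$. For the reverse inclusion, suppose $V \in \PFl{K}{n}^{\Delta\ge 0}\setminus\PFl{K}{n}^{\Delta > 0}$, so that $\Delta_{I_0}(V_{k_0}) = 0$ for some $k_0 \in K$ and $I_0\in\binom{[n]}{k_0}$. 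The plan is to exhibit, in every neighborhood of $V$, a flag $V'$ with $\Delta_{I_0}(V'_{k_0}) < 0$; such a $V'$ lies outside $\PFl{K}{n}^{\Delta\ge 0}$, so $V$ cannot be interior.

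First I would represent $V$ by $g_0 \in \GL_n(\mathbb{R})$ and consider the perturbation $g(t) := (\I_n + tX)g_0$ for $X\in\gl_n(\mathbb{R})$. By the Cauchy--Binet identity \eqref{cauchy-binet},
\[
\Delta_{I_0}(g(t)_{[n],[k_0]}) = \sum_{J\in\binom{[n]}{k_0}} \Delta_{I_0,J}(\I_n + tX)\,\Delta_J(g_0).
\]
Since $\Delta_{I_0,J}(\I_n + tX)$ vanishes to order $|I_0 \triangle J|/2$ in $t$, and in the first nontrivial case $|I_0 \triangle J| = 2$ reduces up to sign to $tX_{i_\ast,j_\ast}$ with $\{i_\ast\} = I_0\setminus J$ and $\{j_\ast\} = J\setminus I_0$, the coefficient of $t$ takes the form
\[
L(X) = \sum_{i\in I_0,\, j\notin I_0} \pm\, X_{i,j}\,\Delta_{(I_0\setminus\{i\})\cup\{j\}}(V_{k_0}).
\]
Whenever some ``neighbor'' $\Delta_{(I_0\setminus\{i\})\cup\{j\}}(V_{k_0})$ is positive, $L$ is a nonzero linear functional of $X$, and I can pick the sign of the corresponding $X_{i,j}$ so that $L(X) < 0$; then $\Delta_{I_0}(g(t)) < 0$ for all small $t > 0$, delivering the desired $V'$.

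The main obstacle will be the degenerate case in which every such neighbor vanishes at $V$, so that $L \equiv 0$ and one must analyze higher-order terms in $t$. The key point is that the zero locus of $\Delta_{I_0}$ on $\PFl{K}{n}(\mathbb{C})$ is an irreducible divisor, namely the pullback of a Schubert divisor under the projection $\PFl{K}{n}(\mathbb{C})\twoheadrightarrow\Gr_{k_0,n}(\mathbb{C})$ of \eqref{defn_Fl_surjection}. At any real smooth point of this divisor the gradient of $\Delta_{I_0}$ is nonzero, so $\Delta_{I_0}$ changes sign transversally. Because the smooth locus is Zariski-open and dense in the divisor, every singular boundary point $V$ is a limit of real smooth boundary points; combining the sign-change statement at smooth points with a diagonal approximation argument produces points with $\Delta_{I_0}(V'_{k_0})<0$ arbitrarily close to $V$. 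This completes the argument that $\operatorname{int}(\PFl{K}{n}^{\Delta\ge 0}) = \PFl{K}{n}^{\Delta > 0}$.
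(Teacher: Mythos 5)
The easy inclusion is fine in substance, but you should say ``open in $\PFl{K}{n}(\mathbb{R})$'' rather than $\PFl{K}{n}(\mathbb{C})$: since $\mathbb{P}^m_{>0}\subset\mathbb{P}^m(\mathbb{R})$ is nowhere dense in $\mathbb{P}^m(\mathbb{C})$, the preimage has empty interior in the complex flag variety, and the interior in the statement must be taken in the real points. The genuine gap is in your degenerate case. You infer from Zariski density of the smooth locus of $\{\Delta_{I_0}=0\}$ that every real singular boundary point is a limit of \emph{real} smooth boundary points. That implication is false for real algebraic varieties in general: for the Whitney umbrella $\{x^2=y^2z\}$ the real smooth locus sits in $\{z\ge 0\}$, so points on the handle $\{x=y=0,\ z<0\}$ are real singular points not approachable by real smooth points. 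To repair the step you would need a direct argument that, near $V$, the real smooth points of the pulled-back Schubert divisor $\{\dim(W_{k_0}\cap U)\ge 1\}$ are dense among its real points --- true here because one can perturb a real $k_0$-plane meeting the real $(n-k_0)$-plane $U$ in dimension $d\ge 2$ to a nearby real $k_0$-plane meeting $U$ in dimension exactly $1$ --- and you would also need to record that the divisor of $\Delta_{I_0}$ is reduced so the sign actually changes transversally. None of this is supplied, and it is where the work really is.

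There is a shorter route that avoids the whole stratification analysis. By the Cauchy--Binet identity \eqref{cauchy-binet} and \cref{plucker_positive_projection} one has $g\cdot\PFl{K}{n}^{\Delta\ge 0}\subseteq\PFl{K}{n}^{\Delta>0}$ for every $g\in\GL_n^{>0}$, by exactly the computation proving \cref{matrices_acting_on_flags}. Now suppose $V\in\PFl{K}{n}^{\Delta\ge 0}\setminus\PFl{K}{n}^{\Delta>0}$ were interior. Fix any $M\in\gl_n^{>0}$; since $\exp(-tM)V\to V$ in $\PFl{K}{n}(\mathbb{R})$ as $t\to 0^{+}$, for small $t>0$ we have $\exp(-tM)V\in\PFl{K}{n}^{\Delta\ge 0}$, and then
$$
V = \exp(tM)\cdot\big(\hspace*{-1pt}\exp(-tM)V\big) \in \GL_n^{>0}\cdot\PFl{K}{n}^{\Delta\ge 0} \subseteq \PFl{K}{n}^{\Delta>0},
$$
contradicting the choice of $V$. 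This one-line semigroup contraction replaces the entire transversality-plus-density analysis, and is almost certainly what the cited reference does.
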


\begin{cor}\label{plucker_ball}
Let $K\subseteq [n-1]$. Then $\PFl{K}{n}^{\Delta\ge 0}$ is homeomorphic to a closed ball, its interior $\PFl{K}{n}^{\Delta > 0}$ is homeomorphic to an open ball, and its boundary $\PFl{K}{n}^{\Delta\ge 0}\setminus\PFl{K}{n}^{\Delta > 0}$ is homeomorphic to a sphere.
\end{cor}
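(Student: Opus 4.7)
The plan is to apply \cref{compact_invariant_ball} with $S := \PFl{K}{n}^{\Delta\ge 0}$, viewed as a compact subset of an adjoint orbit $\Orbit_{\bflambda}$ via the diffeomorphism \eqref{Fl_to_orbit_map}. First I would fix any weakly decreasing $\bflambda\in\mathbb{R}^n$ with $\{i\in[n-1] : \lambda_i > \lambda_{i+1}\} = K$, and choose $N\in\uu_n$ with $\ii N\in\gl_n^{>0}$, so that $\ii N$ is real and tridiagonal with strictly positive sub- and super-diagonal. Since $\exp(\ii N)\in\GL_n^{>0}$, \cref{gk} yields that $\ii N$ has $n$ distinct real eigenvalues $\mu_1 > \cdots > \mu_n$ and that its top-$k$ eigenflag lies in $\Fl_n^{>0}$; therefore $-N\in\Orbit_{\bfmu}$ with $\mu_k > \mu_{k+1}$ for every $k\in K$, and $\ii\limitproj{N}{k}\in\Orbit_{\bfomega{k}}^{>0}$ for every such $k$ by \cref{Gr_to_orbit}.

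The key step is to show that the gradient flow of $\kappa(\cdot,N)$ on $\Orbit_{\bflambda}$ in the K\"{a}hler metric strictly preserves Pl\"{u}cker nonnegativity, in the sense that it sends $\PFl{K}{n}^{\Delta\ge 0}$ into $\PFl{K}{n}^{\Delta>0}$ at every positive time. The tridiagonal structure of $\ii N$, together with the vanishing of $\ii N_{1,n}$ and $\ii N_{n,1}$, means that the conditions of \cref{positivity_preserving_kahler_grassmannian} for strict preservation are satisfied simultaneously for every $k\in[n-1]$: the parity-dependent sign requirement on $(-1)^{k-1}\ii N_{n,1}$ is trivially met, the off-tridiagonal support condition is forced by tridiagonality, at least $n-1$ of the $n$ relevant inequalities are strict (as $\ii N_{i,i+1}>0$ for $1\le i\le n-1$), and the underlying graph $\Gamma$ is the path on $[n]$ and hence connected. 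Hence the gradient flow on each Grassmannian factor $\Orbit_{\bfomega{k}}$ strictly preserves $\Gr_{k,n}^{\ge 0}$; via the decomposition in \cref{kahler_flow_decomposition} and the characterization in \cref{plucker_positive_projection}, this promotes to the claimed strict preservation of $\PFl{K}{n}^{\Delta\ge 0}$. I expect this compatibility of the parity conditions across all $k\in K$ to be the main subtlety: a less carefully chosen $N$ could satisfy the sign requirement for some $k\in K$ while failing it for others, which is precisely the phenomenon discussed in \cref{plucker_gradient_flows}.

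It then remains to verify the hypotheses of \cref{compact_invariant_ball}. Compactness of $S$ is clear from \cref{defn_plucker_positive}, and $\interior{S} = \PFl{K}{n}^{\Delta>0}$ by \cref{plucker_positive_interior}. For the inclusion $S \subseteq \stableorbit{N}{\bflambda}$ I would apply \cref{stable_manifold_orbit}\ref{stable_manifold_orbit_condition}: given $V\in\PFl{K}{n}^{\Delta\ge 0}$, we have $V_k\in\Gr_{k,n}^{\ge 0}$ for every $k\in K$ by \cref{plucker_positive_projection}, so \eqref{projection_maximal_minors} produces $J\in\binom{[n]}{k}$ with $\Delta_{J,J}(\Proj{V_k})>0$ and $\Delta_{K,J}(\Proj{V_k})\ge 0$ for all $K$; the Cauchy--Binet identity \eqref{cauchy-binet} combined with the positivity of all $k\times k$ minors of $\limitproj{N}{k}$ from \cref{projection_tnn_description}\ref{projection_tnn_description_tp} then yields a positive $k\times k$ minor of $\limitproj{N}{k}\Proj{V_k}$, so $\rank(\limitproj{N}{k}\Proj{V_k})=k$. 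The final hypothesis --- that flows beginning in $S$ enter $\interior{S}$ at every positive time --- is precisely the strict preservation already established, so \cref{compact_invariant_ball} delivers all three homeomorphism statements at once.
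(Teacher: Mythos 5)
Your proposal is correct and follows essentially the same route as the paper: both apply \cref{compact_invariant_ball} to the image $S$ of $\PFl{K}{n}^{\Delta\ge 0}$ under \eqref{Fl_to_orbit_map}, take $\ii N\in\gl_n^{>0}$, use \cref{positivity_preserving_kahler_grassmannian} on each Grassmannian factor $\Orbit_{\bfomega{k}}$ for $k\in K$ together with \cref{plucker_positive_projection} and \cref{kahler_flow_decomposition} to get strict preservation, invoke \cref{plucker_positive_interior} to identify $\interior{S}$, and establish $S\subseteq\stableorbit{N}{\bflambda}$. The only difference is cosmetic: you unroll the Cauchy--Binet rank argument and the Gantmakher--Krein eigenstructure facts explicitly, whereas the paper delegates these to \cref{eigenvalue_condition_satisfied} and \cref{stable_manifold_grassmannian}\ref{stable_manifold_grassmannian_positivity}.
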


\begin{proof}
Take $\bflambda\in\mathbb{R}^n$ weakly decreasing such that $K = \{i\in [n-1] : \lambda_i > \lambda_{i+1}\}$. We apply \cref{compact_invariant_ball}, taking $S\subseteq\Orbit_{\bflambda}$ to be the image of $\PFl{K}{n}^{\Delta\ge 0}$ under \eqref{Fl_to_orbit_map}, and taking $-N \in \Orbit_{\bfmu}$ such that the gradient flow on $\Orbit_{\bfomega{k}}$ with respect to $N$ in the K\"{a}hler metric strictly preserves positivity for all $k\in K$. (For example, we may take $\ii N\in\gl_n^{>0}$, by \cref{positivity_preserving_kahler_grassmannian}.) Let us verify that the hypotheses of \cref{compact_invariant_ball} are satisfied. We have $\mu_k > \mu_{k+1}$ for all $k \in K$, by \cref{eigenvalue_condition_satisfied} applied to $\Orbit_{\bfomega{k}}$. Also, $S$ is compact since it is a closed subset of the compact space $\Orbit_{\bflambda}$, and $S$ is contained in $\stableorbit{N}{\bflambda}$ by applying \cref{stable_manifold_grassmannian}\ref{stable_manifold_grassmannian_positivity} for all $k\in K$ (using \cref{eigenvalue_condition_satisfied}). By \cref{plucker_positive_interior}, $\interior{S}$ is the image of $\PFl{K}{n}^{\Delta >0}$ under \eqref{Fl_to_orbit_map}. Therefore any flow beginning in $S$ remains in $\interior{S}$ for all positive time.  \end{proof}

\begin{rmk}\label{symmetric_to_general}
Recall from \cref{gradient_flow_kahler} that the gradient flow on $\Orbit_{\bflambda}$ with respect to $N$ in the K\"{a}hler metric corresponds to the flow $V(t) = \exp(t\ii N)V_0$ on the $\PFl{K}{n}(\mathbb{C})$. We can consider the same flow on $\PFl{K}{n}(\mathbb{C})$ with $\ii N$ replaced by any $M\in\gl_n(\mathbb{R})$ (not necessarily symmetric), and much of the analysis of this section can be replicated in this case. We do not pursue this here, since it is outside the scope of adjoint orbits.
\end{rmk}

\section{Gradient flows on amplituhedra}\label{sec_amplituhedron}

\noindent In this section we study gradient flows on the {\itshape amplituhedron} $\mathcal{A}_{n,k,m}(Z)$, a subset of the Grassmannian $\Gr_{k,k+m}(\mathbb{C})$ defined in terms of an auxiliary matrix $Z$ (see \cref{defn_amplituhedron}). It generalizes both the totally nonnegative Grassmannian $\Gr_{k,n}^{\ge 0}$ (which we obtain when $k+m = n$) and a cyclic polytope (which we obtain when $k=1$). Amplituhedra were introduced by Arkani-Hamed and Trnka \cite{arkani-hamed_trnka14} in order to give a geometric basis for calculating scattering amplitudes in planar $\mathcal{N}=4$ supersymmetric Yang--Mills theory. The case relevant for physics is when $m=4$, but amplituhedra are interesting mathematical objects for any $m$.

There has been a lot of work studying the geometric properties of amplituhedra, including determining the homeomorphism type. It is expected that $\mathcal{A}_{n,k,m}(Z)$ is homeomorphic to a closed ball of dimension $km$. This is known when $k+m = n$ \cite[Theorem 1.1]{galashin_karp_lam} (since every such amplituhedron is homeomorphic to $\Gr_{k,n}^{\ge 0}$), when $k=1$ (since every convex polytope is homeomorphic to a closed ball), when $m=1$ \cite[Corollary 6.18]{karp_williams19} (cf.\ \cite[Corollary 1.2]{karp_machacek}), for the family of {\itshape cyclically symmetric amplituhedra} \cite[Theorem 1.2]{galashin_karp_lam}, and when $n-k-m = 1$ with $m$ even \cite[Theorem 1.8]{blagojevic_galashin_palic_ziegler19}.

We show that a new family of amplituhedra are also homeomorphic to closed balls, which we call {\itshape twisted Vandermonde amplituhedra} (see \cref{twisted_vandermonde_amplituhedra_ball}). This family includes all amplituhedra with $n-k-m \le 2$ (see \cref{amplituhedra_ball_case_ball}). Our argument is based on the proof of \cite[Theorem 1.2]{galashin_karp_lam}, which uses contractive flows to show that cyclically symmetric amplituhedra are homeomorphic to closed balls. (However, we note that the family of twisted Vandermonde amplituhedra does not include the cyclically symmetric amplituhedra; see \cref{vandermonde_flags_corner_remark} for further discussion.)

\subsection{Background}\label{sec_amplituhedron_background}
We now define amplituhedra.
\begin{defn}\label{defn_amplituhedron}
Let $n,k,m\in\mathbb{N}$ such that $k+m \le n$, and let $Z$ be a complex $(k+m)\times n$ matrix of rank $k+m$. We also regard $Z$ as a linear map $\mathbb{C}^n \to \mathbb{C}^{k+m}$. We introduce the rational map
\begin{align}\label{defn_amplituhedron_projection}
\tilde{Z} : \Gr_{k,n}(\mathbb{C}) \dashrightarrow \Gr_{k,k+m}(\mathbb{C}), \quad V \mapsto \{Z(v) : v\in V\},
\end{align}
which is defined whenever $V\cap\ker(Z) = \{0\}$.

Now suppose that $Z$ is real and its $(k+m)\times (k+m)$ minors are all positive. Then by \cite[Section 4]{arkani-hamed_trnka14} (cf.\ \cite[Section 4]{karp17}), $\tilde{Z}$ is defined on $\Gr_{k,n}^{\ge 0}$. We denote the image $\tilde{Z}(\Gr_{k,n}^{\ge 0})$ by $\mathcal{A}_{n,k,m}(Z)$, called a {\itshape (tree) amplituhedron}.
\end{defn}

In \eqref{defn_amplituhedron_projection}, $V$ is a $k$-dimensional subspace of $\mathbb{C}^n$. If we instead regard $V$ as an $n\times k$ matrix modulo column operations, then $\tilde{Z}(V) = ZV$.

We point out two special cases of \cref{defn_amplituhedron}. First, if $k+m = n$, then up to a linear change of coordinates, we may assume that $Z = \I_n$, so that $\mathcal{A}_{n,k,m}(Z)$ is the totally nonnegative Grassmannian $\Gr_{k,n}^{\ge 0}$. Second, if $k=1$, then it follows from work of Sturmfels \cite{sturmfels88} that $\mathcal{A}_{n,k,m}(Z)$ is an {\itshape alternating polytope} (a special kind of cyclic polytope) in $\mathbb{P}^m(\mathbb{C})$.

\begin{rmk}\label{amplituhedron_generalizations}
We note that \cref{defn_amplituhedron} can be generalized in various ways. The tree amplituhedron $\mathcal{A}_{n,k,m}(Z)$ is related to the tree-level term of the scattering amplitude; there are also {\itshape loop amplituhedra} corresponding to the higher-order terms of the amplitude \cite{arkani-hamed_trnka14} (cf.\ \cref{naive_part}). Alternatively, we can relax the condition that $Z$ has positive $(k+m)\times (k+m)$ minors, or replace $\Gr_{k,n}^{\ge 0}$ by the closure of a cell in its cell decomposition. The corresponding image under $\tilde{Z}$ is called a {\itshape Grassmann polytope}, studied by Lam \cite{lam16}. Yet another generalization is provided by replacing $\Gr_{k,n}(\mathbb{C})$ by an arbitrary partial flag variety $\PFl{K}{n}(\mathbb{C})$, giving the {\itshape flag polytopes} introduced by Arkani-Hamed, Bai, and Lam \cite[Section 6.5]{arkani-hamed_bai_lam17}. While we will focus on the case of tree amplituhedra $\mathcal{A}_{n,k,m}(Z)$, many of the results and techniques in this section apply more generally.
\end{rmk}

\begin{eg}\label{eg_amplituhedron}
Let $n := 4$ and $k+m := 3$, and take $Z$ to be the matrix
$$
Z := \begin{bmatrix}
1 & 0 & 0 & a \\
0 & 1 & 0 & -b \\
0 & 0 & 1 & c
\end{bmatrix}, \quad \text{ where } a, b, c > 0.
$$
Note that the $3\times 3$ minors of $Z$ are all positive, so $Z$ defines an amplituhedron $\mathcal{A}_{4,k,m}(Z) = \tilde{Z}(\Gr_{k,4}^{\ge 0})$. When $k=1$ and $m=2$, the map $\tilde{Z} : \Gr_{1,4}^{\ge 0} \to \Gr_{1,3}(\mathbb{C})$ is given by
\begin{multline*}
(x_0 : x_1 : x_2 : x_3) \in \mathbb{P}^3_{\ge 0} \;\mapsto\; \\
x_0(1 : 0 : 0) + x_1(0 : 1 : 0) + x_2(0 : 0 : 1) + x_3(a : -b : c) \in \mathbb{P}^2(\mathbb{C}),
\end{multline*}
and $\mathcal{A}_{4,1,2}(Z)$ is the quadrilateral in $\mathbb{P}^2(\mathbb{C})$ with vertices $(1 : 0 : 0)$, $(0 : 1 : 0)$, $(0 : 0 : 1)$, and $(a : -b: c)$. When $k=2$ and $m=1$, by work of Karp and Williams \cite[Theorem 6.16]{karp_williams19}, we can identify the amplituhedron $\mathcal{A}_{4,2,1}(Z)$ with the bounded complex of a {\itshape cyclic hyperplane arrangement} of $4$ hyperplanes in $\mathbb{R}^2$.
\end{eg}

\begin{rmk}\label{amplituhedron_orthonormal_basis}
Let $Z$ be a complex $(k+m)\times n$ matrix of rank $k+m$, let $g\in\GL_{k+m}(\mathbb{C})$, and set $Z' := gZ$. Then $\tilde{Z'} = g\tilde{Z}$, so the rational map $\tilde{Z}$ only depends on $\ker(Z)$ (or equivalently, the row span of $Z$), up to a linear change of coordinates on $\mathbb{C}^{k+m}$. In particular, we may assume (as it will turn out to be convenient) that the rows of $Z$ are orthonormal, i.e., $Z\adjoint{Z} = \I_{k+m}$. Further, if $Z$ is real and its $(k+m)\times (k+m)$ minors are all positive, and $g$ is real with $\det(g) > 0$, then $Z'$ is real and its $(k+m)\times (k+m)$ minors are all positive. Therefore the amplituhedron $\mathcal{A}_{n,k,m}(Z) \subseteq \Gr_{k,k+m}(\mathbb{C})$ only depends on $\ker(Z)$, and we may assume that $Z\transpose{Z} = \I_{k+m}$.
\end{rmk}

\subsection{Projecting gradient flows}\label{sec_amplituhedra_gradient_projection}

In this subsection, we determine when the rational map $\tilde{Z} : \Gr_{k,n}(\mathbb{C}) \dashrightarrow \Gr_{k,k+m}(\mathbb{C})$ from \eqref{defn_amplituhedron_projection} projects gradient flows on $\Gr_{k,n}(\mathbb{C})$ in a coherent way, where we identify $\Gr_{k,n}(\mathbb{C})$ and $\Gr_{k,k+m}(\mathbb{C})$ with adjoint orbits via \eqref{Fl_to_orbit_map}. By this, we mean that for any two points $V,W\in\Gr_{k,n}(\mathbb{C})$ such that $\tilde{Z}(V) = \tilde{Z}(W)$, the gradient flows beginning at $V$ and $W$ have the same image under $\tilde{Z}$. It turns out that if this is the case, then up to a linear change of coordinates (cf.\ \cref{amplituhedron_orthonormal_basis}), the projected gradient flows are also gradient flows on $\Gr_{k,k+m}(\mathbb{C})$. Since we are working with Grassmannians, the three metrics discussed in \cref{sec_gradient} are the same up to dilation (see \cref{grassmannian_metrics}). We will find it most convenient to use the description of the gradient flows given in \cref{gradient_flow_kahler}.

We will use the following description of the fibers of $\tilde{Z}$; see \cite[Proposition 3.12]{karp_williams19} for a closely related result.
\begin{lem}\label{amplituhedron_projection_equality}
Let $Z$ be a complex $(k+m)\times n$ matrix of rank $k+m$, and let $V,W\in\Gr_{k,n}(\mathbb{C})$ such that $\tilde{Z}(V)$ and $\tilde{Z}(W)$ are defined. Then
\begin{gather*}
\tilde{Z}(V) = \tilde{Z}(W) \quad \iff \quad V + \ker(Z) = W + \ker(Z).
\end{gather*}

\end{lem}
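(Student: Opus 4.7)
The plan is to prove both directions directly from the definition of $\tilde{Z}$ as a set-theoretic image, using the fact that $\tilde{Z}(V) = Z(V) := \{Z(v) : v \in V\}$ whenever $V \cap \ker(Z) = \{0\}$.

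For the reverse direction ($\Leftarrow$), I would simply apply $Z$ to both sides of $V + \ker(Z) = W + \ker(Z)$. Since $Z$ annihilates $\ker(Z)$, we have $Z(V + \ker(Z)) = Z(V) = \tilde{Z}(V)$ and analogously for $W$, yielding $\tilde{Z}(V) = \tilde{Z}(W)$ immediately.

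For the forward direction ($\Rightarrow$), suppose $\tilde{Z}(V) = \tilde{Z}(W)$, i.e., $Z(V) = Z(W)$. I would show $V \subseteq W + \ker(Z)$, and by symmetry also $W \subseteq V + \ker(Z)$, which gives the equality $V + \ker(Z) = W + \ker(Z)$. To prove $V \subseteq W + \ker(Z)$, pick any $v \in V$; then $Z(v) \in Z(V) = Z(W)$, so there exists $w \in W$ with $Z(w) = Z(v)$, whence $v - w \in \ker(Z)$ and $v = w + (v - w) \in W + \ker(Z)$.

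There is no real obstacle here; the statement is essentially a tautology once one unwinds the set-theoretic definitions, and no dimension count is needed (though as a sanity check, the hypothesis $V \cap \ker(Z) = \{0\}$ forces $\dim(V + \ker(Z)) = k + (n-k-m) = n - m$, matching on both sides). The only care required is to note that the direction $\Leftarrow$ does \emph{not} need the hypothesis that $\tilde{Z}$ is defined at $V$ and $W$ separately; the equality of sums already implies both projections coincide as sets in $\mathbb{C}^{k+m}$.
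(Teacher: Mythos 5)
Your proof is correct and follows essentially the same route as the paper's: the forward direction is identical element-chasing, and your $\Leftarrow$ direction (applying $Z$ to both sides of the subspace equality) is just a slightly more compact phrasing of the paper's pointwise argument. One small caution about your final remark: while the equality of sums does yield $Z(V)=Z(W)$ as subsets of $\mathbb{C}^{k+m}$, one still needs $V\cap\ker(Z)=\{0\}$ and $W\cap\ker(Z)=\{0\}$ to know these are $k$-dimensional and hence bona fide points of $\Gr_{k,k+m}(\mathbb{C})$, which is why the lemma keeps the hypothesis that $\tilde{Z}(V)$ and $\tilde{Z}(W)$ are defined.
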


\begin{proof}
($\Rightarrow$) Suppose that $\tilde{Z}(V) = \tilde{Z}(W)$. We show that $V \subseteq W + \ker(Z)$; we similarly have $W \subseteq V + \ker(Z)$, which implies the result. To this end, let $v\in V$. Then $Zv \in \tilde{Z}(V) = \tilde{Z}(W)$, so $Zv = Zw$ for some $w\in W$. Then $v - w \in \ker(Z)$, so $v \in W + \ker(Z)$.

($\Leftarrow$) Suppose that $V + \ker(Z) = W + \ker(Z)$. We show that $\tilde{Z}(V) \subseteq \tilde{Z}(W)$; we similarly have $\tilde{Z}(W) \subseteq \tilde{Z}(V)$. To this end, let $Z(v)$ be an element of $\tilde{Z}(V)$, where $v\in V$. Then $v = w + x$ for some $w \in W$ and $x\in\ker(Z)$, so $Zv = Zw \in \tilde{Z}(W)$.
\end{proof}

\begin{prop}\label{amplituhedron_projected_flows}
Let $Z$ be a complex $(k+m)\times n$ matrix of rank $k+m$, where $k,m\ge 1$, and let $N\in\uu_n$. Then the following conditions are equivalent.
\begin{enumerate}[label=(\roman*), leftmargin=*, itemsep=2pt]
\item\label{amplituhedron_projected_flows_coherent} The rational map $\tilde{Z} : \Gr_{k,n}(\mathbb{C}) \dashrightarrow \Gr_{k,k+m}(\mathbb{C})$ coherently projects the gradient flows on $\Gr_{k,n}(\mathbb{C})$ with respect to $N$. That is, for all gradient flows $V(t)$ and $W(t)$ in $\Gr_{k,n}(\mathbb{C})$ with respect to $N$ such that $\tilde{Z}(V_0) = \tilde{Z}(W_0)$, we have $\tilde{Z}(V(t)) = \tilde{Z}(W(t))$ for all $t$.
\item\label{amplituhedron_projected_flows_preserving} We have $N(\ker(Z))\subseteq \ker(Z)$.
\item\label{amplituhedron_projected_flows_equation} There exists $M\in\gl_{k+m}(\mathbb{C})$ such that $ZN = MZ$, namely, $M = ZN\adjoint{Z}(Z\adjoint{Z})^{-1}$.
\end{enumerate}

\end{prop}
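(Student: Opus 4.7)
The plan is to prove the three conditions equivalent by first establishing (ii) $\Leftrightarrow$ (iii) from direct linear algebra, and then (i) $\Leftrightarrow$ (ii) by combining the explicit description $V(t) = \exp(t\ii N) V_0$ of the gradient flow from \cref{gradient_flow_kahler} with the characterization of fibers of $\tilde Z$ in \cref{amplituhedron_projection_equality}.

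The implication (iii) $\Rightarrow$ (ii) is immediate: from $ZN = MZ$ and $v \in \ker Z$ we get $Z(Nv) = M(Zv) = 0$, so $Nv \in \ker Z$. For (ii) $\Rightarrow$ (iii), the observation is that $N$ preserving $\ker Z$ lets it descend to a linear endomorphism of $\mathbb{C}^n/\ker Z$, which I will transport along the isomorphism $\mathbb{C}^n/\ker Z \xrightarrow{\cong} \mathbb{C}^{k+m}$ induced by $Z$ (using $\rank Z = k+m$) to obtain $M$ satisfying $ZN = MZ$. I will then extract the explicit formula $M = ZN\adjoint{Z}(Z\adjoint{Z})^{-1}$ by right-multiplying $ZN = MZ$ by $\adjoint{Z}$ and using that $Z\adjoint Z$ is invertible.

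For (ii) $\Rightarrow$ (i), I will use that if $N$ preserves $\ker Z$ then so does $\exp(t\ii N)$ for every $t$. Given $V_0, W_0$ with $\tilde Z(V_0) = \tilde Z(W_0)$, \cref{amplituhedron_projection_equality} gives $V_0 + \ker Z = W_0 + \ker Z$; applying $\exp(t\ii N)$ yields $V(t) + \ker Z = \exp(t\ii N)(V_0 + \ker Z) = \exp(t\ii N)(W_0 + \ker Z) = W(t) + \ker Z$, so $\tilde Z(V(t)) = \tilde Z(W(t))$ by \cref{amplituhedron_projection_equality} again.

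The main obstacle is (i) $\Rightarrow$ (ii): the coherence hypothesis is about $k$-dimensional subspaces, but the conclusion is about the underlying linear map, so some care is needed to bridge this gap. Writing $\pi : \mathbb{C}^n \twoheadrightarrow \mathbb{C}^n/\ker Z$ for the quotient, coherence will translate via \cref{amplituhedron_projection_equality} into the condition that $\pi(\exp(t\ii N) V_0) = \pi(\exp(t\ii N) W_0)$ whenever $\pi(V_0) = \pi(W_0)$. Fixing $x \in \ker Z$, I will compare $V_0 = \spn(v_1, \dots, v_k)$ (chosen generic so that $V_0 \cap \ker Z = 0$) with $W_0 := \spn(v_1 + x, v_2, \dots, v_k)$; after subtracting generators, the coherence conclusion will give $\pi(\exp(t\ii N) x) \in \pi(\exp(t\ii N) V_0)$. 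Letting $V_0$ vary, the projections $\pi(\exp(t\ii N) V_0)$ sweep out an open dense family of $k$-dimensional subspaces inside the $(k+m)$-dimensional space $\mathbb{C}^n/\ker Z$, and the intersection of such a family is trivial since $m \ge 1$. This will force $\pi(\exp(t\ii N) x) = 0$, hence $\exp(t\ii N)(\ker Z) \subseteq \ker Z$ for all $t$; differentiating at $t = 0$ then yields $\ii N(\ker Z) \subseteq \ker Z$, which is (ii).
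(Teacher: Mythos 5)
Your proof is correct and follows essentially the same strategy as the paper's: (ii) $\Leftrightarrow$ (iii) by descending $N$ to the quotient $\mathbb{C}^n/\ker(Z) \cong \mathbb{C}^{k+m}$, (ii) $\Rightarrow$ (i) via \cref{amplituhedron_projection_equality} and invariance of $\ker(Z)$ under $\exp(t\ii N)$, and for (i) $\Rightarrow$ (ii), perturbing a generic $V_0$ by a fixed $x\in\ker(Z)$ to extract $\pi(\exp(t\ii N)x)\in\pi(\exp(t\ii N)V_0)$ and intersecting over enough choices of $V_0$. The one organizational difference is in (i) $\Rightarrow$ (ii): you fix $t$ and vary $V_0$, whereas the paper fixes $V$ and lets $t$ vary, which forces it to additionally show that the set $T$ of times where $\tilde{Z}(\exp(t\ii N)V)$ is undefined is discrete and then close with a continuity argument; your dual version sidesteps that analyticity step and is marginally cleaner. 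When writing it up, do make explicit that for each fixed $t$ the genericity conditions $V_0\cap\ker(Z)=0$, $W_0\cap\ker(Z)=0$, $\exp(t\ii N)V_0\cap\ker(Z)=0$, and $\exp(t\ii N)W_0\cap\ker(Z)=0$ (all needed to invoke \cref{amplituhedron_projection_equality} at time $t$) hold on a dense open set of $V_0$, so that the intersection argument has enough valid subspaces to force $\pi(\exp(t\ii N)x)=0$.
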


We observe that in general, the element $M$ in part \ref{amplituhedron_projected_flows_equation} does not necessarily lie in $\uu_{k+m}$. However, under the assumption $Z\adjoint{Z} = \I_{k+m}$ (cf.\ \cref{amplituhedron_orthonormal_basis}), we have $M = ZN\adjoint{Z} \in \uu_{k+m}$.
\begin{proof}
We use the description of the gradient flow with respect to $N$ from \eqref{gradient_flow_kahler_equation}.

\ref{amplituhedron_projected_flows_coherent} $\Rightarrow$ \ref{amplituhedron_projected_flows_preserving}: Suppose that $\tilde{Z}$ coherently projects the gradient flows with respect to $N$. It suffices to prove that $\exp(t\ii N)\ker(Z) = \ker(Z)$ for all $t\in\mathbb{R}$. We will show that given a nonzero $x\in\ker(Z)$, we have $\exp(t\ii N)x \in \ker(Z)$.

To this end, let $V\in\Gr_{k,n}(\mathbb{C})$ such that $V\cap\ker(Z) = \{0\}$, so that $\tilde{Z}(V)$ is defined. Take $W\in\Gr_{k,n}(\mathbb{C})$ such that $W \subseteq V + \spn(x)$, $W \neq V$, and $x\notin W$. Note that $W\cap\ker(Z) = \{0\}$, so $\tilde{Z}(W)$ is defined, and $v + x \in W$ for some $v\in V$. Also, let $T\subseteq\mathbb{C}$ denote the set of $t\in\mathbb{C}$ such that $\tilde{Z}(\exp(t\ii NV))$ is not defined, i.e., $\exp(t\ii N)V\cap\ker(Z) \neq \{0\}$. Viewing $V$ as an $n\times k$ matrix and $\ker(Z)$ as an $n\times (n-k-m)$ matrix, we see that $T$ is the common zero set of the $(n-m)\times (n-m)$ minors of the concatenation of $\exp(t\ii N)V$ and $\ker(Z)$. Each such minor is an analytic function of $t$, and because $0\notin T$, we get that $T$ is discrete.

Since $W + \ker(Z) \subseteq V + \ker(Z)$, we have $\tilde{Z}(V) = \tilde{Z}(W)$ by \cref{amplituhedron_projection_equality}. Therefore by assumption, we have $\tilde{Z}(\exp(t\ii N)V) = \tilde{Z}(\exp(t\ii N)W)$ for all $t\in\mathbb{R}$. Again by \cref{amplituhedron_projection_equality}, we have $\exp(t\ii N)V + \ker(Z) = \exp(t\ii N)W + \ker(Z)$ for all $t\in\mathbb{R}\setminus T$. Multiplying by $\exp(-t\ii N)$, we get
$$
V + \exp(-t\ii N)\ker(Z) = W + \exp(-t\ii N)\ker(Z) \quad \text{ for all } t\in\mathbb{R}\setminus T.
$$
Since $v + x \in W \subseteq V + \exp(-t\ii N)\ker(Z)$, we obtain
$$
x \in V + \exp(-t\ii N)\ker(Z) \quad \text{ for all } t\in\mathbb{R}\setminus T.
$$

The conclusion above holds for all $V\in\Gr_{k,n}(\mathbb{C})$ such that $V\cap\ker(Z) = \{0\}$; considering $k+1$ generic such $V$, since $m\ge 1$ we obtain
$$
x \in \exp(-t\ii N)\ker(Z) \quad \text{ for all } t\in\mathbb{R} \text{ not contained in some discrete set}.
$$
By continuity, we get $x \in \exp(-t\ii N)\ker(Z)$ for all $t\in\mathbb{R}$, as desired.

\ref{amplituhedron_projected_flows_preserving} $\Rightarrow$ \ref{amplituhedron_projected_flows_equation}: Suppose that $N(\ker(Z))\subseteq \ker(Z)$. Since $Z : \mathbb{C}^n \to \mathbb{C}^{k+m}$ is surjective, we can simply define $M$ by $MZx := ZNx$ for all $x\in\mathbb{C}^n$. This is well-defined because if $Zx = 0$, then $ZNx = 0$. 

\ref{amplituhedron_projected_flows_equation} $\Rightarrow$ \ref{amplituhedron_projected_flows_coherent}: Suppose that there exists $M\in\gl_{k+m}(\mathbb{C})$ such that $ZN = MZ$. Then $Z\exp(t\ii N) = \exp(t\ii M)Z$ for all $t\in\mathbb{R}$, so
\begin{align}\label{amplituhedron_commuting flows}
\tilde{Z}(\exp(t\ii N)V) = \exp(t\ii M)\tilde{Z}(V) \;\text{ in } \Gr_{k,k+m}(\mathbb{C}) \quad \text{ for all } V\in\Gr_{k,n}(\mathbb{C}) \text{ and } t\in\mathbb{R}.
\end{align}
In particular, the projection of the gradient flow with respect to $N$ beginning at $V\in\Gr_{k,n}(\mathbb{C})$ only depends on $\tilde{Z}(V)$.
\end{proof}

\begin{cor}\label{projected_flows_gradient}
Let $Z$ be a complex $(k+m)\times n$ matrix such that $Z\adjoint{Z} = \I_{k+m}$. Let $N\in\uu_n$ such that $N(\ker(Z)) \subseteq \ker(Z)$, and set $M := ZN\adjoint{Z}\in\uu_{k+m}$. Then the rational map $\tilde{Z} : \Gr_{k,n}(\mathbb{C}) \dashrightarrow \Gr_{k,k+m}(\mathbb{C})$ takes gradient flows with respect to $N$ to gradient flows with respect to $M$ (given by \eqref{gradient_flow_kahler_equation} and respecting the parameter $t$).
\end{cor}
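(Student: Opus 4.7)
The plan is to combine \cref{gradient_flow_kahler}, which gives an explicit formula for the K\"{a}hler gradient flow, with the implication \ref{amplituhedron_projected_flows_equation} $\Rightarrow$ \ref{amplituhedron_projected_flows_coherent} of \cref{amplituhedron_projected_flows}. By \cref{gradient_flow_kahler} applied to $\Gr_{k,n}(\mathbb{C})$, the gradient flow $V(t)$ with respect to $N$ beginning at $V_0$ is given by $V(t) = \exp(t\ii N)V_0$, and similarly on $\Gr_{k,k+m}(\mathbb{C})$ the gradient flow with respect to $M$ beginning at $W_0$ is $W(t) = \exp(t\ii M)W_0$.

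The preliminary step is to check that $M = ZN\adjoint{Z}$ lies in $\uu_{k+m}$, which follows from $\adjoint{M} = Z\adjoint{N}\adjoint{Z} = -ZN\adjoint{Z} = -M$, and that $M$ satisfies the intertwining relation $ZN = MZ$. For the latter, the hypothesis $Z\adjoint{Z} = \I_{k+m}$ makes $\adjoint{Z}Z$ the orthogonal projection onto $\ker(Z)^{\perp}$, so every $x\in\mathbb{C}^n$ can be written as $x = \adjoint{Z}Zx + y$ with $y\in\ker(Z)$. Then
\begin{align*}
MZx = ZN\adjoint{Z}Zx = ZN(x-y) = ZNx,
\end{align*}
since $Ny\in\ker(Z)$ by hypothesis.

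From the intertwining relation, we immediately obtain $Z\exp(t\ii N) = \exp(t\ii M)Z$ for all $t\in\mathbb{R}$, either by exponentiating power series term by term or by differentiating both sides at $t = 0$. Applying $\tilde{Z}$ to the flow $V(t) = \exp(t\ii N)V_0$ then yields
\begin{align*}
\tilde{Z}(V(t)) = \tilde{Z}(\exp(t\ii N)V_0) = \exp(t\ii M)\tilde{Z}(V_0),
\end{align*}
which is precisely the K\"{a}hler gradient flow on $\Gr_{k,k+m}(\mathbb{C})$ with respect to $M$, beginning at $\tilde{Z}(V_0)$, with the time parameter preserved. Since the corollary is essentially an immediate assembly of the two preceding propositions, the only point requiring care is pinning down the specific intertwiner $M = ZN\adjoint{Z}$ (rather than some conjugate thereof), which the normalization $Z\adjoint{Z} = \I_{k+m}$ handles cleanly; I foresee no substantive obstacle.
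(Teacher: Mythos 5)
Your argument is correct and takes essentially the same approach as the paper. The paper's proof simply cites the implication \ref{amplituhedron_projected_flows_preserving} $\Rightarrow$ \ref{amplituhedron_projected_flows_equation} of \cref{amplituhedron_projected_flows} together with \eqref{amplituhedron_commuting flows}; your proof unpacks exactly those references, re-deriving the intertwining $ZN = MZ$ (via the observation that $\adjoint{Z}Z$ is orthogonal projection onto $\ker(Z)^{\perp}$) and the exponentiated identity $Z\exp(t\ii N) = \exp(t\ii M)Z$ directly.
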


\begin{proof}
This follows from the implication \ref{amplituhedron_projected_flows_preserving} $\Rightarrow$ \ref{amplituhedron_projected_flows_equation} of \cref{amplituhedron_projected_flows}, along with \eqref{amplituhedron_commuting flows} (which both hold for all $k,m \ge 0$).
\end{proof}

\subsection{Gradient flows preserving amplituhedra}\label{sec_amplituhedra_gradient_preserving}
In this subsection, we show that if $\tilde{Z}$ projects a positivity-preserving gradient flow in a coherent way, then the projected gradient flow preserves the corresponding amplituhedron. In order to state our result precisely, we make the following analogue of \cref{defn_positivity_preserving} for amplituhedra.
\begin{defn}\label{defn_amplituhedron_preserving}
Let $Z$ be a real $(k+m)\times n$ matrix whose $(k+m)\times (k+m)$ minors are all positive, and consider the amplituhedron $\mathcal{A}_{n,k,m}(Z)\subseteq\Gr_{k,k+m}(\mathbb{C})$. We say that a flow $V(t)$ on $\Gr_{k,k+m}(\mathbb{C})$ {\itshape weakly preserves $\mathcal{A}_{n,k,m}(Z)$} if
$$
V(t)\in\mathcal{A}_{n,k,m}(Z) \quad \text{ for all } V_0\in\mathcal{A}_{n,k,m}(Z) \text{ and } t \ge 0,
$$
and {\itshape strictly preserves $\mathcal{A}_{n,k,m}(Z)$} if
$$
V(t)\in\interior{\mathcal{A}_{n,k,m}(Z)} \quad \text{ for all } V_0\in\mathcal{A}_{n,k,m}(Z) \text{ and } t>0,
$$
where $\interior{\mathcal{A}_{n,k,m}(Z)}$ denotes the interior of $\mathcal{A}_{n,k,m}(Z)$.
\end{defn}

We will need the following result of Galashin and Lam \cite{galashin_lam20}:
\begin{lem}[Galashin and Lam {\cite[Lemma 9.4]{galashin_lam20}}]\label{amplituhedron_interior}
Let $Z$ be a real $(k+m)\times n$ matrix whose $(k+m)\times (k+m)$ minors are all positive. Then $\tilde{Z}(V) \in \interior{\mathcal{A}_{n,k,m}(Z)}$ for all $V\in\Gr_{k,n}^{>0}$.
\end{lem}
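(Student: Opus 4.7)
The plan is to prove the lemma by showing that $\tilde{Z}$, regarded as a smooth map of real manifolds on an open neighborhood of $\Gr_{k,n}^{\ge 0}$, is a submersion at every point of its domain. Then openness of $\Gr_{k,n}^{>0}$ inside $\Gr_{k,n}(\mathbb{R})$ will force $\tilde{Z}(\Gr_{k,n}^{>0})$ to be an open subset of $\Gr_{k,k+m}(\mathbb{R})$ contained in $\mathcal{A}_{n,k,m}(Z)$, and therefore contained in $\interior{\mathcal{A}_{n,k,m}(Z)}$.

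First I would recall that $\tilde{Z}$ is defined and smooth precisely at those $V$ satisfying $V\cap\ker(Z) = \{0\}$; this condition is open and, as noted in \cref{defn_amplituhedron}, it holds on all of $\Gr_{k,n}^{\ge 0}$. Next I would compute the differential at such a $V$. Representing $V$ by an $n\times k$ matrix of rank $k$, a tangent vector is encoded by an $n\times k$ matrix $\dot{V}$ modulo right multiplication by $k\times k$ matrices (equivalently, by an element of $\Hom(V, \mathbb{C}^n/V)$), and $d\tilde{Z}_V$ sends this class to the class of $Z\dot{V}$ in $\Hom(ZV, \mathbb{C}^{k+m}/ZV)$. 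Because $Z : \mathbb{C}^n \to \mathbb{C}^{k+m}$ is surjective and $V\cap\ker(Z) = \{0\}$, the induced map $\mathbb{C}^n/V \to \mathbb{C}^{k+m}/ZV$ is surjective, so $d\tilde{Z}_V$ is surjective as well. Hence $\tilde{Z}$ is a submersion on its (real) domain, which has the correct dimension count $k(n-k) \ge km$ since $n\ge k+m$.

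Now I would conclude as follows. The condition that all Pl\"{u}cker coordinates of $V$ have the same (nonzero) sign is open in $\Gr_{k,n}(\mathbb{R})$, so $\Gr_{k,n}^{>0}$ is open in $\Gr_{k,n}(\mathbb{R})$. By the open mapping property of submersions, $\tilde{Z}(\Gr_{k,n}^{>0})$ is open in $\Gr_{k,k+m}(\mathbb{R})$. Since $\tilde{Z}(\Gr_{k,n}^{>0}) \subseteq \tilde{Z}(\Gr_{k,n}^{\ge 0}) = \mathcal{A}_{n,k,m}(Z)$, any open subset of the latter lies in its interior, proving the lemma.

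The proof has essentially no hard step: the only thing to verify carefully is surjectivity of $d\tilde{Z}_V$, and even this follows immediately from the elementary fact that a surjection $\mathbb{C}^n \twoheadrightarrow \mathbb{C}^{k+m}$ induces a surjection $\mathbb{C}^n/V \twoheadrightarrow \mathbb{C}^{k+m}/ZV$ whenever $V$ meets $\ker(Z)$ only at $0$. The subtlety is purely one of framing, namely to work consistently with real points so that the topological interior inside $\Gr_{k,k+m}(\mathbb{R})$ is the relevant notion.
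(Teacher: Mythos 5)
Your proof is correct, and the key step — that $d\tilde{Z}_V$ is surjective at every $V$ with $V\cap\ker(Z)=\{0\}$ — is exactly the right thing to establish. Identifying $T_V\Gr_{k,n}$ with $\Hom(V,\mathbb{R}^n/V)$ and $T_{ZV}\Gr_{k,k+m}$ with $\Hom(ZV,\mathbb{R}^{k+m}/ZV)$, the differential is $\phi\mapsto \bar{Z}\circ\phi\circ(Z|_V)^{-1}$, where $\bar{Z}:\mathbb{R}^n/V\to\mathbb{R}^{k+m}/ZV$ is the induced surjection; since $V$ is free, any $\psi\in\Hom(ZV,\mathbb{R}^{k+m}/ZV)$ lifts to some $\phi$, so the differential is onto. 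From there the conclusion follows because submersions are open maps, $\Gr_{k,n}^{>0}$ is open in $\Gr_{k,n}(\mathbb{R})$, and an open subset of $\mathcal{A}_{n,k,m}(Z)$ lies in its interior (taken in $\Gr_{k,k+m}(\mathbb{R})$, where the amplituhedron is full-dimensional). A useful sanity check, consistent with the paper's \cref{amplituhedron_projection_equality}, is that the fibers of $\tilde{Z}$ have real dimension $k(n-k-m)=k(n-k)-km$, matching the rank deficiency one expects of a submersion.

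One remark: the paper itself does not prove this lemma — it cites Galashin and Lam \cite[Lemma 9.4]{galashin_lam20} — so there is no in-paper argument to compare against. Your submersion argument is a clean, self-contained alternative that uses only the linear-algebraic description of $d\tilde{Z}_V$; it does not rely on the positroid cell decomposition or any combinatorial structure of $\Gr_{k,n}^{\ge 0}$, only on the openness of $\Gr_{k,n}^{>0}$ and the definitional fact that $\tilde{Z}$ is defined on all of $\Gr_{k,n}^{\ge 0}$ when $Z$ has positive maximal minors.
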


As in \cref{sec_amplituhedra_gradient_projection}, we identify the Grassmannians $\Gr_{k,n}(\mathbb{C})$ and $\Gr_{k,k+m}(\mathbb{C})$ with adjoint orbits via \eqref{Fl_to_orbit_map}. We also recall the stable manifold defined in \cref{defn_stable_manifold}.
\begin{prop}\label{amplituhedron_preserving_flows}
Let $Z$ be a real $(k+m)\times n$ matrix whose $(k+m)\times (k+m)$ minors are all positive and such that $Z\transpose{Z} = \I_{k+m}$. Let $N\in\uu_n$ such that $N(\ker(Z)) \subseteq \ker(Z)$, and set $M := ZN\adjoint{Z}\in\uu_{k+m}$.
\begin{enumerate}[label=(\roman*), leftmargin=*, itemsep=2pt]
\item\label{amplituhedron_preserving_flows_weak} If the gradient flow on $\Gr_{k,n}(\mathbb{C})$ with respect to $N$ weakly preserves positivity, then the gradient flow on $\Gr_{k,k+m}(\mathbb{C})$ with respect to $M$ weakly preserves $\mathcal{A}_{n,k,m}(Z)$.
\item\label{amplituhedron_preserving_flows_strict} If the gradient flow on $\Gr_{k,n}(\mathbb{C})$ with respect to $N$ strictly preserves positivity, then the gradient flow on $\Gr_{k,k+m}(\mathbb{C})$ with respect to $M$ strictly preserves $\mathcal{A}_{n,k,m}(Z)$. Moreover, the stable manifold for $M$ inside $\Gr_{k,k+m}(\mathbb{C})$ is well-defined (i.e.\ if $-M\in\Orbit_{\bfmu}$, then $\mu_k > \mu_{k+1}$), and it contains $\mathcal{A}_{n,k,m}(Z)$.
\end{enumerate}

\end{prop}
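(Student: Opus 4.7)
The plan is to transport gradient flows from $\Gr_{k,n}(\mathbb{C})$ to $\Gr_{k,k+m}(\mathbb{C})$ via the rational map $\tilde{Z}$, and then convert the positivity-preservation hypotheses into amplituhedron-preservation conclusions. For part (i), given $V_0 \in \mathcal{A}_{n,k,m}(Z)$, I would write $V_0 = \tilde{Z}(W_0)$ for some $W_0 \in \Gr_{k,n}^{\ge 0}$ and let $W(t)$ be the gradient flow with respect to $N$ starting at $W_0$. The hypothesis gives $W(t) \in \Gr_{k,n}^{\ge 0}$ for all $t \ge 0$, so by \cref{projected_flows_gradient} the curve $V(t) := \tilde{Z}(W(t))$ is the gradient flow with respect to $M$ starting at $V_0$, and it stays in $\tilde{Z}(\Gr_{k,n}^{\ge 0}) = \mathcal{A}_{n,k,m}(Z)$. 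For part (ii), the same transport combined with strict preservation of positivity places $W(t) \in \Gr_{k,n}^{>0}$ for $t > 0$, at which point \cref{amplituhedron_interior} yields $V(t) \in \interior{\mathcal{A}_{n,k,m}(Z)}$.

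The deeper task is to control the stable manifold. Writing $-N \in \Orbit_{\bfnu}$, the strict-preservation hypothesis together with \cref{eigenvalue_condition_satisfied} gives $\nu_k > \nu_{k+1}$ and $\ii\limitproj{N}{k} \in \Orbit_{\bfomega{k}}^{>0}$, and \cref{Gr_to_orbit} then identifies $\limitproj{N}{k}$ with a totally positive $k$-subspace, which must intersect $\ker Z$ trivially by \cref{defn_amplituhedron}. Since $N$ is skew-Hermitian and $N(\ker Z) \subseteq \ker Z$, both $\ker Z$ and $(\ker Z)^\perp$ are $\ii N$-invariant, and every eigenspace $E_\lambda$ of $\ii N$ splits as $(E_\lambda \cap \ker Z) \oplus (E_\lambda \cap (\ker Z)^\perp)$. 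The triviality of $\limitproj{N}{k} \cap \ker Z$ then forces $E_\lambda \subseteq (\ker Z)^\perp$ for every eigenvalue $\lambda \ge \nu_k$. Using $Z\transpose{Z} = \I_{k+m}$, the restriction $Z|_{(\ker Z)^\perp}$ is an isometric isomorphism onto $\mathbb{C}^{k+m}$ that conjugates $N|_{(\ker Z)^\perp}$ to $M$, so the eigenvalues $\mu_1 \ge \cdots \ge \mu_{k+m}$ of $\ii M$ satisfy $\mu_i = \nu_i$ for $1 \le i \le k$ and $\mu_{k+1} \le \nu_{k+1}$. In particular $\mu_k > \mu_{k+1}$, and the top $k$ eigenvectors of $\ii M$ are the $Z$-images of the top $k$ eigenvectors of $\ii N$, giving $\tilde{Z}(\limitproj{N}{k}) = \limitproj{M}{k}$ as $k$-subspaces of $\mathbb{C}^{k+m}$.

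For the containment $\mathcal{A}_{n,k,m}(Z) \subseteq \stableorbit{M}{\bfomega{k}}$, I would take $V = \tilde{Z}(W)$ with $W \in \Gr_{k,n}^{\ge 0}$. Applying \cref{stable_manifold_grassmannian}\ref{stable_manifold_grassmannian_positivity} to $N$ places $W$ in $\stableorbit{N}{\bfomega{k}}$, so the gradient flow $W(t)$ converges to $\limitproj{N}{k}$; then \cref{projected_flows_gradient} together with continuity of $\tilde{Z}$ at $\limitproj{N}{k}$ (valid since $\limitproj{N}{k} \cap \ker Z = \{0\}$) gives $V(t) \to \tilde{Z}(\limitproj{N}{k}) = \limitproj{M}{k}$, whence $V \in \stableorbit{M}{\bfomega{k}}$. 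The main obstacle is the eigenvalue comparison and the identification $\tilde{Z}(\limitproj{N}{k}) = \limitproj{M}{k}$; I expect the cleanest route to be the one outlined, which pivots on the observation that $\limitproj{N}{k} \cap \ker Z = \{0\}$ forces whole $\ii N$-eigenspaces with eigenvalues $\ge \nu_k$ into $(\ker Z)^\perp$, where $Z$ acts as an isomorphism intertwining $N$ and $M$.
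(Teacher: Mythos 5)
Your proof is correct and follows essentially the same route as the paper: transport flows through $\tilde{Z}$ via \cref{projected_flows_gradient}, invoke \cref{amplituhedron_interior} for strict preservation, and compare the spectra of $\ii M$ and $\ii N$ via the $\ii N$-invariant orthogonal splitting $\mathbb{C}^n = \ker Z \oplus (\ker Z)^\perp$ together with $\limitproj{N}{k}\cap\ker Z = \{0\}$. Your write-up is slightly more explicit than the paper's at two places the paper compresses: the eigenspace decomposition $E_\lambda = (E_\lambda\cap\ker Z)\oplus(E_\lambda\cap(\ker Z)^\perp)$ forcing the top $k$ eigenvalues of $\ii M$ and $\ii N$ to coincide, and the final continuity-of-$\tilde{Z}$ argument deducing $V=\tilde{Z}(W)\in\stableorbit{M}{\bfomega{k}}$ from $W\in\stableorbit{N}{\bfomega{k}}$.
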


\begin{proof}
By \cref{projected_flows_gradient} and \cref{amplituhedron_interior}, if the gradient flow on $\Gr_{k,n}(\mathbb{C})$ with respect to $N$ weakly (respectively, strictly) preserves positivity, then the gradient flow on $\Gr_{k,k+m}(\mathbb{C})$ with respect to $M$ weakly (respectively, strictly) preserves $\mathcal{A}_{n,k,m}(Z)$. It remains to show that, assuming the gradient flow on $\Gr_{k,n}(\mathbb{C})$ with respect to $N$ strictly preserves positivity, the stable manifold for $M$ inside $\Gr_{k,k+m}(\mathbb{C})$ contains $\mathcal{A}_{n,k,m}(Z)$. Let $W^\infty\in\Gr_{k,n}^{>0}$ denote the subspace of $\mathbb{C}^n$ spanned by the eigenvectors of $\ii N$ corresponding to its $k$ largest eigenvalues, which is well-defined by \cref{eigenvalue_condition_satisfied}. Let $V^\infty := \tilde{Z}(W^\infty)$, which lies in $\interior{\mathcal{A}_{n,k,m}(Z)}$ by \cref{amplituhedron_interior}. By assumption, the spectrum of $\ii M$ equals the spectrum of $\ii N$ minus the spectrum of $\ii N$ restricted to $\ker(Z)$. Since $\tilde{Z}(W^\infty)$ is defined, we have $W^\infty\cap\ker(Z) = \{0\}$, and so the $k$ largest eigenvalues of $\ii M$ and $\ii N$ coincide. In particular, the stable manifold for the gradient flow on $\Orbit_{\bfomega{k}}$ with respect to $M$ as in \cref{defn_stable_manifold} is well-defined, and the equilibrium point therein corresponds via \eqref{Fl_to_orbit_map} to $V^\infty$. Therefore by \cref{stable_manifold_orbit}\ref{stable_manifold_orbit_positivity}, the stable manifold of $V^\infty$ inside $\Gr_{k,k+m}(\mathbb{C})$ contains $\mathcal{A}_{n,k,m}(Z)$.
\end{proof}

\begin{rmk}\label{amplituhedron_preserving_flows_remark}
In \cref{amplituhedron_preserving_flows}, the simultaneous conditions on $N\in\uu_n$ that $N(\ker(Z)) \subseteq \ker(Z)$ and that the gradient flow on $\Gr_{k,n}(\mathbb{C})$ with respect to $N$ preserves positivity are highly constraining. Rather than relying on the existence of such an $N$, it would be interesting to classify directly those $M\in\uu_{k+m}$ such that the gradient flow on $\Gr_{k,k+m}(\mathbb{C})$ with respect to $M$ preserves $\mathcal{A}_{n,k,m}(Z)$. This may be possible using the intrinsic descriptions of $\mathcal{A}_{n,k,m}(Z)$ conjectured by Arkani-Hamed, Thomas, and Trnka \cite{arkani-hamed_thomas_trnka18} (cf.\ \cite[Section 3.3]{karp_williams19}).
\end{rmk}

\subsection{Amplituhedra homeomorphic to a closed ball}\label{sec_amplituhedra_ball}
We now show that any amplituhedron $\mathcal{A}_{n,k,m}(Z)$ satisfying the hypotheses of \cref{amplituhedron_preserving_flows}\ref{amplituhedron_preserving_flows_strict} is homeomorphic to a closed ball.
\begin{lem}\label{amplituhedron_ball_hypotheses}
Let $n,k,m\in\mathbb{N}$ such that $k+m \le n$, and let $N\in\uu_n$ such that the gradient flow on $\Gr_{k+m,n}(\mathbb{C})$ with respect to $N$ strictly preserves positivity. Let $Z$ be a real $(k+m)\times n$ matrix whose rows form an orthonormal basis for the subspace spanned by the eigenvectors of $\ii N$ corresponding to the $k+m$ largest eigenvalues, so that in particular $Z\transpose{Z} = \I_{k+m}$.
\begin{enumerate}[label=(\roman*), leftmargin=*, itemsep=2pt]
\item\label{amplituhedron_ball_hypotheses_positive} All $(k+m)\times (k+m)$ minors of $Z$ are positive (perhaps after negating a row of $Z$).
\item\label{amplituhedron_ball_hypotheses_compatible} We have $N(\ker(Z)) \subseteq \ker(Z)$.
\end{enumerate}

\end{lem}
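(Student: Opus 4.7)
The plan is to exploit two consequences of the hypothesis that the gradient flow on $\Gr_{k+m,n}(\mathbb{C})$ with respect to $N$ in the K\"{a}hler metric strictly preserves positivity. First, $\ii N$ must be a real symmetric matrix (by \cref{eg_constant_flow}, or directly from \cref{positivity_preserving_kahler_grassmannian}), so the top-$(k+m)$-eigenspace of $\ii N$ admits a real orthonormal basis and the matrix $Z$ in the statement makes sense. Second, applying \cref{eigenvalue_condition_satisfied} to $\Orbit_{\bfomega{k+m}}$ and writing $-N \in \Orbit_{\bfmu}$, we obtain $\mu_{k+m} > \mu_{k+m+1}$, so the top-$(k+m)$-eigenspace $W^\infty$ is unambiguous, and $\ii\limitproj{N}{k+m} \in \Orbit_{\bfomega{k+m}}^{>0}$; by \cref{Gr_to_orbit} this translates to $W^\infty \in \Gr_{k+m,n}^{>0}$. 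By hypothesis the row span of $Z$ is exactly $W^\infty$.

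For \ref{amplituhedron_ball_hypotheses_compatible}, the subspace $W^\infty$ is $\ii N$-invariant by construction, since it is spanned by eigenvectors of $\ii N$. Because $\ii N$ is Hermitian, the Hermitian orthogonal complement of any $\ii N$-invariant subspace is again $\ii N$-invariant. Since $Z\transpose{Z} = \I_{k+m}$ says the rows of $Z$ are orthonormal, we have $\ker(Z) = (W^\infty)^\perp$, which is therefore $\ii N$-invariant, hence $N$-invariant.

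For \ref{amplituhedron_ball_hypotheses_positive}, the containment $W^\infty \in \Gr_{k+m,n}^{>0}$ means that $W^\infty$ admits some real $n\times(k+m)$ matrix representative $A$ all of whose $(k+m)\times(k+m)$ minors are positive. Any two real matrix representatives of the same real subspace of $\mathbb{R}^n$ differ by right multiplication by an element of $\GL_{k+m}(\mathbb{R})$, so $\transpose{Z} = AB$ for some $B\in\GL_{k+m}(\mathbb{R})$. By the Cauchy--Binet identity \eqref{cauchy-binet}, each $(k+m)\times(k+m)$ minor of $\transpose{Z}$ equals $\det(B)$ times the corresponding minor of $A$, so all such minors of $\transpose{Z}$ (equivalently, of $Z$) share a common nonzero sign. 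Negating a single row of $Z$ flips this common sign while preserving realness, orthonormality of rows, and the row span, so we may arrange all $(k+m)\times(k+m)$ minors of $Z$ to be positive.

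I do not anticipate any serious obstacle: the lemma is essentially a dictionary translation between the orbit language of \cref{sec_stable_manifold} and the amplituhedron setup, with both parts following directly from machinery already in place. The only mild subtlety is the global-sign correction in \ref{amplituhedron_ball_hypotheses_positive}, where the orthonormal representative $\transpose{Z}$ is not a priori the distinguished totally positive representative of $W^\infty$ but differs from it by a real invertible factor whose determinant may have either sign.
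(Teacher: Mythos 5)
Your proof is correct and follows essentially the same route as the paper: part \ref{amplituhedron_ball_hypotheses_positive} reduces via \cref{eigenvalue_condition_satisfied} and \cref{Gr_to_orbit} to the row span of $Z$ lying in $\Gr_{k+m,n}^{>0}$ and then applies \cref{tnn_Fl}\ref{tnn_Fl_tp}, and part \ref{amplituhedron_ball_hypotheses_compatible} observes that $\ker(Z)$ is $\ii N$-invariant (the paper phrases this as $\ker(Z)$ being spanned by the remaining eigenvectors of $\ii N$, which for a Hermitian operator is the same as your orthogonal-complement argument). Your Cauchy--Binet sign-tracking step merely makes explicit what the paper leaves implicit in the projective nature of the Pl\"{u}cker coordinates.
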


\begin{proof}
\ref{amplituhedron_ball_hypotheses_positive} By \cref{tnn_Fl}\ref{tnn_Fl_tp}, it suffices to verify that the row span of $Z$ lies in $\Gr_{k+m,n}^{>0}$. This follows from \cref{eigenvalue_condition_satisfied} (which also shows that $Z$ is well-defined).

\ref{amplituhedron_ball_hypotheses_compatible} This follows from the fact that $\ker(Z)$ is spanned by the eigenvectors of $\ii N$ corresponding to the $n-k-m$ smallest eigenvalues.
\end{proof}

\begin{thm}\label{amplituhedra_ball}
Let $n,k,m\in\mathbb{N}$ such that $k+m \le n$, and let $N\in\uu_n$ such that the gradient flows on both $\Gr_{k,n}(\mathbb{C})$ and $\Gr_{k+m,n}(\mathbb{C})$ with respect to $N$ strictly preserve positivity. (Recall that such $N$ are explicitly described by \cref{positivity_preserving_kahler_grassmannian}.) Let $Z$ be any real $(k+m)\times n$ matrix whose rows form a basis for the subspace spanned by the eigenvectors of $\ii N$ corresponding to the $k+m$ largest eigenvalues. Then the amplituhedron $\mathcal{A}_{n,k,m}(Z)$ is well-defined (perhaps after negating a row of $Z$). It is homeomorphic to a closed ball, its interior is homeomorphic to an open ball, and its boundary is homeomorphic to a sphere.
\end{thm}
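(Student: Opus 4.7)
The plan is to combine three previously established tools: the explicit description of K\"{a}hler gradient flows on Grassmannians from \cref{gradient_flow_kahler}, the projection machinery of \cref{amplituhedron_preserving_flows}, and the ball criterion \cref{compact_invariant_ball}. First I would reduce to the case $Z\transpose{Z}=\I_{k+m}$. By \cref{amplituhedron_orthonormal_basis}, the amplituhedron $\mathcal{A}_{n,k,m}(Z)$ depends on $Z$ only through $\ker(Z)$, up to a linear change of coordinates on $\mathbb{C}^{k+m}$. Since such a change of coordinates is a homeomorphism of $\Gr_{k,k+m}(\mathbb{C})$, the homeomorphism type of $\mathcal{A}_{n,k,m}(Z)$ is unaffected, and I may assume that the rows of $Z$ are orthonormal. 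Under this normalization, \cref{amplituhedron_ball_hypotheses} gives precisely the two facts I need: (i) after possibly negating a row, all $(k+m)\times(k+m)$ minors of $Z$ are positive, so $\mathcal{A}_{n,k,m}(Z)$ is well-defined; and (ii) $N(\ker Z)\subseteq\ker Z$, so that $M:=ZN\transpose{Z}\in\uu_{k+m}$ governs the projected dynamics.

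Next I would apply \cref{amplituhedron_preserving_flows}\ref{amplituhedron_preserving_flows_strict}. By hypothesis, the gradient flow on $\Gr_{k,n}(\mathbb{C})$ with respect to $N$ in the K\"{a}hler metric strictly preserves positivity, so that proposition delivers two facts simultaneously: the gradient flow on $\Gr_{k,k+m}(\mathbb{C})$ with respect to $M$ in the K\"{a}hler metric strictly preserves $\mathcal{A}_{n,k,m}(Z)$ (in the sense of \cref{defn_amplituhedron_preserving}), and the stable manifold of the attractor $\limitorbit{M}{\bfomega{k}}$ is well-defined and contains $\mathcal{A}_{n,k,m}(Z)$. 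Here I implicitly use the identification $\Gr_{k,k+m}(\mathbb{C})\cong\Orbit_{\bfomega{k}}$ (with $\bfomega{k}\in\mathbb{R}^{k+m}$) from \cref{Gr_to_orbit}, which sends the gradient flow with respect to $M$ in the K\"{a}hler metric on $\Gr_{k,k+m}(\mathbb{C})$ to the corresponding flow on $\Orbit_{\bfomega{k}}$.

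Finally I would invoke \cref{compact_invariant_ball} with $\bflambda=\bfomega{k}$, $-M\in\Orbit_{\bfmu}$, and $S$ equal to the image of $\mathcal{A}_{n,k,m}(Z)$ inside $\Orbit_{\bfomega{k}}$. To check its hypotheses: $S$ is nonempty and compact because $\mathcal{A}_{n,k,m}(Z)$ is the continuous image under $\tilde{Z}$ of the compact space $\Gr_{k,n}^{\ge 0}$; $S$ is contained in the stable manifold by the previous step; and any flow beginning in $S$ enters $\interior{S}$ for all positive time, which is exactly the strict preservation statement from \cref{amplituhedron_preserving_flows}\ref{amplituhedron_preserving_flows_strict} together with the identification of $\interior{\mathcal{A}_{n,k,m}(Z)}$ afforded by \cref{amplituhedron_interior}. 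The conclusion of \cref{compact_invariant_ball} then yields all three topological assertions of the theorem.

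The main obstacle is essentially bookkeeping: making sure that the three notions of ``interior''---the interior of the amplituhedron as defined in \cref{defn_amplituhedron_preserving}, the image of $\Gr_{k,n}^{>0}$ under $\tilde{Z}$ as in \cref{amplituhedron_interior}, and the interior of $S$ inside $\Orbit_{\bfomega{k}}$ appearing in \cref{compact_invariant_ball}---are compatible, so that the strict-preservation output of \cref{amplituhedron_preserving_flows} feeds into the input of \cref{compact_invariant_ball}. Beyond that the argument is a direct synthesis of already-proved results, and no new estimates or constructions should be required.
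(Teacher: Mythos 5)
Your argument matches the paper's proof precisely: normalize $Z$ via \cref{amplituhedron_orthonormal_basis}, obtain positivity of minors and $N(\ker Z)\subseteq\ker Z$ from \cref{amplituhedron_ball_hypotheses}, feed \cref{amplituhedron_preserving_flows}\ref{amplituhedron_preserving_flows_strict} into \cref{compact_invariant_ball} with $S=\mathcal{A}_{n,k,m}(Z)$ and note compactness as the continuous image of $\Gr_{k,n}^{\ge 0}$. The extra paragraph on reconciling the notions of interior is a reasonable sanity check but does not change the argument.
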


\begin{proof}
By \cref{amplituhedron_orthonormal_basis}, we may assume that the rows of $Z$ are orthonormal. Then by \cref{amplituhedron_ball_hypotheses}\ref{amplituhedron_ball_hypotheses_positive}, all $(k+m)\times (k+m)$ minors of $Z$ are positive (perhaps after negating a row of $Z$), so the amplituhedron $\mathcal{A}_{n,k,m}(Z)$ is well-defined. We also have $N(\ker(Z)) \subseteq \ker(Z)$. Consider the gradient flow on $\Gr_{k,k+m}(\mathbb{C})$ with respect to $M := ZN\transpose{Z}\in\uu_{k+m}$, where we identify $\Gr_{k,k+m}(\mathbb{C})$ with the adjoint orbit $\Orbit_{\bfomega{k}}$ via \eqref{Fl_to_orbit_map}. We apply \cref{compact_invariant_ball}, taking $S$ to be $\mathcal{A}_{n,k,m}(Z)$. The space $S$ is compact because it is the image of the compact space $\Gr_{k,n}^{\ge 0}$ under the continuous map $\tilde{Z}$. The remaining hypotheses of \cref{compact_invariant_ball} follow from \cref{amplituhedron_preserving_flows}\ref{amplituhedron_preserving_flows_strict}.
\end{proof}

\begin{rmk}
While \cref{amplituhedra_ball} applies only to a special subset of amplituhedra, we expect that every amplituhedron is homeomorphic to a closed ball. It would be interesting to determine whether this can be proved using \cref{compact_invariant_ball}, by constructing a contractive gradient flow on an arbitrary amplituhedron, or if only a distinguished subset of amplituhedra admit contractive gradient flows.
\end{rmk}

\subsection{Twisted Vandermonde amplituhedra}\label{sec_twisted_vandermonde_amplituhedra}
We now exhibit an explicit family of matrices $Z$ for which \cref{amplituhedra_ball} implies that the corresponding amplituhedra $\mathcal{A}_{n,k,m}(Z)$ are homeomorphic to closed balls. Our description will use the Vandermonde flags introduced in \cref{sec_tridiagonal_orbit} and the twist map $\twist$ from \cref{sec_twist}.
\begin{defn}\label{defn_twisted_vandermonde_amplituhedra}
Let $n,k,m\in\mathbb{N}$ such that $k+m \le n$. Let $V\in\Vand_n^{>0}$ be a totally positive Vandermonde flag, so that $\twist(V)\in\Fl_n^{>0}$ by \cref{twist_action}. Regarding $\twist(V)$ as a sequence of subspaces of $\mathbb{C}^n$, let $Z$ be a $(k+m)\times n$ real matrix whose rows form a basis for the subspace of dimension $k+m$. By \cref{tnn_Fl}\ref{tnn_Fl_tp}, all $(k+m) \times (k+m)$ minors of $Z$ are positive (perhaps after negating a row of $Z$). We call the corresponding amplituhedron $\mathcal{A}_{n,k,m}(Z)$ a {\itshape twisted Vandermonde amplituhedron}.
\end{defn}

We observe that the definition of $Z$ above depends only on $k+m$, not on $k$ or $m$. Therefore each such $Z$ gives rise to several different twisted Vandermonde amplituhedra.
\begin{eg}\label{eg_twisted_vandermonde_amplituhedra}
We give an example in the case $n := 3$ and $k+m := 2$. As in \cref{eg_tridiagonal_flag}, we consider the Vandermonde flag $\Kry{\bflambda}{x}$, where $\bflambda := (1, 0, -1)$ and $x\in\mathbb{P}^2_{>0}$. Then the twisted flag $\twist(\Kry{\bflambda}{x})\in\Fl_3^{>0}$ is represented by the matrix
$$
\begin{bmatrix}
\frac{x_1}{\sqrt{x_1^2 + x_2^2 + x_3^2}} & \frac{-x_2}{\sqrt{x_1^2 + x_2^2 + x_3^2}} & \frac{x_3}{\sqrt{x_1^2 + x_2^2 + x_3^2}} \\[10pt]
\frac{x_1(x_2^2 + 2x_3^2)}{\sqrt{(x_1^2 + x_2^2 + x_3^2)(x_1^2x_2^2 + 4x_1^2x_3^2 + x_2^2x_3^2)}} & \frac{x_2(x_1^2 - x_3^2)}{\sqrt{(x_1^2 + x_2^2 + x_3^2)(x_1^2x_2^2 + 4x_1^2x_3^2 + x_2^2x_3^2)}} & \frac{-x_3(2x_1^2 + x_2^2)}{\sqrt{(x_1^2 + x_2^2 + x_3^2)(x_1^2x_2^2 + 4x_1^2x_3^2 + x_2^2x_3^2)}} \\[10pt]
\frac{x_2x_3}{\sqrt{x_1^2x_2^2 + 4x_1^2x_3^2 + x_2^2x_3^2}} & \frac{2x_1x_3}{\sqrt{x_1^2x_2^2 + 4x_1^2x_3^2 + x_2^2x_3^2}} & \frac{x_1x_2}{\sqrt{x_1^2x_2^2 + 4x_1^2x_3^2 + x_2^2x_3^2}} \\[8pt]
\end{bmatrix}.
$$
Therefore the associated twisted Vandermonde amplituhedron $\mathcal{A}_{n,k,m}(Z)$ is defined by
\begin{gather*}
Z := \begin{bmatrix}
\frac{x_1}{\sqrt{x_1^2 + x_2^2 + x_3^2}} & \frac{x_1(x_2^2 + 2x_3^2)}{\sqrt{(x_1^2 + x_2^2 + x_3^2)(x_1^2x_2^2 + 4x_1^2x_3^2 + x_2^2x_3^2)}} & \frac{x_2x_3}{\sqrt{x_1^2x_2^2 + 4x_1^2x_3^2 + x_2^2x_3^2}} \\[10pt]
\frac{-x_2}{\sqrt{x_1^2 + x_2^2 + x_3^2}} & \frac{x_2(x_1^2 - x_3^2)}{\sqrt{(x_1^2 + x_2^2 + x_3^2)(x_1^2x_2^2 + 4x_1^2x_3^2 + x_2^2x_3^2)}} & \frac{2x_1x_3}{\sqrt{x_1^2x_2^2 + 4x_1^2x_3^2 + x_2^2x_3^2}} \\[8pt]
\end{bmatrix}.\qedhere
\end{gather*}

\end{eg}

\begin{cor}\label{twisted_vandermonde_amplituhedra_ball}
Every twisted Vandermonde amplituhedron is homeomorphic to a closed ball, its interior is homeomorphic to an open ball, and its boundary is homeomorphic to a sphere.
\end{cor}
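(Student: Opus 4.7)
The plan is to apply \cref{amplituhedra_ball} directly. We must exhibit an $N \in \uu_n$ such that the gradient flows on both $\Gr_{k,n}(\mathbb{C})$ and $\Gr_{k+m,n}(\mathbb{C})$ with respect to $N$ strictly preserve positivity, and such that the $(k+m)$-dimensional subspace of the flag $\twist(V)$ coincides with the span of the eigenvectors of $\ii N$ corresponding to its $k+m$ largest eigenvalues. Once such an $N$ is found, any matrix $Z$ as in \cref{defn_twisted_vandermonde_amplituhedra} also satisfies the hypotheses of \cref{amplituhedra_ball} (up to the row-negation allowed in the latter statement), and the conclusion follows immediately.

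The construction of $N$ proceeds via the Jacobi matrix associated to $\twist(V)$ by \cref{tridiagonal_flag}. Write $V = \Kry{\bflambda}{x}$, let $g \in \U_n^{\ge 0}$ be the canonical representative of $\twist(V)$ provided by \cref{U_to_Fl}\ref{U_to_Fl_isomorphism}, set $L := g \Diag{\bflambda} g^{-1}$, and define $N := -\ii L \in \uu_n$. Since $\ii L \in \Jac_{\bflambda}^{>0}$ by \cref{tridiagonal_flag}, \cref{tridiagonal_orbit} tells us that $\ii N = L$ is real, symmetric, and tridiagonal with $L_{i,i+1} = L_{i+1,i} > 0$ for $1 \le i \le n-1$. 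In particular, $\ii N \in \gl_n^{>0}$.

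To verify that the gradient flows on $\Gr_{k,n}(\mathbb{C})$ and $\Gr_{k+m,n}(\mathbb{C})$ with respect to $N$ strictly preserve positivity, I would apply \cref{positivity_preserving_kahler_grassmannian} in each of its three cases. In all cases, tridiagonality of $\ii N$ with positive off-diagonal entries makes the required sign and vanishing conditions immediate (the ``corner'' entries $N_{1,n}$ and $N_{n,1}$ vanish automatically, so the alternating-sign requirement for $2 \le k \le n-2$ is trivially satisfied), and the graph $\Gamma$ associated to $N$ is the path $1 - 2 - \cdots - n$, which is connected, yielding strictness. The eigenvector condition is then a direct consequence of the construction: the columns of $g$ are eigenvectors of $\ii N = L$ with eigenvalues $\lambda_1 > \cdots > \lambda_n$, so the first $k+m$ columns of $g$, which by the Iwasawa-representative convention span the $(k+m)$-th subspace of $\twist(V)$, are precisely the top $k+m$ eigenvectors of $\ii N$.

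I do not anticipate a serious obstacle: all the technical machinery has been deposited in \cref{tridiagonal_flag}, which identifies $\twist(\Vand_n^{>0})$ with the Jacobi manifold, and in the casework of \cref{positivity_preserving_kahler_grassmannian}. The only delicate point is the sign convention; one must take $N := -\ii L$ rather than $\ii L$ so that $\ii N$ is Hermitian and its top eigenspace (in the decreasing ordering on real eigenvalues) aligns with the increasing flag $\twist(V)$ rather than its reverse.
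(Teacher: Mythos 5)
Your proof is correct and follows essentially the same route as the paper: you take $N$ to be (minus $\ii$ times) the Jacobi matrix corresponding to $\twist(V)$ under the map \eqref{Fl_to_orbit_map}, invoke \cref{tridiagonal_flag} to get $\ii N\in\gl_n^{>0}$, use \cref{positivity_preserving_kahler_grassmannian} for strict preservation of positivity on both Grassmannians, and apply \cref{amplituhedra_ball}. The paper's version is terser—it leaves the eigenvector alignment and the case-by-case sign check implicit—but your more explicit verification (including noting that $\twist(V)\in\Fl_n^{>0}$ so $g\in\U_n^{>0}$) is sound and uses the same lemmas.
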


\begin{proof}
Consider a twisted Vandermonde amplituhedron coming from the twisted Vandermonde flag $\twist(V)$. Let $-N\in\Orbit_{\bflambda}$ correspond to $\twist(V)$ under the map \eqref{Fl_to_orbit_map}. By \cref{tridiagonal_flag}, we have $\ii N\in\gl_n^{>0}$. Therefore by \cref{positivity_preserving_kahler_grassmannian}, the gradient flows on both $\Gr_{k,n}(\mathbb{C})$ and $\Gr_{k,k+m}(\mathbb{C})$ with respect to $N$ strictly preserve positivity. The result then follows from \cref{amplituhedra_ball}.
\end{proof}

\begin{rmk}\label{vandermonde_flags_corner_remark}
We note that the twisted Vandermonde amplituhedra are precisely those which arise in \cref{amplituhedra_ball} when the matrix $N$ is tridiagonal. Recall that in general, the matrices $N$ in \cref{amplituhedra_ball} are described by \cref{positivity_preserving_kahler_grassmannian}; for simplicity, here we assume that $k \ge 2$ or $k+m \le n-2$. When $m$ is odd, all such matrices $N$ are tridiagonal, and therefore the twisted Vandermonde amplituhedra are the only ones which arise in \cref{amplituhedra_ball}. However, when $m$ is even, the corner entry $(\ii N)_{n,1} = (\ii N)_{1,n}$ of $\ii N$ can be nonzero, of sign $(-1)^{k-1}$. (We may still assume that the entries $(\ii N)_{i,i+1}$ for $i = 1, \dots, n-1$ are nonzero; if some such entry is zero, we can use the cyclic action from \cref{cyclic_action} to transform $N$ into a tridiagonal matrix.)

We focus in this subsection on the case that the corner entry is zero because when $N$ is tridiagonal, we have an explicit description of the corresponding element of the flag variety $\Fl_n(\mathbb{C})$, by \cref{tridiagonal_flag}. It would be interesting to generalize this description to the case when the corner entry of $N$ is nonzero. The simplest such matrix $N$ is given by
$$
\ii N = \begin{bmatrix}
0 & 1 & 0 & \cdots & 0 & (-1)^{k-1} \\
1 & 0 & 1 & \cdots & 0 & 0 \\
0 & 1 & 0 & \cdots & 0 & 0 \\
\vdots & \vdots & \vdots & \ddots & \vdots & \vdots \\
0 & 0 & 0 & \cdots & 0 & 1 \\
(-1)^{k-1} & 0 & 0 & \cdots & 1 & 0
\end{bmatrix}.
$$
This is the matrix used by Galashin, Karp, and Lam \cite[Theorem 1]{galashin_karp_lam} to show that the totally nonnegative Grassmannian $\Gr_{k,n}^{\ge 0}$ is homeomorphic to a closed ball (cf.\ \cref{sec_homeomorphism}). They also studied the corresponding amplituhedron in \cite[Section 5]{galashin_karp_lam}, which they called the {\itshape cyclically symmetric amplituhedron}, because the cyclic action from \cref{cyclic_action} on $\Gr_{k,n}^{\ge 0}$ restricts coherently to a cyclic action on $\mathcal{A}_{n,k,m}(Z)$. For example, when $k=1$ and $m=2$, the cyclically symmetric amplituhedron is a regular $n$-gon. For this specific choice of $\ii N$, there is an elegant explicit description of the row span of the associated $(k+m)\times n$ matrix $Z$ \cite{karp19}.
\end{rmk}

\begin{rmk}
Even more generally, as discussed in \cref{symmetric_to_general}, we can replace $\ii N$ with $M\in\gl_n(\mathbb{R})$ (not necessarily symmetric), although this setup falls outside the orbit framework. If $M$ satisfies condition \cref{infinitesimal_k}\ref{infinitesimal_k_positive_entries} for both $k$ and $k+m$, we can conclude that the corresponding amplituhedron $\mathcal{A}_{n,k,m}(Z)$ is homeomorphic to a closed ball. Still, we expect that some significant new ideas are required to use this approach to show that every amplituhedron $\mathcal{A}_{n,k,m}(Z)$ is homeomorphic to a closed ball. We can justify this with a dimension count. Indeed, the space of amplituhedra $\mathcal{A}_{n,k,m}(Z)$ for all $Z$ is naturally indexed by $\Gr_{k+m,n}^{>0}$, which has dimension $(k+m)(n-k-m)$. On the other hand, consider the space of matrices $M\in\gl_n(\mathbb{R})$ satisfying the condition \cref{infinitesimal_k}\ref{infinitesimal_k_positive_entries} for both $k$ and $k+m$, modulo translation by scalar multiples of $\I_n$ and rescaling by $\mathbb{R}_{>0}$. Then assuming $k \ge 2$ or $k+m \le n-2$, the dimension of this space is either $3n-2$ (if $m$ is even) or $3n-4$ (if $m$ is odd). Note that when $4 \le k+m \le n-4$ and $n \ge 15$, we have $(k+m)(n-k-m) > 3n-2$. However, it may be possible to use the approach above to show that every amplituhedron $\mathcal{A}_{n,k,m}(Z)$ with $k+m \le 3$ or $n-k-m \le 3$ is homeomorphic to a closed ball, generalizing the arguments in \cref{sec_amplituhedra_ball_case}.
\end{rmk}

\subsection{The case when \texorpdfstring{$n-k-m \le 2$}{n-k-m is at most 2}}\label{sec_amplituhedra_ball_case}
In this subsection, we show that every amplituhedron $\mathcal{A}_{n,k,m}(Z)$ with either $k+m \le 2$ or $n-k-m \le 2$ is a twisted Vandermonde amplituhedron. It particular, every such amplituhedron is homeomorphic to a closed ball. Recall from \eqref{defn_tnn_Fl_surjections} that for any $K'\subseteq K$, we have a surjective projection map $\PFl{K}{n}^{>0} \twoheadrightarrow \PFl{K'}{n}^{>0}$. Also recall the space of totally positive Vandermonde flags $\Vand_n^{>0}$ from \cref{defn_vandermonde_flags}.
\begin{lem}\label{twist_projection_bijection}
Let $n\ge 2$.
\begin{enumerate}[label=(\roman*), leftmargin=*, itemsep=2pt]
\item\label{twist_projection_bijection_first} The projection map $\Fl_n^{>0} \to \PFl{\{1,2\}}{n}^{>0}$ is a bijection when restricted to $\twist(\Vand_n^{>0})$.
\item\label{twist_projection_bijection_last} The projection map $\Fl_n^{>0} \to \PFl{\{n-2,n-1\}}{n}^{>0}$ is a bijection when restricted to $\twist(\Vand_n^{>0})$.
\end{enumerate}

\end{lem}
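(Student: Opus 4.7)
The plan is to use \cref{all_tridiagonal_flags} to identify $\twist(\Vand_n^{>0})$ with equivalence classes $[L]$ of Jacobi matrices under the relation $L \sim t(L + c\I_n)$ for $t > 0$, $c \in \mathbb{R}$. Under this identification, $[L]$ corresponds to the flag $g \in \twist(\Vand_n^{>0})$ with $g(\ii\Diag{\bflambda}) g^{-1} = L$, where $\bflambda_1 > \cdots > \bflambda_n$ are the eigenvalues of $-\ii L$ listed in decreasing order; the columns of $g$ are then the associated orthonormal eigenvectors. So the projection to $\PFl{\{1,2\}}{n}^{>0}$ sends $[L]$ to $(V_1, V_2)$, where $V_1$ is the top eigenspace of $L$ and $V_2$ is the $L$-invariant subspace spanned by the top two eigenvectors. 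Both source and target are smooth real manifolds of dimension $2n-3$.

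For part (i), I would construct an inverse map. Given $(V_1, V_2) \in \PFl{\{1,2\}}{n}^{>0}$, let $v_1 \in V_1$ and $v_2 \in V_2 \cap V_1^\perp$ be the unit vectors with signs fixed by the positivity conventions of $\U_n^{\ge 0}$ (so $v_1$ has positive entries). Using the $\sim$-freedom, I would normalize so $\lambda_1 = 1$ and $\lambda_2 = 0$, so that the desired Jacobi matrix $L$ with diagonal entries $\gamma_i$ and off-diagonal entries $\beta_i$ must satisfy $Lv_1 = v_1$ and $Lv_2 = 0$. These give $2n$ linear equations in the $2n-1$ unknowns $(\gamma_i, \beta_i)$, with exactly one linear dependency (coming from the symmetry of $L$ together with $\langle v_1, v_2 \rangle = 0$, whose uniqueness uses that the $2\times 2$ minors of the matrix $[v_1 \mid v_2]$ are all nonzero---which holds since $V_2 \in \Gr_{2,n}^{>0}$). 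The central step is to show that the resulting rank-$(2n-1)$ system is nonsingular for every $(V_1, V_2) \in \PFl{\{1,2\}}{n}^{>0}$, so that $L$ is uniquely determined, and that the solution automatically satisfies $\beta_i > 0$ with strictly decreasing spectrum; surjectivity of the projection then follows from existence of the solution and injectivity from its uniqueness.

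Part (ii) follows by applying the same argument with $(v_1, v_2)$ replaced by the bottom two eigenvectors $(v_{n-1}, v_n)$ of $L$. The data $(V_{n-2}, V_{n-1}) \in \PFl{\{n-2,n-1\}}{n}^{>0}$ records the orthogonal complements of the bottom one and bottom two eigenvector spans, so the associated linear system built from $Lv_{n-1} = \lambda_{n-1} v_{n-1}$ and $Lv_n = \lambda_n v_n$ has the same structure as in part (i). The main obstacle will be the nonsingularity analysis of the linear system together with verification of the positivity conditions; this amounts to an inverse-spectral-type rigidity result for Jacobi matrices given the top (or bottom) two eigenvalues and eigenvectors, modulo the translation and scaling freedom.
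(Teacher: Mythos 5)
Your overall strategy matches the paper's: reduce via \cref{all_tridiagonal_flags}, then invert the projection by reconstructing a Jacobi matrix from the data $(V_1,V_2)$. But there is a genuine gap that the paper devotes real effort to, and that your sketch glosses over: after you solve the linear system $Lv_1 = \lambda_1 v_1$, $Lv_2 = \lambda_2 v_2$ for the unknown diagonal and off-diagonal entries of $L$, you must show not only that the off-diagonal entries are positive (which you flag), but also that $\lambda_1,\lambda_2$ come out as the \emph{two largest} eigenvalues of $-\ii L$. This is not the same as saying the spectrum is simple (that is automatic for any Jacobi matrix with nonzero off-diagonals). Without the ordering you would recover the wrong flag from $L$, and the inverse map would not be well-defined. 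The paper handles this by first establishing positivity of the off-diagonal entries, then invoking the gradient-flow machinery (\cref{stable_manifold_grassmannian}, \cref{eigenvalue_condition_satisfied}, \cref{positivity_preserving_kahler_grassmannian}) to force $V_1,V_2$ to be the top eigenspaces. Your phrase ``inverse-spectral-type rigidity result'' acknowledges the difficulty but doesn't name the step or indicate how to close it; a concrete alternative would be to appeal to the oscillation theory of Jacobi matrices (eigenvectors ordered by eigenvalue have $0,1,2,\dots$ sign changes), using the positivity of $v_1$ and the single sign change of $v_2$.

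Two secondary points. First, the paper uses the torus action (\cref{torus_action}) to normalize $V_1 = \spn(1,\dots,1)$ before setting up the linear system; this turns the existence, uniqueness, and positivity of the solution into the one-line explicit formula $a_i = (y_1 + \cdots + y_i)/(y_i - y_{i+1})$. Your version keeps $v_1$ general and therefore has to argue nonsingularity and positivity for a more general linear system --- doable with the propagation argument via $2\times 2$ minors that you sketch, but considerably more work and you do not carry it out. Second, for part (ii) the paper does not redo the calculation with the bottom two eigenvectors; it uses the identity $\rev(\Vand_n^{>0}) = \Vand_n^{>0}$ from \eqref{krylov_rev} together with $\twist\circ\rev\circ\twist = \flip$ (\cref{twist_flip_rev}) and \cref{flip_action}\ref{flip_action_tp} to transport part (i) to part (ii). Your direct approach is valid but longer, and would again hit the same eigenvalue-ordering gap.
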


\begin{proof}
\ref{twist_projection_bijection_first} By \cref{all_tridiagonal_flags}, it suffices to prove that that the map
$$
(\ii\gl_n^{>0})\cap\uu_n/{\sim} \hspace*{2pt}\to\hspace*{2pt} \PFl{\{1,2\}}{n}^{>0}, \quad g(\ii\Diag{\bflambda})g^{-1} \mapsto g
$$
is a bijection, that is, any given $V = (V_1, V_2)\in\PFl{\{1,2\}}{n}^{>0}$ has a unique preimage $L \in (\ii\gl_n^{>0})\cap\uu_n$, modulo translating $L$ by a scalar multiple of $\I_n$ and rescaling it by an element of $\mathbb{R}_{>0}$. We will show, equivalently, that $V$ has a unique preimage $L$ which lies in $\Jac_{\bflambda}^{>0}$, for some strictly decreasing $\bflambda\in\mathbb{R}^n$ with $\lambda_1 = 0$ and $\lambda_2 = -1$.

Recall the torus action from \cref{torus_action}. After replacing $V$ and $L$ by, respectively, $hV$ and $hLh^{-1}$ for some $h\in\H_n^{>0}$, we may assume that $V_1$ is spanned by $(1, \dots, 1)$. Now take a nonzero vector $y = (y_1, \dots, y_n)\in V_2$ orthogonal to $(1, \dots, 1)$, so that $y_1 + \cdots + y_n = 0$. By \cref{tnn_Fl}\ref{tnn_Fl_tp}, the $2\times 2$ minors of the matrix
$$
\begin{bmatrix}
1 & -y_1 \\
\vdots & \vdots \\
1 & -y_n
\end{bmatrix}
$$
are all positive (perhaps after replacing $y$ by $-y$), whence $y_1 > \cdots > y_n$. We must show that there is a unique $L \in (\ii\gl_n^{>0})\cap\uu_n$ satisfying the following two properties:
\begin{enumerate}[label=(\alph*), leftmargin=36pt, itemsep=2pt]
\item\label{twist_projection_bijection_first_eigenvectors} the vectors $(1, \dots, 1)$ and $y$ are eigenvectors of $-\ii L$ with eigenvalues $0$ and $-1$, respectively; and
\item\label{twist_projection_bijection_first_eigenvalues} the two largest eigenvalues of $-\ii L$ are $0$ and $-1$.
\end{enumerate}

First we show that there is a unique $L \in (\ii\gl_n^{>0})\cap\uu_n$ satisfying property \ref{twist_projection_bijection_first_eigenvectors}. Write
$$
L = \ii\begin{bmatrix}
b_1 & a_1 & 0 & \cdots & 0 \\
a_1 & b_2 & a_2 & \cdots & 0 \\
0 & a_2 & b_3 & \cdots & 0 \\
\vdots & \vdots & \vdots & \ddots & \vdots \\
0 & 0 & 0 & \cdots & b_n
\end{bmatrix}\in\uu_n, \quad \text{ where } a_1, \dots, a_{n-1}, b_1, \dots, b_n \in \mathbb{R}.
$$
Then property \ref{twist_projection_bijection_first_eigenvectors} holds if and only if
$$
a_{i-1} + b_i + a_i = 0 \quad \text{ and } \quad a_{i-1}y_{i-1} + b_iy_i + a_iy_{i+1} = -y_i \quad \text{ for } 1 \le i \le n,
$$
where we set $a_0, a_n := 0$. These equations have a unique solution, namely,
\begin{align}\label{twist_projection_bijection_first_solution}
a_i = \frac{y_1 + \cdots + y_i}{y_i - y_{i+1}}\; \text{ for } 1 \le i \le n-1 \quad \text{ and } \quad b_i = -a_{i-1} - a_i\; \text{ for } 1 \le i \le n.
\end{align}
Note that the conditions on $y$ imply that $a_i > 0$ for $1 \le i \le n-1$, so $L\in\ii\gl_n^{>0}$.

Now we verify that the matrix $L$ given by \eqref{twist_projection_bijection_first_solution} satisfies property \ref{twist_projection_bijection_first_eigenvalues}. Since $L(V_k) \subseteq V_k$ for $k=1,2$, we have that $V_k$ is spanned by some $k$ eigenvectors of $-\ii L$; we must show that these eigenvectors correspond to the $k$ largest eigenvalues. To this end, consider the gradient flow on $\Gr_{k,n}(\mathbb{C})$ with respect to $-L$, where we identify $\Gr_{k,n}(\mathbb{C})$ with the adjoint orbit $\Orbit_{\bfomega{k}}$ via \eqref{Fl_to_orbit_map}. By \cref{positivity_preserving_kahler_grassmannian}, this flow strictly preserves positivity, and by construction, $V_k$ is a totally positive equilibrium point. Therefore by \cref{stable_manifold_grassmannian}\ref{stable_manifold_grassmannian_positivity} (using \cref{eigenvalue_condition_satisfied}), we have that $V_k$ is spanned by the eigenvectors of $-\ii L$ corresponding to the $k$ largest eigenvalues, as desired.

\ref{twist_projection_bijection_last} By \eqref{krylov_rev} we have $\rev(\Vand_n^{>0}) = \Vand_n^{>0}$, whence $\flip(\twist(\Vand_n^{>0})) = \twist(\Vand_n^{>0})$ by \cref{twist_flip_rev}\ref{twist_flip_rev_Fl}. Therefore the result follows from part \ref{twist_projection_bijection_first} and \cref{flip_action}\ref{flip_action_tp}.
\end{proof}

\begin{rmk}
It is tempting to try to prove \cref{twist_projection_bijection}\ref{twist_projection_bijection_first} by observing that the projection map $\Fl_n^{>0} \to \PFl{\{1,2\}}{n}^{>0}$ is a bijection when restricted to $\Vand_n^{>0}$, and then applying bijectivity of the twist map $\twist$. However, there is good reason to expect such an argument may fail. Indeed, fix a strictly decreasing $\bflambda\in\mathbb{R}^n$. Then the projection map $\Fl_n^{>0} \to \PFl{\{1\}}{n}^{>0} = \mathbb{P}^{n-1}_{>0}$ is a bijection when restricted to $\{\Kry{\bflambda}{x} : x\in\mathbb{P}^{n-1}_{>0}\}$, as it sends $\Kry{\bflambda}{x}$ to $x$. But the projection map $\Fl_n^{>0} \to \mathbb{P}^{n-1}_{>0}$ is in general neither injective nor surjective when restricted to $\twist(\{\Kry{\bflambda}{x} : x\in\mathbb{P}^{n-1}_{>0}\})$. For example, let $\bflambda := (1,0,-1)$, as in \cref{eg_tridiagonal_flag}. Then the projection map sends $\twist(\Kry{\bflambda}{x})$ to
$$
(y_1 : y_2 : y_3) := \big(x_1\textstyle\sqrt{x_1^2x_2^2 + 4x_1^2x_3^2 + x_2^2x_3^2} \hspace*{2pt}:\hspace*{2pt} x_1(x_2^2 + 2x_3^2) \hspace*{2pt}:\hspace*{2pt} x_2x_3\textstyle\sqrt{x_1^2 + x_2^2 + x_3^2}\big) \in\mathbb{P}^2_{>0}.
$$
The points $x = (1 : 1 : 1)$ and $x = (\sqrt{10} : 4 : 2)$ have the same image, so the map is not injective. Also, any such $(y_1 : y_2 : y_3)$ satisfies the constraint $\min(y_1,y_3) < y_2$ (proof omitted), so the map is not surjective.
\end{rmk}

\begin{cor}\label{amplituhedra_ball_case_ball}
Every amplituhedron $\mathcal{A}_{n,k,m}(Z)$ with either $k+m \le 2$ or $n-k-m \le 2$ is a twisted Vandermonde amplituhedron. In particular, it is homeomorphic to a closed ball, its interior is homeomorphic to an open ball, and its boundary is homeomorphic to a sphere.
\end{cor}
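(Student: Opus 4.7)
The plan is to reduce \cref{amplituhedra_ball_case_ball} to \cref{twisted_vandermonde_amplituhedra_ball} by showing that every such $Z$ in fact gives rise to a twisted Vandermonde amplituhedron. By \cref{amplituhedron_orthonormal_basis}, $\mathcal{A}_{n,k,m}(Z)$ depends only on the row span $W := \operatorname{rowspan}(Z) \in \Gr_{k+m,n}(\mathbb{C})$. The positivity of the $(k+m)\times(k+m)$ minors of $Z$ is exactly the statement that the Pl\"{u}cker coordinates of $W$ are positive, and so \cref{tnn_Fl_converse} applied to the Grassmannian gives $W \in \Gr_{k+m,n}^{>0}$. In light of \cref{defn_twisted_vandermonde_amplituhedra}, it therefore suffices to exhibit some $V \in \Vand_n^{>0}$ with $(\twist(V))_{k+m} = W$; the topological conclusion will then be immediate from \cref{twisted_vandermonde_amplituhedra_ball}.

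For the case $k+m \le 2$, the degenerate possibility $k+m = 0$ gives a single-point amplituhedron, so one may assume $k+m \in \{1,2\}$, whence $\{k+m\} \subseteq \{1,2\}$. First I would apply the surjection \eqref{defn_tnn_Fl_surjections} in the form $\PFl{\{1,2\}}{n}^{>0} \twoheadrightarrow \PFl{\{k+m\}}{n}^{>0}$ to extend $W$ to a partial flag $(U_1,U_2) \in \PFl{\{1,2\}}{n}^{>0}$ whose $(k+m)$-th component equals $W$. Then by \cref{twist_projection_bijection}\ref{twist_projection_bijection_first}, the restricted projection $\twist(\Vand_n^{>0}) \to \PFl{\{1,2\}}{n}^{>0}$ is a bijection, so $(U_1,U_2)$ lifts uniquely to some $\twist(V)$, giving $(\twist(V))_{k+m} = W$ as required.

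For the case $n-k-m \le 2$, the trivial possibility $k+m = n$ is immediate since $(\twist(V))_n = \mathbb{C}^n = W$ for any $V \in \Vand_n^{>0}$, so assume $k+m \in \{n-2, n-1\}$; the argument is then entirely symmetric, using the surjection $\PFl{\{n-2,n-1\}}{n}^{>0} \twoheadrightarrow \PFl{\{k+m\}}{n}^{>0}$ together with \cref{twist_projection_bijection}\ref{twist_projection_bijection_last}. The substance of this whole chain of deductions has already been packaged into \cref{twist_projection_bijection} itself, whose proof in turn used the explicit Vandermonde parametrization of $\Jac_{\bflambda}^{>0}$ and the interplay of the twist map with gradient-flow equilibria; given that lemma in hand, the remaining work here is routine bookkeeping of index sets, and the only real care required is the separate handling of the two degenerate endpoints $k+m \in \{0,n\}$.
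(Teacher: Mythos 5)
Your proposal is correct and follows essentially the same route as the paper: both reduce to showing that the projection $\Fl_n^{>0} \to \Gr_{k+m,n}^{>0}$ is surjective on $\twist(\Vand_n^{>0})$ via \cref{twist_projection_bijection}, then invoke \cref{twisted_vandermonde_amplituhedra_ball}. You unpack the intermediate factorization through $\PFl{\{1,2\}}{n}^{>0}$ (resp.\ $\PFl{\{n-2,n-1\}}{n}^{>0}$) and the degenerate endpoints $k+m\in\{0,n\}$ more explicitly than the paper's two-sentence proof, but the substance is identical.
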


\begin{proof}
Suppose that $k+m \le 2$ or $n-k-m \le 2$. Then \cref{twist_projection_bijection} implies that the projection map $\Fl_n^{>0} \to \Gr_{k+m,n}^{>0}$ is surjective when restricted to $\twist(\Vand_n^{>0})$, so every amplituhedron $\mathcal{A}_{n,k,m}(Z)$ is a twisted Vandermonde amplituhedron. The remaining statements follow from \cref{twisted_vandermonde_amplituhedra_ball}.
\end{proof}

\section{Symmetric Toda flow}\label{sec_toda}

\noindent The {\itshape Toda lattice} is an integrable Hamiltonian system which has been widely studied since it was introduced by Toda in 1967 \cite{toda67b}; see the survey of Kodama and Shipman \cite{kodama_shipman18}. By work of Flaschka \cite{flaschka74}, we may view the Toda lattice as a flow evolving on an adjoint orbit $\Orbit_{\bflambda}$. In this section, we observe that the Toda flow provides an example of a gradient flow on $\Orbit_{\bflambda}$. Curiously, this happens in two different ways: in both the normal metric and the K\"{a}hler metric. The Toda flow is also an example of a flow which weakly preserves positivity (in fact, in both the positive and negative time directions). As we discuss further in \cref{torus_remark}, while these results are largely implicit in the literature, we believe the explicit focus on total positivity offers a new perspective. In particular, a key role is played by the twist map $\twist$ introduced in \cref{sec_twist}, which facilitates the study of the Toda flow as a gradient flow in the K\"{a}hler metric. This generalizes and clarifies a construction of Bloch, Flaschka, and Ratiu \cite{bloch_flaschka_ratiu90}, as we explain in \cref{BFR_map}.

\subsection{Background}\label{sec_toda_background}

We introduce the (finite nonperiodic) Toda lattice; we refer to \cite{kodama_shipman18} for further details. It is the Hamiltonian system with Hamiltonian
$$
H(q_1, \dots, q_n, p_1, \dots, p_n) := \frac{1}{2}\sum_{i=1}^np_i^2 + \sum_{i=1}^{n-1}e^{q_i - q_{i+1}}.
$$
The Toda lattice may be interpreted as a system of $n$ points on a line of unit mass governed by an exponential potential.

Following Flaschka \cite{flaschka74}, we set
$$
a_i := \textstyle\frac{1}{2}e^{\frac{q_{i}-q_{i+1}}{2}} \; \text{ for } 1 \le i \le n-1 \quad \text{ and } \quad b_i :=  -\frac{1}{2}p_i \; \text{ for } 1 \le i \le n.
$$
Then the Hamiltonian equations become (with $a_0, a_n := 0$)
$$
\dot{a}_i = a_i(b_{i+1} - b_i) \quad \text{ and } \quad \dot{b}_i = 2(a_i^2 - a_{i-1}^2).
$$
We also let $L$ be the tridiagonal matrix
$$
L := \ii\begin{bmatrix}
b_1 & a_1 & 0 & \cdots & 0 \\
a_1 & b_2 & a_2 & \cdots & 0 \\
0 & a_2 & b_3 & \cdots & 0 \\
\vdots & \vdots & \vdots & \ddots & \vdots \\
0 & 0 & 0 & \cdots & b_n
\end{bmatrix}, \quad \text{ so that } \quad \kterm(-\ii L) = \begin{bmatrix}
0 & -a_1 & 0 & \cdots & 0 \\
a_1 & 0 & -a_2 & \cdots & 0 \\
0 & a_2 & 0 & \cdots & 0 \\
\vdots & \vdots & \vdots & \ddots & \vdots \\
0 & 0 & 0 & \cdots & 0
\end{bmatrix},
$$
where $\kterm$ was defined in \cref{defn_iwasawa_projection}. Then we can write the flow of the Toda lattice in the Lax form (cf.\ \cref{lax_flow}\ref{lax_flow_translation})
\begin{align}\label{toda_flaschka}
\dot{L}(t) = [L(t), \kterm(-\ii L(t))].
\end{align}
Therefore \eqref{toda_flaschka} defines a flow on the adjoint orbit $\Orbit_{\bflambda}$.

Above, $L$ was assumed to be $\ii$ times a real symmetric tridiagonal matrix, but more generally, we can consider any $L\in\Orbit_{\bflambda}$. We call the flow \eqref{toda_flaschka} defined on the tridiagonal part of $\uu_n$ the {\itshape tridiagonal symmetric Toda flow}, and call the flow defined on all of $\uu_n$ the {\itshape full symmetric Toda flow}, which was studied by Deift, Li, Nanda, and Tomei \cite{deift_li_nanda_tomei86}. (The term {\itshape symmetric} is conventional, since $L$ is usually defined to be a real symmetric matrix, without the factor of $\ii$. We prefer instead to work in $\Orbit_{\bflambda}$, following e.g.\ \cite{bloch_flaschka_ratiu90}.)

Symes \cite{symes80} found an explicit solution to \eqref{toda_flaschka} using the Iwasawa decomposition (cf.\ \cref{iwasawa_decomposition}). It can be verified directly.
\begin{thm}[{Symes \cite[Section 7]{symes80}, \cite[(3.2)]{symes82}}]\label{symes_solution}
Let $L(t)$ be a solution to the full symmetric Toda flow \eqref{toda_flaschka}, with $L_0\in\uu_n$. Then
\begin{align}\label{symes_solution_equation}
L(t) = \Kterm(\exp(-t\ii L_0))^{-1} L_0 \Kterm(\exp(-t\ii L_0)).
\end{align}

\end{thm}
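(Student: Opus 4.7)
The plan is to verify \eqref{symes_solution_equation} directly by differentiation, so that the proof reduces to identifying $g(t)^{-1}\dot{g}(t)$ with $\kterm(-\ii L(t))$ via the Iwasawa decomposition on the Lie algebra level. Set $g(t) := \Kterm(\exp(-t\ii L_0)) \in \U_n$ and $b(t) \in \H_n^{>0}\cdot\N_n(\mathbb{C}) \subseteq \B_n(\mathbb{C})$ so that, by \cref{iwasawa_decomposition},
\begin{equation*}
\exp(-t\ii L_0) = g(t)\hspace*{1pt}b(t).
\end{equation*}
Then the formula \eqref{symes_solution_equation} reads $L(t) = g(t)^{-1} L_0\hspace*{1pt} g(t)$, so $L(t) \in \Orbit_{\bflambda}$ and $L(0) = L_0$ (since $g(0) = \I_n$). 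Differentiating gives
\begin{equation*}
\dot{L}(t) = [L(t),\hspace*{2pt} g(t)^{-1}\dot{g}(t)].
\end{equation*}

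First I would differentiate the Iwasawa identity $\exp(-t\ii L_0) = g(t)b(t)$, obtaining $-\ii L_0 \exp(-t\ii L_0) = \dot{g}(t)b(t) + g(t)\dot{b}(t)$. Multiplying on the left by $g(t)^{-1}$ and on the right by $b(t)^{-1}$, and using $g(t)^{-1}(-\ii L_0)g(t) = -\ii L(t)$, this rearranges to
\begin{equation*}
-\ii L(t) = g(t)^{-1}\dot{g}(t) + \dot{b}(t)b(t)^{-1}.
\end{equation*}
Now I would argue that this is precisely the decomposition
\begin{equation*}
\gl_n(\mathbb{C}) = \uu_n \oplus \big(\h_n(\mathbb{R}) \oplus \n_n(\mathbb{C})\big)
\end{equation*}
of \cref{defn_iwasawa_projection} applied to $-\ii L(t)$. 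Indeed, $g(t)^{-1}\dot{g}(t) \in \uu_n$ because $g(t) \in \U_n$, and $\dot{b}(t)b(t)^{-1}$ is upper triangular whose diagonal entries are $\dot{b}_{ii}(t)/b_{ii}(t) \in \mathbb{R}$ (since $b(t)$ has positive real diagonal), so it lies in $\h_n(\mathbb{R}) \oplus \n_n(\mathbb{C})$. By uniqueness of this direct-sum decomposition, $g(t)^{-1}\dot{g}(t) = \kterm(-\ii L(t))$, and substituting into the expression for $\dot{L}(t)$ yields \eqref{toda_flaschka}.

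The only real point where care is needed is the identification of the diagonal summand: one must verify that $b(t)$ is differentiable with positive real diagonal for all $t$, which follows from smoothness of the Iwasawa map in \cref{iwasawa_decomposition}\ref{iwasawa_decomposition_map}. Everything else is formal manipulation. This is the main (minor) obstacle; once handled, uniqueness of the Iwasawa decomposition does all of the real work, and matching to $\kterm(-\ii L(t))$ is automatic.
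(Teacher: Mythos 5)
Your proof is correct and carries out precisely the direct verification that the paper alludes to (``it can be verified directly''): differentiating the Iwasawa decomposition of $\exp(-t\ii L_0)$ and invoking uniqueness of the direct sum $\gl_n(\mathbb{C}) = \uu_n \oplus \h_n(\mathbb{R}) \oplus \n_n(\mathbb{C})$ to identify $g(t)^{-1}\dot{g}(t)$ with $\kterm(-\ii L(t))$. The one small step you leave implicit is the final appeal to uniqueness of ODE solutions: having shown that the right-hand side of \eqref{symes_solution_equation} satisfies \eqref{toda_flaschka} with the same initial condition $L_0$, you should explicitly conclude that it coincides with the given $L(t)$ (and it would be cleaner to give the candidate flow a separate name, say $\tilde{L}(t)$, rather than reusing $L(t)$ for both).
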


We observe that using \eqref{symes_solution_equation}, one can read off the asymptotic behavior of $L(t)$ as $t \to \pm\infty$. In particular, the limits are both diagonal matrices; see \cref{sorting_remark}.

\begin{rmk}\label{kostant_toda}
Another important reformulation of the Toda lattice was given by Kostant \cite{kostant79} in terms of tridiagonal {\itshape Hessenberg matrices}, rather than symmetric matrices. This was generalized to all Hessenberg matrices by Ercolani, Flaschka, and Singer \cite{ercolani_flaschka_singer93}, and is known as the {\itshape full Kostant--Toda lattice}. The Kostant--Toda flows are in general more complicated than the symmetric Toda flows; for example, they are not necessarily complete. As is the case for the symmetric Toda flow, total positivity plays an important role for the Kostant--Toda flow, as shown by Gekhtman and Shapiro \cite{gekhtman_shapiro97} and Kodama and Williams \cite{kodama_williams15}. It would be interesting to explore this connection further. We leave this to future work, since the Kostant--Toda flow does not directly fit into the framework of adjoint orbits considered in this paper.
\end{rmk}

\subsection{The Toda flow as a gradient flow in the normal metric}\label{sec_toda_normal}
In this subsection we consider the tridiagonal symmetric Toda flow. Bloch \cite{bloch90} observed that
\begin{align}\label{tridiagonal_bracket}
\kterm(-\ii L) = [L, -\ii\Diag{n-1, \dots, 1, 0}]  \quad \text{ for $L\in\uu_n$ tridiagonal}.
\end{align}
Therefore the following result holds:
\begin{thm}[{Bloch \cite[Section 6]{bloch90}}]\label{tridiagonal_symmetric_toda_gradient}
Set $N := -\ii\Diag{n-1, \dots, 1, 0} \in \uu_n$, and let $L_0\in\uu_n$ be tridiagonal. Then the tridiagonal symmetric Toda flow \eqref{toda_flaschka} beginning at $L_0$ can be written as
$$
\dot{L}(t) = [L(t), [L(t), N]].
$$
In particular (cf.\ \cref{gradient_flow_normal}), the tridiagonal symmetric Toda flow restricted to $\Orbit_{\bflambda}$ is the gradient flow with respect to $N$ in the normal metric.
\end{thm}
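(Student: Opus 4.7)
The plan is to establish the identity \eqref{tridiagonal_bracket} by a direct computation, then substitute into the Lax form \eqref{toda_flaschka} of the Toda flow and invoke \cref{gradient_flow_normal}.

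First, I would write a general tridiagonal $L\in\uu_n$ as $L = \ii T$, where $T$ is a real symmetric tridiagonal matrix with diagonal entries $b_1, \dots, b_n$ and off-diagonal entries $a_1, \dots, a_{n-1}$. Then $-\ii L = T$ is real symmetric, so by the description of $\kterm$ in \cref{defn_iwasawa_projection} (which acts by taking the strictly lower triangular part of its input and skew-Hermitianizing it, noting that the real diagonal of $T$ is absorbed by the $\h_n(\mathbb{R})$-summand), one finds $\kterm(T)_{i+1,i} = a_i$ and $\kterm(T)_{i,i+1} = -a_i$, with all other entries zero.

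Next, set $D := \Diag{n-1, n-2, \dots, 0}$, so $N = -\ii D$. I would compute $[L,N] = [\ii T, -\ii D] = [T,D]$ entry-wise: for any $i,j$,
\[
[T,D]_{i,j} \;=\; T_{i,j}(D_{j,j}-D_{i,i}) \;=\; (i-j)\, T_{i,j}.
\]
Since $T$ is tridiagonal, this vanishes off the three main diagonals and on the diagonal itself, while it equals $a_i$ at position $(i{+}1,i)$ and $-a_i$ at position $(i,i{+}1)$. Comparing with the computation of $\kterm(T)$ verifies \eqref{tridiagonal_bracket}.

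Substituting \eqref{tridiagonal_bracket} into the Flaschka form \eqref{toda_flaschka} immediately gives $\dot L(t) = [L(t),[L(t),N]]$, which by \cref{gradient_flow_normal} is precisely the gradient flow on $\Orbit_{\bflambda}$ with respect to $N$ in the normal metric. To finish, I would note that this vector field is manifestly tangent to the tridiagonal locus (since at any tridiagonal $L$, both $L$ and $[L,N]=\kterm(-\ii L)$ are tridiagonal, and a direct check shows $[L,[L,N]]$ is again tridiagonal), so the tridiagonal symmetric Toda flow really does coincide with the restriction of the full gradient flow to the tridiagonal subset of $\Orbit_{\bflambda}$. There is no genuine obstacle here: the entire content of the result is the bracket identity \eqref{tridiagonal_bracket}, and the only subtlety is bookkeeping the signs coming from the choice $N=-\ii\Diag{n-1,\dots,0}$ (as opposed to $+\ii\Diag{\cdots}$) and from the two factors of $\ii$ in $[\ii T,-\ii D]$.
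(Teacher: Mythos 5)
Your proof is correct and follows essentially the same route as the paper: the whole content of the theorem is the bracket identity \eqref{tridiagonal_bracket}, which the paper simply cites from Bloch and then substitutes into \eqref{toda_flaschka}. You supply the short entrywise verification of \eqref{tridiagonal_bracket} (which the paper omits) and add the correct observation that the double-bracket vector field is tangent to the tridiagonal locus; both are accurate, and the sign bookkeeping in $[\ii T,-\ii D]=[T,D]$ and in $\kterm(T)$ checks out.
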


\begin{rmk}\label{full_symmetric_toda_gradient_remark}
In general, for $L\in\Orbit_{\bflambda}$ not assumed to be tridiagonal, the equality \eqref{tridiagonal_bracket} fails to hold, and \eqref{toda_flaschka} is not a gradient flow in the normal metric. Nevertheless, De Mari and Pedroni \cite[Theorem 5.1]{de_mari_pedroni99} (cf.\ \cite[Proposition 2.3]{bloch_gekhtman98}) generalized \cref{tridiagonal_symmetric_toda_gradient} to the full symmetric Toda flow, by showing that it is a gradient flow in a modification of the normal metric. Bloch and Gekhtman \cite[Section 2.3]{bloch_gekhtman98} proved an analogous result for the full Kostant--Toda flow.
\end{rmk}

\subsection{The Toda flow as a twisted gradient flow in the K\"{a}hler metric}\label{sec_toda_kahler}
In this subsection we consider the full symmetric Toda flow, restricted to the totally nonnegative part $\Orbit_{\bflambda}^{\ge 0}$ of an adjoint orbit. Our analysis is based on Symes's formula \eqref{symes_solution_equation}, and the twist map introduced in \cref{sec_twist}.
\begin{defn}\label{defn_twist_orbit}
Recall the twist map $\twist : \Fl_n^{\ge 0} \to \Fl_n^{\ge 0}$ from \cref{defn_twist}. Given any strictly decreasing $\bflambda\in\mathbb{R}^n$, via the identification \eqref{Fl_to_orbit_map}, we may also regard the twist map as a map $\twistorbit : \Orbit_{\bflambda}^{\ge 0} \to \Orbit_{\bflambda}^{\ge 0}$. Explicitly, it is the involution defined as
$$
\twistorbit(g(\ii\Diag{\bflambda})g^{-1}) := \iota(g)(\ii\Diag{\bflambda})(\iota(g))^{-1} = \delta_ng^{-1}(\ii\Diag{\bflambda})g\delta_n \quad \text{ for all } g\in\U_n^{\ge 0}.
$$
(If $\bflambda\in\mathbb{R}^n$ is weakly decreasing but not strictly decreasing, then $\twistorbit$ is undefined; cf.\ \cref{twist_extension}.)
\end{defn}

We also recall the cell decomposition \eqref{cell_decomposition_equation} of $\Fl_n^{\ge 0}$. If $\bflambda\in\mathbb{R}^n$ is strictly decreasing, this induces a cell decomposition of $\Orbit_{\bflambda}^{\ge 0}$ via \eqref{Fl_to_orbit_map}.

\begin{thm}\label{full_symmetric_toda_gradient}
Let $\bflambda\in\mathbb{R}^n$ be strictly decreasing, and set $N := -\ii\Diag{\bflambda}\in\uu_n$.
\begin{enumerate}[label=(\roman*), leftmargin=*, itemsep=2pt]
\item\label{full_symmetric_toda_gradient_positivity} The full symmetric Toda flow on $\Orbit_{\bflambda}$ weakly preserves positivity in both the positive and negative time directions. That is, if $L(t)$ evolves according to \eqref{toda_flaschka} beginning at $L_0\in\Orbit_{\bflambda}^{\ge 0}$, then $L(t)\in\Orbit_{\bflambda}^{\ge 0}$ for all $t\in\mathbb{R}$. Moreover, $L(t)$ (for $t\in\mathbb{R})$ is contained in a single cell of the cell decomposition \eqref{cell_decomposition_equation} of $\Orbit_{\bflambda}^{\ge 0}$.
\item\label{full_symmetric_toda_twist} The full symmetric Toda flow restricted to $\Orbit_{\bflambda}^{\ge 0}$ is the twisted gradient flow with respect to $N$ in the K\"{a}hler metric. That is, if $L(t)$ evolves according to \eqref{toda_flaschka} beginning at $L_0\in\Orbit_{\bflambda}^{\ge 0}$, then $\twistorbit(L(t))$ is the gradient flow \eqref{gradient_flow_kahler_equation_iwasawa} with respect to $N$ in the K\"{a}hler metric beginning at $\twistorbit(L_0)\in\Orbit_{\bflambda}^{\ge 0}$.
\end{enumerate}

\end{thm}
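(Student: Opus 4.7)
The plan is to prove part \ref{full_symmetric_toda_twist} first using Symes's explicit solution (\cref{symes_solution}), and then to deduce part \ref{full_symmetric_toda_gradient_positivity} by combining it with positivity-preservation results already established for the K\"{a}hler gradient flow and the twist map. Writing $L_0 = g_0(\ii\Diag{\bflambda})g_0^{-1}$ with $g_0\in\U_n^{\ge 0}$ (which exists and is uniquely determined via \cref{U_to_Fl} and \cref{Fl_to_orbit}), Symes's formula gives $L(t) = g(t)(\ii\Diag{\bflambda})g(t)^{-1}$ with
$$
g(t) := \Kterm(\exp(-t\ii L_0))^{-1}g_0 \in \U_n.
$$
On the other hand, by \cref{gradient_flow_kahler}, the K\"{a}hler gradient flow on $\Orbit_{\bflambda}$ with respect to $N := -\ii\Diag{\bflambda}$ starting at $\twistorbit(L_0) = \iota(g_0)(\ii\Diag{\bflambda})\iota(g_0)^{-1}$ is represented in $\U_n$ by $\tilde{g}(t) := \Kterm(\exp(t\Diag{\bflambda})\iota(g_0))$, so part \ref{full_symmetric_toda_twist} will reduce to the identity $\iota(g(t)) = \tilde{g}(t)$ in $\U_n$.

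To establish this identity I would use that $\exp(-t\ii L_0) = g_0\exp(t\Diag{\bflambda})g_0^{-1}$ together with the left equivariance $\Kterm(g_0X) = g_0\Kterm(X)$ for $g_0\in\U_n$, which reduces $\iota(g(t)) = \delta_n g_0^{-1}\Kterm(\exp(-t\ii L_0))\delta_n$ to $\delta_n\Kterm(\exp(t\Diag{\bflambda})g_0^{-1})\delta_n$. On the other side, expanding $\iota(g_0) = \delta_n g_0^{-1}\delta_n$ and using that $\delta_n$ commutes with $\exp(t\Diag{\bflambda})$, followed by the same equivariance, gives $\tilde{g}(t) = \delta_n\Kterm(\exp(t\Diag{\bflambda})g_0^{-1}\delta_n)$. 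The final ingredient is that right multiplication by $\delta_n\in\T_n$ alters the Iwasawa $\Kterm$-factor only by a trailing $\delta_n$: writing $\exp(t\Diag{\bflambda})g_0^{-1} = KAN$ in Iwasawa form, one has $KAN\delta_n = (K\delta_n)A(\delta_n N\delta_n)$, since $\delta_n$ commutes with the diagonal $A$ and conjugates $\N_n(\mathbb{C})$ into itself. This yields $\Kterm(\exp(t\Diag{\bflambda})g_0^{-1}\delta_n) = \Kterm(\exp(t\Diag{\bflambda})g_0^{-1})\delta_n$ and completes the identification.

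Part \ref{full_symmetric_toda_gradient_positivity} will then follow from part \ref{full_symmetric_toda_twist}: we have $L(t) = \twistorbit(V(t))$, where $V(t)$ denotes the K\"{a}hler gradient flow starting at $\twistorbit(L_0)\in\Orbit_{\bflambda}^{\ge 0}$. Since $\ii N = \Diag{\bflambda}$ is tridiagonal with vanishing super- and subdiagonal entries, both $\ii N$ and $-\ii N$ lie in $\gl_n^{\ge 0}$, so by \cref{positivity_preserving_kahler_full} (or \cref{positivity_preserving_kahler_grassmannian} when $n\le 2$) this flow weakly preserves positivity in both time directions; combined with \cref{twist_action}, which says $\twistorbit$ preserves $\Orbit_{\bflambda}^{\ge 0}$, this gives $L(t)\in\Orbit_{\bflambda}^{\ge 0}$ for all $t\in\mathbb{R}$. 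For cell invariance, \eqref{gradient_flow_kahler_equation} presents $V(t)$ as the action of $\exp(t\Diag{\bflambda})\in\H_n^{>0}$ on a fixed flag, and by the final sentence of \cref{defn_cell_decomposition} each cell $C_{v,w}$ is $\H_n^{>0}$-invariant, while by \cref{twist_action} the twist map sends $C_{v,w}$ diffeomorphically onto $C_{v^{-1},w^{-1}}$; hence $L(t)$ stays in a single cell. The main obstacle will be the identity in the second paragraph: it is elementary but requires careful bookkeeping of how the Iwasawa factorization interacts with the involution $\iota$ and with right multiplication by $\delta_n\in\T_n$, and the cancellations go through precisely because $\delta_n$ is both unitary and diagonal and $\Diag{\bflambda}$ commutes with it.
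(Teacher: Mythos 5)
Your proposal is correct and follows essentially the same route as the paper: both reduce to the algebraic identity involving Symes's formula, left $\U_n$-equivariance of $\Kterm$, and the observation that right multiplication by $\delta_n\in\T_n$ shifts the Iwasawa $\Kterm$-factor by a trailing $\delta_n$ (the paper parametrizes via $M_0 = \twistorbit(L_0)$ and you parametrize via $L_0$, but the computation is the same up to relabeling). One small simplification available: once you know the K\"{a}hler flow is multiplication by $\exp(t\Diag{\bflambda})\in\H_n^{>0}$, the cell-preservation observation from \cref{defn_cell_decomposition} already yields positivity and cell invariance of $V(t)$ simultaneously, so the appeal to \cref{positivity_preserving_kahler_full} is not strictly needed.
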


We observe that because the matrix $N$ above is diagonal, the twisted flow $\twistorbit(L(t))$ is contained in a torus orbit of $\Orbit_{\bflambda}$. This fact is relevant if we wish to map such flows to a moment polytope, as considered by Bloch, Flaschka, and Ratiu \cite{bloch_flaschka_ratiu90} and Kodama and Williams \cite{kodama_williams15}. We discuss this further in \cref{BFR_map}.

\begin{proof}
Let $L_0\in\Orbit_{\bflambda}^{\ge 0}$, and set $M_0 := \twistorbit(L_0)\in\Orbit_{\bflambda}^{\ge 0}$. Let $L(t)$ evolve according to \eqref{toda_flaschka} beginning at $L_0$, and let $M(t)$ be the gradient flow with respect to $N$ in the K\"{a}hler metric beginning at $M_0$. Using \cref{twist_action}, it suffices to verify the following two facts.
\begin{enumerate}[label=(\roman*), leftmargin=*, itemsep=2pt]
\item\label{full_symmetric_toda_gradient_proof_nonnegativity} We have $M(t)\in\Orbit_{\bflambda}^{\ge 0}$ for all $t\in\mathbb{R}$. Moreover, $M(t)$ (for $t\in\mathbb{R})$ is contained in a single cell of the cell decomposition \eqref{cell_decomposition_equation} of $\Orbit_{\bflambda}^{\ge 0}$.
\item\label{full_symmetric_toda_gradient_proof_twist} We have $L(t) = \twistorbit(M(t))$ for all $t\in\mathbb{R}$.
\end{enumerate}

Since $M_0\in\Orbit_{\bflambda}^{\ge 0}$, we can write $M_0 = g_0(\ii\Diag{\bflambda})g_0^{-1}$ for some $g_0\in\U_n^{\ge 0}$. Then we define $g(t)\in\U_n$ as in \eqref{gradient_flow_kahler_equation_iwasawa}, so that $M(t) = g(t)(\ii\Diag{\bflambda})g(t)^{-1}$ for all $t\in\mathbb{R}$:
$$
g(t) := \Kterm(\exp(t\ii N)g_0) = \Kterm(\Diag{e^{t\lambda_1}, \dots, e^{t\lambda_n}}g_0).
$$
Since $\Diag{e^{t\lambda_1}, \dots, e^{t\lambda_n}}\in\H_n^{>0}$, and each cell of \eqref{cell_decomposition_equation} is preserved by the action of $\H_n^{>0}$, we obtain part \ref{full_symmetric_toda_gradient_proof_nonnegativity} above.

Now observe that
$$
\twistorbit(M(t)) = \delta_n g(t)^{-1}(\ii\Diag{\bflambda})g(t)\delta_n.
$$
In particular, taking $t=0$ we obtain
$$
L_0 = \twistorbit(M_0) = \delta_n g_0^{-1}(\ii\Diag{\bflambda})g_0\delta_n.
$$
Therefore using \eqref{symes_solution_equation}, in order to prove part \ref{full_symmetric_toda_gradient_proof_twist} above, it suffices to show that
$$
g(t) = g_0\delta_n\Kterm(\exp(-t\ii L_0))\delta_n.
$$
This equality follows from $\exp(-t\ii L_0) = \delta_ng_0^{-1}\Diag{e^{t\lambda_1}, \dots, e^{t\lambda_n}}g_0\delta_n$, along with the fact that $\Kterm$ commutes with both left multiplication by $\U_n$ and right multiplication by $\H_n(\mathbb{C})$.
\end{proof}

\begin{rmk}\label{torus_remark}
The fact that a trajectory $L(t)$ of the full symmetric Toda flow \eqref{toda_flaschka} can be realized as a gradient flow in a torus orbit of a flag variety is well-known (see e.g.\ \cite[Section 5.2]{singer91}). This was first observed in the tridiagonal case by Moser \cite[(1.4)]{moser75}, who embedded $\Jac_{\bflambda}^{>0}$ inside $\mathbb{P}^{n-1}_{>0}$ (cf.\ \cref{moser_variables}). The new insight provided here is that there is a canonical such embedding when $L_0\in\Orbit_{\bflambda}^{\ge 0}$, which is given by a smooth map defined on all of $\Orbit_{\bflambda}^{\ge 0}$ (namely, the twist map). The subtlety of constructing such a map in general was noted by Ercolani, Flaschka, and Singer \cite[Remark p.\ 194]{ercolani_flaschka_singer93}; also see \cref{BFR_map}. For a related approach to this problem, see \cite[Theorem 1]{martinez_torres_tomei}.
\end{rmk}

\begin{rmk}\label{BFR_map}
As we alluded to in \cref{BFR_remark}, when we restrict the domain of the twist map $\twistorbit$ from $\Orbit_{\bflambda}^{\ge 0}$ to the tridiagonal subset $\Jac_{\bflambda}^{\ge 0}$ (discussed in \cref{sec_tridiagonal_orbit}), it specializes to a map constructed by Bloch, Flaschka, and Ratiu \cite{bloch_flaschka_ratiu90} in general Lie type, and denoted $\iota$. (The map $\iota$ of \cite{bloch_flaschka_ratiu90} is different from the map we denote by the same letter in \cref{defn_positive_inverse}. We also emphasize that in general, the image $\twistorbit(\Jac_{\bflambda}^{\ge 0})$ is not contained in $\Jac_{\bflambda}^{\ge 0}$.) The context in which the map $\iota$ appeared in \cite{bloch_flaschka_ratiu90} is similar to the one in the current discussion, namely, in order to realize the Toda flow on $\Jac_{\bflambda}^{\ge 0}$ as a gradient flow compatible with the torus action; see \cite[Theorem p.\ 63]{bloch_flaschka_ratiu90}. The ultimate goal in \cite{bloch_flaschka_ratiu90} was to prove \cref{jacobi_manifold}, by mapping $\Jac_{\bflambda}^{\ge 0}$ to its moment polytope. It turns out that the usual moment map is neither injective nor surjective on $\Jac_{\bflambda}^{\ge 0}$, but if we first apply the map $\iota$, we obtain a homeomorphism onto the moment polytope which restricts to a diffeomorphism from $\Jac_{\bflambda}^{>0}$ onto its interior.

The subtlety in constructing the maps $\twistorbit$ and $\iota$ is to pick a canonical representative in $\O_n$ (out of a possible $2^n$) for an arbitrary element of $\Fl_n(\mathbb{R})$. It is impossible to pick a smooth representative over all of $\Fl_n(\mathbb{R})$, which is why in defining $\twist$ we restrict to the totally nonnegative part $\Fl_n^{\ge 0}$ and pick the representative in which all left-justified minors are nonnegative. In \cite{bloch_flaschka_ratiu90}, the representative in $\O_n$ is chosen to be the one in which the first row is positive. This is ultimately equivalent to our choice (up to multiplying by $\delta_n$) when we restrict to $\Fl_n^{>0}$, but on the boundary of $\Fl_n^{\ge 0}$ some entries of the first row of the matrix representative may be zero; see \cref{first_row_positive}. This issue necessitated in \cite{bloch_flaschka_ratiu90} an intricate analysis involving the Bruhat decomposition. (The embedding $\Jac_{\bflambda}^{>0}\hookrightarrow\mathbb{P}^{n-1}_{>0}$ of Moser mentioned in \cref{torus_remark} does not extend to the closure $\Jac_{\bflambda}^{\ge 0}$ for similar reasons.) We find the perspective of total positivity gives a natural way to define and extend the map $\iota$, which requires no special consideration at the boundary.
\end{rmk}

\begin{rmk}\label{tridiagonal_real_to_positive}
In the case of the real tridiagonal symmetric Toda flow, there is no loss of generality in restricting to the totally nonnegative part $\Jac_{\bflambda}^{\ge 0}$. That is, suppose we are given $L_0\in\uu_n$ such that $-\ii L_0$ is a real tridiagonal symmetric matrix. Then we can conjugate $L_0$ by an element of the form $\Diag{\pm 1, \dots, \pm 1}$ so that the off-diagonal entries of $-\ii L_0$ become nonnegative, whence $L_0\in\Jac_{\bflambda}^{\ge 0}$. On the other hand, this conjugation commutes with the flow \eqref{toda_flaschka}. We note that this reduction to the totally nonnegative case from the real case does not extend to the complex case, nor to the real full symmetric case.
\end{rmk}

\begin{rmk}\label{kodama_williams_remark}
Kodama and Williams \cite[Section 5]{kodama_williams15} proved a result analogous to \cref{full_symmetric_toda_gradient}\ref{full_symmetric_toda_gradient_positivity} for the full Kostant--Toda flow. Namely, to any point in $\Fl_n^{\ge 0}$ they associate a Hessenberg matrix, and show that the corresponding Kostant--Toda flow is complete; moreover, when the flow is mapped back to $\Fl_n^{\ge 0}$, it is contained inside a single cell of the cell decomposition \eqref{cell_decomposition_equation}. (In the case of the top-dimensional cell $\Fl_n^{>0}$, this is a special case of an earlier result of Gekhtman and Shapiro \cite[Theorem 2]{gekhtman_shapiro97}.) Kodama and Williams further translate their results to the full symmetric Toda lattice \cite[Section 7]{kodama_williams15}, following a procedure of Bloch and Gekhtman \cite{bloch_gekhtman98}. This translation employs a different convention than we use for mapping between $\Fl_n(\mathbb{C})$ and $\Orbit_{\bflambda}$ (cf.\ \cref{left_right_action}), and getting between the two (in the totally nonnegative case) requires applying the twist map. In particular, \cref{full_symmetric_toda_gradient}\ref{full_symmetric_toda_gradient_positivity} follows from \cite[Proposition 7.8]{kodama_williams15} once we know the properties of the twist map given in \cref{twist_action}.
\end{rmk}

\begin{rmk}\label{gladwell_remark}
Gladwell \cite[Theorem 2]{gladwell02} proved a result analogous to \cref{full_symmetric_toda_gradient}\ref{full_symmetric_toda_gradient_positivity} for totally positive matrices. Namely, let $M\in\gl_n(\mathbb{R})$ be symmetric, and let $L(t)$ evolve according to \eqref{toda_flaschka} beginning at $L_0 := \ii M \in \uu_n$. Write $L(t) = \ii M(t)$. Gladwell showed that if $M\in\GL_n^{\ge 0}$, then $M(t)\in\GL_n^{\ge 0}$ for all $t \in\mathbb{R}$; and if $M\in\GL_n^{>0}$, then $M(t)\in\GL_n^{>0}$ for all $t \in\mathbb{R}$. We observe that this result neither directly implies, nor is directly implied by, \cref{full_symmetric_toda_gradient}\ref{full_symmetric_toda_gradient_positivity}.
\end{rmk}

\begin{rmk}\label{sorting_remark}
We note that the Toda flow does not quite fit into the framework of \cref{sec_ball}, because it only weakly (rather than strictly) preserves positivity. In particular, we cannot use the Toda flow to show that $\Orbit_{\bflambda}^{\ge 0}$ is homeomorphic to a closed ball. Nevertheless, we can apply \cref{stable_manifold_grassmannian_remark} (along with \cref{full_symmetric_toda_gradient}) to obtain the {\itshape sorting property} for the full symmetric Toda flow restricted to $\Orbit_{\bflambda}^{>0}$. Namely, letting $L(t)\in\Orbit_{\bflambda}$ evolve according to \eqref{toda_flaschka} beginning at $L_0\in\Orbit_{\bflambda}^{>0}$, we have
$$
\lim_{t\to\infty}L(t) = \ii\Diag{\lambda_1, \dots, \lambda_n} \quad \text{ and } \quad \lim_{t\to -\infty}L(t) = \ii\Diag{\lambda_n, \dots, \lambda_1}.
$$
In general, Chernyakov, Sharygin, and Sorin \cite[Section 3.3]{chernyakov_sharygin_sorin14} (cf.\ \cite[Theorem 7.9]{kodama_williams15} and \cite[Theorem 2]{martinez_torres_tomei}) showed that the limits of $L(t)$ as $t\to\pm\infty$ are diagonal matrices determined by the Schubert and opposite Schubert cells containing $L_0$.
\end{rmk}

\bibliographystyle{alpha}
\bibliography{ref}

\end{document}